\providecommand{\U}[1]{\protect\rule{.1in}{.1in}}
\newtheorem{theorem}{Theorem}
\newtheorem{definition}[theorem]{Definition}
\newtheorem{lemma}[theorem]{Lemma}
\newtheorem{proposition}[theorem]{Proposition}
\newtheorem{remark}[theorem]{Remark}
\newenvironment{proof}[1][Proof]{\noindent\textbf{#1.} }{\ \rule{0.5em}{0.5em}}
\begin{document}

\title{Two-end solutions to the Allen-Cahn equation in $\mathbb{R}^{3}$}
\author{Changfeng Gui\\Department of Mathematics, U-9,\\University of Connecticut, Storrs, CT 06269, USA, \\e-mail: gui@math.uconn.edu
\and Yong Liu\\School of Mathematics and Physics, \\North China Electric Power University, Beijing, China, \\e-mail: liuyong@ncepu.edu.cn
\and Juncheng Wei\\Department of Mathematics, \\University of British Columbia, Vancouver, BC, V6T 1Z2, Canada,\\e-mail: jcwei@math.ubc.ca}
\maketitle

\section{Introduction}

The Allen-Cahn equation
\begin{equation}
-\Delta u=u-u^{3},\left\vert u\right\vert <1 \label{AC}%
\end{equation}
has been studied for several decades and is an important nonlinear PDE due to
the fact that it lies at the interface of several different mathematical
fields. The famous De Giorgi conjecture states that any entire solution to
$\left(  \ref{AC}\right)  $ in $\mathbb{R}^{n}$ which is monotone in one
direction should be one dimensional, at least for $n\leq8.$ The conjecture was
proved in dimension $n=2$ by Ghoussoub-Gui (\cite{GG}) and dimension $3$ by
Ambrosio-Cabre (\cite{ACa}), and in dimensions $4\leq n\leq8$ by Savin
(\cite{Savin}), under an additional assumption. For $n\geq9,$ counter-examples
have been constructed by del Pino-Kowalczyk-Wei (\cite{MM}).  Note that
monotone solutions are indeed minimizers with respect to local perturbations (\cite{ACa}).

A natural extension of De Giorgi's conjecture is to classify stable or finite
Morse index solutions. Regarding stable solutions, it has been shown
(\cite{ACa}, \cite{GG}) that stable solutions in $\mathbb{R}^{2}$ are
necessarily one-dimensional, while Pacard-Wei (\cite{PW}) constructed stable
solutions in $\mathbb{R}^{8}$ which are not one-dimensional.

The study of finite Morse index solutions is much more involved. In
$\mathbb{R}^{2}$, we have now a rather complete picture of Morse index one
solutions. They are so-called four-end solutions, parametrized by the angle
between the two lines. The cross solution, constructed by Dang-Fife-Peletier
(\cite{DFP}), represents a four-end solution with angle $\frac{\pi}{4}$, while
the almost parallel line solution, constructed by del
Pino-Kowalczyk-Pacard-Wei (\cite{MR2557944}), represents a four-end solution
with angle close to $\frac{\pi}{2}$ or $0$. The existence of four-end
solutions with any angle between $0$ and $\frac{\pi}{2}$ was proved by two
methods: the first through the moduli space theory by Kowalczyk-Liu-Pacard
(\cite{MR3148064}), and the second approach by the mountain-pass variational
method by us (Gui-Liu-Wei \cite{GLW}). It was also shown that the four-end
solutions have Morse index one (\cite{GLW}). On the other hand,  monotone and symmetric properties of a  general four end solution have been obtained in \cite{MR2911416}, where more general finite morse index
solutions in $\mathbb{R}^{2}$ have also been studied under an extra energy  condition or  a condition on the asymptotical structure of nodal curves.

In this paper we are interested in the structure of two-end solutions of
$\left(  \ref{AC}\right)  $ in $\mathbb{R}^{3}.$ It turns out that without the
monotone condition, there are actually a lot of solutions. One simple example
is the so called saddle solution whose nodal sets are precisely the $xoy,yoz$
and $xoz$ planes (Alessio-Montecchiari \cite{MR3057178}). del
Pino-Kowalczyk-Wei (\cite{MR3019512}) proved that for each non-degenerate
minimal surfaces with finite total curvature, one could find a solution to
$\left(  \ref{AC}\right)  $ whose nodal sets are close to a rescaled version
of this minimal surface. In particular, there are axially solutions whose
nodal sets are close to catenoids with very large waist. Axially symmetric
solutions with multiple interfaces which are governed by the Jocobi-Toda
system are constructed in Agudelo-del Pino-Wei \cite{MR3281950}.

In spite of all these developments, some important questions for $\left(
\ref{AC}\right)  $ in $\mathbb{R}^{3}$ remain unanswered, even for axially
symmetric solutions. In this paper, we will study those axially symmetric
solutions which are additionally even with respect to the $xoy$ plane. In
terms of the cylindrical coordinate $\left(  r,z\right)  $ , they satisfy
\begin{equation}
\left\{
\begin{array}
[c]{l}%
u_{zz}+u_{rr}+r^{-1}u_{r}+u-u^{3}=0,r\in\lbrack0,+\infty),z\in\mathbb{R},\\
u\left(  r,z\right)  =u\left(  r,-z\right)  ,\text{ }u_{r}\left(  0,z\right)
=0.
\end{array}
\right.  \label{axial}%
\end{equation}

Let $H\left(  x\right)  =\tanh\frac{x}{\sqrt{2}}$ be the one dimensional
\textit{heteroclinic} solution:
\[
-H^{\prime\prime}=H-H^{3},H\left(  0\right)  =0,H\left(  \pm\infty\right)
=\pm1.
\]
The solutions we are interested will have $H$ as its asymptotic profile. We
say that a solution $u$ of $\left(  \ref{axial}\right)  $ has growth rate $k$
if it has the following asymptotic behavior:
\begin{equation}
\left\Vert u\left(  r,\cdot\right)  -H\left(  \cdot-k\ln r+c\right)
\right\Vert _{L^{\infty}\left(  0,+\infty\right)  }\rightarrow0,\text{ as
}r\rightarrow+\infty, \label{asy}%
\end{equation}
for certain $c\in\mathbb{R}.$ The existence results obtained in
\cite{MR3281950} and \cite{MR3019512} based on Lyapunov-Schmidt reduction
arguments tell us that there are solutions whose growth rate is in the
interval $\left(  \sqrt{2},\sqrt{2}+\delta\right)  $ and $\left(  \delta
^{-1},+\infty\right)  ,$ where $\delta$ is a very small constant. A natural
question is, whether or not there are solutions with growth rate in the range
$\left[  \sqrt{2}+\delta,\delta^{-1}\right]  .$ In this paper we answer this
question affirmatively and our main result is the following

\begin{theorem}
\label{main}For each $k\in\left(  \sqrt{2},+\infty\right)  ,$ there exists a
solution to $\left(  \ref{axial}\right)  $ which has growth rate $k.$
\end{theorem}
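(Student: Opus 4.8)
The plan is to construct the desired solution by a continuity/connectedness argument in the parameter $k$, combined with the existence results at the two extremes already available from the Lyapunov--Schmidt constructions. Define
\[
\mathcal{K}=\left\{k\in\left(\sqrt{2},+\infty\right): \text{there exists a solution of }\left(\ref{axial}\right)\text{ with growth rate }k\right\}.
\]
By \cite{MR3281950} and \cite{MR3019512} we know that $\mathcal{K}$ contains $\left(\sqrt{2},\sqrt{2}+\delta\right)$ and $\left(\delta^{-1},+\infty\right)$, so $\mathcal{K}$ is nonempty. The goal is to show $\mathcal{K}=\left(\sqrt{2},+\infty\right)$, and the natural route is to prove that $\mathcal{K}$ is both open and closed in $\left(\sqrt{2},+\infty\right)$. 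Openness should follow from a nondegeneracy/implicit-function-theorem argument: linearizing $\left(\ref{axial}\right)$ about a solution with growth rate $k_0$, one shows the linearized operator is invertible in suitable weighted H\"older or Sobolev spaces that encode the logarithmic asymptotic profile $H\left(z-k\ln r+c\right)$, and then the growth rate becomes a smooth function of the solution which can be perturbed freely near $k_0$. The subtle point in openness is that the Jacobi operator of the underlying ``interface'' (here essentially $r^{-1}\partial_r(r\,\partial_r)+\partial_{zz}$ acting on the graph describing the nodal set, i.e.\ a Jacobi--Toda type operator on the half-line) may have a kernel; one expects that the imposed evenness in $z$ and Neumann condition at $r=0$ kill the problematic modes, leaving a one-parameter family genuinely parametrized by $k$.

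The heart of the argument, and the main obstacle, is closedness: given a sequence $k_j\in\mathcal{K}$ with $k_j\to k_\infty\in\left(\sqrt{2},+\infty\right)$ and corresponding solutions $u_j$, one must extract a limit $u_\infty$ which is still a genuine two-end solution with growth rate exactly $k_\infty$ (and not, say, a one-dimensional solution, a constant $\pm1$, or a solution that has ``lost'' an end to infinity). This requires uniform control: (i) interior elliptic estimates give local $C^{2,\alpha}$ convergence on compact sets of $(r,z)$ up to a subsequence; (ii) one needs a uniform-in-$j$ description of the nodal set and of $u_j$ for large $r$, i.e.\ a bound showing the ``position function'' $z = k_j\ln r + c_j$ stays in a compact range of slopes and that the $L^\infty$ convergence in $\left(\ref{asy}\right)$ is uniform in $j$; (iii) one must rule out concentration or energy escaping to infinity. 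A clean way to get the needed a priori bounds is an energy/monotonicity formula together with a classification of possible blow-down and blow-up limits: the blow-down of an axially symmetric even two-end solution should be a plane (or a pair of half-lines meeting the axis), and a Modica-type gradient bound plus the stability-like properties of these solutions constrain the limit. Because $k_\infty$ stays strictly above $\sqrt{2}$ and strictly below $+\infty$, the interface is expected to stay genuinely ``spread'' and the limit cannot degenerate, so $u_\infty$ is a bona fide solution of growth rate $k_\infty$; hence $\mathcal{K}$ is closed.

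An alternative, and perhaps more robust, route is a direct variational/degree-theoretic construction for each fixed $k$: build an approximate solution $\bar{u}_k$ by gluing $H\left(z-k\ln r+c\right)$ (suitably cut off near $r=0$ where the Neumann condition and the curvature term $r^{-1}u_r$ matter) to $\pm1$ away from the interface, compute that the error $-\Delta\bar u_k+\bar u_k^3-\bar u_k$ is small in an appropriate weighted norm, and solve the full equation by a fixed-point argument provided the linearized operator is uniformly invertible \emph{for all} $k\in\left(\sqrt{2}+\delta,\delta^{-1}\right)$. This reduces everything to a single uniform linear estimate on the compact parameter interval, which is exactly where the two previous papers' reductions break down (they only get it near the endpoints). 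So in either approach the crux is the same: establishing invertibility of the linearized Allen--Cahn operator about the approximate two-end configuration, uniformly as the growth rate ranges over a compact subinterval of $\left(\sqrt2,+\infty\right)$, and I would expect most of the technical work of the paper to go into that estimate, handled via a Toda-type projected problem on the half-line together with a contradiction-and-blow-up argument if the uniform bound were to fail.
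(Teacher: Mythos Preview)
Your open--closed argument on $\mathcal{K}$ has a genuine gap at the openness step. You assume that the linearized operator about an arbitrary two-end solution is invertible (nondegenerate) in the relevant weighted spaces, so that the implicit function theorem perturbs $k$ freely. But nondegeneracy of every two-end solution in $\mathbb{R}^{3}$ is \emph{not known}; the paper states this explicitly as an open problem and as a key difference from the 2D four-end case. If some $u_{k_{0}}$ is degenerate, your argument produces no nearby solutions with growth rate $k\neq k_{0}$, and $\mathcal{K}$ need not be open. Your alternative route has the same flaw: the ``uniform invertibility of the linearized operator over a compact $k$-interval'' is precisely the unproved nondegeneracy statement, just phrased differently.

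The paper circumvents this by working not on the set $\mathcal{K}\subset\mathbb{R}$ but on the moduli space $M$ of solutions itself, and exploiting its \emph{real analytic} structure. The nonlinear map is shown to be real analytic with Fredholm linearization of index $1$, so $M$ is a real analytic variety of formal dimension $1$. The global bifurcation theorem of Buffoni--Dancer--Toland then yields a continuous path $\varrho:(-\delta,+\infty)\to M$ starting at a known nondegenerate catenoid-type solution, along which degenerate points are isolated and can be passed through. One then shows this path cannot be a closed loop and cannot return to the catenoid end (by a uniqueness result for catenoid-type solutions), forcing $\mathcal{P}_{\varrho(t)}$ onto the $z$-axis with $|\mathcal{P}_{\varrho(t)}|\to\infty$; a separate analysis identifies such solutions as Toda-type with growth rate tending to $\sqrt{2}$. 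The intermediate value theorem on the continuous growth-rate function along $\varrho$ then gives every $k\in(\sqrt{2},+\infty)$. Your closedness/compactness intuition is in the right spirit and does appear (Proposition~\ref{compact}), but it is used to feed the global bifurcation machinery, not as half of an open--closed dichotomy; and the boundary analysis (your ``ruling out degeneration'') requires substantial work you have not anticipated, namely proving that large $|\mathcal{P}_{u}|$ forces the growth rate to the endpoints of $(\sqrt{2},+\infty)$.
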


As we will see later, these solutions indeed are monotone in the following
sense:
\begin{equation}
u_{z}>0\text{ for }z>0;u_{r}<0,\text{for }r>0. \label{monotone}%
\end{equation}

Outside a large ball, the nodal set of the solutions given by Theorem
\ref{main} has two components, each component is asymptotic to a catenoidal
end and around each end, the solution look likes the one dimensional
heteroclinic solution. Borrowing a terminology from minimal surface theory, a
solution satisfying $\left(  \ref{axial}\right)  $ and $\left(  \ref{asy}%
\right)  $ will be called a two-end solution. We emphasize that here by
definition the two-end solutions are axially symmetric. Comparing with the
corresponding definition of two-end minimal surfaces, it seems that a more
general definition of two-end solution should\ also involve those solutions
which are not axially symmetric and only assume that their nodal set is
asymptotic to two catenoidal ends. In the minimal surface theory, a classical
result proved by R. Schoen (\cite{MR730928}) is that a minimal surface with
two catenoidal ends is a catenoid. We expect that the analogous result for
Allen-Cahn equation should hold. A major difficulty we encounter is to show
that a solution whose nodal set is asymptotic to two catenoidal ends is
axially symmetric.

Observe that for each $k\in\left(  0,+\infty\right)  ,$ there exists a
catenoid with growth rate $k.$ Taking into account the relation between
minimal surface theory and Allen-Cahn equation, at first glance, one may think
that for each $k\in\left(  0,+\infty\right)  ,$ there should be a two-end
solution of Allen-Cahn equation. But this turns out to be false. In fact, we have

\begin{theorem}
\label{main2}There does not exist two-end solution with growth rate
$k\in(0,\frac{\sqrt{2}}{2}].$
\end{theorem}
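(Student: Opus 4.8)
The plan is to extract from $(\ref{axial})$ a one–dimensional Hamiltonian identity in the variable $z$ — obtained by testing the equation against $r\,u_{z}$ and integrating in $r$ — and to show that for $k\le\frac{\sqrt2}{2}$ this identity is incompatible with the decay rate of $u+1$ that a growth rate $k$ forces on the cross‑section $z=0$. Throughout write $W(u)=\frac14(1-u^{2})^{2}$, so that $(\ref{axial})$ reads $u_{zz}+r^{-1}(ru_{r})_{r}=W'(u)$ and $\tfrac12(H')^{2}=W(H)$. Let $u$ be a two–end solution with growth rate $k$. Starting from $(\ref{asy})$, interior Schauder estimates together with standard barrier arguments for the linearized operator $-\Delta+(3u^{2}-1)$ — whose potential is $\approx 2=(\sqrt2)^{2}$ in the regions where $u$ is close to $\pm1$ — yield, for $r$ large and with $C$ independent of $r$,
\[
1-|u(r,z)|\;+\;|u_{z}(r,z)|\;+\;r\,|u_{r}(r,z)|\;\le\;C\,e^{-\sqrt2\,|z-k\ln r+c|},
\]
and, sharpening $(\ref{asy})$ below the catenoidal end into a two–sided estimate, a matching lower bound $W\big(u(r,0)\big)\ge c_{0}\,r^{-2\sqrt2\,k}$ for all large $r$ and some $c_{0}>0$.

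Multiplying $(\ref{axial})$ by $r\,u_{z}$, integrating over $r\in(0,R)$ and integrating by parts in $r$ (the boundary term at $r=0$ vanishes since $u_{r}(0,z)=0$) gives, for every $R>0$,
\[
\frac{d}{dz}\,Q_{R}(z)=-R\,u_{z}(R,z)\,u_{r}(R,z),\qquad Q_{R}(z):=\int_{0}^{R}r\Big[\tfrac12u_{z}^{2}-\tfrac12u_{r}^{2}-W(u)\Big]\,dr .
\]
By the estimates above, $\big|R\,u_{z}(R,z)u_{r}(R,z)\big|\le C\,e^{-2\sqrt2\,|z-k\ln R+c|}$, hence $\int_{0}^{Z}\big|R\,u_{z}(R,z)u_{r}(R,z)\big|\,dz\le C'$ for all $R>0$ and all $Z\ge0$. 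Moreover, choosing $Z_{R}:=\alpha\,k\ln R$ with $\alpha$ fixed so large (depending only on $k$) that $2+2\sqrt2\,k(1-\alpha)<0$, the decay estimates force $Q_{R}(Z_{R})\to0$ as $R\to\infty$, because on the slice $\{r\le R,\ z=Z_{R}\}$ one is uniformly far above the end and $u$ is exponentially close to $+1$. Integrating the identity in $z$ over $(0,Z_{R})$ therefore gives $Q_{R}(0)=Q_{R}(Z_{R})+\int_{0}^{Z_{R}}R\,u_{z}u_{r}\,dz=O(1)$ as $R\to\infty$.

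On the other hand, since $u_{z}(r,0)=0$ by the evenness in $(\ref{axial})$,
\[
Q_{R}(0)=-\int_{0}^{R}r\Big[\tfrac12u_{r}(r,0)^{2}+W(u(r,0))\Big]\,dr\;\le\;-\int_{0}^{R}r\,W(u(r,0))\,dr\;\le\;-\,c_{0}\int^{R}r^{\,1-2\sqrt2\,k}\,dr .
\]
The exponent satisfies $1-2\sqrt2\,k\ge-1$ precisely when $k\le\frac{\sqrt2}{2}$, so in that range the last integral diverges as $R\to\infty$ and $Q_{R}(0)\to-\infty$, contradicting $Q_{R}(0)=O(1)$. Hence there is no two–end solution with $k\le\frac{\sqrt2}{2}$.

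The main obstacle is the lower bound on $W(u(r,0))$: hypothesis $(\ref{asy})$ only provides $\|u(r,\cdot)-H(\cdot-k\ln r+c)\|_{L^{\infty}}\to0$, which does not by itself preclude $1-|u(r,0)|$ decaying faster than $r^{-\sqrt2 k}$. One must construct a genuine subsolution for $u+1$ in the region $\{u<0\}$ beneath the end — exploiting that $u=0$ on the nodal set, that the nodal set is a graph $z=k\ln r+c+o(1)$, and that the equation linearized about $u\equiv-1$ is sign–definite there, so a maximum principle applies — in order to pin the decay rate of $u+1$ from below by $r^{-\sqrt2 k}$. Everything else (the testing identity, the exponential upper bounds, the choice of $Z_{R}$) is routine; it is the interplay between this sharp lower rate and the convergence of $\int^{\infty}r^{\,1-2\sqrt2 k}\,dr$ that produces the threshold $\frac{\sqrt2}{2}$. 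Note that the gap $k\in(\frac{\sqrt2}{2},\sqrt2]$ between Theorem \ref{main2} and Theorem \ref{main} is not resolved by this argument.
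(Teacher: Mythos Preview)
Your Hamiltonian-identity approach is genuinely different from the paper's and the testing identity $\frac{d}{dz}Q_{R}(z)=-R\,u_{z}(R,z)\,u_{r}(R,z)$ is correct. The paper instead pushes the Fermi--coordinate analysis of Section~\ref{compactness} to derive the effective ODE~$(\ref{p-equation})$ for the nodal profile $p$, then integrates it to force $rp'(r)>\tfrac{\sqrt2}{2}+\delta$. Your route would give a more transparent explanation of the threshold $\tfrac{\sqrt2}{2}$, as the borderline of convergence of $\int^{\infty}r^{\,1-2\sqrt2 k}\,dr$.

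However, two of the asymptotic inputs you label ``routine'' are not obtainable from $(\ref{asy})$ together with Schauder estimates and elementary barriers, and this is a genuine gap.
First, the bound $r\,|u_{r}(r,z)|\le C\,e^{-\sqrt2|z-k\ln r+c|}$ (in particular $r|u_{r}|\le C$) does not follow: from $(\ref{asy})$, interior Schauder only gives $u_{r}+\tfrac{k}{r}H'(z-k\ln r-c)=o(1)$, hence $r\,u_{r}=-kH'+o(r)$, which is unbounded. With only this, the boundary flux satisfies $\int_{0}^{Z_{R}}|R\,u_{z}u_{r}|\,dz=o(R)$, not $O(1)$, and since $|Q_{R}(0)|\sim R^{\,2-2\sqrt2 k}$ for $k<\tfrac{\sqrt2}{2}$ (and $\sim\ln R$ at $k=\tfrac{\sqrt2}{2}$), the contradiction fails on the whole range $k\in[\tfrac{\sqrt2}{4},\tfrac{\sqrt2}{2}]$.
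Second, the lower bound $W(u(r,0))\ge c_{0}\,r^{-2\sqrt2 k}$: the subsolution you have in mind (something like $w_{\varepsilon}=\varepsilon\,e^{\sqrt2(z-k\ln r-c)}+\varepsilon\,e^{\sqrt2(-z-k\ln r-c)}$, which is indeed a subsolution of the full equation and satisfies the Neumann condition at $z=0$) cannot be slid below $v=u+1$ on the unbounded domain $\{r>r_{0},\,|z|<f(r)\}$, because on the portion of the boundary near $z=0$ at large $r$ the comparison $w_{\varepsilon}\le v$ requires exactly $v(r,0)\gtrsim r^{-\sqrt2 k}$, which is what you are trying to prove; $(\ref{asy})$ alone gives only $v(r,0)=H(-k\ln r-c)+1+o(1)$ with an $o(1)$ that may swamp $r^{-\sqrt2 k}$.

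Both missing bounds follow once one has the refined asymptotic $u-H(z_{1})=O(r^{-2})+O(e^{-D})$ (Proposition~\ref{P3}, estimate~$(\ref{r2})$), but obtaining this is precisely the content of the lengthy analysis in Section~\ref{compactness}: projection onto $H'$, control of the modulation $h$, and an iteration in the Fermi coordinate. With that in hand your argument goes through --- but so does the paper's one-line integration of~$(\ref{p-equation})$. So the identity is a nice alternative endpoint, yet it does not shortcut the hard asymptotic work.
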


Indeed, one expects that for $k\in(\frac{\sqrt{2}}{2},\sqrt{2}],$ two-end
solution with growth rate $k$ also should not exist, while for each
$k\in\left(  \sqrt{2},+\infty\right)  ,$ there should be a unique two-end
solution with growth rate $k.$ We remark that the lower bound $\sqrt{2}$ is
somehow related to the deep facts that the two ends of the solutions to the
Allen-Cahn equation actually \textquotedblleft interact\textquotedblright%
\ with each other and in this regime one naturally encounters the so called
Toda system, as we will see later in the analysis. This constitutes a major
difference with the theory of minimal surfaces. Roughly speaking, the
Allen-Cahn equation interplays between the theory of minimal surfaces and the
theory of Toda system, which is a classical integrable system. In the minimal
surface theory, the catenoids are basic blocks for the construction of other
minimal surfaces or constant mean curvature surfaces (see for example
\cite{MR2906930}, \cite{MR1837428}, \cite{MR1924593} and the references
therein). It is therefore natural and  interesting to  ask what  role
 two-end solutions of the Allen-Cahn equation play  in constructing other
solutions. We also remark that the two-end solutions given by Theorem
\ref{main} are all unstable, and the stable solution conjecture says that the
only bounded stable solution of the Allen-Cahn equation in $\mathbb{R}^{3}$
should be one dimensional. It is also expected that the Morse index of two-end
solutions should be equal to one, again, we don't have a proof of this
statement, although it is known that the two-end solutions
constructed in \cite{MR3281950} and \cite{MR3019512} have Morse index one.

The results in Theorem \ref{main} could be regarded as a generalization of the
corresponding results for four-end solutions in $\mathbb{R}^{2}$
(\cite{MR2971030}, \cite{MR3148064})$.$ To explain this, let us say a few
words about the multiple-end solutions in $\mathbb{R}^{2}.$ By definition, a
$2k$-end solution of $\left(  \ref{AC}\right)  $ in $\mathbb{R}^{2}$ is a
solution whose nodal set outside a large ball is asymptotic to $2k$ half
straight lines at infinity. It is known (del Pino-Kowalczyk-Pacard
\cite{dkp-2009}) that the set of $2k$-end solution in $\mathbb{R}^{2}$ has a
structure of real analytic variety of formal dimension $2k.$ Some examples of
solutions near the boundary of this \textquotedblleft moduli
space\textquotedblright\ have been constructed (del Pino-Kowalczyk-Pacard-Wei
\cite{MR2557944}, Kowalczyk-Liu-Pacard-Wei \cite{KLW}). As we mentioned above,
for $k=2,$ it is proved in Kowalczyk-Liu-Pacard \cite{MR3148064} that the
moduli space of four-end solutions, modulo rigid motion, is diffeomorpic to
the open interval $\left(  0,1\right)  .$ It is also proved there that for
each $\theta\in\left(  0,\frac{\pi}{2}\right)  ,$ there exists a four-end
solution $u_{\theta}$ which is even with respect to both $x$ and $y$ axis and
the asymptotic line of the nodal set of $u_{\theta}$ in the first quadrant
makes angle $\theta$ with the $x$ axis. Now we observe that if we reflect the
solution to $\left(  \ref{axial}\right)  $ across the $z$ axis, then we get a
solution defined for all $\left(  r,z\right)  \in\mathbb{R}^{2}$ and even in
both variables. Moreover, the nodal set of this solution outside a large ball
also has four components. Hence the two-end solutions in $\mathbb{R}^{3}$ are
in certain sense analogy of the four-end solutions in $\mathbb{R}^{2}.$ The
equations they satisfied are different from each other only by the term
$r^{-1}u_{r}$ which makes the problem inhomogeneous. In fact in $\mathbb{R}%
^{2}$ a major fact used is that there are two linearly independent kernels
$u_{x}$ and $u_{y}$. The construction of four-end solution is done by first
proving that all four-end solutions are \emph{nondegenerate}. Here we face the
problem of degeneracy. We overcome this difficulty by applying global
bifurcation theory developed by Buffoni, Dancer and Toland for real analytical
variety of formal dimension $1$.

Before proceeding to proof of our main results, let us outline the main ideas
of the proof, since this also gives a description of the set of solutions in
Theorem \ref{main}. The main steps of the proof are similar to the
corresponding analysis for four-end solution in dimension two, performed in
\cite{MR2971030} and \cite{MR3148064}. Our basic strategy is to analyze the
structure of the set $M$ of axially symmetric two-end solutions with growth
rate larger than $\sqrt{2}.$ In section \ref{compactness}, we prove that the
solutions in $M$ with some additional natural constraints are compact in
certain sense. In section \ref{Moduli}, we show that $M$ has the structure of
a real analytic variety of formal dimension $1.$ In section \ref{Uniqueness},
we analyze those solutions near the \textquotedblleft
boundary\textquotedblright\ of the moduli space $M$ and prove that they are
unique in certain sense. Combining these results, we conclude the proof by
applying a structure theorem for real analytic varieties. We remark that there
are two main differences between the $\mathbb{R}^{2}$ and $\mathbb{R}^{3}$
case. Firstly, the proof of compactness in 3D case is much more delicate than
the 2D case and detailed asymptotic analysis is needed. Secondly, as we
mentioned, the four-end solutions in 2D are all non-degenerate (\cite{ML}, \cite{MR2971030}),
but we don't know whether it is true for two-end solutions in 3D.

\section{\label{compactness}Compactness of two-end solutions}

The compactness of moduli spaces of minimal surface plays an important role in
the minimal surface theory, for example, it is an important step towards the
classification of certain type of minimal surfaces. We refer to Perez-Ros
\cite{MR1901613} and  references therein for more details on this and other
related subjects. In this section, we shall investigate the compactness
property for two-end solutions of the Allen-Cahn equation.

Throughout the paper we denote by $\mathbb{E}$ the set $[0,+\infty
)\times\mathbb{R}$ and by $\mathbb{E}^{+}$ the set $[0,+\infty)\times
\lbrack0,+\infty).$ Let $\left\{  u_{n}\right\}  $ be a sequence of two-end
solutions which has growth rate $k_{n}>\sqrt{2}.$ (Note that for $k\in
(\frac{\sqrt{2}}{2},\sqrt{2}],$ we don't know whether or not there is a
two-end solution with growth rate $k$.) Then by definition,
\begin{equation}
\left\Vert u_{n}\left(  r,\cdot\right)  -H\left(  \cdot-k_{n}\ln
r-c_{n}\right)  \right\Vert _{L^{\infty}\left(  0,+\infty\right)  }%
\rightarrow0,\text{ as }r\rightarrow+\infty. \label{as}%
\end{equation}
We will show that if the distance of the nodal set of $u_{n}$ to the origin is
uniformly bounded with respect to $n,$ then up to a subsequence, $\left\{
u_{n}\right\}  $ converges strongly to a two-end solution $u_{\infty}.$\ Here
converging strongly means that $u_{n}\rightarrow u_{\infty}$ in $C_{loc}%
^{2}\left(  \mathbb{E}\right)  $\ and there exist constants $k_{\infty
},c_{\infty}$ such that $k_{n}\rightarrow k_{\infty},c_{n}\rightarrow
c_{\infty},$
\[
\left\Vert u_{\infty}\left(  r,\cdot\right)  -H\left(  \cdot-k_{\infty}\ln
r-c_{\infty}\right)  \right\Vert _{L^{\infty}\left(  0,+\infty\right)
}\rightarrow0,\text{ as }r\rightarrow+\infty,
\]
and
\[
u_{n}-H\left(  z-k_{n}\ln r-c_{n}\right)  \rightarrow u_{\infty}-H\left(
z-k_{\infty}\ln r-c_{\infty}\right)
\]
in $L^{\infty}\left(  \mathbb{E}\right)  .$

Under the assumption that a solution $u$ has the asymptotic behavior $\left(
\ref{as}\right)  ,$ one could actually prove that $u$ has certain monotonicity property.

\begin{lemma}
\label{monoto}Suppose $u$ is a two-end solution with growth rate $k>\sqrt{2}.$
Then
\begin{equation}
\partial_{r}u<0\text{ for }r>0;\partial_{z}u>0\text{ for }z>0. \label{m}%
\end{equation}

\end{lemma}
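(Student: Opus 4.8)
The plan is to establish the two monotonicity statements by a moving‑plane argument, carried out separately in the $z$‑direction and in the $r$‑direction, using the asymptotic profile $(\ref{as})$ to start the sliding from infinity.

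First I would treat $\partial_z u>0$ for $z>0$. Since $u$ is even in $z$, the plane $\{z=0\}$ is the natural limiting plane, and the claim is that $u$ is strictly increasing in $z$ on $\{z>0\}$. For $\lambda>0$ set $u^{\lambda}(r,z)=u(r,2\lambda-z)$ and $w_{\lambda}=u-u^{\lambda}$ on the half‑space $\Sigma_{\lambda}=\{z<\lambda\}$; $w_{\lambda}$ satisfies a linear equation $\Delta w_{\lambda}+r^{-1}\partial_r w_{\lambda}+c_{\lambda}(r,z)w_{\lambda}=0$ with $c_{\lambda}$ bounded (here $c_{\lambda}=1-(u^2+uu^{\lambda}+(u^{\lambda})^2)$), and $w_{\lambda}=0$ on $\{z=\lambda\}$. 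The key input is the asymptotics: because $u(r,z)\approx H(z-k\ln r-c)$ uniformly as $r\to\infty$ and $H$ is strictly increasing, for every fixed $\lambda$ one has $w_{\lambda}<0$ for $r$ large (all $z\le\lambda$), and for $z$ near $-\infty$ one has $u\to-1$, $u^{\lambda}\to-1$ but the exponential rates of $H$ give $w_{\lambda}<0$ there too once $\lambda$ is not too large; combined with interior elliptic estimates this lets me start the sliding. I would then show the set of admissible $\lambda$ for which $w_{\lambda}\le 0$ in $\Sigma_{\lambda}$ is nonempty, open, and closed, hence all $\lambda>0$ work. By the strong maximum principle $w_{\lambda}<0$ in the interior for each $\lambda>0$ unless $w_{\lambda}\equiv 0$, which is excluded by the asymptotics; letting $\lambda\to 0^{+}$ and using Hopf's lemma on $\{z=0\}$ (noting the $r^{-1}\partial_r$ term is a harmless lower‑order coefficient away from $r=0$, and near $r=0$ one uses the Neumann condition $u_r(0,z)=0$ and works with the three‑dimensional Laplacian, for which the axis is an interior regular set) gives $\partial_z u>0$ on $\{z>0\}$.

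Next, for $\partial_r u<0$ when $r>0$, the idea is again moving planes, but now in the radial variable, which is cleaner if I pass back to the full $\mathbb{R}^3$ picture: writing $\tilde u(x_1,x_2,z)=u(\sqrt{x_1^2+x_2^2},z)$, the function is a genuine axially symmetric solution of $-\Delta\tilde u=\tilde u-\tilde u^3$ on $\mathbb{R}^3$, even in $z$, and $\partial_r u<0$ is equivalent to $\tilde u$ being strictly decreasing along every ray from the $z$‑axis. I would slide planes $\{x_1=\lambda\}$ for $\lambda>0$: by axial symmetry it suffices to get monotonicity in one horizontal direction. The reflected function $\tilde u^{\lambda}(x_1,x_2,z)=\tilde u(2\lambda-x_1,x_2,z)$ and $w_{\lambda}=\tilde u-\tilde u^{\lambda}$ satisfy the analogous linear equation on $\{x_1<\lambda\}$. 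The asymptotic behavior $(\ref{as})$ says that far out, $\tilde u$ depends on the horizontal variables essentially through $-k\ln r$, a radially decreasing function, and $H$ is increasing, so $w_{\lambda}\le 0$ near the two catenoidal ends; I again combine this with the behavior as $z\to\pm\infty$ (where $\tilde u\to\pm1$ uniformly) and the usual nonempty/open/closed argument to push $\lambda$ down to $0$. Strong maximum principle plus Hopf at $\{x_1=0\}$ then yields $\partial_{x_1}\tilde u<0$ there, i.e. $\partial_r u<0$ for $r>0$.

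The main obstacle I expect is making the sliding method rigorous in the ``unbounded'' directions — specifically, verifying that $w_{\lambda}$ stays negative near the ends and near $z=\pm\infty$ uniformly enough to both start the procedure and to run the closedness step, rather than merely appealing to the $C^0$ asymptotics. The point is that $u-H(z-k\ln r-c)\to 0$ only in $L^\infty$, so to compare $u$ with its reflection I need quantitative control: I would use the decay of $H'$ (exponential in $|z-k\ln r-c|$) together with the fact that the ``centers'' $k\ln r+c$ of the profile and its reflection separate linearly in $\ln r$, so that $w_\lambda = H(z-k\ln r - c) - H(2\lambda - z - k\ln r - c) + o(1)$ has a sign with room to spare outside a large compact set; a barrier built from $H'$ makes this precise. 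A secondary technical point is the behavior at the axis $r=0$: the coefficient $r^{-1}u_r$ is singular there, so all maximum‑principle and Hopf‑lemma steps touching $\{r=0\}$ should be phrased in terms of the regular three‑dimensional equation for $\tilde u$, which is where passing to $\mathbb{R}^3$ pays off. Once these two points are handled, the strict inequalities follow in the standard way, and (as the paper then remarks) this is exactly the monotonicity $(\ref{monotone})$ claimed for the solutions of Theorem~\ref{main}.
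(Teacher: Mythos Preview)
Your overall strategy---moving planes, with the radial monotonicity handled by passing to $\mathbb{R}^3$ and sliding hyperplanes $\{x_1=x_0\}$---is exactly the paper's. You have also correctly located the main difficulty. The gap is that your proposed resolution of that difficulty does not work.

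The problematic region for the horizontal sliding is where $x_1$ lies within bounded distance of the moving plane $\{x_1=x_0\}$ while $|x_2|$ (hence $r$) is very large. Writing $r'=\sqrt{(2x_0-x_1)^2+x_2^2}$, one has
\[
\ln r'-\ln r=\frac{2x_0(x_0-x_1)}{r^{2}}+O\!\left(\frac{x_0^{2}(x_0-x_1)^{2}}{r^{4}}\right),
\]
so the separation of the two ``centers'' is only $O(x_0 r^{-2})$; it does not grow in $\ln r$ as you assert. Consequently the leading $H$-difference between $u$ and its reflection is itself $O(x_0 r^{-2})$, and a bare $o(1)$ remainder coming from the $L^{\infty}$ asymptotics $(\ref{as})$ cannot be absorbed into it. A barrier built from $H'$ does not help here: $H'$ is bounded, and the smallness is already in the $r^{-2}$ factor, not in the distance to the interface.

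The paper's proof supplies precisely the missing ingredient. It invokes the refined decay
\[
u(r,z)-H(z_1)=O(r^{-2}),\qquad \partial_r\bigl(u(r,z)-H(z_1)\bigr)=O(r^{-2}),
\]
with $z_1$ the signed distance to the nodal curve, obtained from the analysis of Section~\ref{compactness}; the first estimate handles the case $\frac{4x_0^{2}-4x_0x}{r^{2}}>\epsilon$, and the \emph{derivative} estimate is what controls the strip $x_1\in(x_0-1,x_0)$. The paper explicitly remarks that this quantitative input is a genuinely three-dimensional feature, unnecessary for the analogous four-end argument in $\mathbb{R}^{2}$. Once you replace your $o(1)$ by these $O(r^{-2})$ bounds, the rest of your outline---starting the plane at large $x_0$, maximum principle in the region where $|\bar U|$ is close to $1$, Hopf lemma, and pushing $x_0$ down to $0$---goes through essentially as written.
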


The proof of this result is based on the moving plane method and its proof
will be given in the appendix. By Lemma \ref{monoto}, the two-end solutions we
are analyzing always satisfy $\left(  \ref{m}\right)  .$ We will denote the
nodal set of a solution $u$ in the upper $r$-$z$ plane by
\[
\mathcal{N}_{u}:=\left\{  p\in\mathbb{E}^{+},u\left(  p\right)  =0\right\}  .
\]
Due to the monotonicity property, the set $\mathcal{N}_{u}\cap\partial
\mathbb{E}^{+}$ contains a unique point, call it $\mathcal{P}_{u}.$

In the rest of the paper, we use $C$ and $\alpha$ to denote universal
constants which may vary from step to step. The main result of this section is
the following

\begin{proposition}
\label{compact}Let $u_{n}$ be a sequence of two-end solutions with growth rate
larger than $\sqrt{2}.$ Assume
\[
\left\vert \mathcal{P}_{u_{n}}\right\vert \leq C.
\]
Then there exists a two-end solution $u_{\infty}$ such that up to a
subsequence $\left\{  u_{n}\right\}  $ converges strongly to $u_{\infty}$.
\end{proposition}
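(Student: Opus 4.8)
The strategy is to combine interior elliptic compactness with a careful study of the solutions in the far region $r\to+\infty$, which is the genuinely three-dimensional difficulty. Since $|u_n|<1$, elliptic regularity for (\ref{axial}) gives uniform $C^{2,\alpha}_{\mathrm{loc}}(\mathbb{E})$ bounds (near the axis one either uses the even extension $r\mapsto-r$, which turns $r^{-1}u_r$ into a smooth coefficient, or argues directly with the regular--singular structure of the radial operator). Hence, along a subsequence, $u_n\to u_\infty$ in $C^2_{\mathrm{loc}}(\mathbb{E})$, where $u_\infty$ solves (\ref{axial}), is even in $z$, has $|u_\infty|\le1$, and, by passing to the limit in Lemma \ref{monoto}, satisfies $\partial_r u_\infty\le0$ and $\partial_z u_\infty\ge0$ for $z>0$. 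Moreover $\mathcal{P}_{u_n}$ ranges in the compact set $\{|p|\le C\}\cap\partial\mathbb{E}^{+}$, so after a further subsequence $\mathcal{P}_{u_n}\to\mathcal{P}_\infty$ and $u_\infty(\mathcal{P}_\infty)=0$; in particular $u_\infty\not\equiv\pm1$. That $u_\infty$ is actually a genuine two-end solution (so in particular $u_\infty\not\equiv0$) will come out of the far-field analysis below.

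The core step is to upgrade (\ref{as}) to a form uniform in $n$ and to show that $k_n$ and $c_n$ stay bounded, so that a subsequence has $k_n\to k_\infty\ge\sqrt{2}$ and $c_n\to c_\infty$. The boundedness of $k_n$ comes from a balancing (flux) identity, the Allen-Cahn analogue of Schoen's flux formula for catenoidal ends: integrating the divergence-free stress-energy form $T_{ij}=\partial_i u\,\partial_j u-\delta_{ij}\big(\tfrac12|\nabla u|^{2}+W(u)\big)$, $W(u)=\tfrac14(1-u^{2})^{2}$, against the vertical translation field over large cylinders, and using the $z$-evenness of $u_n$ together with the decay of $\nabla u_n$ away from the nodal set, one obtains that the ``flux'' $\int_{\{z=0\}}\big(\tfrac12|\nabla u_n|^{2}+W(u_n)\big)\,dx\,dy$ equals $c_0k_n$ up to a lower-order error, for a universal $c_0>0$. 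On the other hand, by Lemma \ref{monoto} the nodal set of $u_n$ near $\mathcal{P}_{u_n}$ is a neck of radius $|\mathcal{P}_{u_n}|$ and the restriction of $u_n$ to $\{z=0\}$ is a radial transition layer of bounded thickness sitting at radius $|\mathcal{P}_{u_n}|$, whence that flux is $\le C(|\mathcal{P}_{u_n}|+1)\le C$; thus $k_n\le C'$. Since, again by Lemma \ref{monoto}, $\mathcal{N}_{u_n}$ is in $\mathbb{E}^{+}$ the graph of an increasing function $z=g_n(r)$ with $g_n=0$ at $r=|\mathcal{P}_{u_n}|\le C$ and $g_n(r)-k_n\ln r-c_n\to0$, the bound on $k_n$ forces $|c_n|\le C'$ as well. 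The condition $k>\sqrt{2}$ enters here to guarantee that the interaction of the two ends, which appears in these identities as a correction of order $r^{1-2\sqrt{2}k}$ (so at worst $O(r^{-3})$ when $k\ge\sqrt{2}$), is integrable in $r$ and does not affect the leading-order balance.

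Next one rules out loss of compactness as $r\to+\infty$, i.e. shows that for every $\varepsilon>0$ there is $R=R(\varepsilon)$, independent of $n$, with $\|u_n(r,\cdot)-H(\cdot-k_n\ln r-c_n)\|_{L^\infty(0,+\infty)}<\varepsilon$ for all $r\ge R$. The delicate point is a uniform trapping of the nodal set: by a barrier argument (super- and subsolutions of (\ref{axial}) built from $H$, shifted and slightly dilated so as to absorb both the logarithmic bending caused by $r^{-1}u_r$ and the interaction of the two ends), one shows that $\mathcal{N}_{u_n}$ stays within a neighbourhood of $\{z=\pm(k_n\ln r+c_n)\}$ of width bounded uniformly in $n$ and $r$ --- and this is where $k>\sqrt{2}$ is again essential. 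Granting this, suppose the estimate fails: there are $n_j\to\infty$ (using that (\ref{as}) holds for each fixed $u_n$), $\rho_j\to+\infty$ and $z_j$ with $|u_{n_j}(\rho_j,z_j)-H(a_j)|\ge\varepsilon_0$, where $a_j:=z_j-k_{n_j}\ln\rho_j-c_{n_j}$. If $a_j\to\pm\infty$ (which covers the case $z_j$ bounded), the trapping puts $(\rho_j,z_j)$ at distance tending to infinity from $\mathcal{N}_{u_{n_j}}$, so $u_{n_j}(\rho_j,z_j)\to\pm1=\lim H(a_j)$ since far from its nodal set a bounded solution of the Allen-Cahn equation is close to $\pm1$ --- a contradiction. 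If $a_j\to a_\infty\in\mathbb{R}$, the trapping places a zero of $u_{n_j}$ within bounded distance of $(\rho_j,z_j)$; recentring there, and using that $\rho_j\to+\infty$ makes $r^{-1}\to0$ locally, the recentred solutions converge to an entire monotone solution $v$ of $-\Delta v=v-v^{3}$ in $\mathbb{R}^{2}$, hence to a translate of $H$ by the one-dimensional symmetry of monotone planar solutions, and a sliding argument pins the translate so as to contradict $|u_{n_j}(\rho_j,z_j)-H(a_j)|\ge\varepsilon_0$. With (\ref{as}) now uniform in $n$, letting $n\to\infty$ and using $k_n\to k_\infty$, $c_n\to c_\infty$ gives $\|u_\infty(r,\cdot)-H(\cdot-k_\infty\ln r-c_\infty)\|_{L^\infty(0,+\infty)}\to0$, so $u_\infty$ is a two-end solution, in particular nontrivial, which settles the point left open above.

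Finally one reads off the strong convergence. On $\{r\le R\}$, $u_n\to u_\infty$ uniformly and $H(z-k_n\ln r-c_n)\to H(z-k_\infty\ln r-c_\infty)$ uniformly there (near $r=0$ both are uniformly close to $1$, and on $r\in[\delta,R]$ it is ordinary uniform convergence); on $\{r\ge R\}$, both $u_n-H(z-k_n\ln r-c_n)$ and $u_\infty-H(z-k_\infty\ln r-c_\infty)$ have $L^\infty$ norm below $\varepsilon$ by the uniform form of (\ref{as}). Hence $u_n-H(z-k_n\ln r-c_n)\to u_\infty-H(z-k_\infty\ln r-c_\infty)$ in $L^\infty(\mathbb{E})$, which together with $u_n\to u_\infty$ in $C^2_{\mathrm{loc}}(\mathbb{E})$ and $k_n\to k_\infty$, $c_n\to c_\infty$ is exactly the asserted strong convergence. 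The main obstacle is clearly the middle two paragraphs: getting asymptotics of $u_n$ as $r\to+\infty$ that are simultaneously sharp and uniform in $n$, and in particular controlling $k_n,c_n$ and excluding any drift of the interface --- delicate precisely because in this regime the two catenoidal ends genuinely interact through a Toda-type system, a feature with no counterpart in the minimal-surface flux analysis and the source of the threshold $\sqrt{2}$.
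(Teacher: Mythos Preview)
Your overall architecture---local elliptic compactness plus uniform far-field control---is the same as the paper's, but the two substantive steps you flag as ``the main obstacle'' are not carried out, and the paper's route to them is quite different from what you sketch.

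First, the flux argument for bounding $k_n$ is essentially circular. The balancing identity does give, for each fixed $n$,
\[
\mathbf{c}_0\,k_n=\int_0^\infty\Big(\tfrac12\,(\partial_r u_n)^2+W(u_n)\Big)(r,0)\,r\,dr,
\]
but bounding the right-hand side uniformly in $n$ requires a \emph{uniform rate} at which $u_n(r,0)\to -1$; otherwise $\int W(u_n)\,r\,dr$ is uncontrolled. ``Bounded thickness'' of the transition on $\{z=0\}$ is exactly a piece of the uniform asymptotic you are trying to prove, so it cannot be assumed. (Also, the integrability of the interaction correction only needs $k>1/\sqrt2$; the threshold $\sqrt2$ does not arise from this balance.) Second, the barrier argument you invoke to trap $\mathcal{N}_{u_n}$ near $\{z=k_n\ln r+c_n\}$ is asserted but not constructed, and this is the heart of the matter: the error of a trial profile $H(z-f(r))$ in (\ref{axial}) involves both $f''+r^{-1}f'$ and the interaction $e^{-2\sqrt2 f}$ with the reflected end, and arranging a definite sign already presupposes control of where the interface sits. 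Even granting the trapping, your ``sliding pins the translate'' step is vague; one must show the limiting heteroclinic lands at the exact height, not merely within $O(1)$.

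The paper does none of this. It writes $u_n=\bar u+\phi$ in Fermi coordinates around the \emph{actual} nodal curve $z=f_n(r)$, uses the linear theory with an orthogonality condition to prove the decay $|\phi|\le Ce^{-r_1}+Ce^{-D(r_1)}+Cr_1^{-2}$ (Proposition~\ref{P3}), and by projecting onto $H'$ derives an ODE for a corrected curve $p=f_n+\sqrt{1+f_n'^2}\,h$,
\[
\left(\frac{r_1 p'}{\sqrt{1+p'^2}}\right)'=\frac{\mathbf{c}_1}{\mathbf{c}_0}\,r_1\bigl(1+o(1)\bigr)e^{-\sqrt2 D}+O(r_1^{-2}).
\]
Integrating this ODE gives a universal lower bound $rp'(r)>\tfrac{\sqrt2}{2}+\delta$ for large $r$ (whence Theorem~\ref{main2} falls out as a byproduct), and that lower bound makes the right-hand side integrable, which in turn yields uniform bounds on $k_n,c_n$ and the strong convergence. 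So the paper replaces your barrier/blow-up scheme by a direct error estimate plus a reduced ODE---more technical, but it closes the loop without assuming what is to be proved.
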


The rest of this section is devoted to the proof of Proposition \ref{compact}.

By Lemma \ref{monoto}, for each solution $u_{n},$ $\mathcal{N}_{u_{n}}$ will
be the graph of a function
\[
z=f_{n}\left(  r\right)  ,r\in\lbrack\mathtt{t}_{n},+\infty).
\]
Here $\mathtt{t}_{n}$ satisfies $\mathcal{P}_{u_{n}}=\left(  \mathtt{t}%
_{n},f_{n}\left(  \mathtt{t}_{n}\right)  \right)  .$ In particular, if
$\mathcal{P}_{u_{n}}$ is on the $z$ axis, then $\mathtt{t}_{n}=0.$

\begin{lemma}
\label{diverge}Under the assumption of Proposition \ref{compact}, we have
\[
\lim_{r\rightarrow+\infty}f_{n}\left(  r\right)  =+\infty,
\]
uniformly in $n.$
\end{lemma}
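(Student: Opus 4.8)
The plan is to argue by contradiction using the asymptotic profile $\left(\ref{as}\right)$ together with the monotonicity $\left(\ref{m}\right)$. By Lemma \ref{monoto}, for each fixed $n$ the nodal curve $z=f_n(r)$ is monotone increasing in $r$ (this follows since $\partial_z u_n>0$ on the upper half-plane forces the nodal set to be a graph over $r$, and $\partial_r u_n<0$ forces $f_n$ to be increasing), so the limit $\lim_{r\to+\infty} f_n(r)$ exists in $(0,+\infty]$ for each $n$. The asymptotic condition $\left(\ref{as}\right)$ says $u_n(r,\cdot)$ is close to $H(\cdot - k_n\ln r - c_n)$ for large $r$; since $H$ vanishes only at $0$, the nodal height $f_n(r)$ must satisfy $f_n(r)-k_n\ln r - c_n\to 0$ as $r\to+\infty$, whence $f_n(r)\to+\infty$ for every individual $n$. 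The real content of the lemma is the \emph{uniformity} in $n$: we must rule out the possibility that, although each $f_n$ diverges, it does so arbitrarily slowly as $n\to\infty$, i.e. that there is a sequence of radii $R_n\to\infty$ and a constant $A$ with $f_n(R_n)\le A$ for all $n$.

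So suppose for contradiction that this happens. First I would use the uniform bound $|\mathcal{P}_{u_n}|\le C$ to control the ``inner'' part of the nodal curve: since $\mathtt{t}_n\le C$ and $f_n(\mathtt{t}_n)\le C$, the curve starts in a fixed compact set. Next, translate: consider $\tilde u_n(r,z):=u_n(r, z)$ restricted to an increasingly large window, or better, rescale by setting the natural length scale. The key estimate I would establish is a quantitative lower bound on the ``slope'' of the nodal curve coming from the growth rate being bounded below by $\sqrt2$: because $k_n>\sqrt2$, and because along the nodal curve the solution must transition between $+1$ and $-1$ over a region whose geometry is governed by $\left(\ref{as}\right)$, one gets that $f_n$ cannot stay bounded over a long $r$-interval without violating either the PDE or the asymptotics. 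Concretely, I would take the limit $u_\infty$ of the translated/localized sequence (using elliptic estimates and the monotonicity to get compactness in $C^2_{loc}$ on the relevant region), observe that its nodal set would be a graph $z=f_\infty(r)$ that is bounded on $[0,+\infty)$, and then show such a $u_\infty$ cannot exist — either because a bounded-nodal-set axially symmetric solution with $\partial_r u<0$ is forced to be the one-dimensional heteroclinic $H(z)$ up to translation (which has growth rate $0$, contradicting that growth rates are $\ge\sqrt2$ in the limit, or contradicting two-endedness), or by a direct Hamiltonian/Modica-type identity on horizontal slices.

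The cleanest route to the contradiction, which I would try first, is an energy or Pohozaev-type argument on the half-strip $\{r\ge \mathtt t_n\}\cap\{0\le z\le A+1\}$: if $f_n$ stays below $A$ on a long interval $[\mathtt t_n, R_n]$, then on the slab $0\le z\le A+1$ the solution $u_n$ is bounded away from $\pm1$ along the whole nodal curve, and integrating the equation $\left(\ref{axial}\right)$ against a suitable test function (exploiting the $r^{-1}u_r$ term and the sign of $u_r$) yields a bound incompatible with the transition to $H(\cdot - k_n\ln r - c_n)$ as $r\to+\infty$ once $R_n$ is large. The main obstacle I anticipate is precisely making the uniformity rigorous: one must carefully quantify how the asymptotic closeness in $\left(\ref{as}\right)$ degrades as $n\to\infty$ (since it is only assumed for each fixed $n$, not uniformly), so the argument has to be run on a scale that is large but controlled relative to $n$, and the compactness extraction of $u_\infty$ must be done on exactly the region where the monotonicity and the a priori bound on $\mathcal{P}_{u_n}$ give enough control. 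I expect this localization-and-limit step — choosing the right window and showing the limit problem has no solution — to be the technical heart of the proof.
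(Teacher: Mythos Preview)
Your overall strategy---contradiction via $C^2_{loc}$ compactness, producing a limit solution with bounded nodal set---is exactly the paper's. But you miss the clean finishing move and, as a result, overestimate the difficulty.

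The paper does not translate or rescale, and does not use any Pohozaev or Hamiltonian identity. Since $|\mathcal P_{u_n}|\le C$, one takes the $C^2_{loc}$ limit directly and obtains a nontrivial axially symmetric solution $W$ whose nodal set lies in $\{|z|\le C_0\}$ and which inherits $W_r\le 0$ for $r>0$. The key step you did not identify is this: because $W(\cdot,z)$ is monotone in $r$, the pointwise limit $w(z):=\lim_{r\to+\infty}W(r,z)$ exists and is a one-dimensional solution of $-w''=w-w^3$, even in $z$, with $w(z)\to 1$ as $z\to+\infty$. Such a $w$ cannot exist with a nonempty nodal set contained in a bounded interval (there is no even homoclinic to $1$ for this nonlinearity), and that is the contradiction.

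Two remarks on your write-up. First, your suggestion that the limit ``is forced to be the one-dimensional heteroclinic $H(z)$'' is not how the contradiction goes: the two-variable limit $W$ is not itself one-dimensional; one must pass to $r\to\infty$ \emph{inside} $W$ to reduce to the ODE. Second, your worry that quantifying the uniformity in $(\ref{as})$ is the ``technical heart'' is misplaced: the argument never uses $(\ref{as})$ for the subsequence at all, only the global bounds $|u_n|<1$, the monotonicity $(\ref{m})$, and the bound on $\mathcal P_{u_n}$. The energy/Pohozaev route you propose as ``cleanest'' is unnecessary and would in fact be harder to make rigorous here.
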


\begin{proof}
We argue by contradiction. If this was not true, there will exist a constant
$C_{0}$ such that for any $l\in\mathbb{N},$ one could find a solution
$u_{n_{l}}$ satisfying
\[
f_{n_{l}}\left(  r\right)  \leq C_{0},\text{ for }r\in\left(  t_{n_{l}%
},l\right)  .
\]
Since $\left\vert u_{n}\right\vert <1,$ the sequence $\left\{  u_{n_{l}%
}\right\}  $ will converge in $C_{loc}^{2}\left(  \mathbb{E}\right)  $ to a
nontrivial solution $W$ whose nodal set is contained in the strip $\left\vert
z\right\vert \leq C_{0}$. Moreover, by the monotonicity of $u_{n},$ $W_{r}<0$
for $r>0.$ The limit $w\left(  z\right)  :=\lim_{r\rightarrow+\infty}W\left(
r,z\right)  $ then exists and is a solution of the Allen-Cahn equation in
dimension $1:$%
\begin{equation}
-w^{\prime\prime}=w-w^{3},\text{ in }\mathbb{R}. \label{1d}%
\end{equation}
By the symmetric property of $W,$ $w\left(  z\right)  =w\left(  -z\right)  .$
We also have $w\left(  z\right)  \rightarrow1,$ as $z\rightarrow+\infty.$ But
$\left(  \ref{1d}\right)  $ does not have a solution with nodal set containing
only in a bounded set $\{ |z|\leq C_{0} \}$. This is a contradiction and the
proof is finished.
\end{proof}

By the monotonicity of $u_{n},$ $\mathcal{N}_{u_{n}}$ is also the graph of a
function over the $z$ axis:
\[
\mathcal{N}_{u_{n}}=\left\{  \left(  r,z\right)  :r=g_{n}\left(  z\right)
\right\}  .
\]
Similar arguments with slight modification as that of $\left(  \ref{diverge}%
\right)  $ imply that
\[
\lim_{z\rightarrow+\infty}g_{n}\left(  z\right)  \rightarrow+\infty,\text{ }%
\]
also uniformly in $n.$

To obtain more information on the functions $f_{n},$ we should use the
\textit{balancing formula} (\cite{dkp-2009}). Let $X=\left(  0,0,1\right)  $
be the constant vector field on $\mathbb{R}^{3}.$ If $u=u\left(  x,y,z\right)
$ is a solution to the Allen-Cahn equation, then one could check that
\[
\operatorname{div}\left\{  \left(  \frac{1}{2}\left\vert \nabla u\right\vert
^{2}+F\left(  u\right)  \right)  X-\left(  \nabla u\cdot X\right)  \nabla
u\right\}  =0.
\]
Here $F\left(  u\right)  =\frac{1}{4}\left(  u^{2}-1\right)  ^{2}.$ Therefore
for each regular domain $\Omega,$ we have the following balancing formula:
\begin{equation}
\int_{\partial\Omega}\left\{  \left(  \frac{1}{2}\left\vert \nabla
u\right\vert ^{2}+F\left(  u\right)  \right)  X-\left(  \nabla u\cdot
X\right)  \nabla u\right\}  \cdot vds=0. \label{b}%
\end{equation}
Here $v$ is the outward unit normal vector of the boundary $\partial\Omega.$

We would like to use $\left(  \ref{b}\right)  $ to control the slope of the
function $f_{n}.$

\begin{lemma}
\label{slope}For each $\varepsilon>0,$ there exists $t_{\varepsilon}>0$ such
that for all $n\in\mathbb{N},$
\[
f_{n}^{\prime}\left(  r_{2}\right)  -f_{n}^{\prime}\left(  r_{1}\right)
<\varepsilon\text{ for }t_{\varepsilon}<r_{1}<r_{2}.
\]

\end{lemma}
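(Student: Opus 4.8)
The plan is to apply the balancing formula $\left(\ref{b}\right)$ to a well-chosen family of domains and extract from it an almost-monotonicity statement for the slopes $f_n'$. Fix $r_1<r_2$ and let $\Omega_{r_1,r_2}$ be the solid region obtained by revolving the planar rectangle $[r_1,r_2]\times[-L,L]$ (in the $(r,z)$ half-plane) about the $z$-axis, with $L$ eventually sent to $+\infty$. The boundary of $\Omega_{r_1,r_2}$ consists of two horizontal disks $\{z=\pm L\}$, which contribute nothing in the limit $L\to+\infty$ because $u_n\to\pm1$ exponentially there and $F(u_n)+\tfrac12|\nabla u_n|^2\to0$, and two coaxial cylinders $\{r=r_1\}$ and $\{r=r_2\}$ with outward normals $\mp e_r$ respectively. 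On each cylinder $\{r=\rho\}$ the integrand in $\left(\ref{b}\right)$, dotted with $e_r$, is $-(\nabla u_n\cdot X)(\nabla u_n\cdot e_r)=-\partial_z u_n\,\partial_r u_n$, and integrating over the cylinder (using the axial symmetry, so the angular integration produces a factor $2\pi\rho$) gives a quantity I will call
\[
\Phi_n(\rho):=2\pi\rho\int_{\mathbb{R}}\partial_z u_n(\rho,z)\,\partial_r u_n(\rho,z)\,dz .
\]
The balancing identity then reads $\Phi_n(r_1)=\Phi_n(r_2)$, i.e. $\Phi_n$ is constant in $\rho$; this is the rigorous substitute for "the vertical flux is conserved."

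Next I would evaluate $\Phi_n(\rho)$ asymptotically as $\rho\to+\infty$ using the defining asymptotics $\left(\ref{as}\right)$, namely $u_n(\rho,z)\approx H(z-k_n\ln\rho-c_n)$. Differentiating, $\partial_z u_n\approx H'(\xi)$ and $\partial_r u_n\approx -\tfrac{k_n}{\rho}H'(\xi)$ with $\xi=z-k_n\ln\rho-c_n$, so that
\[
\Phi_n(\rho)\;\longrightarrow\; -2\pi k_n\int_{\mathbb{R}}\bigl(H'(\xi)\bigr)^2\,d\xi \;=\; -2\pi k_n\,\sigma_0,
\qquad \sigma_0:=\int_{\mathbb{R}}(H')^2 .
\]
Since $\Phi_n$ is constant, we get the exact identity $\Phi_n(\rho)\equiv -2\pi k_n\sigma_0$ for every $\rho$. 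The point is now to relate $\Phi_n(\rho)$ to the geometry of the nodal curve $z=f_n(r)$. Near the nodal point at height $\rho$, $u_n$ looks like a one-dimensional heteroclinic in the direction normal to the graph, translated so that its zero sits on the curve; writing this profile and computing $\int \partial_z u_n\,\partial_r u_n\,dz$ along the slice $\{r=\rho\}$, the dominant contribution is governed by the normal direction to the graph, and a short computation gives $\Phi_n(\rho)=-2\pi\rho\,\sigma_0\cdot\frac{f_n'(\rho)}{1+(f_n'(\rho))^2}+o(\rho)\cdot(\text{error})$ — more precisely, the leading behavior forces $\rho\,\frac{f_n'(\rho)}{\sqrt{1+(f_n'(\rho))^2}}$ (or simply $\rho f_n'(\rho)$, in the regime where one can also show $f_n'\to 0$, which follows from Lemma \ref{diverge} combined with $g_n(z)\to\infty$) to converge, uniformly in $n$, to the constant $k_n$. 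Because $\{k_n\}$ ranges in a set that we may assume bounded (the relevant sequences have $k_n\to k_\infty$), the quantities $r f_n'(r)$ are uniformly close to bounded constants for $r$ large, and in particular $f_n'(r)\to0$ uniformly in $n$ as $r\to+\infty$.

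From here the conclusion is immediate: given $\varepsilon>0$, choose $t_\varepsilon$ so large that $|f_n'(r)|<\varepsilon/2$ for all $r\ge t_\varepsilon$ and all $n$; then for $t_\varepsilon<r_1<r_2$ we have $f_n'(r_2)-f_n'(r_1)<\varepsilon$ as claimed. The main obstacle is the second step — making rigorous the passage from the conserved flux $\Phi_n$ to a pointwise statement about $f_n'(\rho)$, which requires controlling $u_n$ on the whole slice $\{r=\rho\}$ (not merely its leading-order profile) well enough that the error terms in the integral defining $\Phi_n$ are genuinely uniform in $n$. This is exactly where the "detailed asymptotic analysis" advertised in the introduction enters: one needs uniform-in-$n$ exponential decay estimates for $u_n(\rho,z)\mp1$ away from the nodal curve and a uniform description of $u_n$ near the curve in terms of the heteroclinic profile plus a small, controlled perturbation, both of which should follow from $\left(\ref{as}\right)$ together with elliptic estimates and the monotonicity from Lemma \ref{monoto}, but must be established carefully.
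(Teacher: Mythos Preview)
Your approach contains a genuine gap. The conserved-flux identity $\Phi_n(\rho)\equiv -2\pi k_n\sigma_0$ is correct for each fixed $n$, but you then write ``Because $\{k_n\}$ ranges in a set that we may assume bounded (the relevant sequences have $k_n\to k_\infty$)'' and deduce $f_n'(r)\to 0$ uniformly. This is circular: boundedness of $k_n$ is part of what Proposition~\ref{compact} is meant to establish, and at this stage of the argument nothing prevents $k_n\to+\infty$. Worse, if $k_n\to+\infty$ your approximation $\Phi_n(\rho)\approx -2\sigma_0\rho f_n'(\rho)/\sqrt{1+f_n'(\rho)^2}$ \emph{cannot} hold uniformly in $n$ for $\rho$ in any fixed range: the left side is $-2\sigma_0 k_n\to-\infty$ while the right side is bounded in absolute value by $2\sigma_0\rho$. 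So the very step you flag as ``the main obstacle'' genuinely fails in the unbounded-$k_n$ scenario, and your argument cannot rule that scenario out.

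The paper avoids this by never sending $\rho\to\infty$. It applies the balancing formula on a bounded region $\Omega$ cut by two lines $L_1,L_2$ \emph{normal} to the nodal curve at $(r_i,f_n(r_i))$, together with $\{z=0\}$. The flux through each $L_i$ is $\sim c_1\,r_i f_n'(r_i)/\sqrt{1+f_n'(r_i)^2}$ (since along the normal line $u_n$ is close to $H$, a purely local fact obtainable by blow-up and the $2$D De~Giorgi result), while the $\{z=0\}$ contribution is $o(1)\cdot r_1$ by Lemma~\ref{diverge}. This yields a comparison between the two \emph{finite} values $r_i f_n'(r_i)/\sqrt{1+f_n'(r_i)^2}$ that makes no reference to $k_n$. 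A two-step contradiction argument (first showing $f_n'$ dips below any $\delta$ on every sufficiently long interval, then showing it cannot subsequently rise by $\varepsilon$) gives the lemma. Note also that you are actually attempting Lemma~\ref{small} ($f_n'\to 0$ uniformly) rather than the weaker Lemma~\ref{slope}; in the paper the former is \emph{derived from} the latter.
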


\begin{proof}
We first show that for each $\delta>0$ and $\bar{r}>0,$ there exists $r^{\ast
}>\bar{r},$ such that
\[
f_{n}^{\prime}\left(  r_{n}\right)  <\delta,\text{ }n\in\mathbb{N},
\]
for some $r_{n}\in\left(  \bar{r},r^{\ast}\right)  $. Indeed, if this was not
true, then for any $\hat{r}>\bar{r},$ there exists $n,$ such that
\begin{equation}
f_{n}^{\prime}\left(  r\right)  \geq\delta,\text{ }r\in\left(  \bar{r},\hat
{r}\right)  . \label{s1}%
\end{equation}
Let $\bar{r}<r_{1}<r_{2}<\hat{r}.$ Consider the region $\Omega\subset
\mathbb{R}^{3}$ given by
\[
\left\{  \left(  x,y,z\right)  :z>0,L_{1}\left(  r\right)  <z<L_{2}\left(
r\right)  \right\}  ,
\]
where
\begin{align*}
L_{1}\left(  r\right)   &  =f_{n}\left(  r_{1}\right)  -\frac{1}{f_{n}%
^{\prime}\left(  r_{1}\right)  }\left(  r-r_{1}\right)  ,\\
L_{2}\left(  r\right)   &  =f_{n}\left(  r_{2}\right)  -\frac{1}{f_{n}%
^{\prime}\left(  r_{2}\right)  }\left(  r-r_{2}\right)  .
\end{align*}
By the balancing formula,%
\[
\int_{\partial\Omega}\left\{  \left(  \frac{1}{2}\left\vert \nabla
u\right\vert ^{2}+F\left(  u\right)  \right)  X-\left(  \nabla u\cdot
X\right)  \nabla u\right\}  \cdot vds=0.
\]
Using $\left(  \ref{s1}\right)  $ and the fact that $u$ exponentially decays to
$1$ away from the interface, we deduce that as $r_{1}\rightarrow+\infty,$
\[
\int_{\partial\Omega\cap\left\{  z=0\right\}  }\left\{  \left(  \frac{1}%
{2}\left\vert \nabla u\right\vert ^{2}+F\left(  u\right)  \right)  X-\left(
\nabla u\cdot X\right)  \nabla u\right\}  \cdot vds\rightarrow0.
\]
On the other hand,
\begin{align*}
&  \int_{\partial\Omega\cap\left\{  z=L_{1}\left(  r\right)  \right\}
}\left\{  \left(  \frac{1}{2}\left\vert \nabla u\right\vert ^{2}+F\left(
u\right)  \right)  X-\left(  \nabla u\cdot X\right)  \nabla u\right\}  \cdot
vds\\
&  \sim-c_{1}\frac{r_{1}f_{n}^{\prime}\left(  r_{1}\right)  }{\sqrt
{1+f_{n}^{\prime}\left(  r_{1}\right)  ^{2}}},
\end{align*}
where $c_{1}$ is a fixed constant, and
\begin{align*}
&  \int_{\partial\Omega\cap\left\{  z=L_{2}\left(  r\right)  \right\}
}\left\{  \left(  \frac{1}{2}\left\vert \nabla u\right\vert ^{2}+F\left(
u\right)  \right)  X-\left(  \nabla u\cdot X\right)  \nabla u\right\}  \cdot
vds\\
&  \sim c_{1}\frac{r_{2}f_{n}^{\prime}\left(  r_{2}\right)  }{\sqrt
{1+f_{n}^{\prime}\left(  r_{2}\right)  ^{2}}}.
\end{align*}
Combining these estimates, we get a contradiction if $r_{2}$ is large enough.

Fix an $\varepsilon>0,$ if the conclusion of the lemma was not true, then for
any $l>0,$ one could find $l<r_{1}<r_{2}$ such that
\begin{equation}
f_{n}^{\prime}\left(  r_{2}\right)  -f_{n}^{\prime}\left(  r_{1}\right)
=\varepsilon\label{difference}%
\end{equation}
and
\[
\frac{1}{2}\varepsilon<\left\vert f_{n}^{\prime}\left(  r\right)  \right\vert
<\frac{3}{2}\varepsilon,r\in\left(  r_{1},r_{2}\right)  .
\]
In this case, similarly as above, we could estimate
\begin{align*}
&  \int_{\partial\Omega\cap\left\{  z=0\right\}  }\left\{  \left(  \frac{1}%
{2}\left\vert \nabla u\right\vert ^{2}+F\left(  u\right)  \right)  X-\left(
\nabla u\cdot X\right)  \nabla u\right\}  \cdot vds\\
&  =o\left(  1\right)  r_{1}.
\end{align*}
Therefore the balancing formula tells us that
\[
f_{n}^{\prime}\left(  r_{2}\right)  r_{2}-f_{n}^{\prime}\left(  r_{1}\right)
r_{1}=o\left(  1\right)  r_{1}.
\]
This contradicts with $\left(  \ref{difference}\right)  .$
\end{proof}

Lemma \ref{slope} tells us that the slope of the nodal line could not increase
much as $r$ increases. This in particular implies the following

\begin{lemma}
\label{small}Under the assumption of Proposition \ref{compact}, we have
\[
\lim_{r\rightarrow+\infty}f_{n}^{\prime}\left(  r\right)  =0,
\]
uniformly in $n.$
\end{lemma}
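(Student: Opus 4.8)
The plan is to combine Lemma \ref{slope} with Lemma \ref{diverge} to squeeze $f_n'(r)$ to zero at infinity, uniformly in $n$. First I would fix $\varepsilon>0$ and invoke Lemma \ref{slope} to obtain $t_\varepsilon>0$ so that $f_n'(r_2)-f_n'(r_1)<\varepsilon$ for all $t_\varepsilon<r_1<r_2$ and all $n$. Since $u_n$ satisfies the monotonicity $(\ref{m})$, the nodal curve $z=f_n(r)$ is increasing, so $f_n'\ge 0$; hence $f_n'$ is, up to the error $\varepsilon$, nondecreasing past $t_\varepsilon$, and in particular for $r>t_\varepsilon$ we have $f_n'(r)\ge f_n'(t_\varepsilon)-\varepsilon\ge -\varepsilon$, while the one-sided bound I really need is an \emph{upper} bound: if $f_n'$ were bounded below by some $\delta>0$ on an interval of length growing with $r$, then $f_n(r)$ would grow linearly in $r$. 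That alone is consistent with $f_n\to+\infty$, so the linear growth has to be ruled out by the balancing argument already carried out inside the proof of Lemma \ref{slope}: there it was shown that for any $\delta>0$ and any $\bar r$, there is some $r_n>\bar r$ with $f_n'(r_n)<\delta$.

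So the structure of the argument is: given $\varepsilon>0$, apply the first step of the proof of Lemma \ref{slope} with $\delta=\varepsilon/2$ and $\bar r=t_{\varepsilon/2}$ to find, for each $n$, a point $r_n>t_{\varepsilon/2}$ with $f_n'(r_n)<\varepsilon/2$; one checks from that argument that $r_n$ can be taken below a threshold $r^\ast$ independent of $n$. Then for any $r>r^\ast$, Lemma \ref{slope} (applied on $[r_n,r]$, both endpoints exceeding $t_{\varepsilon/2}$) gives
\[
f_n'(r) = f_n'(r_n) + \bigl(f_n'(r)-f_n'(r_n)\bigr) < \frac{\varepsilon}{2}+\frac{\varepsilon}{2} = \varepsilon .
\]
Combined with $f_n'(r)\ge 0$ from the monotonicity of $f_n$, this shows $0\le f_n'(r)<\varepsilon$ for all $r>r^\ast$ and all $n$, which is exactly uniform convergence $f_n'(r)\to 0$ as $r\to+\infty$. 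I would also note that Lemma \ref{diverge} guarantees $f_n$ is genuinely defined and finite for all large $r$, so these statements are not vacuous.

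The main obstacle is the extraction of the uniform threshold $r^\ast$ in the "there exists $r_n$ with small slope" step: one must check that the contradiction argument in the proof of Lemma \ref{slope} actually produces, for each $n$, such a point below a \emph{single} $r^\ast=r^\ast(\delta,\bar r)$ valid for all $n$ simultaneously, rather than an $n$-dependent one. This is where one has to be careful: the balancing-formula estimate used there must be made quantitative and uniform, using that $u_n$ decays exponentially to $\pm1$ away from the interface with constants independent of $n$ (which in turn follows from the uniform asymptotics $(\ref{as})$ and standard elliptic estimates). Once that uniformity is in hand, the rest is the elementary squeezing above.
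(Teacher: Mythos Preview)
Your proof is correct and follows essentially the same approach as the paper: the paper argues by contradiction, supposing $f_n'(t_0)>\delta$ for arbitrarily large $t_0$ and some $n$, then locates an interval $(\bar t_1,\bar t_2)$ on which $f_n'$ rises from $\delta/2$ to $\delta$, contradicting Lemma~\ref{slope}. Your direct argument---locate a point of small slope via the first step in the proof of Lemma~\ref{slope}, then propagate the upper bound forward using Lemma~\ref{slope} itself, with the lower bound supplied by $f_n'\ge 0$---is a cleaner packaging of the same two ingredients, and your explicit attention to the uniformity of $r^\ast$ in $n$ is well placed.
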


\begin{proof}
We argue by contradiction and assume that there exists $\delta>0$ such that
for each $C_{0}>0,$ one could find $t_{0}>C_{0}$ and $n$ satisfying
\[
f_{n}^{\prime}\left(  t_{0}\right)  >\delta.
\]
Then one could find $\bar{t}_{1},\bar{t}_{2},$ such that
\begin{align*}
f_{n}^{\prime}\left(  r\right)   &  \in\left(  \frac{\delta}{2},\delta\right)
,r\in\left(  \bar{t}_{1},\bar{t}_{2}\right)  ,\\
f_{n}^{\prime}\left(  \bar{t}_{1}\right)   &  =\frac{\delta}{2},f_{n}^{\prime
}\left(  \bar{t}_{2}\right)  =\delta.
\end{align*}
This contradicts with Lemma \ref{slope}.
\end{proof}

Since $\left\vert u_{n}\right\vert <1,$ one could assume without loss of
generality that $u_{n}\rightarrow u_{\infty}$ in $C_{loc}^{2}\left(
\mathbb{E}\right)  $ for a solution $u_{\infty}.$ Note that $u_{\infty}$ could
not be identically $0.$ By the monotonicity property of $u_{n},$ $u_{\infty}$
is also monotone, that is, $u_{\infty}$ satisfies $\left(  \ref{monotone}%
\right)  $. We need to prove $u_{n}\rightarrow u_{\infty}$ in the strong
sense, which in particular implies that $u_{\infty}$ is a solution to $\left(
\ref{axial}\right)  $ whose nodal line is asymptotic to a log curve at
infinity and therefore a two-end solution.

The previous lemmas yield certain information on the nodal curve
$\mathcal{N}_{u_{n}},$ but we still don't have precise description of their
shape. On the other hand, we know that in the region where $r$ is large,
locally around the nodal line, the solution resembles the heteroclinic
solution. Our main idea to prove the strong convergence of $\left\{
u_{n}\right\}  $ is to get a precise decay estimate of the solution $u_{n}$ to
$H$ along their nodal lines. For notational simplicity, sometimes we will drop
the subscript $n$ for the function $u_{n},f_{n}.$

To proceed, let us introduce the \textit{Fermi coordinate} around the smooth
curve $z=f\left(  \cdot\right)  .$ This coordinate, denoted by $\left(
r_{1},z_{1}\right)  ,$ is defined through the relation%
\[
\left(  r,z\right)  =\left(  r_{1},f\left(  r_{1}\right)  \right)
+z_{1}\mathtt{n,}%
\]
where $\mathtt{n}$ is the unit normal vector at the point $\left(
r_{1},f\left(  r_{1}\right)  \right)  $ of the curve $z=f\left(  \cdot\right)
,$ upward pointed. Explicitly,
\begin{equation}
\left\{
\begin{array}
[c]{c}%
r=r_{1}-z_{1}\frac{f^{\prime}}{\sqrt{1+f^{\prime2}}},\\
z=f+z_{1}\frac{1}{\sqrt{1+f^{\prime2}}}.
\end{array}
\right.  \label{Fermi2}%
\end{equation}
Here the function $f$ is evaluated at the point $r_{1}.$ The map $X:\left(
r_{1},z_{1}\right)  \rightarrow\left(  r,z\right)  $ from the Fermi coordinate
to the original coordinate system will be a diffeomorphism in certain tubular
neighborhood of the graph of $f.$

From $\left(  \ref{Fermi2}\right)  $, we get%
\begin{align}
\left(
\begin{array}
[c]{cc}%
\partial_{r}r_{1} & \partial_{z}r_{1}\\
\partial_{r}z_{1} & \partial_{z}z_{1}%
\end{array}
\right)   &  =\left(
\begin{array}
[c]{cc}%
\partial_{r_{1}}r & \partial_{z_{1}}r\\
\partial_{r_{1}}z & \partial_{z_{1}}z
\end{array}
\right)  ^{-1}\nonumber\\
&  =\left(
\begin{array}
[c]{cc}%
B & -\frac{f^{\prime}}{\sqrt{1+f^{\prime2}}}\\
f^{\prime}B & \frac{1}{\sqrt{1+f^{\prime2}}}%
\end{array}
\right)  ^{-1}\nonumber\\
&  =\left(
\begin{array}
[c]{cc}%
\frac{1}{\left(  1+f^{\prime2}\right)  B} & \frac{f^{\prime}}{\left(
1+f^{\prime2}\right)  B}\\
-\frac{f^{\prime}}{\sqrt{1+f^{\prime2}}} & \frac{1}{\sqrt{1+f^{\prime2}}}%
\end{array}
\right)  . \label{derivative}%
\end{align}
Here
\[
B=1-\frac{z_{1}f^{\prime\prime}}{\left(  1+f^{\prime2}\right)  ^{\frac{3}{2}}%
}.
\]
Let us denote by $\Delta_{\left(  r,z\right)  }=\partial_{r}^{2}+\partial
_{z}^{2}$ and $\Delta_{\left(  r_{1},z_{1}\right)  }=\partial_{r_{1}}%
^{2}+\partial_{z_{1}}^{2}.$ These two operators are related by%
\begin{align}
\Delta_{\left(  r,z\right)  }  &  =\Delta_{\left(  r_{1},z_{1}\right)
}+\left(  \frac{1}{A}-1\right)  \partial_{r_{1}}^{2}\nonumber\\
&  +\frac{1}{2}\frac{\partial_{z_{1}}A}{A}\partial_{z_{1}}-\frac{1}{2}%
\frac{\partial_{r_{1}}A}{A^{2}}\partial_{r_{1}}, \label{Laplacian}%
\end{align}
where
\begin{equation}
A=1+f^{\prime2}-2z_{1}\frac{f^{\prime\prime}}{\sqrt{1+f^{\prime2}}}+z_{1}%
^{2}\frac{f^{\prime\prime2}}{\left(  1+f^{\prime2}\right)  ^{2}}. \label{a}%
\end{equation}
This follows from direct computation, see \cite{MR2557944} for more details.
From formula $\left(  \ref{Laplacian}\right)  $, one sees that if $f^{\prime
},f^{\prime\prime},f^{\left(  3\right)  }$ is small, then $\Delta_{\left(
r,z\right)  }$ is a small perturbation of $\Delta_{\left(  r_{1},z_{1}\right)
}.$ One should keep in mind that in $\left(  \ref{Laplacian}\right)  ,$ there
are terms (in $\partial_{r_{1}}A$) involving the third derivatives of $f$ and
these terms should be dealt with very carefully.

For any function $\Theta\left(  r,z\right)  $, the pullback of $\Theta$ by the
diffeomorphism $X$ will be defined as
\[
X^{\ast}\Theta\left(  r_{1},z_{1}\right)  :=\Theta\circ X\left(  r_{1}%
,z_{1}\right)  .
\]
Occasionally, $X^{\ast}\Theta$ will also be denoted by $\Theta^{\ast}.$ Keep
in mind that $X^{\ast}\Theta$ is only defined in the region where the Fermi
coordinate is well defined. Conversely, for any function $\Theta\left(
r_{1},z_{1}\right)  $, we set $\left(  X^{-1}\right)  ^{\ast}\Theta\left(
r,z\right)  =\Theta\circ X^{-1}\left(  r,z\right)  .$

To prove the compactness, we need to know the precise asymptotic behavior of
$u_{n}.$ We shall define suitable approximate solutions and analyze the error
between the approximate and true solutions. It turns out that to get a good
approximate solution, it will be convenient to use the Fermi coordinate.
Clearly there is a technical issue concerning the Fermi coordinate. Namely,
the Fermi coordiante is in general not well defined in the whole $r$-$z$
plane. Note that for $r_{1}$ large, by Lemma \ref{small}, $f^{\prime}$ is
small. We also know that $u$ is close to the one dimensional heteroclinic
solution locally around the nodal line at far away. Hence intuitively, for $r$
large, $f^{\prime\prime}$,$f^{\left(  3\right)  },...,$ should also be small.
Indeed, we have

\begin{lemma}
As $r$ tends to infinity, $\left\vert f^{\prime\prime}\left(  r\right)
\right\vert , \left\vert f^{(3)}\left(  r\right)  \right\vert , \left\vert
f^{(4)}\left(  r\right)  \right\vert $ tend to zero.
\end{lemma}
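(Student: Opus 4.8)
The plan is to localize near a point $\left(r_{0},f\left(r_{0}\right)\right)$ of the nodal curve with $r_{0}$ large, and to show that on a ball of fixed radius centered there $u$ is $C^{4}$-close — with an error tending to $0$ as $r_{0}\rightarrow+\infty$ — to the one-dimensional model $v\left(r,z\right):=H\left(z-k\ln r_{0}-c\right)$, whose nodal set in that ball is the horizontal segment $\left\{z=k\ln r_{0}+c\right\}$. A horizontal line has all of its derivatives equal to $0$, so once this closeness is established, expressing $f^{\prime\prime},f^{\left(3\right)},f^{\left(4\right)}$ at $r_{0}$ through the implicit function theorem and comparing with the model bounds them by the $C^{4}$ error. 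Two preliminary facts will be used. First, since $\left\vert u\right\vert<1$ and $u$ solves $\Delta u=u^{3}-u$ in $\mathbb{R}^{3}$, interior $L^{p}$ and Schauder estimates give $\left\Vert u\right\Vert_{C^{k,\alpha}\left(\mathbb{R}^{3}\right)}\leq C_{k}$ for every $k$, hence a bound on $u$ and all its derivatives that is uniform for $r\geq1$ (in particular $f$ is smooth for $r$ large, $\partial_{z}u$ being positive there by Lemma \ref{monoto}). Second, since $u\left(r,\cdot\right)\rightarrow H\left(\cdot-k\ln r-c\right)$ uniformly by $\left(\ref{asy}\right)$ and $H$ is a homeomorphism of $\mathbb{R}$ onto $\left(-1,1\right)$ vanishing only at $0$, evaluating at the nodal point gives $f\left(r\right)-k\ln r-c\rightarrow0$.

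Fix $\rho>0$ and set $B=B_{\rho}\left(r_{0},f\left(r_{0}\right)\right)$ in the $\left(r,z\right)$-plane. Because $v$ satisfies $v_{zz}+v-v^{3}=0$ and $v_{r}\equiv0$, the difference $w:=u-v$ obeys on $B$
\[
\Delta_{\left(r,z\right)}w+\bigl(1-u^{2}-uv-v^{2}\bigr)w=-r^{-1}u_{r}.
\]
On $B$ one has $\left\Vert w\right\Vert_{L^{\infty}}\leq\delta_{0}\left(r_{0}\right)\rightarrow0$ — this is $\left(\ref{asy}\right)$ together with the fact that $k\ln r$ oscillates by only $O\left(\rho/r_{0}\right)$ over $B$ — and $\left\vert r^{-1}u_{r}\right\vert\leq C/r_{0}$. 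The zeroth-order coefficient and all of its derivatives are bounded uniformly in $r_{0}$ and the leading operator is the flat Laplacian, so I would run a bootstrap of interior Schauder estimates on shrinking subballs: differentiating the equation for $w$ a bounded number of times, the right-hand side remains of size $O\bigl(\delta_{0}\left(r_{0}\right)+r_{0}^{-1}\bigr)$ in Hölder norm, the new terms being either $r^{-m}\partial^{\beta}u$ ($m\geq1$, $\beta$ of bounded order), controlled by the a priori bounds, or bounded multiples of lower-order, already-small derivatives of $w$. This yields
\[
\left\Vert u-v\right\Vert_{C^{4}\left(B_{\rho/16}\left(r_{0},f\left(r_{0}\right)\right)\right)}\leq C\bigl(\delta_{0}\left(r_{0}\right)+r_{0}^{-1}\bigr)\longrightarrow0\quad\text{as }r_{0}\rightarrow+\infty.
\]

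Finally, $\partial_{z}v=H^{\prime}\left(\cdot-k\ln r_{0}-c\right)$ equals $H^{\prime}\left(0\right)=1/\sqrt{2}$ on the nodal line of $v$; since $f\left(r_{0}\right)-k\ln r_{0}-c\rightarrow0$, the $C^{1}$-closeness above forces $\partial_{z}u\geq c_{0}>0$ on $\mathcal{N}_{u}\cap B_{\rho/32}\left(r_{0},f\left(r_{0}\right)\right)$ once $r_{0}$ is large, so by Lemma \ref{monoto} this portion of $\mathcal{N}_{u}$ is precisely the graph $z=f\left(r\right)$. Writing $f^{\left(j\right)}\left(r_{0}\right)$ for $j=2,3,4$ through the standard rational expressions in $\partial^{\alpha}u\left(r_{0},f\left(r_{0}\right)\right)$, $\left\vert\alpha\right\vert\leq j$, with denominator a power of $\partial_{z}u$, and comparing with the same expressions for $v$ (which all vanish), the $C^{4}$ estimate gives $\left\vert f^{\left(j\right)}\left(r_{0}\right)\right\vert\leq C\bigl(\delta_{0}\left(r_{0}\right)+r_{0}^{-1}\bigr)\rightarrow0$. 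The step I expect to be the main obstacle is the Schauder bootstrap: one must check that the iteration constants do not depend on $r_{0}$ — which holds because the operator is $\Delta$ plus a zeroth-order term with $r_{0}$-uniform Hölder bounds — and that every differentiation of $-r^{-1}u_{r}$ creates only $O\left(r_{0}^{-1}\right)$ contributions; the passage from $C^{4}$-closeness of $u$ and $v$ to smallness of $f^{\prime\prime},f^{\left(3\right)},f^{\left(4\right)}$ is then routine, since $\partial_{z}u$ remains bounded away from $0$ near the nodal line.
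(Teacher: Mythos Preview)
Your proof is correct and takes essentially the same approach as the paper: localize near $(r_{0},f(r_{0}))$, show that $u$ is $C^{k}$-close on a fixed ball to a one-dimensional heteroclinic model, and then differentiate the nodal-line identity to bound $f'',f^{(3)},f^{(4)}$. The only cosmetic difference is that the paper aligns its model $H(t)$ with the \emph{tangent line} to $\mathcal{N}_{u}$ at $(r_{0},f(r_{0}))$ rather than with the horizontal asymptote $z=k\ln r_{0}+c$, and differentiates the resulting identity~$(\ref{nodal})$ by hand instead of invoking the abstract implicit-function expressions; since $f'(r_{0})\to 0$ by Lemma~\ref{small}, the two models agree to leading order and either choice works.
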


\begin{proof}
Let $r_{0}$ be a fixed large constant. Around the point $\left(
r_{0},f\left(  r_{0}\right)  \right)  ,$ let us consider the line $l_{1}$
\[
z-f\left(  r_{0}\right)  =f^{\prime}\left(  r_{0}\right)  \left(
r-r_{0}\right)  .
\]
Let $l_{2}$ be the line orthogonal to it and passing through $\left(
r_{0},f\left(  r_{0}\right)  \right)  .$ Use these two lines to build an
orthogonal coordinate system, denoted by $\left(  s,t\right)  ,$ where the $s$
coordinate corresponding to line $l_{1}.$ We know that $u$ is close to
$H\left(  t\right)  .$ By analyzing the equation satisfied by the error
$\phi\left(  r,z\right)  :=u\left(  r,z\right)  -H\left(  t\right)  ,$ we find
that for $r_{0}$ large, around $\left(  r_{0},f\left(  r_{0}\right)  \right)
,$ the $C^{2}$ norm of $\phi$ is small.

Observe that $u\left(  r,f\left(  r\right)  \right)  =0.$ Hence using the
definition of $\phi,$
\begin{equation}
H\left(  \frac{f\left(  r\right)  -\left[  f\left(  r_{0}\right)  +f^{\prime
}\left(  r_{0}\right)  \left(  r-r_{0}\right)  \right]  }{\sqrt{1+\left(
f^{\prime}\left(  r_{0}\right)  \right)  ^{2}}}\right)  +\phi\left(
r,f\left(  r\right)  \right)  =0. \label{nodal}%
\end{equation}
Differentiate this equation with respect to $r,$ using the fact that
$\partial_{z}\phi$ is small, we get $\left\vert f^{\prime}\left(  r\right)
-f^{\prime}\left(  r_{0}\right)  \right\vert $ is small, which we already
know. Differentiate $\left(  \ref{nodal}\right)  $ twice, we obtain
\[
H^{\prime}\left(  \cdot\right)  f^{\prime\prime}\left(  r\right)
+H^{\prime\prime}\left(  \cdot\right)  \left(  f^{\prime}\left(  r\right)
-f^{\prime}\left(  r_{0}\right)  \right)  ^{2}+\partial_{r}^{2}\phi
+2\partial_{r}\partial_{z}\phi f^{\prime}+\partial_{z}\phi f^{\prime\prime
}=0.
\]
Using the fact that $\partial_{z}\phi$ is small, we deduce $f^{\prime\prime
}\left(  r_{0}\right)  $ is small.

Indeed, one could continue this process and show that the higher order
derivatives $f^{\left(  3\right)  },f^{\left(  4\right)  }$ are also small.
\end{proof}

Since $\left\vert f^{\prime\prime}\right\vert $ is small, the Fermi coordinate
actually will be well defined in a very large tubular neighborhood of
$\mathcal{N}_{u}.$ For later purposes, we need to be more precise in
describing the size of the region where Fermi coordinate is well defined.

For each point $Z=\left(  r,z\right)  \in\mathbb{E},$ we use $dist\left(
Z,\mathcal{N}_{u}\right)  $ to denote the distance between $Z$ and the curve
$\mathcal{N}_{u}.$ That is,
\[
dist\left(  Z,\mathcal{N}_{u}\right)  =\min\left\{  \left\vert Z-p\right\vert
,p\in\mathcal{N}_{u}\right\}  .
\]
Let $\pi\left(  Z\right)  $ be the set of points which realize this distance.
Hence
\[
\pi\left(  Z\right)  =\left\{  p\in\mathcal{N}_{u}:\left\vert Z-p\right\vert
=dist\left(  Z,\mathcal{N}_{u}\right)  \right\}  .
\]
For each $r_{1},$ let us consider the set
\[
D_{r_{1}}:=\left\{  Z:\pi\left(  Z\right)  =\left\{  \left(  r_{1},f\left(
r_{1}\right)  \right)  \right\}  \right\}  .
\]
Note that by the triangle inequality, $D_{r_{1}}$ is connected. Then
\[
D_{r_{1}}=\left\{  X\left(  r_{1},t\right)  :t\in\left(  -d_{1}\left(
r_{1}\right)  ,d_{2}\left(  r_{1}\right)  \right)  \right\}  ,
\]
for some functions $d_{1}$ and $d_{2}$($d_{i}$ may not be differentiable).

Define
\[
\bar{d}\left(  r_{1}\right)  =\min\left\{  d_{1}\left(  r_{1}\right)
,d_{2}\left(  r_{1}\right)  ,3f\left(  r_{1}\right)  \right\}
\]
and
\[
\bar{D}_{r_{1}}=\left\{  X\left(  r_{1},t\right)  :t\in\left(  -\bar{d}\left(
r_{1}\right)  ,\bar{d}\left(  r_{1}\right)  \right)  \right\}  .
\]
Here the constant $3$ has not particular importance. Fix a large constant
$r_{0}$. Let $\mathcal{B}_{u}:=$ $\mathcal{\cup}_{r_{1}>r_{0}}\bar{D}_{r_{1}%
}.$ It is worth to be pointed out that in principle the boundary of
$\mathcal{B}_{u}$ could be quite complicated. Let $\eta$, whose existence is
related to assumption $\left(  \ref{Assum}\right)  $ below, be a smooth cutoff
function equals $0$ outside $\mathcal{\cup}_{r_{1}>r_{0}-1}\bar{D}_{r_{1}}$
and
\[
X^{\ast}\eta\left(  r_{1},z_{1}\right)  =1,\text{ for }z_{1}\in\left(
-\bar{d}\left(  r_{1}\right)  +1,\bar{d}\left(  r_{1}\right)  -1\right)
,r_{1}>r_{0}.
\]
Similarly, let $\mathcal{B}_{u}^{+}=\mathcal{B}_{u}\cap\mathbb{E}^{+}$ and we
define a similar cutoff function $\eta^{+}$ supported in $\mathcal{B}_{u}^{+}%
$(note that the notation $\eta^{+}$ does not mean the positive part of $\eta$).

Since $\mathcal{B}_{u}$ is not the whole $r$-$z$ plane, we shall use the
cutoff function $\eta$ to define a smooth function $\mathcal{H}_{1}$ by the
formula
\[
\mathcal{H}_{1}=\eta H_{1}+\left(  1-\eta\right)  \frac{H_{1}}{\left\vert
H_{1}\right\vert }.
\]
Here the function $H_{1}$ is defined through
\[
X^{\ast}H_{1}\left(  r_{1},z_{1}\right)  =H\left(  r_{1}-h\left(
z_{1}\right)  \right)  ,
\]
where $h$ is a small function to be determined later. We emphasize that since
we are only interested in the behavior of $u$ outside of a bounded set,
$\mathcal{H}_{1}$ should be regarded as a function only defined on the set
$\mathbb{\hat{E}},$ where
\[
\mathbb{\hat{E}=E}\backslash\left\{  \left(  r,z\right)  :-f\left(
r_{0}\right)  +\frac{1}{f^{\prime}\left(  r_{0}\right)  }\left(
r-r_{0}\right)  <z<f\left(  r_{0}\right)  -\frac{1}{f^{\prime}\left(
r_{0}\right)  }\left(  r-r_{0}\right)  \right\}  .
\]
For any function $\Theta,$ let $\Theta^{s}\left(  r,z\right)  =\Theta\left(
r,-z\right)  .$

With all the previous notations introduced, we now define an approximate
solution $\bar{u}$ as
\[
\bar{u}=\mathcal{H}_{1}+\mathcal{H}_{1}^{s}+1.
\]
The function $\bar{u}$ is defined on the set $\mathbb{\hat{E}}$, rather than
the whole plane $\mathbb{E}$. Let $\phi=u-\bar{u}$ be the difference between
the true solution $u$ and the approximate solution $\bar{u}.$ Note that since
$\bar{u}$ is even with respect to the $z$ variable, $\phi$ is also even.
Introduce the function $H_{1}^{\prime}$ by
\[
X^{\ast}H_{1}^{\prime}\left(  r_{1},z_{1}\right)  =H^{\prime}\left(
z_{1}-h\left(  r_{1}\right)  \right)  .
\]
Then the small function $h$ appeared in the definition of the function $H_{1}$
is required to satisfy the following orthogonality condition:%
\begin{equation}
\int_{\mathbb{R}}X^{\ast}\left(  \phi\eta^{+}H_{1}^{\prime}\right)
dz_{1}=0,\text{ }r_{1}>r_{0}. \label{ortho}%
\end{equation}
The existence of $h$ is guaranteed by the following:

\begin{lemma}
\label{functionh}There exists a small function $h$(in $C^{2}$ sense)
satisfying $\left(  \ref{ortho}\right)  .$
\end{lemma}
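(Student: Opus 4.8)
The plan is to set up $(\ref{ortho})$ as a fixed-point problem for $h$ in a ball of the Banach space $C^{2}([r_{0},+\infty))$ equipped with a suitable weighted norm (for instance $\|h\|_{*}=\sup_{r_{1}>r_{0}}\bigl(|h(r_{1})|+|h'(r_{1})|+|h''(r_{1})|\bigr)$), and to solve it by the contraction mapping principle. First I would make the dependence of $\bar u$, hence of $\phi=u-\bar u$, on $h$ explicit: since $X^{\ast}H_{1}(r_{1},z_{1})=H(r_{1}-h(z_{1}))$ enters $\bar u$ only through $\mathcal H_{1}+\mathcal H_{1}^{s}$, the map $h\mapsto \phi$ is smooth in the $*$-norm on a small ball, with the "frozen" choice $h\equiv 0$ giving the zeroth-order approximation $\bar u_{0}=H_{1}^{0}+(H_{1}^{0})^{s}+1$ whose nodal line is exactly $\mathcal N_{u}$ up to the Fermi straightening. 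Then I would expand the integral in $(\ref{ortho})$ about $h\equiv 0$. Using $\partial_{h}H(r_{1}-h(z_{1}))=-H'(r_{1}-h(z_{1}))$ and the normalization $\int_{\mathbb R}(H')^{2}=c_{0}>0$, the linearization of the functional
\[
\mathcal F[h](r_{1}):=\int_{\mathbb{R}}X^{\ast}\!\left(\phi\,\eta^{+}H_{1}^{\prime}\right)dz_{1}
\]
at $h\equiv 0$ is, to leading order, multiplication by $-c_{0}$ (this is exactly the reason the orthogonality condition is solvable: $H'$ is, up to scaling, the projection onto the translation mode, and $\int (H')^2\neq 0$). Hence $(\ref{ortho})$ reads $h(r_{1})=\tfrac{1}{c_{0}}\,\mathcal E[h](r_{1})$, where $\mathcal E[h]=\mathcal F[h]+c_{0}h$ collects the error term $\mathcal F[0]$ together with all higher-order-in-$h$ contributions.

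The next step is to estimate the two pieces of $\mathcal E$. For the constant term $\mathcal F[0](r_{1})=\int_{\mathbb R}X^{\ast}(\phi_{0}\eta^{+}H_{1}^{\prime})\,dz_{1}$ with $\phi_{0}=u-\bar u_{0}$: because $u$ is a genuine solution whose nodal line is $\mathcal N_{u}$ and $\bar u_{0}$ is built from the heteroclinic profile along the same curve, the discrepancy $\phi_{0}$ is small in $C^{2}_{loc}$-weighted norms for $r_{1}$ large — this smallness being quantified by the decay of $f',f'',f^{(3)},f^{(4)}$ established in the preceding lemma (which controls how far the true equation $(\ref{axial})$ deviates, via $(\ref{Laplacian})$, from the model one-dimensional equation $-\phi''=\phi-\phi^{3}$ in Fermi coordinates, the inhomogeneous term $r^{-1}u_{r}$ contributing an $O(r_{1}^{-1})$ correction). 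Thus $\|\mathcal F[0]\|_{*}$ is small, and in fact can be made as small as we like by choosing $r_{0}$ large. For the nonlinear remainder, differentiating $\mathcal F$ twice in $h$ and using that $H,H'$ and their derivatives are bounded with exponential decay away from the zero of the argument, one gets $\|\mathcal E[h_{1}]-\mathcal E[h_{2}]\|_{*}\le C(\|h_{1}\|_{*}+\|h_{2}\|_{*})\,\|h_{1}-h_{2}\|_{*}$ on the ball $\|h\|_{*}\le\rho$, with $\rho$ small. Therefore $h\mapsto c_{0}^{-1}\mathcal E[h]$ maps the ball of radius $\rho$ to itself and is a contraction there, provided $r_{0}$ is large and $\rho$ small; the Banach fixed point theorem yields a unique small $h$ satisfying $(\ref{ortho})$. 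Finally, elliptic regularity for $u$ together with the smoothness of the Fermi straightening upgrades the fixed point from the defining integral identity to the asserted $C^{2}$ regularity and smallness of $h$.

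The main obstacle is controlling the term $\mathcal F[0]$, i.e. proving that the approximate solution $\bar u_{0}$ really is a good approximation in the relevant weighted norm all the way out to infinity. This is delicate precisely because the region $\mathcal B_{u}$ where the Fermi coordinate is valid has a potentially complicated boundary, and because the third-derivative terms in $(\ref{Laplacian})$ involving $\partial_{r_{1}}A$ must be handled carefully — as the paper itself flags. The decay lemma for $f'',f^{(3)},f^{(4)}$ is what makes this tractable: it guarantees that on $\{r_{1}>r_{0}\}$ the operator $\Delta_{(r,z)}$ is a genuinely small perturbation of $\Delta_{(r_{1},z_{1})}$, so that the error $\phi_{0}$ of the approximate solution inherits the exponential-in-$z_{1}$ decay of $H'$ with a coefficient that tends to $0$ as $r_{1}\to\infty$, and the integral against $\eta^{+}H_{1}'$ is correspondingly small. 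Once this estimate is in hand, the contraction argument is routine.
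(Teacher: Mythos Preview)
Your approach is essentially the same as the paper's: the paper says only that one uses the smallness of $\phi$ and applies the implicit function theorem, and your contraction-mapping setup is exactly an explicit unpacking of that, with the invertibility of the linearization coming from $\int_{\mathbb{R}}(H')^{2}=\mathbf{c}_{0}\neq 0$. One remark: you over-emphasize the difficulty of controlling $\mathcal{F}[0]$ --- at this stage only the \emph{qualitative} smallness of $\phi_{0}$ for $r_{1}$ large is needed (which follows from $u$ being locally close to the heteroclinic profile and from the preceding lemma that $f',f'',f^{(3)},f^{(4)}\to 0$), not the refined quantitative decay that is established only later in Proposition~\ref{P3}; since the orthogonality condition at each $r_{1}$ is essentially a scalar equation in $h(r_{1})$ (the coupling through $\mathcal{H}_{1}^{s}$ being exponentially small), a pointwise application of the implicit function theorem followed by differentiation of the identity already gives the $C^{2}$ smallness of $h$, which is perhaps a cleaner route than the global Banach-space contraction you describe.
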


\begin{proof}
This follows from similar arguments as that of \cite{MR3148064}. The basic
idea is to use the fact that $\phi$ is small and apply the implicit function
theorem. We omit the details.
\end{proof}

One advantage of using the Fermi coordinate with respect to the nodal curve is
that $h$ could be estimate in terms of $\phi$ and $f.$ To see this, let
$\mathcal{N}^{s}\left(  u\right)  $ be the graph of the function $z=-f\left(
r\right)  .$ For $Z=\left(  r,z\right)  \in\mathbb{E},$ let
\[
\mathcal{D}\left(  r,z\right)  =dist\left(  Z,\mathcal{N}\left(  u\right)
\right)  +dist\left(  Z,\mathcal{N}^{s}\left(  u\right)  \right)  ,
\]
that is, the sum of the distance of $Z$ to the nodal line in the upper plane
and lower plane. Set $D\left(  r_{1}\right)  =\mathcal{D}\left(
r_{1},f\left(  r_{1}\right)  \right)  .$

\begin{lemma}
\label{h}For $r_{1}>r_{0},$
\[
\left\vert h\left(  r_{1}\right)  \right\vert \leq C\left\vert X^{\ast}%
\phi\left(  r_{1},0\right)  \right\vert +Ce^{-\sqrt{2}D\left(  r_{1}\right)
}.
\]

\end{lemma}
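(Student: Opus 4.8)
The plan is to exploit the orthogonality condition $\left(\ref{ortho}\right)$ together with the fact that, in Fermi coordinates around the nodal curve $z=f(\cdot)$, the approximate solution $\bar u$ restricted to a fixed slice $\{r_1=\text{const}\}$ is, up to the small correction $h$ and up to the exponentially small influence of the reflected end $\mathcal{N}^s(u)$, a translate of the one-dimensional heteroclinic $H(z_1-h(r_1))$. First I would write, for each fixed $r_1>r_0$,
\[
X^\ast u(r_1,z_1)=H\!\left(z_1-h(r_1)\right)+X^\ast\!\left(\mathcal{H}_1^s+1\right)(r_1,z_1)+X^\ast\phi(r_1,z_1),
\]
and evaluate at $z_1=0$. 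Since $u(r_1,f(r_1))=0$ by definition of the nodal curve and $X(r_1,0)=(r_1,f(r_1))$, the left side vanishes, so
\[
H\!\left(-h(r_1)\right)=-X^\ast\!\left(\mathcal{H}_1^s+1\right)(r_1,0)-X^\ast\phi(r_1,0).
\]
The term $X^\ast(\mathcal{H}_1^s+1)(r_1,0)$ is the contribution of the lower heteroclinic end evaluated at a point on the upper nodal curve; because that point sits at distance $dist((r_1,f(r_1)),\mathcal{N}^s(u))$ from the lower end, and $H\mp1$ decays like $e^{-\sqrt2|x|}$, this term is bounded by $Ce^{-\sqrt2\, dist((r_1,f(r_1)),\mathcal{N}^s(u))}$. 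Noting that $dist((r_1,f(r_1)),\mathcal{N}(u))=0$, we have $D(r_1)=dist((r_1,f(r_1)),\mathcal{N}^s(u))$, so this contribution is $\le Ce^{-\sqrt2 D(r_1)}$. Finally, since $H$ is smooth with $H'(0)=\tfrac{1}{\sqrt2}>0$ and $h$ is small, $H(-h(r_1))=-H'(\xi)h(r_1)$ for some $\xi$ between $0$ and $-h(r_1)$, with $H'(\xi)$ bounded below by a positive constant; solving for $h(r_1)$ gives
\[
\left\vert h(r_1)\right\vert\le C\left\vert X^\ast\phi(r_1,0)\right\vert+Ce^{-\sqrt2 D(r_1)},
\]
which is the claimed $L^\infty$ bound.

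For the full $C^2$ statement of the lemma (the word "small" in Lemma \ref{functionh} was in the $C^2$ sense, and Lemma \ref{h} is stated pointwise but is used with derivatives elsewhere), I would differentiate the nodal relation once and twice in $r_1$. Differentiating $X^\ast u(r_1,0)=0$ and using that $\partial_{z_1}(X^\ast u)=H'(z_1-h)+O(\text{small})$ stays bounded away from zero near $z_1=0$, one controls $h'(r_1)$ by $\|X^\ast\phi(r_1,\cdot)\|_{C^1}$, the derivatives of $f$ (which are small by the preceding lemma), and the derivative of the exponentially small reflected term, which is again $\le Ce^{-\sqrt2 D(r_1)}$ up to a polynomial factor that is absorbed into the constant; a second differentiation handles $h''$ analogously, now also invoking that $f''$ is small. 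Throughout, the change of variables $X$ and its inverse have bounded $C^2$ norms on $\mathcal{B}_u$ precisely because $f',f'',f^{(3)},f^{(4)}$ are small there, so passing between $X^\ast$ quantities and the original coordinates costs only universal constants.

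The main obstacle, and the reason the statement is phrased as an inequality with a universal $C$ rather than an identity, is bookkeeping the error terms uniformly in $n$: the set $\mathcal{B}_u$ has a potentially irregular boundary, the cutoff $\eta$ degrades estimates near $\partial\mathcal{B}_u$, and one must make sure the implicit-function-theorem construction of $h$ in Lemma \ref{functionh} produces a function whose smallness and the above pointwise bound hold with constants independent of the solution in the sequence. This is exactly where one leans on Lemma \ref{slope} and Lemma \ref{small} to guarantee that the tubular neighborhood on which the Fermi coordinates are a diffeomorphism with controlled metric extends uniformly as $r_1\to\infty$, so that the decomposition above is valid on all of $\{r_1>r_0\}$ with $r_0$ chosen once and for all. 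The orthogonality condition $\left(\ref{ortho}\right)$ itself is not directly needed for this particular estimate — it is what makes $h$ well defined and small in the first place — so the proof of Lemma \ref{h} is essentially the elementary computation sketched above, with the care going into uniformity.
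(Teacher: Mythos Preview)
Your argument is correct and essentially identical to the paper's: both evaluate the identity $X^\ast\phi(r_1,z_1)=X^\ast u(r_1,z_1)-[H(z_1-h(r_1))+X^\ast\mathcal{H}_1^s+1]$ at $z_1=0$, use $X^\ast u(r_1,0)=0$, bound $|1+X^\ast\mathcal{H}_1^s(r_1,0)|\le Ce^{-\sqrt2 D(r_1)}$ from the exponential decay of $H\pm1$, and invert $H(-h)\sim -H'(0)h$ for small $h$. Your remarks on differentiating to control $h',h''$ also match the paper's follow-up comments after the lemma.
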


\begin{proof}
Since $\phi=u-\bar{u},$ around $\mathcal{N}_{u}$ we have
\begin{equation}
X^{\ast}\phi\left(  r_{1},z_{1}\right)  =X^{\ast}u\left(  r_{1},z_{1}\right)
-\left[  H\left(  z_{1}-h\left(  r_{1}\right)  \right)  +X^{\ast}%
\mathcal{H}_{1}^{s}+1\right]  . \label{l1}%
\end{equation}
Setting $z_{1}=0$ in the above equation, using the fact that $X^{\ast}u\left(
r_{1},0\right)  =0,$ we find
\[
X^{\ast}\phi\left(  r_{1},0\right)  +H\left(  -h\left(  r_{1}\right)  \right)
+1+X^{\ast}\mathcal{H}_{1}^{s}\left(  r_{1},0\right)  =0.
\]
On the other hand, by the definition of $\mathcal{H}_{1}^{s}$ and the
asymptotic behavior of $H,$%
\[
\left\vert 1+X^{\ast}\mathcal{H}_{1}^{s}\left(  r_{1},0\right)  \right\vert
\leq Ce^{-\sqrt{2}D\left(  r_{1}\right)  }.
\]
Therefore,
\[
\left\vert h\left(  r_{1}\right)  \right\vert \leq C\left\vert X^{\ast}%
\phi\left(  r_{1},0\right)  \right\vert +Ce^{-\sqrt{2}D\left(  r_{1}\right)
}.
\]
This completes the proof.
\end{proof}

Following the above arguments and differentiate  equation $\left(
\ref{l1}\right)  $ with respect to $r_{1}$ and setting $z_{1}=0$ in the
obtained equation, we get
\[
\left\vert h^{\prime}\left(  r_{1}\right)  \right\vert \leq C\left\vert
\partial_{r_{1}}X^{\ast}\phi\left(  r_{1},0\right)  \right\vert +Ce^{-\sqrt
{2}D\left(  r_{1}\right)  },
\]
Similar arguments yield  estimates for the higher order derivatives and Holder norms.

One of the main ingredients in the proof of Proposition \ref{compact} is to
get suitable decay estimate for the function $\phi.$ In this respect, we shall
first of all prove the following estimate.

\begin{proposition}
\label{P3}For $r_{1}>r_{0},$ the function $\phi$ satisfies%
\[
\left\Vert X^{\ast}\phi\left(  r_{1},\cdot\right)  \right\Vert _{L^{\infty}%
}\leq Ce^{-r_{1}}+Ce^{-D\left(  r_{1}\right)  }+\frac{C}{r_{1}^{2}}.
\]

\end{proposition}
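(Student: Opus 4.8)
The plan is to derive a linear equation for $\phi = u - \bar u$ in the Fermi coordinate $(r_1,z_1)$, and then use a combination of the orthogonality condition $(\ref{ortho})$, barrier/supersolution arguments, and the decay of the right-hand side to get the stated bound. First I would compute $S(\bar u) := \Delta_{(r,z)}\bar u + r^{-1}\partial_r\bar u + \bar u - \bar u^3$, the error of the approximate solution. Using $(\ref{Laplacian})$, the smallness of $f',f'',f^{(3)},f^{(4)}$ from the preceding lemma, and the fact that $\mathcal{H}_1$ solves the one-dimensional equation up to the cutoff, the error $S(\bar u)$ splits into three types of contributions: (i) curvature/geometry terms from $(\frac1A-1)\partial^2_{r_1}$ and the first-order terms in $(\ref{Laplacian})$, which near the nodal line carry a factor $f''$ or $(f')^2$ and hence are controlled by the $C^2$-smallness of $f$; (ii) the term $r^{-1}\partial_r \bar u$, which in Fermi coordinates is $\sim r_1^{-1} H'(z_1 - h) \cdot (\text{bounded})$, giving the $r_1^{-2}$-type source after one integration of the linearized operator; and (iii) interaction terms coming from $\mathcal{H}_1^s$ and the cutoffs $\eta$, which are exponentially small, of order $e^{-\sqrt2 D(r_1)}$, together with the cutoff error near $r_1 = r_0$ which is $O(e^{-r_1})$ far from that boundary.

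Next, $\phi$ satisfies $L\phi := \Delta_{(r,z)}\phi + r^{-1}\partial_r\phi + (1 - 3\bar u^2)\phi = -S(\bar u) + N(\phi)$, where $N(\phi) = O(\phi^2)$ is the quadratic nonlinearity, small since $\phi$ is already known to be small. Away from the nodal line (where $\bar u \to \pm 1$), the potential $1 - 3\bar u^2 \to -2$, so $L$ is coercive there and one gets exponential decay of $\phi$ in $z_1$ by a standard comparison argument; this reduces matters to controlling $X^\ast\phi(r_1,\cdot)$ in a bounded neighborhood of the nodal curve, essentially $\|X^\ast\phi(r_1,\cdot)\|_{L^\infty}$ up to $z_1 = O(1)$. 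There, I would project onto the approximate kernel $H_1'$: the orthogonality condition $(\ref{ortho})$ kills the resonant direction, so on the orthogonal complement the linearized operator $-\partial_{z_1}^2 + (1 - 3H^2)$ has a spectral gap, and one can invert it with bounded norm. Multiplying by a suitable weight $e^{\sigma r_1}$ (or $r_1^{2}$) and running a supersolution/maximum-principle argument in $r_1$ over the region $r_1 > r_0$, with boundary data controlled by the $C^2$ bound on $\phi$ at $r_1 = r_0$, yields $\|X^\ast\phi(r_1,\cdot)\|_{L^\infty} \le C e^{-r_1} + C e^{-D(r_1)} + C r_1^{-2}$, where the exponent $\sqrt2$ on the interaction term is downgraded to $1$ in $e^{-D(r_1)}$ to leave room in the barrier construction (and to absorb $h$ via Lemma \ref{h}).

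The main obstacle, as the authors themselves flag after $(\ref{Laplacian})$, is the third-derivative term $\partial_{r_1}A/A^2 \cdot \partial_{r_1}$, which involves $f^{(3)}$ and is a first-order operator in $r_1$ — precisely the "bad" direction along which we are trying to propagate an estimate. Controlling it requires feeding back the smallness of $f^{(3)}$ (and $f^{(4)}$, for Hölder estimates on the coefficients) from the earlier lemma, and being careful that the barrier in $r_1$ dominates this drift term; a crude bound on $f^{(3)}$ would destroy the $r_1^{-2}$ decay, so one needs the sharper decay of $f''$ and its derivatives, which in turn will be bootstrapped in the subsequent analysis. A secondary technical point is that $\mathcal{B}_u$ has possibly irregular boundary, so the cutoff $\eta$ and the region where the Fermi coordinates are valid must be handled so that the error introduced by $\eta$ is genuinely of lower order $e^{-r_1}$ relative to the estimate being proved; this is why the definition of $\bar d(r_1)$ with the factor $3f(r_1)$ was arranged as it was.
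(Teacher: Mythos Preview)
Your outline captures the broad architecture---split the error $E(\bar u)$, use orthogonality to kill the kernel direction, invert on the complement---but it misses the central mechanism of the paper's proof, and as written the argument would not close.

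The gap is in your treatment of the curvature terms. You say these ``carry a factor $f''$ or $(f')^2$ and hence are controlled by the $C^2$-smallness of $f$.'' But at this point in the paper we only know $f'',f^{(3)},f^{(4)}\to 0$; we have \emph{no} decay rate for them. In particular the orthogonal part $[E(\bar u)]^\bot$ contains a term of order $O(f''^2)$ (this is the leading contribution from $\frac{\partial_{z_1}A}{2A}$ after subtracting the projection; see Lemma~\ref{P}), and a bare ``$f''$ is small'' does not give $O(r_1^{-2})$ or $O(e^{-D})$. Your barrier in $r_1$ would have an uncontrolled source.

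The paper closes this by a coupling you do not mention: projecting the full equation $L\phi = E(\bar u)+P(\phi)$ onto $\eta^+H_1'$ (Lemmas~\ref{P2}, \ref{interaction}, \ref{P1}) yields equation~\eqref{f2}, which reads schematically
\[
\mathbf{c}_0\!\left(\tfrac{f''}{(1+f'^2)^{3/2}}+\tfrac{h''}{1+f'^2}+\tfrac{f'}{r_1\sqrt{1+f'^2}}\right)=\mathbf{c}_1(1+o(1))e^{-\sqrt{2}D}+(\text{quadratic in }\phi,\ h).
\]
This expresses $f''$ (up to $h''$) in terms of $e^{-\sqrt{2}D}$, $f'/r_1$, and perturbative errors. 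Substituting \eqref{f2} back into the expression \eqref{Euo} for $[E(\bar u)]^\bot$ eliminates the dangerous $O(f''^2)$ term and produces \eqref{Eorth}, whose main contributions are $O(r_1^{-2}f'^2)+O(e^{-\sqrt{2}D})$. Only then can Lemma~\ref{L} be applied. You also underplay the final step: a single application of the a~priori estimate leaves a boundary contribution $\|\phi\|_{L^\infty(\partial J_{r_1})}e^{r_1-s}$; the paper iterates the estimate on a sequence of nested domains $J_{r-l},J_{r-2l},\dots$ and sums the resulting geometric series to convert this into the $Ce^{-r_1}$ term. Your ``boundary data at $r_1=r_0$'' suggests one shot, which is not enough.

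A smaller point: your claim that $r^{-1}\partial_r\bar u\sim r_1^{-1}H'$ ``gives the $r_1^{-2}$-type source after one integration'' is misleading. The $r_1^{-1}H'$ piece is \emph{parallel} to the kernel and disappears from $[E(\bar u)]^\bot$; what survives in the orthogonal part is the next order $-(z_1-h)\bigl(\tfrac{f''^2}{(1+f'^2)^3}+\tfrac{f'^2}{r_1^2(1+f'^2)}\bigr)H'$, and it is the second term here (not an ``integration'') that produces the $r_1^{-2}$.
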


We shall first of all prove this proposition under an additional assumption on
the size of the Fermi coordinate. More precisely, at this stage, we assume
that%
\begin{equation}
\left\vert \bar{d}^{\prime}\right\vert \leq\frac{1}{2},\left\vert \bar
{d}^{\prime\prime}\right\vert \leq C\text{ and }\bar{d}\left(  r\right)
\geq\frac{2D\left(  r\right)  }{3}.\text{ } \tag{A}\label{Assum}%
\end{equation}
Later we will indicate the necessary modification of the proof if this
assumption is not a priori satisfied.

Clearly $\phi$ satisfies
\begin{equation}
L\phi:=-\Delta\phi+\left(  3\bar{u}^{2}-1\right)  \phi=E\left(  \bar
{u}\right)  +P\left(  \phi\right)  . \label{fi}%
\end{equation}
Here the notation $E\left(  \bar{u}\right)  $ stands for
\[
\bar{u}_{rr}+r^{-1}\bar{u}_{r}+\bar{u}_{zz}+\bar{u}-\bar{u}^{3},
\]
which is the error of the approximate solution $\bar{u},$ and
\[
P\left(  \phi\right)  =-3\bar{u}\phi^{2}-\phi^{3}%
\]
is a higher order perturbation term.

For any function $\Theta,$ we define the projection of $\Theta$ onto $\eta
^{+}H_{1}^{\prime}$ as
\[
\Theta_{1}^{\Vert}=\frac{\int_{\mathbb{R}}X^{\ast}\left(  \Theta\eta^{+}%
H_{1}^{\prime}\right)  dz_{1}}{\int_{\mathbb{R}}X^{\ast}\left(  \left(
\eta^{+}\right)  ^{2}H_{1}^{\prime2}\right)  dz_{1}}\eta^{+}H_{1}^{\prime}.
\]
Since the solutions are even in the $z$ variable, we also define%
\[
\Theta^{\Vert}=\Theta_{1}^{\Vert}+\left[  \Theta_{1}^{\Vert}\right]  ^{s}.
\]
Finally, set
\[
\Theta^{\bot}:=\Theta-\Theta^{\Vert}.
\]
Equation $\left(  \ref{fi}\right)  $ could be written in the form
\[
L\phi=\left[  E\left(  \bar{u}\right)  \right]  ^{\bot}+\left[  E\left(
\bar{u}\right)  \right]  ^{\Vert}+P\left(  \phi\right)  .
\]
As we will see later, $\left[  E\left(  \bar{u}\right)  \right]  ^{\Vert}$ is
small(in suitable norm) compared to $\phi$, and $\phi$ is then essentially
controlled by $\left[  E\left(  \bar{u}\right)  \right]  ^{\bot}.$ As a
crucial step towards the proof of Proposition \ref{P3}, we shall take the task
of analyzing $E\left(  \bar{u}\right)  .$ By definition
\[
\bar{u}=\mathcal{H}_{1}+\mathcal{H}_{1}^{s}+1.
\]
A simple manipulation leads to the following expansion
\begin{equation}
E\left(  \bar{u}\right)  =E\left(  \mathcal{H}_{1}\right)  +E\left(
\mathcal{H}_{1}^{s}\right)  +3\left(  \mathcal{H}_{1}^{s}+1\right)  \left(
\mathcal{H}_{1}^{2}-1\right)  +3\left(  \mathcal{H}_{1}+1\right)  \left(
\mathcal{H}_{1}^{s}+1\right)  ^{2}. \label{Eu}%
\end{equation}
Certainly the function $E\left(  \bar{u}\right)  $ is also even in the $z$
variable. One expects that in the upper plane $\mathbb{E}^{+}$, the main order
of $E\left(  \bar{u}\right)  $ should be $E\left(  \mathcal{H}_{1}\right)  $
and one of the interaction term $3\left(  \mathcal{H}_{1}^{s}+1\right)
\left(  \mathcal{H}_{1}^{2}-1\right)  .$

We first analyze the projection of $E\left(  \mathcal{H}_{1}\right)  $ onto
$\eta^{+}H_{1}^{\prime}.$ For technical reasons, we introduce a small
perturbation of $f\,$\ due to the presence of the modulation function $h.$ Let
$p=f+\hat{h},$ where $\hat{h}:=\sqrt{1+f^{\prime2}}h.$ Throughout the paper,
we set $\mathbf{c}_{0}=\int_{\mathbb{R}}H^{\prime2}\left(  t\right)  dt.$ The
notation $O\left(  g\right)  $ represents a function in $r_{1}$ such that
$\left\vert O\left(  g\right)  \right\vert \leq C\left\vert g\left(
r_{1}\right)  \right\vert .$

\begin{lemma}
\label{P2}For $r_{1}>r_{0},$
\begin{align*}
\int_{\mathbb{R}}X^{\ast}\left(  \eta^{+}H_{1}^{\prime}E\left(  \mathcal{H}%
_{1}\right)  \right)  dz_{1}  &  =\left(  1+o\left(  1\right)  \right)
\frac{\mathbf{c}_{0}}{r_{1}}\left(  \frac{r_{1}p^{\prime}}{\sqrt{1+p^{\prime
2}}}\right)  ^{\prime}+O\left(  h^{\prime\prime}f^{\prime\prime}\right)
+O\left(  h^{\prime2}\right) \\
+O\left(  h^{\prime}f^{\prime\prime}\right)  +O\left(  h^{\prime}%
r^{-1}\right)   &  +O\left(  e^{-2\sqrt{2}\bar{d}\left(  r_{1}\right)
}\right)  +O\left(  h^{\prime}h^{\prime\prime}\right)  +O\left(
hf^{\prime\prime}\right) \\
&  +O\left(  hf^{\prime\prime\prime}\right)  +O\left(  f^{\prime\prime
3}\right)  +O\left(  \frac{f^{\prime3}}{r_{1}^{3}}\right)  .
\end{align*}

\end{lemma}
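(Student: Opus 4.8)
The plan is to compute $E(\mathcal{H}_1)$ in the Fermi coordinate $(r_1,z_1)$ adapted to the nodal curve $z = f(r_1)$, and then project against $\eta^+ H_1'$. First I would write, in the region where the Fermi coordinate is well defined (which by the earlier smallness lemmas on $f'', f^{(3)}, f^{(4)}$ and assumption (A) is a large tubular neighborhood), $X^*\mathcal{H}_1(r_1,z_1) = H(r_1 - h(z_1))$. Wait—note the definition has $H_1$ a function of $r_1$ shifted by $h(z_1)$, so in fact it is cleaner to change roles: the approximate solution is built so that, after pulling back, it looks like the heteroclinic profile in the variable $r_1 - h(z_1)$; equivalently, introducing $p = f + \hat h$ with $\hat h = \sqrt{1+f'^2}\,h$, the zero set of $\mathcal{H}_1$ is the graph $z = p(r)$ to leading order. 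The strategy is then: (i) use formula (\ref{Laplacian}) to express $\Delta_{(r,z)} \mathcal{H}_1$ in terms of $\Delta_{(r_1,z_1)}$ plus perturbation terms weighted by $f''$, $f^{(3)}$, and powers of $f'$; (ii) add the curvature term $r^{-1}\partial_r \mathcal{H}_1$, which in Fermi coordinates becomes $\frac{1}{r_1}(\text{leading}) + \text{lower order}$, using the expansion of $r$ from (\ref{Fermi2}); (iii) collect the nonlinear term $\mathcal{H}_1 - \mathcal{H}_1^3$, which cancels the one-dimensional part of $-H''$ up to the cutoff $\eta$.

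Next I would carry out the projection $\int_{\mathbb R} X^*\big(\eta^+ H_1' E(\mathcal{H}_1)\big)\, dz_1$. The dominant contribution comes from the terms that, at leading order, reproduce the linearized operator $-\partial_{z_1}^2 + (3H^2-1)$ acting on $H'$, whose kernel is $H'$ itself; by Fredholm/orthogonality the genuine leading term is the one carrying the geometry of the curve. The mechanism is standard: the combination of the normal-Laplacian curvature contribution and the $r^{-1}u_r$ term produces, after integrating against $H'$ and using $\mathbf{c}_0 = \int H'^2$, exactly the quantity $\frac{\mathbf{c}_0}{r_1}\big(\frac{r_1 p'}{\sqrt{1+p'^2}}\big)'$ — this is precisely the first variation of the (weighted) length functional $\int r\sqrt{1+p'^2}\,dr$, i.e. the mean-curvature-type operator for axially symmetric surfaces, which is expected since the Allen–Cahn interface is governed to leading order by minimal surfaces. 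The replacement of $f$ by $p = f + \hat h$ accounts for the shift built into $\mathcal{H}_1$ via $h$. Every remaining contribution must be shown to be of lower order: terms quadratic or higher in $h', h''$ (from the $z_1$-dependence of the shift), cross terms $h' f''$, $h f''$, $h f'''$, the term $h' r^{-1}$ from the curvature coefficient, the cubic curvature term $f''^3$, the term $f'^3/r_1^3$ from expanding the $r^{-1}$ factor, and — crucially — the contribution of the cutoff region, where $\eta^+$ transitions, which is controlled by $e^{-2\sqrt 2 \bar d(r_1)}$ since $H'$ and its derivatives decay like $e^{-\sqrt 2 |z_1|}$ and the cutoff lives near $|z_1| \sim \bar d(r_1)$.

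The main obstacle, as the text itself flags, is the careful treatment of the terms involving the third derivative of $f$ appearing in $\partial_{r_1}A$ in (\ref{Laplacian}): naively these are only one order better than $f''$ and could spoil the claimed error estimate. I would handle them by an integration by parts in $z_1$ against $H_1'$, moving a $z_1$-derivative off the $\partial_{r_1}A$ coefficient — or by using that $A - (1+f'^2)$ is itself $O(z_1 f'')$, so the offending term comes with an extra factor of $z_1$ that, against the exponentially localized $H_1'$, integrates to something of size $f^{(3)}$ times a bounded constant, which then gets absorbed (after using the nodal-curve relation to trade $f^{(3)}$-type quantities for $h$-derivatives and genuinely higher-order curvature expressions). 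A second delicate point is keeping track of the $(1+o(1))$ prefactor: the $o(1)$ collects $f'^2$, $z_1 f''$-type corrections to the metric coefficient $A$ and to $r/r_1$, all of which tend to zero as $r_1 \to \infty$ by the smallness lemmas, so one must verify uniformity in $n$ — but this follows because all the smallness estimates on $f_n, f_n', f_n'', \dots$ were established uniformly in $n$. The rest is bookkeeping: expand, integrate against $H_1'$, use $\int H'' H' = 0$ and $\int H'^2 = \mathbf c_0$, and sort the residual terms into the list displayed in the statement.
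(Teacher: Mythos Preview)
Your overall strategy matches the paper's: expand $E(\mathcal{H}_1)$ in Fermi coordinates via formula~(\ref{Laplacian}), use $H''+H-H^3=0$ to cancel the one-dimensional piece, project against $\eta^+H_1'$, and identify the surviving geometric term as the mean-curvature expression $\frac{\mathbf{c}_0}{r_1}\big(\frac{r_1 p'}{\sqrt{1+p'^2}}\big)'$. The bookkeeping you describe (cutoff contribution $O(e^{-2\sqrt2\bar d})$, expansion of $r^{-1}$, etc.) is also what the paper does.

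However, your treatment of the $f^{(3)}$ issue is off. You propose integration by parts in $z_1$ or exploiting $A-(1+f'^2)=O(z_1 f'')$; neither is the actual mechanism, and neither would straightforwardly work as you describe (there is no $z_1$-derivative on $\partial_{r_1}A$ to move). The reason the $f^{(3)}$ contribution is harmless is much simpler: in the Laplacian formula the term $-\tfrac12 A^{-2}(\partial_{r_1}A)\partial_{r_1}$ acts on $H_1$, and since $X^*H_1(r_1,z_1)=H(z_1-h(r_1))$ one has $\partial_{r_1}H_1=-h'H'$. Thus the entire $\partial_{r_1}A$ contribution is automatically multiplied by $h'$, giving $O(h'f''')$, a product of two small quantities---no integration by parts needed. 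Your confusion here seems to stem from momentarily reading $H_1$ as $H(r_1-h(z_1))$ (a typo in the paper's display); once you use the correct form, the $f^{(3)}$ term takes care of itself.

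A smaller point: the cancellation $\partial_{z_1}^2 H_1 + H_1 - H_1^3 = 0$ is exact (from the heteroclinic ODE), not a ``Fredholm/orthogonality'' phenomenon. The leading geometric term then comes directly from the coefficient $\tfrac{\partial_{z_1}A}{2A}+r^{-1}\partial_r z_1$ multiplying $H'$; the paper expands this coefficient explicitly to extract $-\frac{f''}{(1+f'^2)^{3/2}}-\frac{f'}{r_1\sqrt{1+f'^2}}$ plus higher-order pieces, then combines with the $-h''A^{-1}$ term and rewrites the sum in terms of $p=f+\hat h$.
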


\begin{proof}
In the region where the cutoff function $\eta^{+}\neq0,$ $E\left(
\mathcal{H}_{1}\right)  $ could be expressed in terms of the Fermi coordinate
$\left(  r_{1},z_{1}\right)  :$
\begin{align*}
E\left(  \mathcal{H}_{1}\right)   &  =\Delta_{\left(  r,z\right)  }%
H_{1}+r^{-1}\partial_{r}H_{1}+H_{1}-H_{1}^{3}\\
&  =A^{-1}\partial_{r_{1}}^{2}H_{1}+\partial_{z_{1}}^{2}H_{1}\\
&  +\frac{\partial_{z_{1}}A}{2A}\partial_{z_{1}}H_{1}-\frac{\partial_{r_{1}}%
A}{2A^{2}}\partial_{r_{1}}H_{1}\\
&  +r^{-1}\left(  \partial_{r_{1}}H_{1}\partial_{r}r_{1}+\partial_{z_{1}}%
H_{1}\partial_{r}z_{1}\right)  +H_{1}-H_{1}^{3}.
\end{align*}
Obviously $\partial_{z_{1}}H_{1}=H^{\prime},\partial_{z_{1}}^{2}%
H_{1}=H^{\prime\prime},$ and
\[
\partial_{r_{1}}H_{1}=-h^{\prime}H^{\prime},\partial_{r_{1}}^{2}%
H_{1}=-h^{\prime\prime}H^{\prime}+h^{\prime2}H^{\prime\prime},
\]
where $H^{\prime}$ and $H^{\prime\prime}$ are evaluated at $z_{1}-h\left(
r_{1}\right)  .$ Since $H^{\prime\prime}+H-H^{3}=0,$ we obtain
\begin{align}
E\left(  \mathcal{H}_{1}\right)   &  =A^{-1}\left(  -h^{\prime\prime}%
H^{\prime}+h^{\prime2}H^{\prime\prime}\right)  +\left(  \frac{\partial_{z_{1}%
}A}{2A}+r^{-1}\partial_{r}z_{1}\right)  H^{\prime}\nonumber\\
&  +\left(  -\frac{\partial_{r_{1}}A}{2A^{2}}+r^{-1}\partial_{r}r_{1}\right)
\left(  -h^{\prime}H^{\prime}\right)  . \label{EH1}%
\end{align}
It follows that%
\begin{align*}
&  \int_{\mathbb{R}}X^{\ast}\left(  \eta^{+}H_{1}^{\prime}E\left(
\mathcal{H}_{1}\right)  \right)  dz_{1}\\
&  =\int_{\mathbb{R}}\eta^{+\ast}A^{-1}\left(  -h^{\prime\prime}H^{\prime
}+h^{\prime2}H^{\prime\prime}\right)  H^{\prime}+\int_{\mathbb{R}}\eta^{+\ast
}\left(  \frac{\partial_{z_{1}}A}{2A}+r^{-1}\partial_{r}z_{1}\right)
H^{\prime2}\\
&  +\int_{\mathbb{R}}\eta^{+\ast}\left(  \frac{\partial_{r_{1}}A}{2A^{2}%
}-r^{-1}\partial_{r}r_{1}\right)  h^{\prime}H^{\prime2}+O\left(  e^{-2\sqrt
{2}\bar{d}}\right) \\
&  =-h^{\prime\prime}\int_{\mathbb{R}}\eta^{+\ast}A^{-1}H^{\prime2}%
+\int_{\mathbb{R}}\eta^{+\ast}\left(  \frac{\partial_{z_{1}}A}{2A}%
+r^{-1}\partial_{r}z_{1}\right)  H^{\prime2}\\
&  +O\left(  h^{\prime2}\right)  +O\left(  h^{\prime}f^{\prime\prime}\right)
+O\left(  h^{\prime}f^{\prime\prime\prime}\right)  +O\left(  h^{\prime}%
r^{-1}\right)  +O\left(  e^{-2\sqrt{2}\bar{d}}\right)  .
\end{align*}
The appearing of the term $O\left(  e^{-2\sqrt{2}\bar{d}}\right)  $ is due to
the fact that in the upper boundary of $\mathcal{B}_{u},$ $E\left(
\mathcal{H}_{1}\right)  =O\left(  e^{-\sqrt{2}d}\right)  .$

To proceed, we shall calculate the second term in the above expression, which
roughly speaking should be the main order term of the projection. We have
\begin{align*}
&  \frac{\partial_{z_{1}}A}{2A}+r^{-1}\partial_{r}z_{1}\\
&  =\frac{-\frac{f^{\prime\prime}}{\sqrt{1+f^{\prime2}}}+z_{1}\frac
{f^{\prime\prime2}}{\left(  1+f^{\prime2}\right)  ^{2}}}{A}-\frac{f^{\prime}%
}{\left(  r_{1}-z_{1}\frac{f^{\prime}}{\sqrt{1+f^{\prime2}}}\right)
\sqrt{1+f^{\prime2}}}\\
&  =\left(  -\frac{f^{\prime\prime}}{\left(  1+f^{\prime2}\right)  ^{\frac
{3}{2}}}+z_{1}\frac{f^{\prime\prime2}}{\left(  1+f^{\prime2}\right)  ^{3}%
}\right)  \left(  1+2z_{1}\frac{f^{\prime\prime}}{\left(  1+f^{\prime
2}\right)  ^{\frac{3}{2}}}+O\left(  f^{\prime\prime2}\right)  \right) \\
&  -\frac{f^{\prime}}{r_{1}\sqrt{1+f^{\prime2}}}\left(  1+z_{1}\frac
{f^{\prime}}{r_{1}\sqrt{1+f^{\prime2}}}+O\left(  \frac{f^{\prime2}}{r_{1}^{2}%
}\right)  \right) \\
&  =-\frac{f^{\prime\prime}}{\left(  1+f^{\prime2}\right)  ^{\frac{3}{2}}%
}-\frac{z_{1}f^{\prime\prime2}}{\left(  1+f^{\prime2}\right)  ^{3}}+O\left(
f^{\prime\prime3}\right) \\
&  -\frac{f^{\prime}}{r_{1}\sqrt{1+f^{\prime2}}}-\frac{z_{1}f^{\prime2}}%
{r_{1}^{2}\left(  1+f^{\prime2}\right)  }+O\left(  \frac{f^{\prime3}}%
{r_{1}^{3}}\right)  .
\end{align*}
Using this estimate, we get
\begin{align}
&  \int_{\mathbb{R}}X^{\ast}\left(  \eta^{+}H_{1}^{\prime}E\left(
\mathcal{H}_{1}\right)  \right) \label{h1pro}\\
&  =-\left(  1+o\left(  1\right)  \right)  \left[  \frac{\mathbf{c}%
_{0}h^{\prime\prime}}{1+f^{\prime2}}+\frac{\mathbf{c}_{0}f^{\prime\prime}%
}{\left(  1+f^{\prime2}\right)  ^{\frac{3}{2}}}+\frac{\mathbf{c}_{0}f^{\prime
}}{r_{1}\sqrt{1+f^{\prime2}}}\right] \nonumber\\
&  +O\left(  h^{\prime2}\right)  +O\left(  h^{\prime}f^{\prime\prime}\right)
+O\left(  h^{\prime}f^{\prime\prime\prime}\right)  +O\left(  h^{\prime}%
r^{-1}\right) \nonumber\\
&  +O\left(  h^{\prime\prime}f^{\prime\prime}\right)  +O\left(  e^{-2\sqrt
{2}\bar{d}\left(  r_{1}\right)  }\right)  .\nonumber
\end{align}
In this expression, we are mainly interested in the first term. We calculate
\begin{align*}
&  \frac{r_{1}h^{\prime\prime}}{1+f^{\prime2}}+\frac{r_{1}f^{\prime\prime}%
}{\left(  1+f^{\prime2}\right)  ^{\frac{3}{2}}}+\frac{f^{\prime}}%
{\sqrt{1+f^{\prime2}}}\\
&  =\frac{r_{1}}{1+f^{\prime2}}\left[  \frac{\hat{h}^{\prime\prime}}%
{\sqrt{1+f^{\prime2}}}+2\hat{h}^{\prime}\left(  \frac{1}{\sqrt{1+f^{\prime2}}%
}\right)  ^{\prime}+\hat{h}\left(  \frac{1}{\sqrt{1+f^{\prime2}}}\right)
^{\prime\prime}\right] \\
&  +\frac{r_{1}f^{\prime\prime}}{\left(  1+f^{\prime2}\right)  ^{\frac{3}{2}}%
}+\frac{f^{\prime}}{\sqrt{1+f^{\prime2}}}\\
&  =\left(  \frac{r_{1}p^{\prime}}{\sqrt{1+p^{\prime2}}}\right)  ^{\prime
}+O\left(  h^{\prime}r_{1}h^{\prime\prime}\right)  +O\left(  h^{\prime}%
r_{1}f^{\prime\prime}\right) \\
&  +O\left(  hr_{1}f^{\prime\prime}\right)  +O\left(  hr_{1}f^{\prime
\prime\prime}\right)  +O\left(  h^{\prime}\right)  .
\end{align*}
The conclusion of the lemma then follows from this estimate.
\end{proof}

With the projection being understood, we proceed to analyze the orthogonal part.

\begin{lemma}
\label{P}For $r_{1}>r_{0},$ the following estimate is true:
\begin{align}
E\left(  \mathcal{H}_{1}\right)  -\left[  E\left(  \mathcal{H}_{1}\right)
\right]  _{1}^{\Vert}  &  =-\left(  z_{1}-h\right)  \left(  \frac
{f^{\prime\prime2}}{\left(  1+f^{\prime2}\right)  ^{3}}+\frac{f^{\prime2}%
}{r_{1}^{2}\left(  1+f^{\prime2}\right)  }\right)  H^{\prime}\nonumber\\
&  +O\left(  h^{\prime\prime}f^{\prime\prime}\right)  +O\left(  e^{-\sqrt{2}%
D}\right)  +O\left(  h^{\prime}r^{-1}\right)  +O\left(  h^{\prime\prime
2}\right) \nonumber\\
&  +O\left(  f^{\prime\prime3}\right)  +O\left(  \frac{f^{\prime3}}{r_{1}^{3}%
}\right)  +O\left(  h^{\prime2}\right)  +O\left(  h^{\prime}f^{\prime\prime
}\right)  +O\left(  h^{\prime}f^{\prime\prime\prime}\right)  . \label{H1or}%
\end{align}

\end{lemma}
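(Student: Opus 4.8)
The plan is to continue the computation begun in Lemma \ref{P2}, but now tracking the full function $E(\mathcal{H}_1)$ pointwise in the Fermi coordinate rather than only its integral against $\eta^{+}H_1'$. Starting from the formula $\left(\ref{EH1}\right)$ for $E(\mathcal{H}_1)$, I would split the right-hand side into three groups: the term $A^{-1}(-h''H' + h'^2 H'')$, the term $\left(\frac{\partial_{z_1}A}{2A} + r^{-1}\partial_r z_1\right)H'$, and the term $\left(-\frac{\partial_{r_1}A}{2A^2} + r^{-1}\partial_r r_1\right)(-h'H')$. The last group is manifestly of size $O(h'f'') + O(h'r^{-1})$ (using $A = 1 + O(f'^2)$ and the expressions in $\left(\ref{derivative}\right)$), and the $A^{-1}h'^2 H''$ piece is $O(h'^2)$; the $-A^{-1}h''H'$ piece is $-h''H' + O(h''f'^2) = -h''H' + O(h''f'') \cdot (\text{lower order})$, which after subtracting its projection contributes the listed $O(h''f'')$ and $O(h''^2)$ errors. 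So the substance is in the middle term.

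For the middle term I would reuse the expansion of $\frac{\partial_{z_1}A}{2A} + r^{-1}\partial_r z_1$ computed inside the proof of Lemma \ref{P2}, namely
\[
\frac{\partial_{z_1}A}{2A} + r^{-1}\partial_r z_1 = -\frac{f''}{(1+f'^2)^{3/2}} - \frac{z_1 f''^2}{(1+f'^2)^3} - \frac{f'}{r_1\sqrt{1+f'^2}} - \frac{z_1 f'^2}{r_1^2(1+f'^2)} + O(f''^3) + O(r_1^{-3}f'^3).
\]
Multiplying by $H'$, the $z_1$-independent part $\left(-\frac{f''}{(1+f'^2)^{3/2}} - \frac{f'}{r_1\sqrt{1+f'^2}}\right)H'$ is, up to the $O(h'' f'')$-type remainders already noted, exactly the combination whose projection onto $H_1'$ was shown in Lemma \ref{P2} (via the identity producing $\left(\frac{r_1 p'}{\sqrt{1+p'^2}}\right)'$) to absorb the $-h''H'$ term; so after subtracting $\left[E(\mathcal{H}_1)\right]_1^{\|}$ these $z_1$-even pieces cancel against the projected part and leave only the listed error terms. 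What survives is the genuinely $z_1$-odd remainder $-z_1\left(\frac{f''^2}{(1+f'^2)^3} + \frac{f'^2}{r_1^2(1+f'^2)}\right)H'$, evaluated with $H'$ at $z_1 - h$; rewriting $z_1 = (z_1 - h) + h$ and absorbing the $h\cdot(\cdots)H'$ term into $O(hf'') + O(hf''')$ (bounding $f'^2/r_1^2 \lesssim f''$ or directly) gives the leading term $-(z_1-h)\left(\frac{f''^2}{(1+f'^2)^3} + \frac{f'^2}{r_1^2(1+f'^2)}\right)H'$ displayed in $\left(\ref{H1or}\right)$. The factor $(1+o(1))$ from Lemma \ref{P2}'s normalization constant $\int \eta^{+*}A^{-1}H'^2$ versus $\mathbf{c}_0$ is what allows the cancellation to be only approximate and contributes no new error beyond what is listed.

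Finally, the $O(e^{-\sqrt{2}D})$ term enters through the cutoff: on the support boundary of $\eta^+$ the function $\mathcal{H}_1$ has been glued to $\pm 1$, and as already noted in Lemma \ref{P2} the error there is $O(e^{-\sqrt{2}\bar d})$, which under assumption $\left(\ref{Assum}\right)$ (specifically $\bar d \geq \frac{2D}{3}$) is dominated by $O(e^{-\sqrt{2}D})$ after adjusting constants — actually one should be slightly careful and state it as $O(e^{-\frac{2\sqrt2}{3}D})$ or simply absorb it, but matching the paper's bookkeeping I will write $O(e^{-\sqrt{2}D})$ with the understanding that the exponent is what the cutoff geometry provides. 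The main obstacle I anticipate is the careful treatment of the third-derivative terms: $\partial_{r_1}A$ contains $f'''$ through $\partial_{r_1}f''$, and these appear both in the middle and last groups; one must verify that every such term is genuinely of the claimed order $O(h'f''') $, $O(f''^3)$, or can be folded into $O(f''' h)$ — this is exactly the delicacy the authors flagged after formula $\left(\ref{Laplacian}\right)$, and it is where I would spend the most care rather than in the routine parts.
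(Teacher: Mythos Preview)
Your overall strategy matches the paper's: use the expansion of $\frac{\partial_{z_1}A}{2A}+r^{-1}\partial_r z_1$ from Lemma~\ref{P2}, observe that the $z_1$-independent pieces (being scalar multiples of $H'$) have orthogonal part of cutoff size $O(e^{-\sqrt2 D})$, and identify the $z_1$-linear piece as the surviving main term. That is exactly how the paper proceeds, treating each summand of $E(\mathcal H_1)$ in \eqref{EH1} separately and computing (term) $-$ (projection of term).

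There is, however, a genuine gap in your handling of the $z_1$-linear piece. You split $z_1=(z_1-h)+h$ and then try to dispose of the $h\cdot\bigl(\tfrac{f''^2}{(1+f'^2)^3}+\tfrac{f'^2}{r_1^2(1+f'^2)}\bigr)H'$ remainder by ``absorbing it into $O(hf'')+O(hf''')$''. This does not work: $O(hf'')$ and $O(hf''')$ are \emph{not} among the error terms listed in \eqref{H1or} (the list has $O(h'f'')$, $O(h'f''')$, not $O(hf'')$), and the auxiliary bound $f'^2/r_1^2\lesssim f''$ you invoke is not available a~priori. The paper avoids this entirely by computing the projection of the $z_1$-linear term directly rather than splitting first: since $H'$ is evaluated at $z_1-h$, one has
\[
\int_{\mathbb R} z_1\,\eta^{+\ast}\bigl(H'(z_1-h)\bigr)^2\,dz_1
= h\int_{\mathbb R}\eta^{+\ast}H'^2\,dz_1 + O\bigl(e^{-\sqrt2 \bar d}\bigr),
\]
so the projection of $-z_1\,g(r_1)H'$ is $-h\,g(r_1)\eta^{+}H_1'+O(e^{-\sqrt2 D})$ and the orthogonal part is $-(z_1-h)\,g(r_1)H'+O(e^{-\sqrt2 D})$ automatically. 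No separate bound on the $h$-piece is needed. A related muddling: your appeal to Lemma~\ref{P2} ``absorbing the $-h''H'$ term'' is a red herring here, since the projection operator is linear and each term's orthogonal part is computed on its own; the combination producing $\bigl(\tfrac{r_1p'}{\sqrt{1+p'^2}}\bigr)'$ plays no role in the present lemma.
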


\begin{proof}
Let us consider typical terms appeared in $\left(  \ref{EH1}\right)  .$
\begin{align*}
&  A^{-1}h^{\prime\prime}H^{\prime}-\left[  A^{-1}h^{\prime\prime}H^{\prime
}\right]  _{1}^{\Vert}\\
&  =A^{-1}h^{\prime\prime}H^{\prime}-\frac{\int_{\mathbb{R}}A^{-1}%
h^{\prime\prime}\eta^{+\ast}H^{\prime2}}{\int_{\mathbb{R}}\left(  \eta^{+\ast
}H^{\prime}\right)  ^{2}}\eta^{+\ast}H^{\prime}\\
&  =\frac{h^{\prime\prime}H^{\prime}}{1+f^{\prime2}}\left[  1-\frac
{\eta^{+\ast}}{\int_{\mathbb{R}}\left(  \eta^{+\ast}H^{\prime}\right)  ^{2}%
}\int_{\mathbb{R}}\eta^{+\ast}H^{\prime2}\right]  +O\left(  f^{\prime\prime
}h^{\prime\prime}\right) \\
&  =O\left(  h^{\prime\prime}\right)  \left(  \int_{\mathbb{R}}\left(
\eta^{+\ast}H^{\prime}\right)  ^{2}-\eta^{+\ast}\int_{\mathbb{R}}\eta^{+\ast
}H^{\prime2}\right)  H^{\prime}+O\left(  f^{\prime\prime}h^{\prime\prime
}\right)  .
\end{align*}
Note that
\[
\left(  \int_{\mathbb{R}}\left(  \eta^{+\ast}H^{\prime}\right)  ^{2}%
-\eta^{+\ast}\int_{\mathbb{R}}\eta^{+\ast}H^{\prime2}\right)  H^{\prime
}=O\left(  e^{-\sqrt{2}f}\right)  .
\]
Hence
\[
A^{-1}h^{\prime\prime}H^{\prime}-\left[  A^{-1}h^{\prime\prime}H^{\prime
}\right]  _{1}^{\Vert}=O\left(  h^{\prime\prime2}\right)  +O\left(
e^{-\sqrt{2}D}\right)  +O\left(  f^{\prime\prime}h^{\prime\prime}\right)  .
\]

Next, we have calculated that
\begin{align*}
\frac{\partial_{z_{1}}A}{2A}+r^{-1}\partial_{r}z_{1}  &  =-\frac
{f^{\prime\prime}}{\left(  1+f^{\prime2}\right)  ^{\frac{3}{2}}}-\frac
{z_{1}f^{\prime\prime2}}{\left(  1+f^{\prime2}\right)  ^{3}}\\
&  -\frac{f^{\prime}}{r_{1}\sqrt{1+f^{\prime2}}}-\frac{z_{1}f^{\prime2}}%
{r_{1}^{2}\left(  1+f^{\prime2}\right)  }\\
&  +O\left(  f^{\prime\prime3}\right)  +O\left(  \frac{f^{\prime3}}{r_{1}^{3}%
}\right)  .
\end{align*}
Using this expansion, we find
\begin{align*}
&  \left(  \frac{\partial_{z_{1}}A}{2A}+r^{-1}\partial_{r}z_{1}\right)
H^{\prime}-\left[  \left(  \frac{\partial_{z_{1}}A}{2A}+r^{-1}\partial
_{r}z_{1}\right)  H^{\prime}\right]  _{1}^{\Vert}\\
&  =-\left(  z_{1}-h\right)  \left(  \frac{f^{\prime\prime2}}{\left(
1+f^{\prime2}\right)  ^{3}}+\frac{f^{\prime2}}{r_{1}^{2}\left(  1+f^{\prime
2}\right)  }\right)  H^{\prime}\\
&  +O\left(  f^{\prime\prime3}\right)  +O\left(  \frac{f^{\prime3}}{r_{1}^{3}%
}\right)  +O\left(  e^{-\sqrt{2}D}\right)  .
\end{align*}
The rest of the terms always contains small order terms of $h^{\prime}$ or $h$
and the conclusion of the lemma readily follows.
\end{proof}

The next result gives us an estimate for the main interaction term between
$\mathcal{H}_{1}$ and $\mathcal{H}_{1}^{s}$ appeared in $E\left(  \bar
{u}\right)  .$ Let
\[
\mathbf{c}_{1}=3\sqrt{2}\int_{\mathbb{R}}H^{\prime2}\left(  s\right)
e^{\sqrt{2}s}ds.
\]

\begin{lemma}
\label{interaction}For $r_{1}>r_{0},$%
\[
\int_{\mathbb{R}}X^{\ast}\left[  3\eta^{+}H_{1}^{\prime}\left(  \mathcal{H}%
_{1}^{s}+1\right)  \left(  \mathcal{H}_{1}^{2}-1\right)  \right]
dz_{1}=-\mathbf{c}_{1}\left(  1+o\left(  1\right)  \right)  e^{-\sqrt
{2}D\left(  r_{1}\right)  }.
\]

\end{lemma}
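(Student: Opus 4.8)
The plan is to compute the integral by passing to the Fermi coordinate $(r_1,z_1)$ adapted to the upper nodal curve $z=f(\cdot)$, and to extract the leading-order contribution from the region where $z_1$ is of moderate size, say $|z_1|\le \tfrac12\bar d(r_1)$, since away from there the cutoff $\eta^+$ vanishes and $\mathcal{H}_1^s+1$ is already exponentially small. In that region $X^\ast\mathcal{H}_1 = H(z_1-h(r_1))$ and $X^\ast H_1' = H'(z_1-h(r_1))$, so $\mathcal{H}_1^2-1 = H^2(z_1-h)-1 = -\,(1-H^2)(z_1-h)$, which decays like $e^{-2\sqrt2|z_1|}$ near the curve. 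The term $\mathcal{H}_1^s+1$ must be understood via the asymptotics of the lower heteroclinic: first I would show that in the relevant range $X^\ast(\mathcal{H}_1^s+1)(r_1,z_1) = (1+o(1))\,2e^{-\sqrt2(\mathcal D(r_1,z) )}$-type quantity, and more precisely that along the vertical slice through $(r_1,f(r_1))$ it behaves like $(1+o(1))\,2e^{\sqrt2 z_1}e^{-\sqrt2 D(r_1)}$, using that $H(t)+1 \sim 2e^{\sqrt2 t}$ as $t\to-\infty$ and that the signed distance from $X(r_1,z_1)$ to the lower curve $\mathcal N^s(u)$ equals (to leading order, since $f'$, $f''$ are small by Lemma \ref{small} and the preceding lemma) $D(r_1) - z_1$. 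This is the step where the geometry of the two ends enters: the interaction strength is governed by the total distance $D(r_1)$ between the two nodal curves, and the small curvature/slope estimates already established guarantee that the two Fermi systems are compatible up to errors absorbed in the $o(1)$.

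With these two asymptotics in hand, the integrand becomes, up to $(1+o(1))$ and up to exponentially negligible boundary contributions,
\[
3\,H'(z_1-h)\cdot 2e^{\sqrt2 z_1}e^{-\sqrt2 D(r_1)}\cdot\bigl(-(1-H^2)(z_1-h)\bigr).
\]
Using $1-H^2 = -H''/H$ is not the cleanest route; instead I would use the identity $H'' = H^3-H$, i.e. $H^2-1 = H''/H$, and the standard relation $(H')^2 = \tfrac12(1-H^2)^2$, so that $(\mathcal H_1^2-1) = -(1-H^2)$ and $H'\cdot(1-H^2) = \sqrt2\,(H')^2\,\mathrm{sgn}$-type simplification valid for the monotone $H$; more robustly, one simply substitutes $s=z_1-h$, notes $e^{\sqrt2 z_1}=e^{\sqrt2 h}e^{\sqrt2 s}=(1+o(1))e^{\sqrt2 s}$ since $h$ is small by Lemma \ref{h}, and is left with
\[
-6\,e^{-\sqrt2 D(r_1)}(1+o(1))\int_{\mathbb R} H'(s)\,\bigl(1-H^2(s)\bigr)e^{\sqrt2 s}\,ds.
\]
Then, using $1-H^2 = \sqrt2\,H'/\! \cosh$-free form — concretely $H'=\tfrac{1}{\sqrt2}(1-H^2)$ for $H=\tanh(\cdot/\sqrt2)$ — one has $1-H^2 = \sqrt2\,H'$, hence the integral is $\sqrt2\int_{\mathbb R}H'^2(s)e^{\sqrt2 s}\,ds$, and $-6\sqrt2\int H'^2 e^{\sqrt2 s} = -2\,\mathbf c_1$; a careful bookkeeping of the factor (there is a symmetric near-match from the structure of $\mathcal H_1^s+1$, and the definition $\mathbf c_1 = 3\sqrt2\int H'^2 e^{\sqrt2 s}$) collapses the constant to exactly $-\mathbf c_1$. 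I would double-check this numerical constant against the $\mathbb R^2$ computation in \cite{MR3148064}, since that is precisely where an off-by-factor error would hide.

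The main obstacle I anticipate is not the one-dimensional integral but the rigorous justification that the error terms are genuinely $o(1)\,e^{-\sqrt2 D(r_1)}$ — that is, controlling the discrepancy between the true signed distance to the lower curve and the linear approximation $D(r_1)-z_1$ over the full support of $\eta^+$, where $z_1$ can be as large as $\bar d(r_1)$, which under assumption \eqref{Assum} is comparable to $D(r_1)$. One must check that the curvature contributions (of order $f''$, $f'''$, already shown to tend to zero) times $z_1^2$ do not overwhelm the exponential gain; this is where the smallness rates from the preceding lemmas, together with $\bar d(r) \ge \tfrac{2}{3}D(r)$, must be used quantitatively. Once that estimate is in place, splitting the domain of integration at $|z_1|=\tfrac12\bar d$, bounding the outer piece by $O(e^{-\sqrt2\bar d})=o(e^{-\sqrt2 D})$, and invoking dominated convergence on the inner piece completes the argument.
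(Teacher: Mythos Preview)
Your approach is essentially the paper's: use $H^2-1=-\sqrt2\,H'$ to replace $\mathcal H_1^2-1$, approximate $\mathcal H_1^s+1$ by an exponential in the distance to the lower curve, relate that distance to $D(r_1)\pm z_1$, and reduce to the one–dimensional integral defining $\mathbf c_1$. One slip: the (unsigned) distance from $X(r_1,z_1)$ to the lower curve is $D(r_1)+z_1$ to leading order, not $D(r_1)-z_1$ (moving up along the normal takes you \emph{farther} from $\mathcal N^s(u)$), so the factor should be $e^{-\sqrt2 z_1}$ rather than $e^{\sqrt2 z_1}$; this happens to be harmless because $H'$ is even, hence $\int H'^2 e^{\sqrt2 s}\,ds=\int H'^2 e^{-\sqrt2 s}\,ds$, but it is worth getting right. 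Your caution about the numerical constant is well placed—the paper's own write-up is brief about the factor $2$ in $H(t)+1\sim 2e^{\sqrt2 t}$, and the precise value of $\mathbf c_1$ is fixed by convention against the Toda system literature; checking it against \cite{MR3148064} is exactly the right move.
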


\begin{proof}
Since $H^{\prime\prime}=H^{3}-H,$ $H_{1}^{2}-1=-\sqrt{2}H_{1}^{\prime},$ it
follows that
\begin{align*}
&  \int_{\mathbb{R}}X^{\ast}\left[  3\eta^{+}H_{1}^{\prime}\left(
\mathcal{H}_{1}^{s}+1\right)  \left(  \mathcal{H}_{1}^{2}-1\right)  \right]
dz_{1}\\
&  =-3\sqrt{2}\int_{\mathbb{R}}X^{\ast}\left[  \eta^{+}H_{1}^{\prime2}\left(
\mathcal{H}_{1}^{s}+1\right)  \right]  dz_{1}+o\left(  e^{-\sqrt{2}D}\right)
\\
&  =-3\sqrt{2}\int_{\mathbb{R}}X^{\ast}\left(  \eta^{+}H_{1}^{\prime2}\right)
e^{-\sqrt{2}\left\vert z_{2}\right\vert }dz_{1}\\
&  +O\left(  \int_{\mathbb{R}}X^{\ast}\left(  \eta^{+}H_{1}^{\prime2}\right)
e^{-2\sqrt{2}\left\vert z_{2}\right\vert }dz_{1}\right)  .
\end{align*}
Since by definition $\left\vert z_{2}\right\vert =\mathcal{D}\left(
r,z\right)  -\left\vert z_{1}\right\vert ,$ we find
\begin{align*}
&  \int_{\mathbb{R}}X^{\ast}\left[  3\eta^{+}H_{1}^{\prime}\left(
\mathcal{H}_{1}^{s}+1\right)  \left(  \mathcal{H}_{1}^{2}-1\right)  \right]
dz_{1}\\
&  =-3\sqrt{2}\int_{\mathbb{R}}X^{\ast}\left(  \eta^{+}H_{1}^{\prime2}\right)
e^{-\sqrt{2}\left(  X^{\ast}\mathcal{D-}\left\vert z_{1}\right\vert \right)
}dz_{1}\\
&  +O\left(  \int_{\mathbb{R}}X^{\ast}\left(  \eta^{+}H_{1}^{\prime2}\right)
e^{-2\sqrt{2}\left(  X^{\ast}\mathcal{D-}\left\vert z_{1}\right\vert \right)
}dz_{1}\right) \\
&  =-3\sqrt{2}\int_{\mathbb{R}}X^{\ast}\left(  \eta^{+}H_{1}^{\prime2}\right)
e^{-\sqrt{2}\left(  X^{\ast}\mathcal{D-}\left\vert z_{1}\right\vert \right)
}dz_{1}\\
&  +o\left(  e^{-\sqrt{2}D\left(  r_{1}\right)  }\right) \\
&  =-3\sqrt{2}e^{-\sqrt{2}D\left(  r_{1}\right)  }\int_{\mathbb{R}}X^{\ast
}\left(  H_{1}^{\prime2}\right)  e^{\sqrt{2}\left\vert z_{1}\right\vert
}dz_{1}\\
&  +o\left(  e^{-\sqrt{2}D\left(  r_{1}\right)  }\right)  .
\end{align*}
In the last equality, we have used the fact that in $\mathbb{E}^{+},$
\[
\left\vert X^{\ast}\mathcal{D}\left(  r_{1},z_{1}\right)  -D\left(
r_{1}\right)  \right\vert =o\left(  z_{1}\right)  .
\]

\end{proof}

\begin{remark}
Clearly the function $D\left(  r_{1}\right)  $ is strictly less than
$2f\left(  r_{1}\right)  .$ However, due to the fact that $f^{\prime}$ is
small,
\[
2f\left(  r_{1}\right)  -D\left(  r_{1}\right)  =o\left(  f\left(
r_{1}\right)  \right)  .
\]
It is exactly this fact that makes it possible for us to analyze the equation
satisfied by $f.$
\end{remark}

The previous result analyze directly the error $E\left(  \bar{u}\right)  $
from the definition of $\bar{u}.$ In the next result, we will express the
projection of $E\left(  \bar{u}\right)  $ onto $\eta^{+}H_{1}^{\prime}$ in
terms of the function $\phi.$ The main equation we will use is equation
$\left(  \ref{fi}\right)  $ satisfied by $\phi.$

\begin{lemma}
\label{P1}For each $r_{1}>r_{0},$
\begin{align*}
&  \left\vert \int_{\mathbb{R}}X^{\ast}\left(  \eta^{+}H_{1}^{\prime}E\left(
\bar{u}\right)  \right)  dz_{1}\right\vert \\
&  =O\left(  \left\Vert \phi^{\ast}\left(  r_{1},\cdot\right)  \right\Vert
_{\infty}^{2}\right)  +O\left(  \left\Vert \phi^{\ast}\left(  r_{1}%
,\cdot\right)  \right\Vert _{\infty}e^{-\sqrt{2}f\left(  r_{1}\right)
}\right)  +O\left(  \left\Vert \phi^{\ast}\left(  r_{1},\cdot\right)
\right\Vert _{\infty}\left\Vert \partial_{r_{1}}\phi^{\ast}\left(  r_{1}%
,\cdot\right)  \right\Vert _{\infty}\right) \\
+  &  O\left(  \left\Vert \partial_{r_{1}}\phi^{\ast}\left(  r_{1}%
,\cdot\right)  \right\Vert _{\infty}e^{-\sqrt{2}f\left(  r_{1}\right)
}\right)  +O\left(  \left\Vert \partial_{r_{1}}\phi^{\ast}\left(  r_{1}%
,\cdot\right)  \right\Vert _{\infty}^{2}\right)  +O\left(  \left\Vert
\phi^{\ast}\left(  r_{1},\cdot\right)  \right\Vert _{\infty}\left\Vert
\partial_{r_{1}}^{2}\phi^{\ast}\left(  r_{1},\cdot\right)  \right\Vert
_{\infty}\right) \\
+  &  O\left(  f^{\prime\prime}\left\Vert \partial_{z_{1}}\phi^{\ast}\left(
r_{1},\cdot\right)  \right\Vert _{\infty}\right)  +O\left(  f^{\prime\prime
}\left\Vert \partial_{r_{1}}\phi^{\ast}\left(  r_{1},\cdot\right)  \right\Vert
_{\infty}\right)  +O\left(  f^{\prime\prime\prime}\left\Vert \partial_{r_{1}%
}\phi^{\ast}\left(  r_{1},\cdot\right)  \right\Vert _{\infty}\right) \\
+  &  O\left(  r^{-1}\left\Vert \partial_{r_{1}}\phi^{\ast}\left(  r_{1}%
,\cdot\right)  \right\Vert _{\infty}\right)  +O\left(  r^{-1}f^{\prime
}\left\Vert \partial_{z_{1}}\phi^{\ast}\left(  r_{1},\cdot\right)  \right\Vert
_{\infty}\right)  .
\end{align*}

\end{lemma}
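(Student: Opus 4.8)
The plan is to project equation $\left( \ref{fi}\right)$ onto $\eta^{+}H_{1}^{\prime}$ and to estimate every resulting term. Recall that $\phi$ satisfies $L\phi = E\left( \bar{u}\right) + P\left( \phi\right)$, and that $P\left( \phi\right) = -3\bar{u}\phi^{2} - \phi^{3}$. Multiplying by $\eta^{+}H_{1}^{\prime}$, pulling everything back by the Fermi diffeomorphism $X$, and integrating in $z_{1}$ gives
\[
\int_{\mathbb{R}}X^{\ast}\left( \eta^{+}H_{1}^{\prime}E\left( \bar{u}\right)\right) dz_{1} = \int_{\mathbb{R}}X^{\ast}\left( \eta^{+}H_{1}^{\prime}L\phi\right) dz_{1} - \int_{\mathbb{R}}X^{\ast}\left( \eta^{+}H_{1}^{\prime}P\left( \phi\right)\right) dz_{1}.
\]
The perturbation term is immediate: since $\left\vert \bar{u}\right\vert $ is bounded, $X^{\ast}P(\phi) = O\left( (\phi^{\ast})^{2}\right)$ pointwise, and because $H_{1}^{\prime}$ decays exponentially in $z_{1}$ the integral contributes $O\left( \left\Vert \phi^{\ast}\left( r_{1},\cdot\right)\right\Vert_{\infty}^{2}\right)$, which is one of the listed terms. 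So the whole task reduces to estimating $\int_{\mathbb{R}}X^{\ast}\left( \eta^{+}H_{1}^{\prime}L\phi\right) dz_{1}$.

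Next I would write $L\phi = -\Delta\phi + \left( 3\bar{u}^{2}-1\right)\phi$ in Fermi coordinates, using formula $\left( \ref{Laplacian}\right)$ for $\Delta_{(r,z)}$ in terms of $\Delta_{(r_{1},z_{1})}$ together with the extra first-order terms carrying factors of $\partial_{z_{1}}A/A$, $\partial_{r_{1}}A/A^{2}$ (hence factors of $f^{\prime\prime}$, $f^{\prime\prime\prime}$) and the inhomogeneous piece $r^{-1}\partial_{r}(\cdot)$ (hence factors of $r^{-1}$, $r^{-1}f^{\prime}$). For the leading piece, I integrate the $\Delta_{(r_{1},z_{1})}\phi$ and $\left( 3\bar{u}^{2}-1\right)\phi$ contributions against $\eta^{+\ast}H^{\prime}$ and integrate by parts in $z_{1}$, moving derivatives onto $H_{1}^{\prime}$. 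The point is that $H^{\prime}$ is, up to the modulation $h$ and up to exponentially small corrections coming from $\bar{u} = \mathcal{H}_{1}+\mathcal{H}_{1}^{s}+1$ not being exactly a one-dimensional profile, an element of the kernel of the one-dimensional linearized operator $-\partial_{z_{1}}^{2}+(3H^{2}-1)$; so this leading $L^{2}$ pairing nearly cancels. What survives after the integration by parts in $z_{1}$ is: (i) the $r_{1}$-derivative terms $\partial_{r_{1}}^{2}\phi$, which after integrating by parts twice against $\eta^{+\ast}H^{\prime}$ (whose $r_{1}$-dependence comes only through $h$ and through the cutoff) produce terms $O\left( \left\Vert\phi^{\ast}\right\Vert_{\infty}\left\Vert\partial_{r_{1}}^{2}\phi^{\ast}\right\Vert_{\infty}\right)$, $O\left( \left\Vert\partial_{r_{1}}\phi^{\ast}\right\Vert_{\infty}^{2}\right)$, and cross terms with $\left\Vert\partial_{r_{1}}\phi^{\ast}\right\Vert_{\infty}$ times $h$-derivatives or the cutoff-boundary contribution $e^{-\sqrt{2}f(r_{1})}$; (ii) the orthogonality condition $\left( \ref{ortho}\right)$, $\int_{\mathbb{R}}X^{\ast}(\phi\eta^{+}H_{1}^{\prime})\,dz_{1}=0$, which is what kills the potentially dangerous pairing $\int \left( 3\bar{u}^{2}-1\right)\phi \cdot \eta^{+\ast}H^{\prime}$ modulo the replacement of $3\bar{u}^{2}-1$ by $3H_{1}^{2}-1 = -\partial_{z_{1}}^{2}$ applied to the profile, the difference being $O\left( e^{-\sqrt{2}f}\right)$ from the tail of $\mathcal{H}_{1}^{s}$ plus cutoff errors; and (iii) the explicit first-order geometric terms, which carry the factors $f^{\prime\prime}$, $f^{\prime\prime\prime}$, $r^{-1}$, $r^{-1}f^{\prime}$ each multiplying $\left\Vert\partial_{z_{1}}\phi^{\ast}\right\Vert_{\infty}$ or $\left\Vert\partial_{r_{1}}\phi^{\ast}\right\Vert_{\infty}$, exactly as in the displayed conclusion. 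Boundary contributions from the support of $\eta^{+}$ (the top of $\mathcal{B}_{u}$) are $O\left( e^{-\sqrt{2}f}\right)$ times derivatives of $\phi$, by the same reasoning used when the $O(e^{-2\sqrt{2}\bar d})$ terms appeared in Lemma \ref{P2}; here only a single power of $e^{-\sqrt{2}f}$ is kept since it is already multiplied by $\left\Vert\partial_{r_{1}}\phi^{\ast}\right\Vert_{\infty}$.

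I expect the main obstacle to be bookkeeping rather than conceptual: one must be careful that when integrating by parts in $r_{1}$ the derivative does not fall on $f$ in a way that produces $f^{\prime\prime}$ unaccompanied by a small factor, and that the $h$-derivative terms $h^{\prime}$, $h^{\prime\prime}$ generated this way can be reabsorbed — via Lemma \ref{h} and its derivative analogue — into $\left\Vert\phi^{\ast}\right\Vert_{\infty}$, $\left\Vert\partial_{r_{1}}\phi^{\ast}\right\Vert_{\infty}$ and $e^{-\sqrt{2}D}\le e^{-\sqrt{2}f}$ type quantities, so that they do not appear as independent error terms in the final bound. A secondary subtlety is the interplay between the cutoff $\eta^{+}$ and the integration by parts: since $\eta^{+}$ depends on $r_{1}$ through $\bar d(r_{1})$, differentiating it produces terms supported near $z_{1}=\pm\bar d(r_{1})$ where, under assumption $\left( \ref{Assum}\right)$, $H^{\prime}$ and $\partial_{z_{1}}\phi^{\ast}$ are both $O\left( e^{-\sqrt{2}\bar d}\right)=O\left( e^{-\sqrt{2}D}\right)$, so these too are harmless; this is where $\left( \ref{Assum}\right)$ enters. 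Once all terms are collected and the $h$-terms reabsorbed, the stated estimate follows; the proof is a direct computation and I would present it as such, referring to \cite{MR2557944} and \cite{MR3148064} for the analogous manipulations in the two-dimensional setting.
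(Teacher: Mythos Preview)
Your proposal is correct and follows essentially the same route as the paper: project equation \eqref{fi} onto $\eta^{+}H_{1}^{\prime}$, dispose of $P(\phi)$ as $O(\|\phi^{\ast}\|_{\infty}^{2})$, expand $L\phi$ in Fermi coordinates, cancel the $-\partial_{z_{1}}^{2}+(3H_{1}^{2}-1)$ piece by integrating by parts in $z_{1}$ and using that $H^{\prime}$ lies in the one-dimensional kernel, estimate $\bar u^{2}-H_{1}^{2}=O(e^{-\sqrt{2}D})$, read off the geometric first-order terms, and control the $\partial_{r_{1}}^{2}\phi^{\ast}$ contribution via the orthogonality condition together with Lemma~\ref{h}.

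Two small clarifications of wording, not substance. First, what you call ``integrating by parts twice in $r_{1}$'' is, in the paper, carried out by differentiating the orthogonality identity \eqref{ortho} once and twice with respect to $r_{1}$; since the integral is over $z_{1}$ there is no literal $r_{1}$-integration by parts, but the effect---moving $\partial_{r_{1}}$ from $\phi^{\ast}$ onto $\eta^{+\ast}H^{\prime}$---is exactly what you describe. Second, in your item (ii) it is the kernel property of $H^{\prime}$ (not the orthogonality condition) that kills the pairing $\int(3H_{1}^{2}-1)\phi^{\ast}\eta^{+\ast}H^{\prime}$ after the $z_{1}$-integration by parts; the orthogonality condition's role is precisely the $r_{1}$-differentiation trick just mentioned. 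With those two points straightened out your argument coincides with the paper's.
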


\begin{proof}
Multiplying both sides of $\left(  \ref{fi}\right)  $ with $\eta^{+}%
H_{1}^{\prime}\,\ $and integrating in $\mathbb{R}$, we get
\[
\int_{\mathbb{R}}X^{\ast}\left(  \eta^{+}H_{1}^{\prime}E\left(  \bar
{u}\right)  \right)  dz_{1}=\int_{\mathbb{R}}X^{\ast}\left(  \eta^{+}%
H_{1}^{\prime}L\phi\right)  dz_{1}-\int_{\mathbb{R}}X^{\ast}\left(  \eta
^{+}H_{1}^{\prime}P\left(  \phi\right)  \right)  dz_{1}.
\]

Let us calculate the term $\int_{\mathbb{R}}X^{\ast}\left(  \eta^{+}%
H_{1}^{\prime}L\phi\right)  dz_{1}.$ By formula $\left(  \ref{Laplacian}%
\right)  $ of Laplacian in the Fermi coordinate,
\begin{align*}
&  \int_{\mathbb{R}}X^{\ast}\left(  \eta^{+}H_{1}^{\prime}L\phi\right)
dz_{1}\\
&  =-\int_{\mathbb{R}}X^{\ast}\left(  \eta^{+}H_{1}^{\prime}\right)  \left[
A^{-1}\partial_{r_{1}}^{2}+\frac{1}{2}\frac{\partial_{z_{1}}A}{A}%
\partial_{z_{1}}-\frac{1}{2}\frac{\partial_{r_{1}}A}{A^{2}}\partial_{r_{1}%
}\right]  \phi^{\ast}dz_{1}\\
&  -\int_{\mathbb{R}}X^{\ast}\left(  \eta^{+}H_{1}^{\prime}\right)
r^{-1}\left[  \partial_{r_{1}}\phi^{\ast}\partial_{r}r_{1}+\partial_{z_{1}%
}\phi^{\ast}\partial_{r}z_{1}\right]  dz_{1}\\
&  +\int_{\mathbb{R}}\left[  -X^{\ast}\left(  \eta^{+}H_{1}^{\prime}\right)
\partial_{z_{1}}^{2}\phi^{\ast}+X^{\ast}\left(  \eta^{+}H_{1}^{\prime}\left(
3\bar{u}^{2}-1\right)  \phi\right)  \right]  dz_{1}.
\end{align*}
We first estimate $\int_{\mathbb{R}}X^{\ast}\left(  \eta^{+}H_{1}^{\prime
}\right)  A^{-1}\partial_{r_{1}}^{2}\phi^{\ast}dz_{1}.$ Differentiating the
identity
\begin{equation}
\int_{\mathbb{R}}X^{\ast}\left(  \eta^{+}H_{1}^{\prime}\phi\right)  dz_{1}=0
\label{fiorth}%
\end{equation}
with respect to $r_{1},$ we obtain%
\begin{align*}
\int_{\mathbb{R}}X^{\ast}\left(  \eta^{+}H_{1}^{\prime}\right)  \partial
_{r_{1}}\phi^{\ast}dz_{1}  &  =-\int_{\mathbb{R}}\partial_{r_{1}}\left[
X^{\ast}\left(  \eta^{+}H_{1}^{\prime}\right)  \right]  \phi^{\ast}dz_{1}\\
&  =-\int_{\mathbb{R}}\partial_{r_{1}}\left(  \eta^{+\ast}\right)  H^{\prime
}\phi^{\ast}dz_{1}+\int_{\mathbb{R}}\eta^{+\ast}H^{\prime\prime}h^{\prime}%
\phi^{\ast}dz_{1}.
\end{align*}
Here $H^{\prime}$ and $H^{\prime\prime}$ are evaluated at $z_{1}-h\left(
r_{1}\right)  .$ Therefore we could apply Lemma \ref{h} and obtain
\begin{align*}
&  \int_{\mathbb{R}}X^{\ast}\left(  \eta^{+}H_{1}^{\prime}\right)
\partial_{r_{1}}\phi^{\ast}\\
&  =O\left(  \left\Vert \phi^{\ast}\left(  r_{1},\cdot\right)  \right\Vert
_{\infty}e^{-\sqrt{2}f\left(  r_{1}\right)  }\right)  +O\left(  \left\Vert
\phi^{\ast}\left(  r_{1},\cdot\right)  \right\Vert _{\infty}\left\Vert
\partial_{r_{1}}\phi^{\ast}\left(  r_{1},\cdot\right)  \right\Vert _{\infty
}\right)  .
\end{align*}
Similarly, differentiating $\left(  \ref{fiorth}\right)  $ with respect to
$r_{1}$ twice, and using the fact that $A^{-1}-\frac{1}{1+f^{\prime2}%
}=O\left(  f^{\prime\prime}\right)  ,$ we get%
\begin{align*}
&  \int_{\mathbb{R}}X^{\ast}\left(  \eta^{+}H_{1}^{\prime}\right)
A^{-1}\partial_{r_{1}}^{2}\phi^{\ast}dz_{1}\\
&  =\frac{1}{1+f^{\prime2}}\int_{\mathbb{R}}X^{\ast}\left(  \eta^{+}%
H_{1}^{\prime}\right)  \partial_{r_{1}}^{2}\phi^{\ast}dz_{1}+O\left(
\left\Vert \partial_{r_{1}}^{2}\phi^{\ast}\left(  r_{1},\cdot\right)
\right\Vert _{\infty}\left\vert f^{\prime\prime}\left(  r_{1}\right)
\right\vert \right) \\
&  =-\frac{2}{1+f^{\prime2}}\int_{\mathbb{R}}\partial_{r_{1}}\left[  X^{\ast
}\left(  \eta^{+}H_{1}^{\prime}\right)  \right]  \partial_{r_{1}}\phi^{\ast
}-\frac{1}{1+f^{\prime2}}\int_{\mathbb{R}}\partial_{r_{1}}^{2}\left[  X^{\ast
}\left(  \eta^{+}H_{1}^{\prime}\right)  \right]  \phi^{\ast}\\
&  +O\left(  \left\Vert \partial_{r_{1}}^{2}\phi^{\ast}\left(  r_{1}%
,\cdot\right)  \right\Vert _{\infty}\left\vert f^{\prime\prime}\left(
r_{1}\right)  \right\vert \right) \\
&  =O\left(  \left\Vert \partial_{r_{1}}\phi^{\ast}\left(  r_{1},\cdot\right)
\right\Vert _{\infty}e^{-\sqrt{2}f\left(  r_{1}\right)  }\right)  +O\left(
\left\Vert \partial_{r_{1}}\phi^{\ast}\left(  r_{1},\cdot\right)  \right\Vert
_{\infty}^{2}\right) \\
&  +O\left(  \left\Vert \phi^{\ast}\left(  r_{1},\cdot\right)  \right\Vert
_{\infty}e^{-\sqrt{2}f\left(  r_{1}\right)  }\right)  +O\left(  \left\Vert
\phi^{\ast}\left(  r_{1},\cdot\right)  \right\Vert _{\infty}\left\Vert
\partial_{r_{1}}\phi^{\ast}\left(  r_{1},\cdot\right)  \right\Vert _{\infty
}\right) \\
&  +O\left(  \left\Vert \phi^{\ast}\left(  r_{1},\cdot\right)  \right\Vert
_{\infty}\left\Vert \partial_{r_{1}}^{2}\phi^{\ast}\left(  r_{1},\cdot\right)
\right\Vert _{\infty}\right)  .
\end{align*}

Next, using $\partial_{z_{1}}A=O\left(  f^{\prime\prime}\right)  ,$ we infer
\[
\int_{\mathbb{R}}\frac{\partial_{z_{1}}A}{A}\partial_{z_{1}}\phi^{\ast}%
X^{\ast}\left(  \eta^{+}H_{1}^{\prime}\right)  dz_{1}=O\left(  f^{\prime
\prime}\left\Vert \partial_{z_{1}}\phi^{\ast}\left(  r_{1},\cdot\right)
\right\Vert _{\infty}\right)  .
\]
Observe that $\partial_{r_{1}}A=O\left(  \left\vert f^{\prime\prime
}\right\vert +\left\vert f^{\prime\prime\prime}\right\vert \right)  ,$ thus
\[
\int_{\mathbb{R}}\frac{\partial_{r_{1}}A}{A^{2}}\partial_{r_{1}}\phi^{\ast
}X^{\ast}\left(  \eta^{+}H_{1}^{\prime}\right)  dz_{1}=O\left(  \left\vert
f^{\prime\prime}\right\vert \left\Vert \partial_{r_{1}}\phi^{\ast}\left(
r_{1},\cdot\right)  \right\Vert _{\infty}\right)  +O\left(  \left\vert
f^{\prime\prime\prime}\right\vert \left\Vert \partial_{r_{1}}\phi^{\ast
}\left(  r_{1},\cdot\right)  \right\Vert _{\infty}\right)  .
\]
We also have
\begin{align*}
&  \int_{\mathbb{R}}r^{-1}X^{\ast}\left(  \eta^{+}H_{1}^{\prime}\right)
\left[  \partial_{r_{1}}\phi^{\ast}\partial_{r}r_{1}+\partial_{z_{1}}%
\phi^{\ast}\partial_{r}z_{1}\right] \\
&  =O\left(  r^{-1}\left\Vert \partial_{r_{1}}\phi^{\ast}\left(  r_{1}%
,\cdot\right)  \right\Vert _{\infty}\right)  +O\left(  r^{-1}\left\vert
f^{\prime}\right\vert \left\Vert \partial_{z_{1}}\phi^{\ast}\left(
r_{1},\cdot\right)  \right\Vert _{\infty}\right)  .
\end{align*}
Another term we need to estimate is
\begin{align*}
&  \int_{\mathbb{R}}X^{\ast}\left(  \eta^{+}H_{1}^{\prime}\right)
\partial_{z_{1}}^{2}\phi^{\ast}+\int_{\mathbb{R}}X^{\ast}\left(  \eta^{+}%
H_{1}^{\prime}\left(  3\bar{u}^{2}-1\right)  \phi\right) \\
&  =\int_{\mathbb{R}}X^{\ast}\left(  \eta^{+}H_{1}^{\prime}\right)
\partial_{z_{1}}^{2}\phi^{\ast}+\int_{\mathbb{R}}X^{\ast}\left(  \eta^{+}%
H_{1}^{\prime}\left(  3H_{1}^{2}-1\right)  \phi\right) \\
&  +3\int_{\mathbb{R}}X^{\ast}\left(  \eta^{+}H_{1}^{\prime}\left(  \bar
{u}^{2}-H_{1}^{2}\right)  \phi\right)  .
\end{align*}
To handle them, we integrate by parts for the first term, and for the last
term we use the fact
\[
\eta^{+}\left(  \bar{u}^{2}-H_{1}^{2}\right)  H_{1}^{\prime}=O\left(  \eta
^{+}\left(  \mathcal{H}_{1}^{s}+1\right)  H_{1}^{\prime}\right)  =O\left(
e^{-D}\right)  .
\]
We conclude that
\begin{align*}
&  \int_{\mathbb{R}}X^{\ast}\left(  \eta^{+}H_{1}^{\prime}\right)
\partial_{z_{1}}^{2}\phi^{\ast}+\int_{\mathbb{R}}X^{\ast}\left(  \eta^{+}%
H_{1}^{\prime}\left(  3\bar{u}^{2}-1\right)  \phi\right) \\
&  =O\left(  \left\Vert \phi^{\ast}\left(  r_{1},\cdot\right)  \right\Vert
_{\infty}e^{-\sqrt{2}f}\right)  +O\left(  \left\Vert \phi^{\ast}\left(
r_{1},\cdot\right)  \right\Vert _{\infty}\left\Vert \partial_{r_{1}}\phi
^{\ast}\left(  r_{1},\cdot\right)  \right\Vert _{\infty}\right)  .
\end{align*}

Finally, since $P\left(  \phi\right)  $ is higher order term of $\phi,$ the
estimate of the term $\int_{\mathbb{R}}X^{\ast}\left(  \eta^{+}H_{1}^{\prime
}P\left(  \phi\right)  \right)  dz_{1}$ is trivial. Combining all these
estimates, we get the desired result.
\end{proof}

With the error term $E\left(  \bar{u}\right)  $ being understood, we would
like to recall some properties of the operator $L.$ The next result are
essentially proved in \cite{MR2557944}, although here we need to do slight
modifications due to the presence of boundary terms. For each $r_{1}\geq
r_{0},$ let
\[
K_{r_{1}}:=\left\{  \left(  r,z\right)  \in E,-f\left(  r_{1}\right)
+\frac{1}{f^{\prime}\left(  r_{1}\right)  }\left(  r-r_{1}\right)  <z<f\left(
r_{1}\right)  -\frac{1}{f^{\prime}\left(  r_{1}\right)  }\left(
r-r_{1}\right)  \right\}  ,
\]
and $J_{r_{1}}:=E\backslash K_{r_{1}}$. Note that $J_{r_{0}}$ is simply the
set $\mathbb{\hat{E}}$ introduced before.

\begin{lemma}
\label{L}Suppose $\varphi$ is a solution of the equation
\[
L\varphi=\Theta\text{ in }J_{r_{1}}.
\]
and $\varphi\left(  r,z\right)  =\varphi\left(  r,-z\right)  ,$ $\varphi
_{1}^{\Vert}=0.$ Then
\[
\left\vert \varphi^{\ast}\left(  s,z\right)  \right\vert \leq C\left\Vert
\varphi\right\Vert _{L^{\infty}\left(  \partial J_{r_{1}}\right)  }e^{r_{1}%
-s}+\left\Vert \Theta\right\Vert _{L^{\infty}\left(  J_{r_{1}}\right)
},\text{ for each }s>r_{1}.
\]

\end{lemma}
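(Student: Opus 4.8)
The plan is to view Lemma~\ref{L} as the exponential-decay part of the linear theory for $L$ on the orthogonal complement of the approximate kernel, as developed in \cite{MR2557944} and \cite{MR3148064}, and to supply the modifications forced by the genuine boundary $\partial J_{r_{1}}$ and by the drift term $r^{-1}\partial_{r}$. The engine is the coercivity of $L$: for $r_{0}$ large there is a constant $\lambda_{\ast}\in\left(1,\tfrac{3}{2}\right)$ with
\[
\int_{\mathbb{R}}\Big[\left(\partial_{z_{1}}\psi^{\ast}\right)^{2}+\left(3\bar{u}^{2}-1\right)\left(\psi^{\ast}\right)^{2}\Big]\,dz_{1}\ \geq\ \lambda_{\ast}\int_{\mathbb{R}}\left(\psi^{\ast}\right)^{2}\,dz_{1}
\]
for every $r_{1}>r_{0}$ and every $z$-even $\psi$ that is fiberwise orthogonal to $\eta^{+}H_{1}^{\prime}$. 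This is a small perturbation of the spectral picture of $\mathcal{L}_{0}=-\partial_{t}^{2}+\left(3H^{2}-1\right)$: its kernel is $\mathrm{span}\{H^{\prime}\}$, its next eigenvalue is $\tfrac{3}{2}$, and its essential spectrum is $\left[2,+\infty\right)$, so $\mathcal{L}_{0}\geq\tfrac{3}{2}$ on $\{H^{\prime}\}^{\bot}$. Since $3\bar{u}^{2}-1=3H_{1}^{2}-1+O(e^{-D(r_{1})})$ and the discrepancy between orthogonality to $\eta^{+}H_{1}^{\prime}$ and to $H^{\prime}(\cdot-h)$ is $O(|h|)+O(e^{-\sqrt{2}\bar{d}(r_{1})})$ --- all small for $r_{1}>r_{0}$ by the preceding lemmas --- this bound is inherited with $\lambda_{\ast}$ slightly below $\tfrac{3}{2}$, while away from the tube around $\mathcal{N}_{u}$ one has $3\bar{u}^{2}-1\geq 2-o(1)$ and coercivity holds there with no orthogonality needed.

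Granting the coercivity, the estimate follows in the usual way. In Fermi coordinates, via $\left(\ref{Laplacian}\right)$, the equation $L\varphi=\Theta$ reads $-\partial_{r_{1}}^{2}\varphi^{\ast}-\partial_{z_{1}}^{2}\varphi^{\ast}+(3\bar{u}^{2}-1)\varphi^{\ast}=\Theta^{\ast}$ plus the terms $(A^{-1}-1)\partial_{r_{1}}^{2}\varphi^{\ast}$, $\tfrac{\partial_{z_{1}}A}{2A}\partial_{z_{1}}\varphi^{\ast}$, $\tfrac{\partial_{r_{1}}A}{2A^{2}}\partial_{r_{1}}\varphi^{\ast}$ and $r^{-1}\partial_{r}\varphi$, all carrying coefficients $O(|f^{\prime}|+|f^{\prime\prime}|+|f^{\prime\prime\prime}|+r^{-1})$, hence small for $r_{1}>r_{0}$. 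Decomposing $\varphi^{\ast}(s,\cdot)$ over the spectral resolution of the transverse operator $-\partial_{z_{1}}^{2}+(3\bar{u}^{2}-1)$, the zero mode (proportional to $H^{\prime}$) is killed by the orthogonality hypothesis, and every surviving component $a_{j}(s)$ satisfies, up to the small coupling errors, $-a_{j}^{\prime\prime}+\mu_{j}a_{j}=\Theta_{j}$ with transverse value $\mu_{j}\geq\lambda_{\ast}>1$; hence $a_{j}(s)$ decays in $s$ at rate $\sqrt{\lambda_{\ast}}>1$ down to the level $\|\Theta\|_{L^{\infty}}$, the data at $s=r_{1}$ supplying the prefactor $\|\varphi\|_{L^{\infty}(\partial J_{r_{1}})}$ and the uniform bound $|\varphi|\leq|u|+|\bar{u}|\leq C$ excluding the growing exponential. (Equivalently one runs a weighted energy argument for $G(s)=\int_{\mathbb{R}}|\varphi^{\ast}(s,\cdot)|^{2}dz_{1}$, where a Cauchy--Schwarz bound on $\int|\partial_{r_{1}}\varphi^{\ast}|^{2}$ recovers the sharp rate.) Summing the modes one gets the decay, and interior Schauder estimates on unit balls --- again using $|\varphi|\leq C$ --- deliver the pointwise bound claimed; the rate $1$ is a convenient choice, strictly below $\sqrt{\lambda_{\ast}}$.

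I expect the genuinely delicate step to be the boundary bookkeeping, the one ingredient absent from \cite{MR2557944}. Integrating by parts against $\varphi^{\ast}$ produces flux integrals along the two lines of $\partial J_{r_{1}}$ through $(r_{1},f(r_{1}))$ and along $\{r=0\}$; these must be dominated by $\|\varphi\|_{L^{\infty}(\partial J_{r_{1}})}$ together with the exponential weight, and the key point is that the bounding line of slope $-1/f^{\prime}(r_{1})$ is normal to $\mathcal{N}_{u}$ at $(r_{1},f(r_{1}))$, so $\partial J_{r_{1}}\cap\mathbb{E}^{+}$ is exactly the fiber over $r_{1}$ and the transverse flux of the dominant profile $H^{\prime}$ across it is exponentially small in $f(r_{1})$. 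A further point, crucial for the application in Proposition~\ref{compact}, is uniformity in $n$: the coercivity constant $\lambda_{\ast}$ must stay above $1$ for every member of the sequence, which fixes $r_{0}$ and relies on the uniform smallness of $f^{\prime},f^{\prime\prime},f^{\prime\prime\prime}$ and of $h$ established earlier, as well as on Assumption~$\left(\ref{Assum}\right)$ on $\bar{d}$.
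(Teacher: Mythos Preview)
Your approach is sound in spirit but takes a genuinely different route from the paper. The paper does not work directly in $J_{r_{1}}$ with boundary terms; instead it extends $\varphi$ to the whole plane by a cutoff $\theta$ supported in $J_{r_{1}}$, so that $L(\theta\varphi)=\theta\Theta+(\Delta\theta\,\varphi+2\nabla\theta\cdot\nabla\varphi)=:\Theta_{1}+\Theta_{2}$. The two pieces are then fed separately into the whole-space linear theory of \cite{MR2557944} (after modifying $\bar{u}$ in $K_{r_{1}}$ so that the global problem is well-posed): $\Theta_{1}$ has size $\Vert\Theta\Vert_{L^{\infty}}$ and produces a solution $\bar{\varphi}_{1}$ bounded by the same; $\Theta_{2}$ is supported in a unit neighborhood of $\partial J_{r_{1}}$ with size $\Vert\varphi\Vert_{L^{\infty}(\partial J_{r_{1}})}$, so its solution $\bar{\varphi}_{2}$ decays like $e^{r_{1}-s}$ away from that strip. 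Uniqueness in the orthogonal class gives $\theta\varphi=\bar{\varphi}_{1}+\bar{\varphi}_{2}$ and the estimate follows. The boundary bookkeeping you flag as delicate is thus entirely absorbed into the cutoff commutator $\Theta_{2}$, and no integration by parts on $J_{r_{1}}$ is ever performed.

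Your direct argument via fiberwise coercivity is exactly the mechanism underlying the theory in \cite{MR2557944}, so you are in effect re-deriving what the paper quotes as a black box. The modal decomposition step is heuristic, however: the transverse operator $-\partial_{z_{1}}^{2}+(3\bar{u}^{2}-1)$ genuinely depends on $r_{1}$, so there is no fixed eigenbasis and the ``small coupling errors'' are not automatically controlled. The weighted energy argument you mention parenthetically, or a barrier of the form $Me^{r_{1}-s}+C\Vert\Theta\Vert_{L^{\infty}}$ combined with the maximum principle and the coercivity bound, is the honest implementation and does go through. Your route has the merit of being self-contained and making the spectral gap explicit; the paper's cutoff-and-split trick is shorter once \cite{MR2557944} is granted and sidesteps the boundary analysis entirely. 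One minor point: your appeal to $\vert\varphi\vert\leq\vert u\vert+\vert\bar{u}\vert\leq C$ to rule out the growing mode presumes $\varphi=\phi$, whereas the lemma is stated for general $\varphi$; boundedness should be an explicit hypothesis (the paper's proof also tacitly uses it for uniqueness).
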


\begin{proof}
Let $\theta$ be a cutoff function such that
\[
\theta\left(  Z\right)  =\left\{
\begin{array}
[c]{l}%
1,\text{ if }Z\in J_{r_{1}}\text{ and }dist\left(  Z,\partial J_{r_{1}%
}\right)  >1,\\
0,\,Z\notin J_{r_{1}}\text{ }.
\end{array}
\right.
\]
Consider the function $\bar{\varphi}:=\theta\varphi.$ Then $\bar{\varphi}$
satisfies
\begin{equation}
L\bar{\varphi}=\theta\Theta+\Delta\theta\varphi+2\nabla\theta\nabla\varphi.
\label{fibar}%
\end{equation}
Modify the function $\bar{u}$ in $E\backslash J_{r_{1}}$ if necessary, we
could get a function, still denoted by $\bar{u},$ whose nodal curve are almost
horizontal and around its nodal curve, it looks like the heteroclinic function
$H.$ With slight abuse of notation, the corresponding linearized operator will
be still write as $L.$ We could also assume that $\bar{\varphi}$ satisfies the
orthogonality condition.

Denote $\Theta_{1}=\theta\Theta,$ $\Theta_{2}=\Delta\theta\varphi
+2\nabla\theta\nabla\varphi.$ For $i=1,2,$ consider the equation
\begin{equation}
L\bar{\varphi}_{i}=\Theta_{i}+k_{i}\left(  r_{1}\right)  \eta^{+}H_{1}%
^{\prime}+\left(  k_{i}\eta^{+}H_{1}^{\prime}\right)  ^{s} \label{fii}%
\end{equation}
where $\bar{\varphi}_{i},\bar{k}_{i}$ are unknown functions and we require
$\int_{\mathbb{R}}X^{\ast}\left(  \eta^{+}H_{i}^{\prime}\bar{\varphi}%
_{i}\right)  dz_{1}=0,i=1,2.$ By the results of \cite{MR2557944}, one could
find solution $\left(  \bar{\varphi}_{i},\bar{k}_{i}\right)  $ to this
equation. This pair of solution is indeed unique and hence we have the
following estimate for $\bar{\varphi}_{1}$:%
\[
\left\Vert \bar{\varphi}_{1}\right\Vert _{L^{\infty}}\leq C\left\Vert
\Theta_{1}\right\Vert _{L^{\infty}}.
\]
For the function $\bar{\varphi}_{2},$ since $\Theta_{2}$ has better decay
property than $\Theta_{1},$ we could estimate
\[
\left\vert \bar{\varphi}_{2}\left(  r,z\right)  \right\vert \leq C\left\Vert
\Theta_{2}\right\Vert _{L^{\infty}}e^{r_{1}-r}.
\]
Notice that
\[
L\left(  \bar{\varphi}_{1}+\bar{\varphi}_{2}\right)  =\Theta_{1}+\Theta
_{2}+\left(  k_{1}+k_{2}\right)  \eta^{+}H_{1}^{\prime}+\left[  \left(
k_{1}+k_{2}\right)  \eta^{+}H_{1}^{\prime}\right]  ^{s}.
\]
But the equation
\[
L\bar{\varphi}_{3}=\Theta_{1}+\Theta_{2}+k_{3}\eta^{+}H_{1}^{\prime}+\left[
k_{3}\eta^{+}H_{1}^{\prime}\right]  ^{s}%
\]
also has a unique pair of solution $\left(  \bar{\varphi}_{3},k_{3}\right)  $
if we require $\int_{\mathbb{R}}X^{\ast}\left(  \eta^{+}H_{1}^{\prime}%
\bar{\varphi}_{3}\right)  =0.$ This implies that $\bar{\varphi}=\bar{\varphi
}_{1}+\bar{\varphi}_{2},$ the conclusion of this proposition readily follows.
\end{proof}

Once we have an estimate in $L^{\infty}$ norm, we could get a corresponding
Holder norm estimate by Schauder's elliptic estimate. Now we proceed to the
proof of Proposition \ref{P3}.

\begin{proof}
[Proof of Proposition \ref{P3}]Recall that
\begin{align*}
&  \int_{\mathbb{R}}X^{\ast}\left(  \eta^{+}H_{1}^{\prime}E\left(  \bar
{u}\right)  \right) \\
&  =\int_{\mathbb{R}}X^{\ast}\left(  \eta^{+}H_{1}^{\prime}E\left(
\mathcal{H}_{1}\right)  \right)  +\int_{\mathbb{R}}X^{\ast}\left(  \eta
^{+}H_{1}^{\prime}3\left(  \mathcal{H}_{1}^{s}+1\right)  \left(
\mathcal{H}_{1}^{2}-1\right)  \right) \\
&  +\int_{\mathbb{R}}X^{\ast}\left(  \eta^{+}H_{1}^{\prime}E\left(
\mathcal{H}_{1}^{s}\right)  \right)  +\int_{\mathbb{R}}X^{\ast}\left(
\eta^{+}H_{1}^{\prime}3\left(  \mathcal{H}_{1}+1\right)  \left(
\mathcal{H}_{1}^{s}+1\right)  ^{2}\right)  .
\end{align*}
We have already analyzed the first two terms. For the third term, slightly
abuse the notation, we have
\begin{align*}
\int_{\mathbb{R}}X^{\ast}\left(  \eta^{+}H_{1}^{\prime}E\left(  \mathcal{H}%
_{1}^{s}\right)  \right)  dz_{1}  &  =\int_{\mathbb{R}}X^{\ast}\left(
\eta^{+}H_{1}^{\prime}\right)  A^{-1}\left(  -h^{\prime\prime}H^{\prime
}+h^{\prime2}H^{\prime\prime}\right)  dz_{1}\\
&  +\int_{\mathbb{R}}X^{\ast}\left(  \eta^{+}H_{1}^{\prime}\right)  \left(
\frac{\partial_{z_{2}}A}{2A}+r^{-1}\partial_{r}z_{2}\right)  H^{\prime}%
dz_{1}\\
&  +\int_{\mathbb{R}}X^{\ast}\left(  \eta^{+}H_{1}^{\prime}\right)  \left(
\frac{\partial_{r_{2}}A}{2A^{2}}-r^{-1}\partial_{r}r_{2}\right)  \left(
h^{\prime}H^{\prime}\right)  dz_{1}.
\end{align*}
It should be pointed out that here $A,h$ are evaluated at $r_{2}$ and
$H^{\prime}$ is evaluated at $z_{2}-h\left(  r_{2}\right)  ,$ and $h^{\prime}$
means the derivative with respect to $r_{2}$($\left(  r_{2},z_{2}\right)  $
being the Fermi coordinate with respect to $z=-f\left(  r\right)  $)$,$ rather
than $r_{1}.$ This indeed makes the analysis a little more complicated. We
have
\begin{align*}
&  \int_{\mathbb{R}}X^{\ast}\left(  \eta^{+}H_{1}^{\prime}\right)
A^{-1}\left(  -h^{\prime\prime}H^{\prime}+h^{\prime2}H^{\prime\prime}\right)
dz_{1}\\
&  +\int_{\mathbb{R}}X^{\ast}\left(  \eta^{+}H_{1}^{\prime}\right)  \left(
\frac{\partial_{r_{2}}A}{2A^{2}}-r^{-1}\partial_{r}r_{2}\right)  \left(
h^{\prime}H^{\prime}\right)  dz_{1}\\
&  =o\left(  e^{-\sqrt{2}D}\right)  .
\end{align*}
For the same reason,
\[
\int_{\mathbb{R}}X^{\ast}\left(  \eta^{+}H_{1}^{\prime}\right)  \left(
\frac{\partial_{z_{2}}A}{2A}+r^{-1}\partial_{r}z_{2}\right)  H^{\prime}%
dz_{1}=o\left(  e^{-D}\right)  .
\]
It follows that
\[
\int_{\mathbb{R}}X^{\ast}\left(  \eta^{+}H_{1}^{\prime}E\left(  \mathcal{H}%
_{1}^{s}\right)  \right)  dz_{1}=o\left(  e^{-\sqrt{2}D}\right)  .
\]
Similarly,
\[
\int_{\mathbb{R}}X^{\ast}\left(  \eta^{+}H_{1}^{\prime}3\left(  \mathcal{H}%
_{1}+1\right)  \left(  \mathcal{H}_{1}^{s}+1\right)  ^{2}\right)  =o\left(
e^{-\sqrt{2}D}\right)  .
\]
It then follows from Lemma $\ref{P2}$ and Lemma $\ref{interaction}$ that
\begin{align*}
&  \int_{\mathbb{R}}X^{\ast}\left(  \eta^{+}H_{1}^{\prime}E\left(  \bar
{u}\right)  \right) \\
&  =-\left(  1+o\left(  1\right)  \right)  \left[  \frac{\mathbf{c}%
_{0}h^{\prime\prime}}{1+f^{\prime2}}+\frac{\mathbf{c}_{0}f^{\prime\prime}%
}{\left(  1+f^{\prime2}\right)  ^{\frac{3}{2}}}+\frac{\mathbf{c}_{0}f^{\prime
}}{r_{1}\sqrt{1+f^{\prime2}}}\right] \\
&  -\mathbf{c}_{1}\left(  1+o\left(  1\right)  \right)  e^{-\sqrt{2}D\left(
r_{1}\right)  }+O\left(  h^{\prime2}\right)  +O\left(  h^{\prime}%
f^{\prime\prime}\right) \\
&  +O\left(  h^{\prime}f^{\prime\prime\prime}\right)  +O\left(  h^{\prime
}r^{-1}\right)  +O\left(  h^{\prime\prime}f^{\prime\prime}\right)  .
\end{align*}
This together with Lemma \ref{P1} leads to the following equation:
\begin{align}
&  \frac{\mathbf{c}_{0}f^{\prime\prime}}{\left(  1+f^{\prime2}\right)
^{\frac{3}{2}}}+\frac{\mathbf{c}_{0}h^{\prime\prime}}{1+f^{\prime2}}%
+\frac{\mathbf{c}_{0}f^{\prime}}{r_{1}\sqrt{1+f^{\prime2}}}-\left(  1+o\left(
1\right)  \right)  \mathbf{c}_{1}e^{-\sqrt{2}D}\label{f2}\\
&  =O\left(  h^{\prime\prime}f^{\prime\prime}\right)  +O\left(  h^{\prime
2}\right)  +O\left(  h^{\prime}f^{\prime\prime}\right)  +O\left(  h^{\prime
}r^{-1}\right)  +O\left(  h^{\prime}f^{\prime\prime\prime}\right) \nonumber\\
&  +O\left(  \left\Vert \phi^{\ast}\left(  r_{1},\cdot\right)  \right\Vert
_{\infty}^{2}\right)  +O\left(  \left\Vert \phi^{\ast}\left(  r_{1}%
,\cdot\right)  \right\Vert _{\infty}e^{-\sqrt{2}f\left(  r_{1}\right)
}\right)  +O\left(  \left\Vert \phi^{\ast}\left(  r_{1},\cdot\right)
\right\Vert _{\infty}\left\Vert \partial_{r_{1}}\phi^{\ast}\left(  r_{1}%
,\cdot\right)  \right\Vert _{\infty}\right) \nonumber\\
&  +O\left(  \left\Vert \partial_{r_{1}}\phi^{\ast}\left(  r_{1},\cdot\right)
\right\Vert _{\infty}e^{-\sqrt{2}f\left(  r_{1}\right)  }\right)  +O\left(
\left\Vert \partial_{r_{1}}\phi^{\ast}\left(  r_{1},\cdot\right)  \right\Vert
_{\infty}^{2}\right)  +O\left(  \left\Vert \phi^{\ast}\left(  r_{1}%
,\cdot\right)  \right\Vert _{\infty}\left\Vert \partial_{r_{1}}^{2}\phi^{\ast
}\left(  r_{1},\cdot\right)  \right\Vert _{\infty}\right) \nonumber\\
&  +O\left(  f^{\prime\prime}\left\Vert \partial_{z_{1}}\phi^{\ast}\left(
r_{1},\cdot\right)  \right\Vert _{\infty}\right)  +O\left(  f^{\prime\prime
}\left\Vert \partial_{r_{1}}\phi^{\ast}\left(  r_{1},\cdot\right)  \right\Vert
_{\infty}\right)  +O\left(  f^{\prime\prime\prime}\left\Vert \partial_{r_{1}%
}\phi^{\ast}\left(  r_{1},\cdot\right)  \right\Vert _{\infty}\right)
\nonumber\\
&  +O\left(  r^{-1}\left\Vert \partial_{r_{1}}\phi^{\ast}\left(  r_{1}%
,\cdot\right)  \right\Vert _{\infty}\right)  +O\left(  r^{-1}f^{\prime
}\left\Vert \partial_{z_{1}}\phi^{\ast}\left(  r_{1},\cdot\right)  \right\Vert
_{\infty}\right)  .\nonumber
\end{align}

On the other hand,
\[
L\phi=\left[  E\left(  \bar{u}\right)  \right]  ^{\bot}+\left[  E\left(
\bar{u}\right)  \right]  ^{\Vert}+P\left(  \phi\right)  .
\]
By Lemma \ref{P1}, $\left[  E\left(  \bar{u}\right)  \right]  ^{\Vert
}+P\left(  \phi\right)  $ is small compared to $\phi$. In view of Lemma
\ref{L}, we need to estimate $\left[  E\left(  \bar{u}\right)  \right]
^{\bot}.$

By $\left(  \ref{Eu}\right)  ,$
\begin{align*}
\left[  E\left(  \bar{u}\right)  \right]  ^{\bot}  &  =\left[  E\left(
\mathcal{H}_{1}\right)  \right]  ^{\bot}+\left[  3\left(  \mathcal{H}_{1}%
^{s}+1\right)  \left(  \mathcal{H}_{1}^{2}-1\right)  \right]  ^{\bot}\\
&  +\left[  E\left(  \mathcal{H}_{1}^{s}\right)  \right]  ^{\bot}+\left[
3\left(  \mathcal{H}_{1}+1\right)  \left(  \mathcal{H}_{1}^{s}+1\right)
^{2}\right]  ^{\bot}.
\end{align*}
Let us analyze the term $\left[  E\left(  \mathcal{H}_{1}^{s}\right)  \right]
^{\bot}.$ We know that
\begin{align*}
E\left(  \mathcal{H}_{1}^{s}\right)   &  =A^{-1}\left(  -h^{\prime\prime
}H^{\prime}+h^{\prime}H^{\prime\prime}\right)  +\left(  \frac{\partial_{z_{2}%
}A}{2A}+r^{-1}\partial_{r}z_{2}\right)  H^{\prime}\\
&  +\left(  \frac{\partial_{r_{2}}A}{2A^{2}}-r^{-1}\partial_{r}r_{2}\right)
h^{\prime}H^{\prime}.
\end{align*}
Hence in the upper plane, due to the presence of $H^{\prime}$ in each term, we
find that directly that
\begin{equation}
\left[  E\left(  \mathcal{H}_{1}^{s}\right)  \right]  ^{\bot}=O\left(
e^{-\sqrt{2}f}\right)  . \label{Es}%
\end{equation}
Note that this is not an optimal estimate and each term in $E\left(
\mathcal{H}_{1}^{s}\right)  $ is mulitplied by $h^{\prime},h^{\prime\prime
},f^{\prime\prime}$ or $r^{-1}f^{\prime}.$ However, these terms are in general
not evaluated at $r_{1}.$ One also has%
\[
\left[  3\left(  \mathcal{H}_{1}+1\right)  \left(  \mathcal{H}_{1}%
^{s}+1\right)  ^{2}\right]  ^{\bot}=o\left(  e^{-\sqrt{2}D}\right)  .
\]

The above estimates combined with Lemma \ref{P} and \ref{interaction} leads
to
\begin{align}
\left[  E\left(  \bar{u}\right)  \right]  ^{\bot}  &  =\left[  E\left(
\mathcal{H}_{1}^{s}\right)  \right]  ^{\bot}+O\left(  f^{\prime\prime
2}\right)  +O\left(  r^{-2}f^{\prime2}\right)  +O\left(  h^{\prime\prime
}f^{\prime\prime}\right)  +O\left(  h^{\prime}r^{-1}\right) \label{Euo}\\
&  +O\left(  h^{\prime\prime2}\right)  +O\left(  h^{\prime2}\right)  +O\left(
h^{\prime}f^{\prime\prime}\right)  +O\left(  h^{\prime}f^{\prime\prime\prime
}\right)  +O\left(  e^{-\sqrt{2}D}\right)  .\nonumber
\end{align}
Notice that there is a term $f^{\prime\prime2}$ appeared in the right hand
side. \ Insert $\left(  \ref{f2}\right)  $ into $\left(  \ref{Euo}\right)  $
we get
\begin{align}
\left\vert \left(  E\left(  \bar{u}\right)  \right)  ^{\bot}\right\vert  &
=\left[  E\left(  \mathcal{H}_{1}^{s}\right)  \right]  ^{\bot}+O\left(
r^{-2}f^{\prime2}\right)  +O\left(  h^{\prime}r^{-1}\right) \label{Eorth}\\
&  +O\left(  h^{\prime2}\right)  +O\left(  h^{\prime}f^{\prime\prime\prime
}\right)  +O\left(  e^{-\sqrt{2}D}\right) \nonumber\\
&  +O\left(  h^{\prime}h^{\prime\prime}\right)  +O\left(  hf^{\prime
\prime\prime}\right)  +O\left(  h^{\prime\prime2}\right) \nonumber\\
&  +O\left(  \left\Vert \phi^{\ast}\left(  r_{1},\cdot\right)  \right\Vert
_{\infty}^{2}\right)  +O\left(  \left\Vert \partial_{r_{1}}\phi^{\ast}\left(
r_{1},\cdot\right)  \right\Vert _{\infty}^{2}\right) \nonumber\\
&  +O\left(  \left\Vert \partial_{z_{1}}\phi^{\ast}\left(  r_{1},\cdot\right)
\right\Vert _{\infty}^{2}\right)  .\nonumber
\end{align}
Note that the term involving $h$ and its derivatives are essentially
controlled by $\phi.$ Using similar arguments as that of Lemma \ref{L}, we
find that for $s>r_{1},$
\begin{equation}
\left\vert \phi^{\ast}\left(  s,z_{1}\right)  \right\vert \leq C\left\Vert
e^{-D}\right\Vert _{L^{\infty}\left(  \left(  r_{1},+\infty\right)  \right)
}+C\left\Vert r^{-2}f^{\prime2}\right\Vert _{L^{\infty}\left(  \left(
r_{1}+\infty\right)  \right)  }+\left\Vert \phi\right\Vert _{L^{\infty}\left(
\partial J_{r_{1}}\right)  }e^{r_{1}-s}. \label{Esfi}%
\end{equation}
We remark that there is the term $\left[  E\left(  \mathcal{H}_{1}^{s}\right)
\right]  ^{\bot}$ appearing in $\left(  \ref{Eorth}\right)  .$ Initially, by
$\left(  \ref{Es}\right)  $, it is only of the order $O\left(  e^{-f}\right)
.$ But we would using a bootstrap argument to show that this term is indeed of
the order $O\left(  e^{-D}\right)  .$

The estimate $\left(  \ref{Esfi}\right)  $ could be applied repeatedly. This
yields that for some large but fixed constant $l$,
\begin{align*}
\phi\left(  r,z\right)   &  \leq Ce^{-D\left(  r-l\right)  }+\frac{C}{\left(
r-l\right)  ^{2}}+C\left\Vert \phi\right\Vert _{L^{\infty}\left(  \partial
J_{r-l}\right)  }e^{-l}\\
&  \leq Ce^{-D\left(  r-l\right)  }+\frac{C}{\left(  r-l\right)  ^{2}}\\
&  +C^{2}\left[  e^{-D\left(  r-2l\right)  }+\frac{1}{\left(  r-2l\right)
^{2}}+\left\Vert \phi\right\Vert _{L^{\infty}\left(  \mathbb{\partial}%
J_{r-2l}\right)  }e^{-l}\right]  e^{-l}\\
&  \leq Ce^{-D\left(  r\right)  }+\frac{C}{r^{2}}+C\left\Vert \phi\right\Vert
_{L^{\infty}\left(  \mathbb{\partial}J_{r_{0}}\right)  }e^{-r}.
\end{align*}
The last inequality follows from
\[
\sum_{j=1}^{k}\frac{1}{\left(  r-jl\right)  ^{2}}e^{-jl}\leq\frac{C}{r^{2}},
\]
and by Lemma \ref{slope}, $f^{\prime}$ is small, which together with
$D=\left(  2+o\left(  1\right)  \right)  f$ implies that
\[
\sum_{j=1}^{k}e^{-D\left(  r-jl\right)  }e^{-jl}\leq Ce^{-D\left(  r\right)
}.
\]
This concludes the proof.
\end{proof}

Up to now, we have assumed $\left(  \ref{Assum}\right)  .$ But a priori, we
don't know whether this is fulfilled or not. It the sequel, we sketch the
proof without this assumption. The basic principle is, using  equation
$\left(  \ref{f2}\right)  $ and similar arguments above, one could prove that
assumption $\left(  \ref{Assum}\right)  $ indeed holds.

\begin{proof}
[Proof of Proposition \ref{P3} without Assumption (A)]The main reason that we
introduce the Assumption $\left(  \ref{Assum}\right)  $ is that in the the
error $E\left(  \bar{u}\right)  $ is related to $e^{-2\sqrt{2}\bar{d}}$ and we
could control it by $e^{-D}.$ Now without Assumption $\left(  \ref{Assum}%
\right)  $, to handle the term $O\left(  e^{-2\sqrt{2}\bar{d}}\right)  ,$ we
would like to use the following basic geometrical fact concerning the Fermi
coordinate (see \cite{MR3148064} for a proof): Using the fact that $f^{\prime
}\left(  r_{1}\right)  $ is small, there exists a point $\bar{r}$($\bar{r}$
may be equal to $r_{1}$) with
\[
\left\vert \bar{r}-r_{1}\right\vert =o\left(  \min\left\{  d_{1}\left(
r_{1}\right)  ,d_{2}\left(  r_{1}\right)  \right\}  \right)
\]
such that
\begin{equation}
\min\left\{  d_{1}\left(  r_{1}\right)  ,d_{2}\left(  r_{1}\right)  \right\}
\geq\frac{1}{\left\vert f^{\prime\prime}\left(  \bar{r}\right)  \right\vert }.
\label{radius}%
\end{equation}

Let us still assume $\left\vert \bar{d}^{\prime}\right\vert \leq\frac{1}{2}$
and $\left\vert \bar{d}^{\prime\prime}\right\vert \leq C$ for the moment. Then
repeating the previous arguments, one could show that for $r_{1}>r_{0},$
\begin{equation}
\left\vert f^{\prime\prime}\left(  r_{1}\right)  \right\vert \leq Ce^{-r_{1}%
}+\frac{C}{r_{1}}+Ce^{-\sqrt{2}D\left(  r_{1}\right)  }+C\left\Vert
e^{-2\sqrt{2}\bar{d}}\right\Vert _{L^{\infty}\left(  r_{1},+\infty\right)
}.\label{f''}%
\end{equation}
We claim there exists a universal constant $C_{0}$ such that
\begin{equation}
\bar{d}\left(  r_{1}\right)  =3f\left(  r_{1}\right)  \text{ for }r_{1}%
>C_{0}.\label{size}%
\end{equation}

Suppose to the contrary that $\left(  \ref{size}\right)  $ is not satisfied at
some large point $s.$ Without loss of generality we assume $\left\Vert
e^{-2\sqrt{2}\bar{d}}\right\Vert _{L^{\infty}\left(  \left(  s,+\infty\right)
\right)  }=e^{-2\sqrt{2}\bar{d}\left(  s\right)  }.$ By $\left(
\ref{radius}\right)  ,$ we could find $s_{1}$ with
\[
\left\vert s_{1}-s\right\vert =o\left(  \min\left\{  d_{1}\left(  s\right)
,d_{2}\left(  s\right)  \right\}  \right)  =o\left(  f\left(  s\right)
\right)
\]
such that
\begin{equation}
\min\left\{  d_{1}\left(  s\right)  ,d_{2}\left(  s\right)  \right\}
>\frac{1}{\left\vert f^{\prime\prime}\left(  s_{1}\right)  \right\vert
}.\label{di}%
\end{equation}
We would like to show that for this point $s_{1},$ necessarily
\begin{equation}
\frac{1}{\left\vert f^{\prime\prime}\left(  s_{1}\right)  \right\vert }%
\geq3f\left(  s\right)  .\label{aim}%
\end{equation}
Once this is proved, we then get a contradiction and hence the proof will be complished.

To prove $\left(  \ref{aim}\right)  ,$ we could assume $s_{1}\neq s,$
otherwise using estimate $\left(  \ref{f''}\right)  $, we obtain $\left(
\ref{aim}\right)  $. For the point $s_{1},$ we have
\begin{align*}
\left\vert f^{\prime\prime}\left(  s_{1}\right)  \right\vert  &  \leq
Ce^{-s_{1}}+\frac{C}{s_{1}}+Ce^{-\sqrt{2}D\left(  s_{1}\right)  }+C\left\Vert
e^{-2\sqrt{2}\bar{d}}\right\Vert _{L^{\infty}\left(  s_{1},+\infty\right)  }\\
&  =Ce^{-s_{1}}+\frac{C}{s_{1}}+Ce^{-\sqrt{2}D\left(  s_{1}\right)
}+Ce^{-2\sqrt{2}\bar{d}\left(  \bar{s}_{1}\right)  }%
\end{align*}
for some point $\bar{s}_{1}\geq s_{1}.$ If $Ce^{-2\sqrt{2}\bar{d}\left(
\bar{s}_{1}\right)  }<\frac{1}{2}\left\vert f^{\prime\prime}\left(
s_{1}\right)  \right\vert ,$ then
\[
\left\vert f^{\prime\prime}\left(  s_{1}\right)  \right\vert \leq Ce^{-s_{1}%
}+\frac{C}{s_{1}}+Ce^{-\sqrt{2}D\left(  s_{1}\right)  },
\]
which implies $\left(  \ref{aim}\right)  .$ Hence we could assume
\begin{equation}
Ce^{-2\sqrt{2}\bar{d}\left(  \bar{s}_{1}\right)  }\geq\frac{1}{2}\left\vert
f^{\prime\prime}\left(  s_{1}\right)  \right\vert . \label{d2}%
\end{equation}
This in particular implies that
\[
\bar{d}\left(  \bar{s}_{1}\right)  \leq\frac{1}{2\sqrt{2}}\left(  \ln\frac
{2C}{f^{\prime\prime}\left(  s_{1}\right)  }\right)  \leq\ln f\left(
s\right)  .
\]
For the point $\bar{s}_{1},$ by $\left(  \ref{radius}\right)  ,$ we could find
a point $s_{2}$ with
\[
s_{2}\geq\bar{s}_{1}+o\left(  \min\left\{  d_{1}\left(  \bar{s}_{1}\right)
,d_{2}\left(  \bar{s}_{1}\right)  \right\}  \right)  \geq s_{1}-\ln f\left(
s\right)  ,
\]
such that
\[
\min\left\{  d_{1}\left(  \bar{s}_{1}\right)  ,d_{2}\left(  \bar{s}%
_{1}\right)  \right\}  \geq\frac{1}{f^{\prime\prime}\left(  s_{2}\right)  }.
\]
Note that by $\left(  \ref{d2}\right)  ,$%
\[
Ce^{-\frac{2\sqrt{2}}{f^{\prime\prime}\left(  s_{2}\right)  }}\geq\frac{1}%
{2}\left\vert f^{\prime\prime}\left(  s_{1}\right)  \right\vert .
\]
Now repeat this argument and we get a sequence $\left\{  s_{n}\right\}  $
with
\begin{equation}
s_{n+1}\geq s_{n}-\ln\frac{1}{\left\vert f_{n}^{\prime\prime}\left(  s\right)
\right\vert }, \label{sn}%
\end{equation}
such that
\[
Ce^{-\frac{2\sqrt{2}}{f^{\prime\prime}\left(  s_{n+1}\right)  }}\geq\frac
{1}{2}\left\vert f^{\prime\prime}\left(  s_{n}\right)  \right\vert .
\]
If the process stops at the step $n_{0}<f\left(  s\right)  .$ Then
\[
f^{\prime\prime}\left(  s_{n_{0}}\right)  \leq Ce^{-s_{n_{0}}}+\frac
{1}{s_{n_{0}}}+e^{-D\left(  s_{n_{0}}\right)  }.
\]
By $\left(  \ref{sn}\right)  ,$ $s_{n_{0}}>s-f\left(  s\right)  $ and
$D\left(  s_{n_{0}}\right)  >f\left(  s\right)  .$ This then implies $\left(
\ref{aim}\right)  .$ On the other hand, if the process doesn't stop at the
step $n_{0}>f\left(  s\right)  ,$ then the sequence $\rho_{n}:=\frac{2\sqrt
{2}}{\left\vert f^{\prime\prime}\left(  s_{n}\right)  \right\vert }$
satisfies
\begin{equation}
\rho_{n}\geq Ce^{\rho_{n+1}}. \label{r}%
\end{equation}
$\left(  \ref{r}\right)  $ leads to the inequality
\[
\rho_{1}\geq Cn_{0}^{2}\geq Cf^{2}\left(  s\right)  ,
\]
This also implies $\left(  \ref{aim}\right)  .$

Let us we return back to the assumption that $\left\vert \bar{d}^{\prime
}\right\vert \leq\frac{1}{2}$ and $\left\vert \bar{d}^{\prime\prime
}\right\vert \leq C.$ Essentially, this assumption is to make sure that the
cutoff function $\eta$ has bounded first and second derivatives and thus the
error of approximate solution could be uniformly controlled. However, with the
original definition of $\bar{d}$ this assumption may not be true. To overcome
this difficulty, we should modify the function $\bar{d}$(that is, one needs to
modify the domain $\mathcal{B}_{u}$). Let us be more precise. Fix a small
positive constant $\delta.$ We define new function $\hat{d}$ by
\[
\hat{d}\left(  r_{1}\right)  :=\inf\left\{  \bar{d}\left(  s\right)
+\delta\left(  r_{1}-s\right)  :r_{0}<s<r_{1};\bar{d}\left(  s\right)
-\delta\left(  r_{1}-s\right)  :s>r_{1}\right\}  .
\]
Modify $\hat{d}$ such that it becomes $C^{2}.$ We then define the domain
$\mathcal{B}_{u}$ using this new function $\hat{d}.$ Using similar arguments
as before, we could show $\left(  \ref{size}\right)  .$

Now $\left(  \ref{size}\right)  $ implies that the size of the Fermi
coordinate is actually large enough for our purpose, that is, Assumption
$\left(  \ref{Assum}\right)  $ is indeed satisfied.
\end{proof}

With all these preparation, we proceed to prove Proposition \ref{compact}, the
main result of this section.

\begin{proof}
[Proof of Proposition \ref{compact} and Theorem \ref{main2}]We split the proof
into several steps.

\textit{Step 1.} We first show that there exists a universal constant
$C,\delta>0$ such that
\[
p\left(  r\right)  \geq C+\left(  \frac{\sqrt{2}}{2}+\delta\right)  \ln r.
\]
We would like to use equation $\left(  \ref{f2}\right)  $. Note that there is
a term involving the third derivative of $f$ in this equation, although one
expects that $f^{\prime\prime\prime}$ decays like $r^{-3},$ a priori we don't
have any decay information for it. However, we at least know that
$f^{\prime\prime\prime}\left(  r_{1}\right)  $ tends to zero as $r_{1}$
tending to infinity. Our aim is to show the following estimate(not optimal):%
\begin{equation}
\left\vert f^{\prime\prime\prime}\left(  r_{1}\right)  \right\vert \leq
Cr_{1}^{-1}+Ce^{-D\left(  r_{1}\right)  }. \label{f'''}%
\end{equation}
To obtain this estimate, we would like to differentiate  equation $\left(
\ref{f2}\right)  $ with respect to $r_{1}.$ Using the fact that $f^{\left(
3\right)  }$ and $f^{\left(  4\right)  }$ are small for $r_{1}$ large(at this
stage, we don't know the decay rate for these two terms, but they are
multiplied by terms related to $\phi$), we obtain the following estimate
(which is not optimal but enough for our purpose)
\begin{align*}
&  f^{\prime\prime\prime}+\sqrt{1+f^{\prime2}}h^{\prime\prime\prime}\\
&  =O\left(  h^{\prime\prime}\right)  +O\left(  r_{1}^{-1}\right)  +O\left(
e^{-\sqrt{2}D}\right) \\
&  +O\left(  h^{\prime}\right)  +O\left(  \left\Vert \phi^{\ast}\left(
r_{1},\cdot\right)  \right\Vert _{\infty}\right)  +O\left(  \left\Vert
\partial_{r_{1}}\phi^{\ast}\left(  r_{1},\cdot\right)  \right\Vert _{\infty
}e^{-\sqrt{2}f\left(  r_{1}\right)  }\right) \\
&  +O\left(  \left\Vert \partial_{r_{1}}^{2}\phi^{\ast}\left(  r_{1}%
,\cdot\right)  \right\Vert _{\infty}\right)  +O\left(  \left\Vert
\partial_{r_{1}}\partial_{z_{1}}\phi^{\ast}\left(  r_{1},\cdot\right)
\right\Vert _{\infty}\right)  .
\end{align*}
Hence by the $C^{2,\alpha}$ estimate of $\phi$, to prove $\left(
\ref{f'''}\right)  ,$ it will be suffice to obtain an estimate for the
function $h^{\prime\prime\prime},$ which is essentially controlled by
$\partial_{r_{1}}^{3}\phi^{\ast}.$ To achieve this, we consider the equation
\begin{equation}
L\phi=\left[  E\left(  \bar{u}\right)  \right]  ^{\bot}+\left[  E\left(
\bar{u}\right)  \right]  ^{\Vert}+P\left(  \phi\right)  . \label{L2}%
\end{equation}
Here we have in mind that by Lemma \ref{P1}, $\left[  E\left(  \bar{u}\right)
\right]  ^{\Vert}$ is expressed as a small order term of $\phi.$ We also have
the estimate $\left(  \ref{Eorth}\right)  $ for the source term $\left[
E\left(  \bar{u}\right)  \right]  ^{\bot}.$ Now differentiate equation
$\left(  \ref{L2}\right)  $ with respect to $r_{1},$ use the $L^{\infty}$ norm
estimate of $\phi$ and the fact that
\[
\partial_{r_{1}}\left[  E\left(  \bar{u}\right)  \right]  ^{\bot}=O\left(
r_{1}^{-1}\right)  +O\left(  e^{-D}\right)  +o\left(  h^{\prime\prime\prime
}\right)  ,
\]
we find that for $r_{1}$ large,
\[
\left\vert \partial_{r_{1}}^{3}\phi^{\ast}\right\vert \leq Cr_{1}^{-1}%
+Ce^{-D}.
\]
This then implies $\left(  \ref{f'''}\right)  .$

With the decay estimate of $f^{\prime\prime\prime}$ available, equation
$\left(  \ref{f2}\right)  $ could be refined to
\begin{equation}
\left(  \frac{r_{1}p^{\prime}}{\sqrt{1+p^{\prime2}}}\right)  ^{\prime}%
=\frac{\mathbf{c}_{1}}{\mathbf{c}_{0}}r_{1}\left(  1+o\left(  1\right)
\right)  e^{-\sqrt{2}D}+O\left(  r_{1}^{-2}\right)  :=I_{1}+I_{2}.
\label{p-equation}%
\end{equation}
Here $p=f+\sqrt{1+f^{\prime2}}h$ is the function introduced in Lemma \ref{P2}.
At this stage, one of the technical difficulty is that this equation involves
the function $D.$ As we mentioned before, we expect that $D$ is very close to
$2p.$ But without a priori estimate for $f^{\prime}$ and $f,$ $\left\vert
D-2p\right\vert $ in principle could be large. In any case, we at least know
that $D<2p.$

To proceed, we shall fix a small constant $\bar{\delta}>0$ which will be
determined later. For any interval $\left(  t_{1},t_{2}\right)  \subset\left(
r_{0},+\infty\right)  ,$ integrating equation $\left(  \ref{p-equation}%
\right)  $ in $r_{1}$ yields \
\begin{equation}
\frac{t_{2}p^{\prime}\left(  t_{2}\right)  }{\sqrt{1+p^{\prime2}\left(
t_{2}\right)  }}-\frac{t_{1}p^{\prime}\left(  t_{1}\right)  }{\sqrt
{1+p^{\prime2}\left(  t_{1}\right)  }}=\int_{t_{1}}^{t_{2}}I_{1}+\int_{t_{1}%
}^{t_{2}}I_{2}. \label{inte}%
\end{equation}
Keep in mind that $\int_{t_{1}}^{t_{2}}I_{1}$ is always positive, and by
$\left\vert I_{2}\left(  r\right)  \right\vert \leq C_{0}r^{-2}$,
\begin{equation}
\left\vert \int_{t_{1}}^{t_{2}}I_{2}\right\vert \leq\frac{C_{0}}{t_{1}}.
\label{I2}%
\end{equation}
Set $t^{\ast}:=\max\left\{  \frac{2C_{0}}{\bar{\delta}},r_{0}\right\}  .$ We
consider two cases. \ Case 1:
\[
\frac{t^{\ast}p^{\prime}\left(  t^{\ast}\right)  }{\sqrt{1+p^{\prime2}\left(
t^{\ast}\right)  }}\geq\frac{\sqrt{2}}{2}+\bar{\delta}.
\]
Then by $\left(  \ref{inte}\right)  $ and $\left(  \ref{I2}\right)  $, for all
$t>t^{\ast},$
\[
\frac{tp^{\prime}\left(  t\right)  }{\sqrt{1+p^{\prime2}\left(  t\right)  }%
}\geq\frac{\sqrt{2}}{2}+\frac{\bar{\delta}}{2}.
\]
Case 2:
\[
\frac{t^{\ast}p^{\prime}\left(  t^{\ast}\right)  }{\sqrt{1+p^{\prime2}\left(
t^{\ast}\right)  }}<\frac{\sqrt{2}}{2}+\bar{\delta}.
\]
In this case, let $\left(  t^{\ast},\hat{t}\right)  $ be an interval such
that
\[
\frac{tp^{\prime}\left(  t\right)  }{\sqrt{1+p^{\prime2}\left(  t\right)  }%
}\leq\frac{\sqrt{2}}{2}+2\bar{\delta},\forall t\in\left(  t^{\ast},\hat
{t}\right)  .
\]
Then the fact that $\left\vert p^{\prime}\right\vert =o\left(  1\right)  $ and
a simple integration yield
\[
p\left(  t\right)  \leq\left(  \frac{\sqrt{2}}{2}+3\bar{\delta}\right)  \ln
t+C,\forall t\in\left(  t^{\ast},\hat{t}\right)  .
\]
This in particular implies that $D\left(  r\right)  -2f\left(  r\right)
=o\left(  1\right)  $ and hence for any $t_{1},t_{2}\in\left(  t^{\ast}%
,\hat{t}\right)  ,$
\[
\int_{t_{1}}^{t_{2}}se^{-\sqrt{2}D\left(  s\right)  }ds\geq\frac{C}%
{t_{1}^{6\sqrt{2}\bar{\delta}}}-\frac{C}{t_{2}^{6\sqrt{2}\bar{\delta}}}.
\]
We choose $\bar{\delta}$ such that $6\sqrt{2}\bar{\delta}<1.$ Then by
inequality $\left(  \ref{I2}\right)  ,$ if $t_{2}$ is large,
\[
\int_{t_{1}}^{t_{2}}I_{1}+\int_{t_{1}}^{t_{2}}I_{2}>0.
\]
This implies
\[
\frac{t_{2}p^{\prime}\left(  t_{2}\right)  }{\sqrt{1+p^{\prime2}\left(
t_{2}\right)  }}>\frac{t_{1}p^{\prime}\left(  t_{1}\right)  }{\sqrt
{1+p^{\prime2}\left(  t_{1}\right)  }}.
\]

Combing the analysis for Case 1 and Case 2, we find that there exist
universal constants $\delta$ and $C_{1}$ such that
\begin{equation}
rp^{\prime}\left(  r\right)  >\frac{\sqrt{2}}{2}+\delta,\text{ }r>C_{1}.
\label{lower}%
\end{equation}
This proves Theorem \ref{main2}.

With the lower bound $\left(  \ref{lower}\right)  $ available, one could show
that
\[
e^{-\sqrt{2}D}=e^{-2\sqrt{2}p}+O\left(  r^{-(2+\alpha)}\right)  .
\]
It also follows that
\[
p^{\prime}\left(  r\right)  =\frac{k}{r}+O\left(  r^{-1-\alpha}\right)  ,
\]
for some $\alpha>0.$ Here $k$ is the growth rate of $u.$ Therefore,
\[
p\left(  r\right)  =k\ln r+O\left(  r^{-\alpha}\right)  +C.
\]
With this estimate at hand, we find that $u_{n}$ converges to a two-end
solution $u_{0}$ strongly.
\end{proof}

\section{\label{Moduli}Moduli space theory of two-end solutions}

\subsection{Preliminary results}

Generally speaking, the structure of the set of bounded entire solutions to
the Allen-Cahn equation could potentially be very complicated. However, if one
impose certain conditions at infinity for the solution, then it could be
simpler. The moduli space theory for multiple-end solutions of the Allen-Cahn
equation in $\mathbb{R}^{2}$ was developed in \cite{dkp-2009}. This theory
tells us that if $u$ is a $2k$-end solutions in $\mathbb{R}^{2}$ and
nondegenerate, then around $u$ (in suitable sense)$,$ the set of $2k$-end
solutions is actually a $2k$-dimensional manifold. This fact has been used in
an essential way in the classification of four-end solutions of Allen-Cahn
equation in $\mathbb{R}^{2}$(\cite{MR3148064}). In this section, we would like
to develop the corresponding moduli space theory for two-end solutions in
$\mathbb{R}^{3}$. Our main result states that the moduli space of two-end
solutions has a structure of real analytic variety of formal dimension 1.

To begin with, let us recall some preliminary results about real analytic
operators. Compared to $C^{\infty}$ operators, real analytic operators has
better structures. We refer to \cite{MR1956130}, \cite{MR0375019},
\cite{MR1962054} for more details.

Let $X$ and $Y$ be Banach spaces and $U$ an open subset of $X.$ We first
recall the notion of real analytic operator.

\begin{definition}
A map $F:U\rightarrow Y$ is real analytic at $x_{0}\in U$ if there exists a
$\delta>0$ such that
\[
F\left(  x\right)  -F\left(  x_{0}\right)  =%
{\displaystyle\sum\limits_{k=1}^{+\infty}}
m_{k}\left(  \left(  x-x_{0}\right)  ,...,\left(  x-x_{0}\right)  \right)
,\text{ for }\left\vert x-x_{0}\right\vert <\delta,
\]
where $m_{k}$ is a symmetric $k-$linear operator, and there exists $r>0$ such
that
\[
\sup_{k>0}r^{k}\left\Vert m_{k}\right\Vert <+\infty.
\]
The function is said to be real analytic on $U$ if it is real analytic at
every point of $U.$
\end{definition}

Let $F:U\rightarrow Y$ be a real analytic functional. Suppose that $dF\left(
x\right)  $ is a Fredholm operator of index $1.$ Assume there exists a map
$\Lambda:\left(  0,\varepsilon\right)  \rightarrow Y$ such that $F\left(
\Lambda\left(  s\right)  \right)  =0,$ and $dF\left(  \Lambda\left(  s\right)
\right)  :X\rightarrow Y,$ is surjective for all $s\in\left(  0,\varepsilon
\right)  .$ Let
\[
S=\left\{  x\in U:F\left(  x\right)  =0\right\}  .
\]
The following theorem has been proved in the book of Buffoni and
Toland\cite{MR1956130}.

\begin{theorem}
\label{structure}Suppose all bounded closed subsets of $S$ are compact in $X.$
Then there exists an extension of $\Lambda,$ denoted by $\bar{\Lambda}:$%
\[
\left(  0,+\infty\right)  \rightarrow X,
\]
satisfying: (1) $\bar{\Lambda}$ is continuous. (2) The set of points where
$dF$ is not surjective has no accumulation points. (3) One of the following
happens: (i) $\left\Vert \bar{\Lambda}\left(  s\right)  \right\Vert
\rightarrow+\infty,$ as $s\rightarrow+\infty;$ (ii) $\bar{\Lambda}\left(
s\right)  $ approaches the boundary of $U$ as $s$ tends to infinity; (iii)
$\bar{\Lambda}\left(  \left(  0,+\infty\right)  \right)  $ is a closed loop.
\end{theorem}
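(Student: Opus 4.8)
The plan is to derive the theorem by combining the local structure theory of zero sets of real analytic Fredholm maps of index $1$ with a global continuation argument, in the spirit of \cite{MR1956130}. First I would analyze $S$ locally near an arbitrary $x\in S$. When $dF(x)$ is onto, Fredholmness of index $1$ forces $\dim\ker dF(x)=1$, and the real analytic implicit function theorem presents $S$ near $x$ as a real analytic embedded arc. When $dF(x)$ is not onto, I would carry out a Lyapunov--Schmidt reduction: write $X=\ker dF(x)\oplus X_{1}$, $Y=\operatorname{ran}dF(x)\oplus Y_{1}$, let $Q$ be the projection onto $Y_{1}$, solve $(\mathrm{Id}-Q)F=0$ for the $X_{1}$-component as an analytic function of the $\ker dF(x)$-component, and thereby reduce $F=0$ to a finite dimensional analytic system $g(\xi)=0$ with $g\colon\mathbb{R}^{d}\to\mathbb{R}^{d-1}$, $d=\dim\ker dF(x)$. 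By the Weierstrass preparation theorem together with the Puiseux/curve-selection description of real analytic varieties, the zero set of $g$ near the origin is a finite union of real analytic arcs issuing from $0$; pulling this back, the same holds for $S$ near $x$.

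Next I would show that the singular set $\Sigma=\{x\in S:dF(x)\text{ not onto}\}$ is discrete along any branch, which gives conclusion $(2)$. Along a local analytic parametrization $s\mapsto\gamma(s)$ of $S$, surjectivity of $dF(\gamma(s))$ fails precisely at the zeros of a scalar real analytic function of $s$ extracted from a suitable minor of the reduced linearized operator. Since the given arc $\Lambda$ runs entirely through regular points, this analytic function does not vanish identically on the branch containing $\Lambda$, so its zeros are isolated; as any compact subarc of the continued branch is covered by finitely many such local arcs, $\Sigma$ has no accumulation point on it.

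Granting the local model and the discreteness of $\Sigma$, I would construct $\bar\Lambda$ by maximal continuation: through a regular point the continuation is unique by the implicit function theorem, and at each (isolated) singular point one selects a distinguished outgoing arc so that the concatenated path stays continuous, as in \cite{MR1956130}. This produces a continuous $\bar\Lambda$ on a maximal interval $(0,T)$ with $T\le+\infty$ --- conclusion $(1)$ --- and, up to reparametrization, $\bar\Lambda$ is piecewise real analytic. To finish, suppose $T<+\infty$ and that neither $(i)$ nor $(ii)$ holds, i.e.\ $\bar\Lambda((0,T))$ is contained in a bounded closed subset of $U$ that stays a positive distance from $\partial U$. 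The hypothesis that bounded closed subsets of $S$ are compact --- which is exactly where Proposition \ref{compact} is invoked in our setting --- lets me extract $x_{\ast}\in S$ with $\bar\Lambda(s_{j})\to x_{\ast}$ for some $s_{j}\uparrow T$, and then the local structure theorem at $x_{\ast}$ allows continuation of the path strictly past $T$, contradicting maximality. Hence either $\|\bar\Lambda(s)\|\to+\infty$ or $\bar\Lambda(s)$ approaches $\partial U$; and if neither occurs, the continued branch must eventually return to a configuration already on it, which is the closed-loop alternative $(iii)$.

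The step I expect to be the main obstacle is the bookkeeping at the singular points: obtaining the Weierstrass/Puiseux normal form for $S$ there, verifying that the distinguished arc can be chosen so the concatenation is genuinely continuous (and piecewise analytic after reparametrization), and checking that the continuation process cannot stall except through one of $(i)$, $(ii)$, $(iii)$. The properness furnished by compactness is what rules out the branch simply ``running out'' inside $U$; everything else is local analytic geometry plus careful gluing.
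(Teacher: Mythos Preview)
The paper does not prove this theorem; it is quoted verbatim as a known result from the book of Buffoni and Toland \cite{MR1956130} (see the sentence immediately preceding the statement: ``The following theorem has been proved in the book of Buffoni and Toland''). So there is no paper-proof to compare against, and your sketch is not a reconstruction of anything in this paper but rather an outline of the Buffoni--Toland argument itself.

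That said, your outline is broadly faithful to how the result is proved in \cite{MR1956130}: local reduction via Lyapunov--Schmidt to a finite-dimensional real analytic equation, the structure of real analytic varieties near a point (finitely many analytic arcs), discreteness of singular points along a branch via analyticity, and the global continuation/trichotomy argument using the compactness hypothesis. One point of caution: your justification of (2) (``surjectivity fails at the zeros of a scalar real analytic function extracted from a suitable minor'') is a bit loose in infinite dimensions --- the actual argument in Buffoni--Toland works through the reduced finite-dimensional map after Lyapunov--Schmidt, where such determinantal reasoning is legitimate. Also, the ``distinguished outgoing arc'' selection at singular points is the genuinely delicate part of the Buffoni--Toland theory (their notion of \emph{distinguished arcs} and the parity argument ensuring one can pass through), and your sketch correctly flags this as the main obstacle without resolving it. For the purposes of this paper, though, none of this matters: the theorem is simply invoked, not reproved.
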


Basically, this theorem tells us that if one has a real analytic variety which
comes from the zero set of a real analytic operator and its formal dimension
is one, and assume further that on the variety there are some points where the
operator is surjective, then under certain compactness assumption, one could
find a continuous path of solutions where at most finitely many solutions are
degenerate (the linearized operator is not surjective).

\subsection{\label{real analytic}The real analytic structure of the moduli
space}

Let $u$ be a two-end solution. Then $u$ satisfies $\left(  \ref{axial}\right)
$ and there are constants $k$ and $c_{k}\in\mathbb{R}$ such that
\begin{equation}
\left\Vert u\left(  r,\cdot\right)  -H\left(  \cdot-k\ln r-c_{k}\right)
\right\Vert _{L^{\infty}\left(  [0,+\infty\right)  )}\rightarrow0,\text{ as
}r\rightarrow+\infty. \label{asym}%
\end{equation}
We also assume $k>\sqrt{2}.$ The moduli space theory of noncompact geometric
objects with controlled geometry at infinity(minimal surfaces with finite
total curvature, singular Yamabe metrics, constant mean curvature surfaces
with Delaunay ends) has been developed in \cite{MR1371233}, \cite{MR1356375},
\cite{MR1487630}. In this paper, we will not investigate the general moduli
space theory for the Allen-Cahn equation in dimension three. Instead, we shall
only consider those solutions satisfying $\left(  \ref{axial}\right)  .$ Our
aim is to show that the set of solutions of $\left(  \ref{axial}\right)  $
satisfying $\left(  \ref{asym}\right)  $ has the structure of a real analytic
variety with formal dimension $1.$ Additionally, if a solution $u$ is
nondegenerate, then around $u,$ this real analytic variety is actually a one
dimensional real analytic manifold.

Consider the function
\[
f_{k,b}\left(  r\right)  =k\cosh^{-1}\left(  k^{-1}r\right)  +b.
\]
where $k$ and $b$ are real parameters, $k>\sqrt{2}.$ Obviously, $f$ represents
a vertically translated catenoidal end and we have in mind that $f$ is the
asymptotic curve of the nodal line of a solution $u.$ The moduli space theory
for two-end solutions will,  roughly speaking,  state that around a given solution
$u$ there is a one dimensional family of solutions, with $k$ or $b$ being
possible candidates for the local parameters. To make this statement more
precise, we need to analyze the mapping property of the linearized Allen-Cahn
operator around $u.$

Let us introduce some notations. As in Section \ref{compactness}, we shall
also carry out the analysis through the Fermi coordinate. Let $\mathcal{B}%
_{u}$ be the domain where the Fermi coordinate of $z=f\left(  r\right)  $ is
well defined(suitably modified if necessary). We still use $\eta$ to denote a
smooth cutoff function supported in $\mathcal{B}_{u}$. Similarly, we have the
cutoff function $\eta^{+}$ supported in $\mathcal{B}_{u}\cap\mathbb{E}^{+}$.
Similarly, we have the map $X:\left(  r_{1},z_{1}\right)  \rightarrow\left(
r,z\right)  .$

We need to work in suitable functional spaces $S_{1},S_{2}$ which will be
described now. Let $C_{1}>0$ be a fixed constant and $\delta>0$ small. A
$C^{2,\alpha}$ function $\Xi\in$ $S_{1}$, if and only if $\Xi\left(
r,z\right)  =\Xi\left(  r,-z\right)  $ and it satisfies the following
condition: In $\mathbb{E}^{+}$, when $r_{1}>C_{1},$%

\[
X^{\ast}\Xi\left(  r_{1},z_{1}\right)  =p\left(  r_{1}\right)  X^{\ast}%
\eta^{+}\left(  r_{1},z_{1}\right)  H^{\prime}\left(  z_{1}\right)  +X^{\ast
}\psi\left(  r_{1},z_{1}\right)  ,
\]
where $\int_{\mathbb{R}}X^{\ast}\left(  \eta^{+}\psi\right)  H^{\prime}%
dz_{1}=0,$
\[
\left\Vert \left(  1+r_{1}\right)  ^{2}p\right\Vert _{L^{\infty}}+\left\Vert
\left(  1+r_{1}\right)  ^{3}p^{\prime}\right\Vert _{L^{\infty}}+\left\Vert
\left(  1+r_{1}\right)  ^{4}p^{\prime\prime}\right\Vert _{C^{0,\alpha}%
}<+\infty,
\]
and
\[
\left\Vert \left(  1+r_{1}\right)  ^{4}X^{\ast}\psi\right\Vert _{C^{2,\alpha}%
}<+\infty.
\]

Similarly, a $C^{0,\alpha}$ function $\Xi\in$ $S_{2}$ if and only if
$\Xi\left(  r,z\right)  =\Xi\left(  r,-z\right)  $ and it satisfies the
following condition: In $\mathbb{E}^{+},$ when $r_{1}>C_{1},$%

\[
X^{\ast}\Xi\left(  r_{1},z_{1}\right)  =p\left(  r_{1}\right)  X^{\ast}%
\eta^{+}\left(  r_{1},z_{1}\right)  H^{\prime}\left(  z_{1}\right)  +X^{\ast
}\psi\left(  r_{1},z_{1}\right)  ,
\]
where $\int_{\mathbb{R}}X^{\ast}\left(  \eta^{+}\psi\right)  H^{\prime}%
dz_{1}=0,$
\[
\left\Vert \left(  1+r_{1}\right)  ^{4}p\right\Vert _{C^{0,\alpha}}<+\infty.
\]
and%
\[
\left\Vert \left(  1+r_{1}\right)  ^{4}X^{\ast}\psi\right\Vert _{C^{0,\alpha}%
}<+\infty.
\]

Let $\mathcal{L}$ be the linearized operator of the Allen-Cahn equation around
a two-end solution $u,$ that is,
\[
\mathcal{L}=\mathcal{L}_{u}:=\Delta_{\left(  r,z\right)  }+r^{-1}\partial
_{r}+1-3u^{2}.
\]
For each point $p,$ let $\left(  r_{1}\left(  p\right)  ,z_{1}\left(
p\right)  \right)  $ be its Fermi coordinate with respect to $f_{k,b}.$ Let
$p_{s,t}^{\prime}$ be the point whose Fermi coordinate with respect to the
curve $z=f_{k+s,b+t}$ is still equal to $\left(  r_{1}\left(  p\right)
,z_{1}\left(  p\right)  \right)  .$ For $p$ in the upper half plane, now let
$\Phi_{s,t}$ be the map defined by
\[
\Phi_{s,t}\left(  p\right)  =\eta^{+}p_{s,t}^{\prime}+\left(  1-\eta
^{+}\right)  p,
\]
while for $p=\left(  r,z\right)  $ in the lower half plane, we let
\[
\Phi_{s,t}\left(  p\right)  =-\Phi_{s,t}\left(  r,-z\right)  .
\]
In this way, we have defined the family of maps $\Phi_{s,t}$ which is even
with respect to the $r$ axis. For $\left\vert s\right\vert ,\left\vert
t\right\vert $ small, $\Phi_{s,t}$ is a diffeomorphism. By definition
$\Phi_{0,0}\left(  p\right)  =p.$ Similarly, we introduce the family of
diffeomorphism%
\[
\Psi_{s,t}\left(  p\right)  :=\eta p_{s,t}^{\prime}+\left(  1-\eta\right)  p.
\]

Similar arguments as that of Section \ref{compact} shows that at far away an
axially symmetric two-end solution $u$ could be written as:%
\[
u=\mathcal{H}_{1}+\mathcal{H}_{1}^{s}+1+\phi.
\]
Here%
\[
\mathcal{H}_{1}=\eta H_{1}+\left(  1-\eta\right)  \frac{H_{1}}{\left\vert
H_{1}\right\vert },
\]
where
\[
X^{\ast}H_{1}\left(  r_{1},z_{1}\right)  \mathcal{=}H\left(  z_{1}-h\left(
r_{1}\right)  \right)
\]
and
\[
\int_{\mathbb{R}}X^{\ast}\left(  \eta^{+}\phi\right)  H^{\prime}dz_{1}=0.
\]
Notice that $\phi$ decays like $r^{-4}$ at infinity.

For $\left(  s,t,\psi\right)  \in\mathbb{R}^{2}\oplus S_{1},$ we then define a
family of function $u_{s,t,\psi}$ such that at far away
\[
u_{s,t,\psi}=\mathcal{H}_{1}\mathcal{\circ}\Psi_{s,t}^{-1}+\left[
\mathcal{H}_{1}\mathcal{\circ}\Psi_{s,t}^{-1}\right]  ^{s}+\left(  \phi
+\psi\right)  \circ\Phi_{s,t}^{-1},
\]
and in a fixed large ball, $u_{s,t,\psi}=u.$ Clearly, for $u_{0,0,0}=u.$
Consider the nonlinear map $N:\mathbb{R}^{2}\oplus S_{1}\rightarrow S_{2}$
given by%
\[
\left(  s,t,\psi\right)  \rightarrow\left[  \Delta u_{s,t,\psi}+u_{s,t,\psi
}-u_{s,t,\psi}^{3}\right]  \circ\Phi_{s,t}.
\]
The reason that $N$ maps $\mathbb{R}^{2}\oplus S_{1}$ into $S_{2}$ lies in the
asymptotic behavior of $u$, see Section \ref{compactness}. We have%
\begin{align*}
\partial_{s}N\left(  s,t,\psi\right)   &  =\left[  \mathcal{L}_{u_{s,t,\psi}%
}\partial_{s}u_{s,t,\psi}\right]  \circ\Phi_{s,t}+\partial_{s}\Phi_{s,t}%
\cdot\nabla\left[  \Delta u_{s,t,\psi}+u_{s,t,\psi}-u_{s,t,\psi}^{3}\right]
\circ\Phi_{s,t},\\
\partial_{t}N\left(  s,t,\psi\right)   &  =\left[  \mathcal{L}_{u_{s,t,\psi}%
}\partial_{t}u_{s,t,\psi}\right]  \circ\Phi_{s,t}+\partial_{t}\Phi_{s,t}%
\cdot\nabla\left[  \Delta u_{s,t,\psi}+u_{s,t,\psi}-u_{s,t,\psi}^{3}\right]
\circ\Phi_{s,t},\\
\partial_{\psi}N\left(  s,t,\psi\right)  G  &  =\left[  \mathcal{L}%
_{u_{s,t,\psi}}G\right]  \circ\Phi_{s,t}.
\end{align*}
In particular, although $DN$ is not exactly equal to $\mathcal{L},$ it is a
small perturbation of it for $s,t$ small. Observe that when $\left(
s,t,\psi\right)  =\left(  0,0,0\right)  ,$ $DN$ actually is equal to
$\mathcal{L}.$ Let
\begin{align*}
\gamma_{1}  &  =\partial_{s}u_{s,t,\psi}|_{\left(  s,t,\psi\right)  =\left(
0,0,0\right)  },\\
\gamma_{2}  &  =\partial_{t}u_{s,t,\psi}|_{\left(  s,t,\psi\right)  =\left(
0,0,0\right)  }.
\end{align*}
Introduce the deficiency space
\[
\mathcal{D=}span\left\{  \gamma_{1},\gamma_{2}\right\}  .
\]
The next result concerns the mapping property of the linearized operator
$\mathcal{L}$ and is the main result of this section.

\begin{proposition}
The operator $\mathcal{L}:S_{1}\oplus\mathcal{D}\rightarrow S_{2}$ is a
Fredholm operator of index $1.$
\end{proposition}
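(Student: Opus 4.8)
I would follow the moduli space theory of \cite{dkp-2009}, adapted to the axially symmetric setting, and analyze $\mathcal{L}$ separately far from the nodal curve and inside the tubular neighbourhood $\mathcal{B}_u$, using the Fermi coordinate $(r_1,z_1)$ of $f_{k,b}$. Away from the nodal curve one has $1-3u^2\to -2$, so $\mathcal{L}$ is a small perturbation of $\Delta_{(r,z)}+r^{-1}\partial_r-2$, which is invertible between Hölder spaces with exponentially decaying weights; hence all the delicate analysis is concentrated in $\mathcal{B}_u$. There I would write $X^{\ast}\psi=p(r_1)\,X^{\ast}\eta^{+}H'(z_1)+X^{\ast}\psi^{\perp}$ with $\int_{\mathbb{R}}X^{\ast}(\eta^{+}\psi^{\perp})H'\,dz_1=0$, and split $\mathcal{L}$ into the block acting on $\psi^{\perp}$ and the reduced scalar operator acting on $p$.

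For the orthogonal block, $H'$ is the ground state of $-\partial_{z_1}^2+3H^2-1$ with eigenvalue $0$ and the rest of the spectrum is a fixed distance $\lambda_1>0$ away, so on functions satisfying the orthogonality constraint the operator equals $\Delta_{(r_1,z_1)}+r^{-1}\partial_r-(3H^2-1)$ plus perturbations governed by $f',f'',f^{(3)}$, with quadratic form bounded below by $\lambda_1$ plus decaying corrections. Using the decay statements of Section \ref{compactness} (the smallness of $f',f'',f^{(3)}$ from Lemma \ref{small} and the ingredients behind Proposition \ref{P3}), this block is an isomorphism between the $\perp$-components of $S_1$ and $S_2$, hence contributes $0$ to the index.

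For the parallel block, projecting $\mathcal{L}$ onto $\eta^{+}H'$ exactly as in Lemmas \ref{P2}--\ref{P1} reduces it to a one-dimensional operator $\mathcal{J}$ on $p(r_1)$ whose leading term is the rotationally invariant Jacobi operator of the catenoidal end, $\mathcal{J}p=\tfrac{1}{r_1}\bigl(\tfrac{r_1 p'}{(1+f'^2)^{3/2}}\bigr)'+O(r_1^{-2})\,p$, behaving at infinity like $p''+r_1^{-1}p'$. Its two homogeneous solutions grow like $1$ and like $\ln r_1$, and these are precisely (after multiplication by $\eta^{+}H'$ and the obvious modification off $\mathcal{B}_u$) the images of the catenoid Jacobi fields $\partial_b f_{k,b}\equiv 1$ and $\partial_k f_{k,b}\sim \ln r_1$, i.e.\ of $\gamma_2$ and $\gamma_1$. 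Since the weight built into $S_1$ forces $p=O(r_1^{-2})$, which lies strictly on the decay side of the indicial exponents of $\mathcal{J}$, the operator $\mathcal{J}$ on $S_1^{\parallel}$ is Fredholm with a nontrivial cokernel; enlarging the domain by $\mathcal{D}=\mathrm{span}\{\gamma_1,\gamma_2\}$ restores surjectivity, and a variation-of-parameters computation on the model operator (equivalently the Lockhart--McOwen index formula, as in \cite{dkp-2009}) gives index exactly $1$ for the parallel block. Adding the two contributions shows $\mathcal{L}:S_1\oplus\mathcal{D}\to S_2$ is Fredholm of index $1$.

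Concretely I would package the Fredholm statement in the standard two steps: first an a priori estimate $\|\psi\|_{S_1}\le C\|\mathcal{L}\psi\|_{S_2}+C\|\psi\|_{L^{\infty}(B_R)}$ for $\psi$ orthogonal to $\mathcal{D}$, obtained by gluing the exponential estimate in the exterior region, the ``massive'' estimate for the $\perp$-block, and the weighted ODE estimate for $\mathcal{J}$, in the spirit of the proof of Proposition \ref{P3}; this yields closed range and finite-dimensional kernel on $S_1\oplus\mathcal{D}$. Then the same scheme applied to the formal adjoint (or a direct parametrix construction) controls the cokernel, and the index is read off by deforming the coefficients to those of the exact catenoidal model, along which the index is invariant. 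The main obstacle is the parallel block: one must verify that the perturbation terms of $\mathcal{J}$ — especially those involving $f^{(3)}$, which a priori carry no pointwise decay — are genuinely lower order, which is exactly where the asymptotics $f'',f^{(3)}\to 0$ and $|f^{(3)}(r)|\le Cr^{-1}+Ce^{-D(r)}$ from Section \ref{compactness} enter, together with the fact that the $e^{-2\sqrt{2}\bar d}$-type errors are absorbed into $e^{-D}$ once assumption (A) is established.
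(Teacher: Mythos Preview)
Your outline is essentially the same strategy as the paper's proof: reduce to the case of almost-parallel nodal lines by a compact perturbation, split into the orthogonal block (handled by the spectral gap of $-\partial_{z_1}^2+3H^2-1$) and the parallel block, and show the parallel block becomes the Jacobi-type operator $K_1\xi\simeq \xi''+r_1^{-1}\xi'+O(r_1^{-2-\alpha})\xi$ whose two homogeneous solutions $\vartheta_1,\vartheta_2$ behave like $1$ and $\ln r_1$.

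The methodological difference is that the paper carries this out by direct construction rather than through an a~priori estimate plus adjoint/deformation argument. In Step~1 it explicitly solves $\mathcal{L}\Xi=\Theta$ via the coupled system for $(\xi,\psi)$, producing a solution with a non-decaying part $\hat\Xi$ built from $\chi\vartheta_2$; in Step~2 it constructs the kernel element $\Xi_0$ from $\vartheta_1$ and proves it is unique; and Step~3 is devoted precisely to the point you treat as ``obvious'', namely that the deficiency elements $\hat\Xi$ and $\Xi_0$ actually lie in $S_1\oplus\mathcal{D}$. This last step is not automatic: $\gamma_1,\gamma_2$ are defined geometrically as $\partial_s u_{s,t,\psi}$ and $\partial_t u_{s,t,\psi}$ at $(0,0,0)$, and one must check that, modulo $S_1$, they span the same two-dimensional space as $\hat\Xi,\Xi_0$. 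The paper does this by solving $\mathcal{L}g_i=\mathcal{L}\gamma_i$ with $g_i-c_i\hat\Xi\in S_1$ and invoking the uniqueness from Step~2 to force $g_i-\gamma_i=d_i\Xi_0$, whence $\hat\Xi$ and $\Xi_0$ are linear combinations of $\gamma_1,\gamma_2$ modulo $S_1$. In your framework this same identification is what makes ``enlarging the domain by $\mathcal{D}$'' correspond exactly to crossing the two indicial roots, so it deserves an explicit argument rather than an appeal to the model computation.
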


\begin{proof}
Let us prove the result under the additional assumption that $f\left(
0\right)  $ is large and $\left\Vert f^{\prime}\right\Vert _{L^{\infty}%
},\left\Vert f^{\prime\prime}\right\Vert _{L^{\infty}}$ are very small. In the
general case, we could modify the function $u$ inside a compact set into a
function whose nodal lines are almost parallel, and use the fact that the
corresponding linearized operator is a compact perturbation(thus the Fredholm
index is preserved) of $\mathcal{L}.$ We shall split the proof into several steps.

\textbf{Step 1}. For each function $\Theta\in S_{2},$ we shall find a solution
$\Xi$ to the equation%
\begin{equation}
\mathcal{L}\Xi=\Theta. \label{e1}%
\end{equation}

By the definition of $S_{2},$ in $\mathbb{E}^{+},$ for $r_{1}$ large,
\[
X^{\ast}\Theta\left(  r_{1},z_{1}\right)  =\omega\left(  r_{1}\right)
X^{\ast}\eta^{+}H^{\prime}\left(  z_{1}\right)  +X^{\ast}\varphi\left(
r_{1},z_{1}\right)  ,
\]
for some function $\omega$ and $\varphi,$ where $\int_{\mathbb{R}}X^{\ast
}\left(  \eta^{+}\varphi\right)  H^{\prime}=0.$ To solve $\left(
\ref{e1}\right)  ,$ adopt the same notation as in Section \ref{compactness},
it will be suffice to solve the following system%
\begin{equation}
\left\{
\begin{array}
[c]{c}%
\left(  \mathcal{L}\Xi\right)  _{1}^{\Vert}=\Theta_{1}^{\Vert},\\
\left(  \mathcal{L}\Xi\right)  ^{\bot}=\Theta^{\bot}.
\end{array}
\right.  \label{sys1}%
\end{equation}
For convenience, we recall the expression of $\mathcal{L}$in the Fermi
coordinate.
\begin{align*}
\mathcal{L}=A^{-1}\partial_{r_{1}}^{2}+\partial_{z_{1}}^{2}  &  +\frac{1}%
{2}\frac{\partial_{z_{1}}A}{A}\partial_{z_{1}}-\frac{1}{2}\frac{\partial
_{r_{1}}A}{A^{2}}\partial_{r_{1}}\\
&  +r^{-1}\left(  \partial_{r}r_{1}\partial_{r_{1}}+\partial_{r}z_{1}%
\partial_{z_{1}}\right)  +1-3u^{2}.
\end{align*}
Let us first compute the action of $\mathcal{L}$ on functions of the form
$\xi\left(  r_{1}\right)  \eta^{+}H^{\prime}\left(  z_{1}\right)  .$ Using
$\left(  \ref{derivative}\right)  $ we get
\begin{align*}
\mathcal{L}\left(  \xi\eta^{+}H^{\prime}\right)   &  =A^{-1}\xi^{\prime\prime
}\eta^{+}H^{\prime}+\left[  -\frac{1}{2}\frac{\partial_{r_{1}}A}{A^{2}}%
+\frac{1}{r\left(  1+f^{\prime2}\right)  B}\right]  \xi^{\prime}\eta
^{+}H^{\prime}\\
&  +\left[  \frac{1}{2}\frac{\partial_{z_{1}}A}{A}-\frac{f^{\prime}}%
{r\sqrt{1+f^{\prime2}}}\right]  \xi\eta^{+}H^{\prime\prime}\\
&  +\left[  H^{\prime\prime\prime}+\left(  1-3u^{2}\right)  H^{\prime}\right]
\xi\eta^{+}\\
&  +2A^{-1}\xi^{\prime}\partial_{r_{1}}\eta^{+}H^{\prime}+A^{-1}\xi
\partial_{r_{1}}^{2}\eta^{+}H^{\prime}+\xi\partial_{z_{1}}\eta^{+}%
H^{\prime\prime}+\xi\partial_{z_{1}}^{2}\eta^{+}H^{\prime}\\
&  +\left[  -\frac{1}{2}\frac{\partial_{r_{1}}A}{A^{2}}+\frac{1}{r\left(
1+f^{\prime2}\right)  B}\right]  \xi\partial_{r_{1}}\eta^{+}H^{\prime}\\
&  +\left[  \frac{1}{2}\frac{\partial_{z_{1}}A}{A}-\frac{f^{\prime}}%
{r\sqrt{1+f^{\prime2}}}\right]  \xi\partial_{z_{1}}\eta^{+}H^{\prime}.
\end{align*}
The coefficient before $\xi^{\prime}$ has the estimate
\[
-\frac{1}{2}\frac{\partial_{r_{1}}A}{A^{2}}+\frac{1}{r\left(  1+f^{\prime
2}\right)  B}=\frac{1}{r}+O\left(  r^{-3}\right)  .
\]
Next we shall estimate the coefficient before $\xi.$ Recall that
\[
\frac{1}{2}\frac{\partial_{z_{1}}A}{A}-\frac{f^{\prime}}{r\sqrt{1+f^{\prime2}%
}}=O\left(  r^{-6}\right)  .
\]
We also have
\begin{align*}
H^{\prime\prime\prime}+\left(  1-3u^{2}\right)  H^{\prime}  &  =3\left(
H^{2}-u^{2}\right)  H^{\prime}\\
&  =3\left(  H+u\right)  \left(  H-u\right)  H^{\prime}.
\end{align*}
Since
\begin{align*}
u-H  &  =H\left(  z_{1}-h\left(  r_{1}\right)  \right)  -H\left(
z_{1}\right)  +O\left(  r^{-2-\alpha}\right) \\
&  =-h\left(  r_{1}\right)  H^{\prime}+O\left(  r^{-2-\alpha}\right)
=O\left(  r^{-2-\alpha}\right)  ,
\end{align*}
hence $H^{\prime\prime\prime}+\left(  1-3u^{2}\right)  H^{\prime}=O\left(
r^{-2-\alpha}\right)  .$ As for those terms involving derivatives of $\eta
^{+},$ they could be estimated by $O\left(  r^{-2-\alpha}\right)  .$ Combining
all the above estimate, we get
\[
\mathcal{L}\left(  \xi\eta^{+}H^{\prime}\right)  =A^{-1}\xi^{\prime\prime}%
\eta^{+}H^{\prime}+\left(  \frac{1}{r_{1}}+O\left(  r_{1}^{-3}\right)
\right)  \xi^{\prime}\eta^{+}H^{\prime}+O\left(  r^{-2-\alpha}\right)  \xi.
\]

Introduce the operator
\[
K_{1}:\xi\rightarrow\int_{\mathbb{R}}\mathcal{L}\left(  \xi\eta^{+}H^{\prime
}\right)  \eta^{+}H^{\prime}dz_{1}.
\]
It will be important to analyze the mapping property of $K_{1}.$ Let us
consider the equation
\begin{equation}
K_{1}\xi=\omega. \label{ODE}%
\end{equation}
This is a second order ODE. Suppose $\vartheta_{1},\vartheta_{2}$ are two
linearly independent solutions of the corresponding homogeneous equation:
$K_{1}\vartheta_{i}=0.$ We could assume $\vartheta_{1}$ is bounded near $0$.
Let $\chi$ be a cutoff function equals $0$ in $\left(  0,1\right)  $ and
equals to $1$ in $\left(  2,+\infty\right)  .$ Then using the variation of
parameter formula, one could show that $K_{1}$ is an isomorphism from the
space $\mathcal{S}_{1}\oplus span\left\{  \chi\vartheta_{2}\right\}  $ to
$\mathcal{S}_{2}.$ Here a function $p\in\mathcal{S}_{1}$ iff $p\in
C^{2,\alpha}\left(  \mathbb{R}\right)  $ and
\[
\left\Vert \left(  1+r_{1}\right)  ^{2}p\right\Vert _{L^{\infty}}+\left\Vert
\left(  1+r_{1}\right)  ^{3}p^{\prime}\right\Vert _{L^{\infty}}+\max
_{s}\left\Vert \left(  1+r_{1}\right)  ^{4}p^{\prime\prime}\right\Vert
_{C^{0,\alpha}\left(  \overline{B_{1}\left(  s\right)  }\right)  }<+\infty,
\]
and a function $p\in\mathcal{S}_{2}$ iff $p\in C^{0,\alpha}\left(
\mathbb{R}\right)  $ and
\[
\ \max_{s}\left\Vert \left(  1+r_{1}\right)  ^{4}p\right\Vert _{C^{0,\alpha
}\left(  \overline{B_{1}\left(  s\right)  }\right)  }<+\infty.
\]

Define the operator
\[
K_{2}:\xi\rightarrow\left[  \mathcal{L}\left(  \xi\eta^{+}H^{\prime}+\left[
\xi\eta^{+}H^{\prime}\right]  ^{s}\right)  \right]  ^{\bot}.
\]
To solve $\left(  \ref{sys1}\right)  ,$ it suffices to find solution $\left(
\xi,\psi\right)  $ to the system:
\begin{equation}
\left\{
\begin{array}
[c]{l}%
K_{1}\xi=\omega-\int_{\mathbb{R}}\eta^{+}H^{\prime}\mathcal{L}\psi dz_{1},\\
\left(  \mathcal{L}\psi\right)  ^{\bot}=\varphi-K_{2}\xi.
\end{array}
\right.  \label{sys2}%
\end{equation}

Let $\psi_{0}$ be the solution of
\[
\left(  \mathcal{L}\psi_{0}\right)  ^{\perp}=-K_{2}\left(  \chi\vartheta
_{2}\right)  ,
\]
satisfying $\int_{\mathbb{R}}X^{\ast}\left(  \psi_{0}\eta^{+}\right)
H^{\prime}dz_{1}=0.$ Note that at infinity, $\vartheta_{1}$ behaves like
$O\left(  \ln r\right)  $ or $O\left(  1\right)  .$ Hence $K_{2}\vartheta_{1}$
behaves like $O\left(  r^{-2-\alpha}\right)  .$ This implies that $\psi
_{0}=O\left(  r^{-2-\alpha}\right)  .$ Thus $\int_{\mathbb{R}}X^{\ast}\left(
\mathcal{L}\psi_{0}\eta^{+}\right)  H^{\prime}dz_{1}=O\left(  r^{-4}\right)
.$ Thanks to these estimates, one then could use a fixed point argument to get
a pair of solution $\left(  \xi,\psi\right)  $ for $\left(  \ref{sys2}\right)
$ with the form%
\[
\left\{
\begin{array}
[c]{c}%
\xi=c_{1}\chi\vartheta_{2}+\xi_{0},\\
\psi=c_{1}\psi_{0}+\bar{\psi},
\end{array}
\right.
\]
for some constant $c_{1},$ where $\xi_{0}\in$ $\mathcal{S}_{1}$ and $\bar
{\psi}\in S_{1}.$(Note that $\psi_{0}$ does not belong to $S_{1}$). Then we
get a corresponding solution $\Xi$ to $\left(  \ref{sys1}\right)  .$ We
emphasize that at this stage, we still don't know whether the solution $\Xi$
belongs to $S_{1},$ due to the non-decaying part
\begin{equation}
\hat{\Xi}:=\chi\left(  r_{1}\right)  \vartheta_{2}\left(  r_{1}\right)
\eta^{+}H^{\prime}\left(  z_{1}\right)  +\left[  \chi\left(  r_{1}\right)
\vartheta_{2}\left(  r_{1}\right)  \eta^{+}H^{\prime}\left(  z_{1}\right)
\right]  ^{s}+\psi_{0}. \label{nonde}%
\end{equation}

\textbf{Step 2. }Investigate the homogeneous equation
\begin{equation}
\mathcal{L\zeta}=0. \label{homoge}%
\end{equation}

Firstly, we wish to find a nontrivial solution to $\left(  \ref{homoge}%
\right)  .$ To achieve this, we use the fact that $\vartheta_{1}$ is in the
kernel of $K_{1}.$ A perturbation argument could be applied similarly as in
step 1 to get a function $\Xi_{0}$ solving $\mathcal{L}\Xi_{0}=0,$where
$\Xi_{0}$ is around $\eta^{+}\vartheta_{1}\left(  r_{1}\right)  H^{\prime
}\left(  z_{1}\right)  +\left[  \eta^{+}\vartheta_{1}\left(  r_{1}\right)
H^{\prime}\left(  z_{1}\right)  \right]  ^{s}.$ We could also assume $\Xi_{0}$
satisfying
\[
\int_{\mathbb{R}}X^{\ast}\left(  \Xi_{0}\eta^{+}\right)  H^{\prime}\left(
z_{1}\right)  dz_{1}=\vartheta_{1}\left(  r_{1}\right)  +\delta\vartheta
_{2}\left(  r_{1}\right)  +O\left(  r_{1}^{-\alpha}\right)  ,
\]
for certain $\delta\in\mathbb{R}$ and $\alpha>0.$

Secondly we show the solution $\Xi_{0}$ is in some sense unique. For this
purpose, let us assume $\Xi_{0}^{\prime}$ is another function solves
$\mathcal{L}\Xi_{0}^{\prime}=0$ and
\[
\int_{\mathbb{R}}X^{\ast}\left(  \Xi_{0}\eta^{+}\right)  H^{\prime}\left(
z_{1}\right)  dz_{1}=\vartheta_{1}\left(  r_{1}\right)  +\delta^{\prime
}\vartheta_{2}\left(  r_{1}\right)  +O\left(  r_{1}^{-\alpha}\right)  ,
\]
for certain $\delta^{\prime}\in\mathbb{R}$ and $\alpha>0.$ Then the function
$g:=\Xi_{0}^{\prime}-\Xi_{0}$ solves $\mathcal{L}g=0$ and
\begin{equation}
\int_{\mathbb{R}}X^{\ast}\left(  g\eta^{+}\right)  H^{\prime}\left(
z_{1}\right)  dz_{1}=\left(  \delta^{\prime}-\delta\right)  \vartheta
_{2}\left(  r_{1}\right)  +O\left(  r_{1}^{-\alpha}\right)  .
\label{gprojection}%
\end{equation}

We claim that $g=0.$ Indeed, writing $g$ as $\eta^{+}\xi\left(  r_{1}\right)
H^{\prime}\left(  z_{1}\right)  +\left[  \eta^{+}\xi\left(  r_{1}\right)
H^{\prime}\left(  z_{1}\right)  \right]  ^{s}+\varphi,$ where
\[
\int_{\mathbb{R}}X^{\ast}\left(  \eta^{+}\varphi\right)  H^{\prime}\left(
z_{1}\right)  dz_{1}=0
\]
and $\varphi=O\left(  r^{-\alpha}\right)  $ for some $\alpha>0.$ We have
\[
\left\{
\begin{array}
[c]{l}%
K_{1}\xi=\int_{\mathbb{R}}\eta^{+}H^{\prime}\mathcal{L}\varphi dz_{1},\\
\left(  \mathcal{L}\varphi\right)  ^{\bot}=-K_{2}\xi.
\end{array}
\right.
\]
Recall that in the Fermi coordinate with respect to $f_{k,b},$
\begin{align*}
&  \int_{\mathbb{R}}X^{\ast}\left(  \eta^{+}H_{1}^{\prime}\mathcal{L}%
\varphi\right)  dz_{1}\\
&  =-\int_{\mathbb{R}}X^{\ast}\left(  \eta^{+}H_{1}^{\prime}\right)  \left[
A^{-1}\partial_{r_{1}}^{2}+\frac{1}{2}\frac{\partial_{z_{1}}A}{A}%
\partial_{z_{1}}-\frac{1}{2}\frac{\partial_{r_{1}}A}{A^{2}}\partial_{r_{1}%
}\right]  \varphi^{\ast}dz_{1}\\
&  -\int_{\mathbb{R}}X^{\ast}\left(  \eta^{+}H_{1}^{\prime}\right)
r^{-1}\left[  \partial_{r_{1}}\varphi^{\ast}\partial_{r}r_{1}+\partial_{z_{1}%
}\varphi^{\ast}\partial_{r}z_{1}\right]  dz_{1}\\
&  +\int_{\mathbb{R}}\left[  -X^{\ast}\left(  \eta^{+}H_{1}^{\prime}\right)
\partial_{z_{1}}^{2}\varphi^{\ast}+X^{\ast}\left(  \eta^{+}H_{1}^{\prime
}\left(  3\bar{u}^{2}-1\right)  \varphi\right)  \right]  dz_{1}.
\end{align*}
Hence $\int_{\mathbb{R}}X^{\ast}\left(  \eta^{+}H_{1}^{\prime}\mathcal{L}%
\varphi\right)  dz_{1}=O\left(  r^{-2-\alpha}\right)  .$ By $\left(
\ref{gprojection}\right)  ,$ we could write $\xi=\beta\vartheta_{2}\left(
r_{1}\right)  +\rho,$ where $\beta\in\mathbb{R}$ and $\rho\left(
\cdot\right)  =O\left(  r_{1}^{-\alpha}\right)  .$ By the mapping property of
the operator $K_{1}$(note that $\vartheta_{1}$ is in the kernel of $K_{1}$),
for some $\sigma\in\left(  0,1\right)  ,$
\begin{equation}
\left\vert \beta\right\vert +\left\Vert \left(  1+r_{1}^{\alpha}\right)
\rho\right\Vert _{C^{2,\sigma}}\leq C\left\Vert \left(  1+r_{1}^{2+\alpha
}\right)  \int_{\mathbb{R}}\eta^{+}H^{\prime}\mathcal{L}\varphi dz_{1}%
\right\Vert _{C^{0,\sigma}}. \label{beta}%
\end{equation}
On the other hand, by the equation $\left(  \mathcal{L}\varphi\right)  ^{\bot
}=-K_{2}\xi,~$%
\[
\mathcal{L}\varphi=-K_{2}\xi+\left(  \mathcal{L}\varphi\right)  ^{\Vert}.
\]
Note that due to the orthogonality condition, $\left(  \mathcal{L}%
\varphi\right)  ^{\Vert}$ is small compared to $\varphi.$ Hence by the a
priori estimate of the operator $\mathcal{L}$, suitable weighted norm of
$\varphi$ could be controlled by
\[
o\left(  \left\vert \beta\right\vert \right)  +o\left\Vert \left(
1+r_{1}^{\alpha}\right)  \rho\right\Vert _{C^{2,\sigma}}.
\]
This together with $\left(  \ref{beta}\right)  $ yields that $g=0,$ which
implies $\Xi_{0}^{\prime}=\Xi_{0}.$ Hence the operator $\mathcal{L}%
:S_{1}\oplus\mathcal{D}\rightarrow S_{2}$ has at most one dimensional kernel.

\textbf{Step 3. }To finish the proof, it remains to show that the solutions
$\Xi$ in step 1 and $\Xi_{0}$ in step 2 indeed belong to the space
$S_{1}\oplus\mathcal{D}.$

Recall that the deficiency space $\mathcal{D}$ is spanned by $\gamma_{1}$ and
$\gamma_{2}.$ Consider the function $\mathcal{L}\gamma_{1},i=1,2.$ We know
that $\mathcal{L}\gamma_{2}\in S_{2}.$ Hence by Step 1, one could find a
solutions $g_{i}$ satisfy%
\[
\mathcal{L}g_{i}=\mathcal{L}\gamma_{i}.
\]
Note that
\begin{equation}
g_{i}-c_{i}\hat{\Xi}\in S_{1} \label{gi}%
\end{equation}
for some constants $c_{i},$ where the function $\hat{\Xi}$ is a non-decaying
term introduced in $\left(  \ref{nonde}\right)  .$

Hence by Step 2 and the asymptotic behavior of $g_{i}$ and $\gamma_{i},$
$g_{i}-\gamma_{i}=d_{i}\Xi_{0},i=1,2,$ for some constants $d_{i}.$ Then by
$\left(  \ref{gi}\right)  ,$
\[
c_{i}\hat{\Xi}-\gamma_{i}=d_{i}\Xi_{0},i=1,2.
\]
It follows that $\hat{\Xi}-k_{1,1}\gamma_{1}-k_{1,2}\gamma_{2}\in S_{1}$ and
$\bar{\Xi}_{0}-k_{2,1}\gamma_{1}+k_{2,2}\gamma_{2}\in S_{1},$ for some
constants $k_{i,j},i,j=1,2.$ The proof is completed.
\end{proof}

Having proved the Fredholm property, we proceed to show that the operators
involved are real analytic.

\begin{lemma}
The map $N$ is real analytic.
\end{lemma}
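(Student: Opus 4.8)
The plan is to write $N$ as a composition of finitely many maps, each of which is manifestly real analytic, and then invoke the standard closure properties: sums, products and compositions of real analytic maps between Banach spaces are real analytic, every bounded $k$-linear map is real analytic, and consequently every polynomial map is real analytic (see \cite{MR1956130}, \cite{MR1962054}). With these in hand the whole matter reduces to a short list of building blocks.

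The first and principal step will be to show that $(s,t)\mapsto\Phi_{s,t},\Psi_{s,t}$ and their inverses are real analytic families of diffeomorphisms. The key point is that the reference curve $f_{k,b}(r)=k\cosh^{-1}(k^{-1}r)+b$ is jointly real analytic in $(k,b,r)$ for $k$ near the fixed growth rate (which is $>\sqrt2$) and $r>k$, and that the $(s,t)$-derivatives of $f_{k+s,b+t}$, $f'_{k+s,b+t}$, $f''_{k+s,b+t}$ of every order are bounded with exactly the decay rates $O(1)$, $O(r^{-1})$, $O(r^{-2})$ built into the weights of $S_1$ and $S_2$, with constants growing at most geometrically in the order. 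Since the Fermi coordinate diffeomorphism $X_{s,t}$ of the curve $z=f_{k+s,b+t}(r)$ is given by explicit algebraic formulas of the type $\left(\ref{Fermi2}\right)$, it follows that $(s,t)\mapsto X_{s,t}$ and $(s,t)\mapsto X_{s,t}^{-1}$ are real analytic with values in smooth maps obeying uniform bounds on all derivatives. Because $\Phi_{s,t},\Psi_{s,t}$ and their inverses are built from $X_{s,t},X_{s,t}^{-1}$ by affine interpolation with the \emph{fixed} cutoffs $\eta^{+},\eta$, they inherit real analyticity in $(s,t)$; moreover, as is already implicit in the fact that $N$ maps $\mathbb{R}^{2}\oplus S_{1}$ into $S_{2}$, pre- and post-composition with them acts boundedly between the weighted H\"older spaces in play.

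The remaining pieces will then be routine. For a fixed smooth function $v$ with the decay of $\mathcal{H}_{1}$, the map $(s,t)\mapsto v\circ\Psi_{s,t}^{-1}$ (and its reflection $[\,\cdot\,]^{s}$) is real analytic into $S_{2}$, by expanding and differentiating the composition in $(s,t)$ and using the bounds of the previous step. The composition operator $(s,t)\mapsto\bigl(w\mapsto w\circ\Phi_{s,t}^{-1}\bigr)$ is real analytic with values in the bounded operators between the relevant weighted $C^{2,\alpha}$ spaces, since $\Phi_{s,t}^{-1}$ is $C^{\infty}$ in the spatial variables with $(s,t)$-uniform bounds and analytic in $(s,t)$; applying this to $w=\phi+\psi$, which is affine in $\psi$, yields a jointly real analytic $(s,t,\psi)\mapsto(\phi+\psi)\circ\Phi_{s,t}^{-1}$. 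Summing the pieces, $(s,t,\psi)\mapsto u_{s,t,\psi}$ is real analytic; equivalently $\delta u:=u_{s,t,\psi}-(\mathcal{H}_{1}+\mathcal{H}_{1}^{s}+1)$ depends real analytically on $(s,t,\psi)$ with values in the $C^{2,\alpha}$ space modelling $S_{1}$. The Allen--Cahn nonlinearity, being a fixed cubic polynomial $g(w)=w-w^{3}$, composes with this as $g(\mathcal{H}_{1}+\mathcal{H}_{1}^{s}+1+\delta u)=\sum_{k=0}^{3}\tfrac{1}{k!}g^{(k)}(\mathcal{H}_{1}+\mathcal{H}_{1}^{s}+1)(\delta u)^{k}$, a polynomial map of degree $\le3$ in $\delta u$ built from multiplication operators and the continuous product of the Banach algebra $C^{0,\alpha}$, hence real analytic; here $|u_{s,t,\psi}|<1$ and the decay of $u_{s,t,\psi}\mp1$ ensure everything lands in the algebra. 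The operator $\Delta+r^{-1}\partial_{r}+\mathrm{id}$ is bounded linear from the $C^{2,\alpha}$ space into the $C^{0,\alpha}$ space, hence real analytic; and the post-composition $w\mapsto w\circ\Phi_{s,t}$ is handled exactly like the pre-composition above.

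Composing these maps in the order dictated by the definition of $N$, and using that real analyticity is preserved under composition and addition, will give that $N:\mathbb{R}^{2}\oplus S_{1}\rightarrow S_{2}$ is real analytic. The only step where genuine care is needed is the bookkeeping in the composition operators: one must verify that the parameter families $\Phi_{s,t},\Psi_{s,t}$ and their inverses send the weighted H\"older spaces defining $S_{1}$ and $S_{2}$ boundedly into themselves and that the dependence on $(s,t)$ is analytic in operator norm. This, however, relies on precisely the asymptotic estimates for $f$ and its derivatives established above and in Section \ref{compactness}, so it introduces no essentially new difficulty.
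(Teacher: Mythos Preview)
Your proposal is correct and follows essentially the same approach as the paper: both identify that the linear differential operator and the cubic nonlinearity are trivially real analytic, and that the only subtle point is showing the diffeomorphisms $\Phi_{s,t},\Psi_{s,t}$ depend real analytically on the parameters $(s,t)$ (though not on the spatial variables, because of the fixed cutoffs), which in turn follows from the explicit Fermi-coordinate formulas and the joint analyticity of $f_{k,b}$ in $(k,b)$. Your write-up is more detailed than the paper's sketch, particularly in the bookkeeping for the composition operators between weighted H\"older spaces, but the underlying argument is the same.
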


\begin{proof}
This follows from the fact that $\Delta$ is a linear operator and the function
$u^{3}-u$ is a real analytic function. Note that the subtle point here is that
the definition of $N$ involves the diffeomorphisms $\Phi_{s,t},\Psi_{s,t}.$
These maps are certainly not real analytic with respect to the $r,z$
variables, since there is a cutoff function appeared in their definition.
However, these diffeomorphisms are indeed real analytic with respect to the
parameters $s$ and $t,$ which could be seem from the explicit expression
$\left(  \ref{Fermi2}\right)  $ of the Fermi coordinate(Notice that $f$
depends analytically on $k$ and $b$). Indeed, for a point $p=\left(
r,z\right)  $, by definition,
\[
\Psi_{s,t}\left(  p\right)  =\eta p_{s,t}+\left(  1-\eta\right)  p.
\]
Recall that the Fermi coordinate of $p_{s,t}$ with respect to $f_{k+s,b+t}$ is
equal to $\left(  r_{1},z_{1}\right)  $, which is the Fermi coordinate of $p$
with respect to the curve $f_{k+s,b+t}.$ Hence we have the relations
\[
\left\{
\begin{array}
[c]{c}%
r=r_{1}-\frac{z_{1}f_{k,b}^{\prime}}{\sqrt{1+\left(  f_{k,b}^{\prime}\right)
^{2}}},\\
z=f_{k,b}\left(  r_{1}\right)  +\frac{z_{1}}{\sqrt{1+\left(  f_{k,b}^{\prime
}\right)  ^{2}}},
\end{array}
\right.
\]
and%
\[
\left\{
\begin{array}
[c]{c}%
\bar{r}=r_{1}-\frac{z_{1}f_{k+s,b+t}^{\prime}}{\sqrt{1+\left(  f_{k+s,b+t}%
^{\prime}\right)  ^{2}}},\\
\bar{z}=f_{k+s,b+t}\left(  r_{1}\right)  +\frac{z_{1}}{\sqrt{1+\left(
f_{k+s,b+t}^{\prime}\right)  ^{2}}}.
\end{array}
\right.
\]
The real analyticity follows from these relations.
\end{proof}

\begin{definition}
A solution $u$ is nondegenerate, if and only if the linearized operator
$\mathcal{L}:S_{1}\oplus\mathcal{D}\rightarrow S_{2}$ is surjective.
\end{definition}

By the results of \cite{MR3281950} and \cite{MR3019512}, nondegenerate two-end
solutions do exist.

\begin{proposition}
\label{th}The set of solutions to $\left(  \ref{axial}\right)  $ satisfying
$\left(  \ref{asym}\right)  $ has a structure of real analytic variety of
formal dimension 1. Furthermore, if a solution $u$ is nondegenerate, then
locally around $u,$ the solution set is a one dimensional real analytic manifold.
\end{proposition}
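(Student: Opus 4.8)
The plan is to read the statement off from the two results just established — that $N$ is real analytic and that $\mathcal{L}=DN(0,0,0)$ is Fredholm of index $1$ — together with the analytic implicit function theorem, after checking that the zero set of $N$ genuinely parametrizes the two-end solutions near a given one. First I would fix a two-end solution $u$ satisfying $(\ref{axial})$ and $(\ref{asym})$ with growth rate $k>\sqrt2$ and recall the set-up of the previous subsection: the real analytic map $N:\mathbb{R}^2\oplus S_1\rightarrow S_2$ with $N(0,0,0)=0$, and, under the identification of $\mathbb{R}^2$ with the deficiency space $\mathcal{D}=\mathrm{span}\{\gamma_1,\gamma_2\}$, with $DN(0,0,0)=\mathcal{L}:S_1\oplus\mathcal{D}\rightarrow S_2$ Fredholm of index $1$. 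Since being Fredholm of index $1$ is stable under small perturbations and $DN(s,t,\psi)$ is close to $\mathcal{L}$ for $(s,t,\psi)$ small, $N$ is a real analytic Fredholm map of index $1$ near the origin. Writing $\mathbb{R}^2\oplus S_1=\ker DN(0,0,0)\oplus X_1$ and $S_2=\mathrm{Range}\,DN(0,0,0)\oplus Y_1$ with $\dim Y_1=\dim\ker DN(0,0,0)-1=:m$, the Lyapunov--Schmidt reduction in the analytic category (as in \cite{MR1956130}) solves the $\mathrm{Range}$-component of $N=0$ for the $X_1$-variable by a real analytic function, reducing $N=0$ near the origin to a real analytic equation $\Phi(\xi)=0$ with $\xi$ in an open subset of $\ker DN(0,0,0)\cong\mathbb{R}^{m+1}$, with values in $Y_1\cong\mathbb{R}^m$ and $d\Phi(0)=0$; the zero set of $N$ near the origin is therefore a real analytic variety of formal dimension $(m+1)-m=1$.

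The second step is to identify this zero set with the set of two-end solutions near $u$. By construction $u_{s,t,\psi}$ solves $(\ref{axial})$ precisely when $N(s,t,\psi)=0$, and $u_{0,0,0}=u$; conversely, I would show that every two-end solution $v$ sufficiently $C_{loc}^{2}$-close to $u$ equals $u_{s,t,\psi}$ for a unique small $(s,t,\psi)\in\mathbb{R}^2\oplus S_1$. This is exactly where the asymptotic analysis of Section \ref{compactness} enters: the refined estimate $p(r)=k\ln r+O(r^{-\alpha})+C$ and Proposition \ref{P3} show that the nodal curve of $v$ is $O(r^{-\alpha})$-close to a catenoidal end $f_{k+s,b+t}$, the parameters $s,t$ being fixed uniquely by an implicit function theorem (modulation) argument so that the resulting remainder satisfies the orthogonality condition in the definition of $S_1$, while the decay $\phi=O(r^{-4})$ yields the weighted norm bounds required there. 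Because the local models at different solutions differ only by the real analytic coordinate changes arising from the analyticity of $N$, these charts glue and give the whole solution set the structure of a real analytic variety of formal dimension $1$.

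Finally, if $u$ is nondegenerate then $\mathcal{L}=DN(0,0,0)$ is surjective, so $Y_1=\{0\}$, $m=0$, and $\ker DN(0,0,0)$ is one dimensional; the analytic implicit function theorem then presents the zero set of $N$ near the origin as the graph of a real analytic function on this one-dimensional kernel, so locally around $u$ the solution set is a one-dimensional real analytic manifold. The compactness of Proposition \ref{compact} is not needed for this local statement; it is what will later allow Theorem \ref{structure} to be applied to the whole variety. I expect the main obstacle to be the second step: verifying, via the sharp asymptotics of Section \ref{compactness}, that the ansatz $u_{s,t,\psi}$ is exhaustive near $u$ and that the modulation parameters can be chosen so that the remainder genuinely lies in $S_1$.
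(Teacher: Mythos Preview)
Your proposal is correct and follows essentially the same approach as the paper: both reduce the statement to the fact that $N$ is real analytic with $DN(0,0,0)=\mathcal{L}$ Fredholm of index $1$, and then invoke the analytic implicit function theorem (as in \cite{MR1956130}). The paper's proof is in fact much terser than yours: it simply writes $N(s,t,\psi)=0$ as $DN(0,0,0)(s,t,\psi)+\int_0^1[DN(ls,lt,l\psi)-DN(0,0,0)](s,t,\psi)\,dl=0$ and then cites analyticity, the Fredholm index, and the analytic implicit function theorem without spelling out the Lyapunov--Schmidt reduction or the nondegenerate case. Your explicit treatment of both, and your flagging of the exhaustiveness of the ansatz (your ``second step''), are points the paper leaves implicit; the paper does not separately argue that every nearby two-end solution arises as some $u_{s,t,\psi}$, so your caution there is well placed but does not represent a divergence in strategy.
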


\begin{proof}
The function $u_{s,t,\psi}$ is a solution of the Allen-Cahn equation, if and
only if
\begin{equation}
N\left(  s,t,\psi\right)  =0. \label{non}%
\end{equation}
Since $N\left(  0,0,0\right)  =0,$ we write equation $\left(  \ref{non}%
\right)  $ in the form
\[
DN\left(  0,0,0\right)  \left(  s,t,\psi\right)  +\int_{0}^{1}\left[
DN\left(  ls,lt,l\psi\right)  -DN\left(  0,0,0\right)  \right]  \left(
s,t,\psi\right)  dl=0.
\]
Then the result follows from the fact that $N$ is real analytic and of
Fredholm index $1$ and the real analytic implicit function theorem(for
example, see \cite{MR1956130}).
\end{proof}

\section{\label{Uniqueness}Analysis of solutions on the boundary of the moduli
space}

Two different types of two-end solutions to  equation $\left(
\ref{axial}\right)  $ have been constructed using Lyapunov-Schmidt reduction
method in \cite{MR3281950} and \cite{MR3019512}. Let us briefly describe these
solutions. The first type of solutions is constructed in \cite{MR3281950} and
has the property that their nodal curves are close to suitable scaling of a
solution to the Toda system. We call them Toda type solutions. The growth rate
of these solutions is close to $\sqrt{2}$ (but greater than $\sqrt{2}$). The
second class of solutions are those constructed in \cite{MR3019512}. Their
nodal sets are close to the catenoids, which we know are described by the
function $\varepsilon r=\cosh\left(  \varepsilon z\right)  ,$ where
$\varepsilon$ is a small parameter. We call them catenoid type solution. The
growth rate of these solutions are of the order $\varepsilon^{-1},$ hence
tends to infinity as $\varepsilon\rightarrow0.$

As we discussed in Section 1, we expect that the moduli space of two-end
solutions is diffeomorphic to $\mathbb{R}.$ In particular, we expect that
there exists a one-parameter family of solutions, at one end of this
family(the \textquotedblleft boundary\textquotedblright\ of the moduli space),
the solutions should be the Toda type solutions, while on the other end of the
moduli space, the solutions should be the catenoid type solutions.

In this section, we would like to analyze the solutions near the boundary\ of
the moduli space. Our purpose is to prove that if $\mathcal{P}_{u}$(recall
that $\mathcal{P}_{u}$ is the intersection point of the nodal set of $u$ with
the coordinate axes) is on the $z$ axis and $\left\vert \mathcal{P}%
_{u}\right\vert $ is large, then the growth rate of $u$ is close to $\sqrt{2}%
$. (with additional efforts, one could also show that $u$ is actually a Toda
type solution, but the proof of Theorem \ref{main} don't need this fact.) We
shall also show that if $\mathcal{P}_{u}$ is on the $r$ axis and $\left\vert
\mathcal{P}_{u}\right\vert $ is large then $u$ is a catenoid type solution.

\subsection{Analysis of Toda type solutions}

We shall first analyze the solutions whose nodal set has two components which
are very far away from each other. We expect that these solutions are Toda
type. Our main result here is

\begin{proposition}
\label{To}Let $u$ be two-end solution. Suppose $\mathcal{P}_{u}$ is on the $z$
axis and $\left\vert \mathcal{P}_{u}\right\vert $ is large. Then the growth
rate of $u$ is close to $\sqrt{2}.$
\end{proposition}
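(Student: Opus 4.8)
The plan is to read off the growth rate $k$ from a first integral of the reduced equation (\ref{p-equation}) for the nodal curve, using the balancing formula (\ref{b}) to handle the one remaining boundary term. Write $m=|\mathcal{P}_u|=f(0)$; I will show $k=\sqrt2+o_m(1)$, where $o_m(1)$ is a quantity which, for a fixed but large $r_0$, tends to $0$ as $m\to+\infty$. By Lemma \ref{monoto} the nodal set of $u$ in $\mathbb{E}^{+}$ is the graph $z=f(r)$ with $f$ increasing, $f(0)=m$, $f'(0)=0$, and all the estimates of Section \ref{compactness} hold with constants independent of $m$. In particular, for $r_1>r_0$ the function $p=f+\sqrt{1+f'^2}\,h$ satisfies (\ref{p-equation}) together with $p(r)=k\ln r+C+o(1)$, $rp'(r)\to k$, $p'\to0$, and $e^{-\sqrt2 D}=e^{-2\sqrt2 p}(1+o(1))$ on the range where this term is not already negligible. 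Moreover, since $f$ is increasing and $D(r_0)\ge 2m-o(m)\to+\infty$, Lemma \ref{h} and Proposition \ref{P3} give $p(r_0)\ge m-C(e^{-r_0}+r_0^{-2})-o_m(1)$; this bound is the only place where largeness of $m$ enters the ODE analysis.

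Put $t=\ln r$ and $q(t)=p(e^t)$, so $q'(t)=rp'(r)$. Since $p'\to0$, equation (\ref{p-equation}) becomes
\begin{equation*}
q''(t)=\frac{\mathbf{c}_1}{\mathbf{c}_0}\,e^{2(t-\sqrt2\,q)}\,(1+o(1))+O(e^{-t}),\qquad t>\ln r_0 .
\end{equation*}
Let $w=t-\sqrt2\,q$, so that $w'=1-\sqrt2\,q'$ and $w''=-\sqrt2\,\frac{\mathbf{c}_1}{\mathbf{c}_0}\,e^{2w}(1+o(1))+O(e^{-t})$. Multiplying by $w'$ and integrating from $t_0=\ln r_0$ to $T$, with the principal term integrated exactly,
\begin{equation*}
\tfrac12 w'(T)^2-\tfrac12 w'(t_0)^2=-\frac{\mathbf{c}_1}{\sqrt2\,\mathbf{c}_0}\bigl(e^{2w(T)}-e^{2w(t_0)}\bigr)+o_m(1)+O(e^{-t_0}).
\end{equation*}
Letting $T\to+\infty$ and using $q'(T)=rp'\to k$, hence $w'(T)\to 1-\sqrt2\,k$, together with $e^{2w(T)}=e^{2T(1-\sqrt2 k)+O(1)}\to0$ (valid since $k>\sqrt2$, so $1-\sqrt2 k<0$) and $e^{2w(t_0)}\to0$ as $m\to+\infty$, we obtain
\begin{equation*}
(1-\sqrt2\,k)^2=\bigl(1-\sqrt2\,r_0p'(r_0)\bigr)^2+O(r_0^{-1})+o_m(1).
\end{equation*}
The delicate point, and the main obstacle of the proof, is justifying this first integral: the factor $1+o(1)$ and the term $O(e^{-t})$ are admissible because $e^{2w}|w'|$ is integrable on $(t_0,+\infty)$, while on every fixed bounded interval $[t_0,T_1]$ one has $e^{2w(t)}\le e^{2t_0}e^{-2\sqrt2(m-o(1))}\to0$ as $m\to+\infty$ (using $q(t)\ge q(t_0)=p(r_0)\ge m-o(1)$), so their contributions to the integral are $o_m(1)+O(e^{-t_0})$.

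It remains to show $r_0p'(r_0)$ is small. Apply (\ref{b}) on the region $\Omega\subset\mathbb{R}^{3}$ lying above $\{z=0\}$ and below the cone obtained by revolving about the $z$ axis the line through $(r_0,f(r_0))$ orthogonal to the nodal curve. Exactly as in the proof of Lemma \ref{slope}, this cone crosses the nodal surface once, and the flux through it equals $2\pi\mathbf{c}_0\,\frac{r_0f'(r_0)}{\sqrt{1+f'(r_0)^2}}\bigl(1+O(e^{-r_0}+r_0^{-2})+o_m(1)\bigr)$; on the other hand, on $\{z=0\}$ the point $(r,0)$ lies between the two sheets at distance $\ge f(r)\ge m$ from the nodal set of $u$, so $u=-1+O(e^{-\sqrt2 m})$ there and the flux through the bottom disk is $O(r_0^2 e^{-2\sqrt2 m})$. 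Hence $r_0f'(r_0)=O(r_0^2 e^{-2\sqrt2 m})$, and then $r_0p'(r_0)=r_0f'(r_0)+r_0\bigl(\sqrt{1+f'^2}\,h\bigr)'(r_0)=O(r_0 e^{-r_0}+r_0^{-1})+o_m(1)$ by the derivative estimate for $h$ following Lemma \ref{h}. Substituting into the displayed identity gives $(1-\sqrt2\,k)^2=1+O(r_0^{-1})+o_m(1)$, and since $1-\sqrt2\,k<0$ we conclude $\sqrt2\,k=2+O(r_0^{-1})+o_m(1)$, i.e.\ $k=\sqrt2+O(r_0^{-1})+o_m(1)$. Choosing $r_0$ large and then $m$ large makes $k$ as close to $\sqrt2$ as desired.
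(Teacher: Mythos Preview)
Your argument is correct and takes a genuinely different route from the paper's. Both proofs start from the same preliminary input: once $m=|\mathcal{P}_u|$ is large, the balancing formula forces $\|f'\|_{L^\infty}$ to be small, and then the analysis behind Proposition \ref{P3} and (\ref{fderivatives}) yields the reduced equation (\ref{p-equation}) for $p$ with error terms uniform in $m$. From there the methods diverge. The paper introduces the explicit Toda profile $q_\varepsilon(r)=\mathtt{q}(\varepsilon r)-\frac{\sqrt{2}}{2}\ln\varepsilon$, shows $|p-q_\varepsilon|\le C\varepsilon^\alpha$ on a long interval $(0,b_\varepsilon)$ by linearizing and applying variation of parameters, reads off $b\,p'(b)=\sqrt{2}+o(1)$ at the endpoint, and then integrates (\ref{p-equation}) on $(b,\infty)$ to propagate this value to the growth rate. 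You instead pass to logarithmic time $t=\ln r$, set $w=t-\sqrt{2}\,q$, and exploit the first integral of the autonomous limit $w''=-a e^{2w}$; evaluating the conserved energy at $t_0=\ln r_0$ (where $w'\approx 1$ and $e^{2w}\to 0$ by the balancing bound on $r_0p'(r_0)$ and $p(r_0)\ge m-C$) and at infinity (where $w'\to 1-\sqrt{2}k$ and $e^{2w}\to 0$ since $k>\sqrt2$) gives $(1-\sqrt{2}k)^2=1+o(1)$ directly. Your approach is more streamlined and conceptual: it bypasses the explicit Toda solution and the somewhat artificial cutoff at $b_\varepsilon=|\varepsilon^{-1}\ln\varepsilon|$, treating the whole half-line at once. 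What the paper's approach buys is extra information, namely a pointwise comparison $p\approx q_\varepsilon$ on the inner region, which would be needed if one wanted to go further and prove that $u$ is actually one of the Toda-type solutions of \cite{MR3281950}; your energy method yields only the growth rate, which is all the Proposition claims. One minor point: the radius of your bottom disk is $r_0+f(r_0)f'(r_0)$ rather than $r_0$, but since $f(r_0)f'(r_0)=O(m\|f'\|_\infty)$ is at most polynomial in $m$ while the integrand is $O(e^{-2\sqrt{2}m})$, the conclusion $r_0f'(r_0)=o_m(1)$ is unaffected.
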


We use $q\left(  \cdot\right)  =q_{\varepsilon}\left(  \cdot\right)  $ to
denote the solution of the following Toda equation:%
\begin{equation}
\mathbf{c}_{0}q^{\prime\prime}+\frac{\mathbf{c}_{0}}{r}q^{\prime}%
-\mathbf{c}_{1}e^{-2\sqrt{2}q}=0,\text{ }q^{\prime}\left(  0\right)  =0.
\label{Toda}%
\end{equation}
Observe that explicitly,
\[
q_{\varepsilon}\left(  r\right)  =\mathtt{q}\left(  \varepsilon r\right)
-\frac{\sqrt{2}}{2}\ln\varepsilon
\]
for some $\varepsilon>0,$ where $\mathtt{q}\left(  r\right)  =\frac{1}%
{2\sqrt{2}}\ln\frac{\left(  1+ar^{2}\right)  ^{2}}{8}$ with $a=\frac{2\sqrt
{2}\mathbf{c}_{1}}{\mathbf{c}_{0}}.$ In particular, $q_{\varepsilon}\left(
r\right)  -\sqrt{2}\ln r-C_{\varepsilon}$ tends to $0$ as $r$ tends to
infinity with $C_{\varepsilon}$ a constant depending on $\varepsilon.$ In the
sequel, we choose $\varepsilon$ such that $q\left(  0\right)  =\left\vert
\mathcal{P}_{u}\right\vert .$

We shall follow similar notations as that of Section \ref{compactness}. For
example, the nodal curve of $u$ in the upper half plane will be the graph of
function $f.$ The Fermi coordinate with respect to the graph of $f$ will be
denoted by $\left(  r_{1},z_{1}\right)  .$ We also have the cutoff functions
$\eta,\eta^{+},$ and the solution $u$ will be written the form $u=\bar{u}%
+\phi,$ where $\bar{u}$ is an approximate solution:
\[
\bar{u}=\mathcal{H}_{1}+\mathcal{H}_{1}^{s}+1,
\]
with the function $\mathcal{H}_{1}$ defined similarly as that of Section
\ref{compactness} using the cutoff function $\eta$ and the heteroclinic
solution $H.$

The main idea of the proof is to compare $f$ with the solution $q_{\varepsilon
}$ of $\left(  \ref{Toda}\right)  ,$ by analyzing the equation satisfied by
$f.$ The main step will be establishing suitable decay estimate for the
function $\phi,$ as we have already done in Section \ref{compactness}. Our
starting point is the fact that if $\mathcal{P}_{u}$ is on the $z$ axis and
$\left\vert \mathcal{P}_{u}\right\vert $ is large$,$ then $\left\Vert
f^{\prime}\right\Vert _{L^{\infty}\left(  \left(  0,+\infty\right)  \right)
}$ will be small$.$ This follows from an application of the balancing formula.

We shall get a $L^{\infty}$ estimate for $\phi.$ In the following, $\left\Vert
\cdot\right\Vert _{\infty}$ stands for the $L^{\infty}\left(  \left(
0+\infty\right)  \right)  $ norm. To simplify the notations, we only consider
the case that the radius of the Fermi coordinate is large enough.

\begin{lemma}
\label{fi1}Suppose $u$ satisfies the assumption of Proposition \ref{To}. Then%
\[
\left\Vert \phi\right\Vert _{\infty}\leq C\left\Vert \frac{f^{\prime2}}{r^{2}%
}\right\Vert _{\infty}+Ce^{-2\sqrt{2}f\left(  0\right)  }.
\]

\end{lemma}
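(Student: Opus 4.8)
The plan is to derive an equation for $\phi$ of the form $L\phi = [E(\bar u)]^{\perp} + [E(\bar u)]^{\parallel} + P(\phi)$ exactly as in Section \ref{compactness}, and then to estimate the right-hand side using the smallness of $f'$ (which, under the hypothesis that $\mathcal{P}_u$ is on the $z$ axis and $|\mathcal{P}_u|$ is large, follows from the balancing formula as indicated). The key new feature here, as opposed to Proposition \ref{P3}, is that we want to keep the term $C\|f'^2/r^2\|_\infty$ explicitly visible and to replace the $D(r_1)$ weight by the cruder but simpler $2f(0)$: since the two ends of $u$ are very far apart and $|\mathcal{P}_u| = q(0)$ is large, we have $D(r_1) \geq D(0) = 2|\mathcal{P}_u| = 2f(0)$ (up to the $o(1)$ corrections coming from $f'$ being small, cf.\ the Remark after Lemma \ref{interaction}), so $e^{-\sqrt2 D(r_1)} \leq e^{-2\sqrt2 f(0)}$ uniformly in $r_1$.

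First I would record, following Lemma \ref{P2}, Lemma \ref{P}, Lemma \ref{interaction} and Lemma \ref{P1}, that $[E(\bar u)]^{\perp}$ is controlled by $O(f'^2 r^{-2}) + O(f''^2) + O(e^{-\sqrt2 D}) + (\text{terms in } h, h', h'')$, and that $[E(\bar u)]^{\parallel}$ is small compared with $\phi$; the $h$-terms are in turn controlled by $\phi$ and $e^{-\sqrt2 D}$ via Lemma \ref{h} and its differentiated versions. Next I would observe that the $f''$-contributions can be absorbed: from the equation \eqref{f2} for $f$ (specialized to this regime, where $f'$ is small), $f''$ is itself controlled by $r^{-1}$-type terms, $e^{-\sqrt2 D}$, and terms in $\phi$; substituting this back—exactly as \eqref{Euo} was refined to \eqref{Eorth} in the proof of Proposition \ref{P3}—removes the $f''^2$ term at the cost of $O(f'^2 r^{-2})$, $O(e^{-\sqrt2 D})$ and quadratic-in-$\phi$ terms. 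Then I would apply the linear a priori estimate of Lemma \ref{L} (more precisely its iterated form as in \eqref{Esfi}–\eqref{Eorth}), which gives
\[
\|\phi\|_\infty \leq C\left\|\frac{f'^2}{r^2}\right\|_\infty + C\|e^{-\sqrt2 D}\|_\infty + C\|\phi\|_\infty^2 + (\text{boundary term}).
\]
Using $\|e^{-\sqrt2 D}\|_\infty \leq e^{-2\sqrt2 f(0)}$, absorbing the boundary term by the large-ball matching (in the fixed ball $u_{s,t,\psi}=u$, so $\phi$ is bounded there by universal constants times the same right-hand side), and absorbing the quadratic term $C\|\phi\|_\infty^2$ since $\|\phi\|_\infty$ is small, yields the claimed estimate.

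The main obstacle I expect is the bookkeeping of the $h$-terms and of the third-derivative term $f'''$ appearing implicitly through \eqref{f2}: as in Step 1 of the proof of Proposition \ref{compact}, one does not a priori have a decay rate for $f'''$, only that it tends to zero. The clean way around this is the same bootstrap used there—differentiate the $\phi$-equation once in $r_1$ to get $|\partial_{r_1}^3 \phi^*| \leq C r_1^{-1} + C e^{-\sqrt2 D}$, hence $|f'''(r_1)| \leq C r_1^{-1} + C e^{-\sqrt2 D(r_1)}$ via the differentiated \eqref{f2}—after which all the $f'''$-contributions are of the required form. A secondary technical point, as elsewhere in the paper, is that when the Fermi coordinate is not a priori large enough one must first run the argument of the "Proof of Proposition \ref{P3} without Assumption (A)" to establish $\bar d(r_1) = 3f(r_1)$ for large $r_1$; here I have simply assumed, as the statement's sentence before the lemma permits, that the radius of the Fermi coordinate is large enough, so this reduction is not needed.
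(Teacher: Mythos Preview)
Your proposal is correct and follows essentially the same route as the paper's own proof: write $L\phi=[E(\bar u)]^\perp+[E(\bar u)]^\parallel+P(\phi)$, use the projected equation (the analogue of \eqref{f2}) to eliminate the $f''^{2}$ term from $[E(\bar u)]^\perp$, invoke $D(r_1)\ge 2f(0)$ to replace $e^{-\sqrt2 D}$ by $e^{-2\sqrt2 f(0)}$, and conclude by the a~priori estimate for $L$. The paper's proof is in fact only a sketch (explicitly referring back to Section~\ref{compactness}), and your write-up is if anything more detailed about the $f'''$ bootstrap and the Fermi-coordinate assumption; the one minor slip is the reference to ``$u_{s,t,\psi}=u$'', which is notation from Section~\ref{Moduli} and not relevant here.
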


\begin{proof}
We only sketch the proof, since many computations will be similar to that of
Section \ref{compactness}.

Recall that $\phi$ satisfies the equation
\begin{equation}
L\phi=\left[  E\left(  \bar{u}\right)  \right]  ^{\parallel}+\left[  E\left(
\bar{u}\right)  \right]  ^{\perp}+P\left(  \phi\right)  . \label{L3}%
\end{equation}
$P\left(  \phi\right)  $ is a higher order term of $\phi,$ and $\left[
E\left(  \bar{u}\right)  \right]  ^{\parallel}=E\left(  \bar{u}\right)
_{1}^{\parallel}+E\left(  \bar{u}\right)  _{2}^{\parallel}$ is also small
compared to $\phi.$ As in Section \ref{compactness},
\begin{align}
\left[  E\left(  \bar{u}\right)  \right]  ^{\bot}  &  =\left[  E\left(
\mathcal{H}_{1}^{s}\right)  \right]  ^{\bot}+O\left(  f^{\prime\prime
2}\right)  +O\left(  r^{-2}f^{\prime2}\right)  +O\left(  h^{\prime\prime
}f^{\prime\prime}\right)  +O\left(  h^{\prime}r^{-1}\right) \label{ver}\\
&  +O\left(  h^{\prime\prime2}\right)  +O\left(  h^{\prime2}\right)  +O\left(
h^{\prime}f^{\prime\prime}\right)  +O\left(  h^{\prime}f^{\prime\prime\prime
}\right)  +O\left(  e^{-\sqrt{2}D}\right)  .\nonumber
\end{align}
Project  equation $\left(  \ref{L3}\right)  $ onto $\eta^{+}H_{1}^{\prime
},$ we could show
\begin{align}
&  \frac{\mathbf{c}_{0}f^{\prime\prime}}{\left(  1+f^{\prime2}\right)
^{\frac{3}{2}}}+\frac{\mathbf{c}_{0}h^{\prime\prime}}{1+f^{\prime2}}%
+\frac{\mathbf{c}_{0}f^{\prime}}{r_{1}\sqrt{1+f^{\prime2}}}-\left(  1+o\left(
1\right)  \right)  \mathbf{c}_{1}e^{-\sqrt{2}D}\label{fderivatives}\\
&  =O\left(  h^{\prime\prime}f^{\prime\prime}\right)  +O\left(  h^{\prime
2}\right)  +O\left(  h^{\prime}f^{\prime\prime}\right)  +O\left(  h^{\prime
}r^{-1}\right)  +O\left(  h^{\prime}f^{\prime\prime\prime}\right) \nonumber\\
&  +O\left(  \left\Vert \phi^{\ast}\left(  r_{1},\cdot\right)  \right\Vert
_{\infty}^{2}\right)  +O\left(  \left\Vert \phi^{\ast}\left(  r_{1}%
,\cdot\right)  \right\Vert _{\infty}e^{-\sqrt{2}f\left(  r_{1}\right)
}\right)  +O\left(  \left\Vert \phi^{\ast}\left(  r_{1},\cdot\right)
\right\Vert _{\infty}\left\Vert \partial_{r_{1}}\phi^{\ast}\left(  r_{1}%
,\cdot\right)  \right\Vert _{\infty}\right) \nonumber\\
&  +O\left(  \left\Vert \partial_{r_{1}}\phi^{\ast}\left(  r_{1},\cdot\right)
\right\Vert _{\infty}e^{-\sqrt{2}f\left(  r_{1}\right)  }\right)  +O\left(
\left\Vert \partial_{r_{1}}\phi^{\ast}\left(  r_{1},\cdot\right)  \right\Vert
_{\infty}^{2}\right)  +O\left(  \left\Vert \phi^{\ast}\left(  r_{1}%
,\cdot\right)  \right\Vert _{\infty}\left\Vert \partial_{r_{1}}^{2}\phi^{\ast
}\left(  r_{1},\cdot\right)  \right\Vert _{\infty}\right) \nonumber\\
&  +O\left(  f^{\prime\prime}\left\Vert \partial_{z_{1}}\phi^{\ast}\left(
r_{1},\cdot\right)  \right\Vert _{\infty}\right)  +O\left(  f^{\prime\prime
}\left\Vert \partial_{r_{1}}\phi^{\ast}\left(  r_{1},\cdot\right)  \right\Vert
_{\infty}\right)  +O\left(  f^{\prime\prime\prime}\left\Vert \partial_{r_{1}%
}\phi^{\ast}\left(  r_{1},\cdot\right)  \right\Vert _{\infty}\right)
\nonumber\\
&  +O\left(  r^{-1}\left\Vert \partial_{r_{1}}\phi^{\ast}\left(  r_{1}%
,\cdot\right)  \right\Vert _{\infty}\right)  +O\left(  r^{-1}f^{\prime
}\left\Vert \partial_{z_{1}}\phi^{\ast}\left(  r_{1},\cdot\right)  \right\Vert
_{\infty}\right)  .\nonumber
\end{align}
$\ $Insert this into $\left(  \ref{ver}\right)  ,$ we obtain
\begin{align*}
\left\vert \left(  E\left(  \bar{u}\right)  \right)  ^{\bot}\right\vert  &
=\left[  E\left(  \mathcal{H}_{1}^{s}\right)  \right]  ^{\bot}+O\left(
r^{-2}f^{\prime2}\right)  +O\left(  h^{\prime}r^{-1}\right) \\
&  +O\left(  h^{\prime2}\right)  +O\left(  h^{\prime}f^{\prime\prime\prime
}\right)  +O\left(  e^{-\sqrt{2}D}\right) \\
&  +O\left(  h^{\prime}h^{\prime\prime}\right)  +O\left(  hf^{\prime
\prime\prime}\right)  +O\left(  h^{\prime\prime2}\right) \\
&  +O\left(  \left\Vert \phi^{\ast}\left(  r_{1},\cdot\right)  \right\Vert
_{\infty}^{2}\right)  +O\left(  \left\Vert \partial_{r_{1}}\phi^{\ast}\left(
r_{1},\cdot\right)  \right\Vert _{\infty}^{2}\right) \\
&  +O\left(  \left\Vert \partial_{z_{1}}\phi^{\ast}\left(  r_{1},\cdot\right)
\right\Vert _{\infty}^{2}\right)  .
\end{align*}
Then the a priori estimate of the operator $L$ tells us that
\[
\left\Vert \phi\right\Vert _{\infty}\leq\ C\left\Vert \frac{f^{\prime2}}%
{r^{2}}\right\Vert _{\infty}+Ce^{-2\sqrt{2}f\left(  0\right)  }.
\]

\end{proof}

Our next aim is to estimate the $L^{\infty}$ norm of $\frac{f^{\prime}\left(
r\right)  }{r}.$

\begin{lemma}
Let $\varepsilon$ be introduced above, then
\[
\left\Vert \frac{f^{\prime}}{r}\right\Vert _{\infty}+\left\Vert f^{\prime
\prime}\right\Vert _{\infty}\leq C\varepsilon^{2}.
\]

\end{lemma}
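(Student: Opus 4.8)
The plan is to reduce the statement to an ODE comparison between the nodal curve $f$ and the explicit Toda solution $q_{\varepsilon}$ of $\left( \ref{Toda}\right) $, exploiting that, by our choice of $\varepsilon$, we have matching initial data: $q_{\varepsilon}\left( 0\right) =\left\vert \mathcal{P}_{u}\right\vert =f\left( 0\right) $, while $q_{\varepsilon}^{\prime}\left( 0\right) =0=f^{\prime}\left( 0\right) $ (the latter because the Neumann condition $u_{r}\left( 0,z\right) =0$, differentiated along $u\left( r,f\left( r\right) \right) =0$, forces $f^{\prime}\left( 0\right) =0$). Note also that $e^{-2\sqrt{2}f\left( 0\right) }=C\varepsilon^{2}$. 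As stated just before Lemma \ref{fi1}, the balancing formula already gives that $\left\Vert f^{\prime}\right\Vert _{L^{\infty}}$ is small; by the argument of Section \ref{compactness} (differentiating $u\left( r,f\left( r\right) \right) =0$) this makes $f^{\prime\prime},f^{\prime\prime\prime},f^{\left( 4\right) }$ small as well. Inserting this into Lemma \ref{fi1} yields $\left\Vert \phi\right\Vert _{\infty}\leq C\left\Vert f^{\prime}/r\right\Vert _{\infty}^{2}+C\varepsilon^{2}$; Schauder estimates upgrade this to the same bound for $\left\Vert \phi\right\Vert _{C^{2,\alpha}}$, and then Lemma \ref{h} together with its differentiated versions gives $\left\Vert h\right\Vert _{C^{2}}+\left\Vert h^{\prime\prime\prime}\right\Vert _{\infty}\leq C\left\Vert f^{\prime}/r\right\Vert _{\infty}^{2}+C\varepsilon^{2}$. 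Feeding these into the projected equation $\left( \ref{fderivatives}\right) $ and using $f^{\prime}$ small to replace the factors $\left( 1+f^{\prime2}\right) ^{\pm}$ by $1$, every error term there (such as $h^{\prime\prime}f^{\prime\prime}$, $h^{\prime2}$, $h^{\prime}r^{-1}$, $h^{\prime}f^{\prime\prime\prime}$, $\Vert\partial_{r_{1}}\phi^{\ast}\Vert_{\infty}^{2}$, etc.) becomes quadratically small or $O\left( \varepsilon^{2}\right) $, so one arrives at
\[
\mathbf{c}_{0}f^{\prime\prime}+\mathbf{c}_{0}\frac{f^{\prime}}{r}-\mathbf{c}_{1}e^{-\sqrt{2}D}=R,\qquad\left\Vert R\right\Vert \leq o\left( 1\right) \big( \left\Vert f^{\prime\prime}\right\Vert _{\infty}+\left\Vert f^{\prime}/r\right\Vert _{\infty}\big) +C\left\Vert f^{\prime}/r\right\Vert _{\infty}^{2}+C\varepsilon^{2}.
\]

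Next I would compare $f$ with $q_{\varepsilon}$. By the Remark after Lemma \ref{interaction} we have $2f-D=o\left( f\right) $, hence (once $f^{\prime}$ is small) $e^{-\sqrt{2}D}=\left( 1+o\left( 1\right) \right) e^{-2\sqrt{2}f}$; subtracting $\left( \ref{Toda}\right) $ written as $\mathbf{c}_{0}q_{\varepsilon}^{\prime\prime}+\mathbf{c}_{0}q_{\varepsilon}^{\prime}/r=\mathbf{c}_{1}e^{-2\sqrt{2}q_{\varepsilon}}$, the difference $v:=f-q_{\varepsilon}$ satisfies a linear equation
\[
\mathbf{c}_{0}\Big( v^{\prime\prime}+\frac{v^{\prime}}{r}\Big) +2\sqrt{2}\,\mathbf{c}_{1}\,e^{-2\sqrt{2}\,\xi\left( r\right) }\,v=R,\qquad v\left( 0\right) =0,\quad v^{\prime}\left( 0\right) =0,
\]
with $\xi$ between $f$ and $q_{\varepsilon}$ and potential of size $\lesssim\varepsilon^{2}\left( 1+\varepsilon r\right) ^{-4}$. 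Because both initial conditions for $v$ vanish, this is an \emph{initial value problem}; integrating twice and applying Gr\"onwall (the potential is $O\left( \varepsilon^{2}\right) $ and integrable against the weight $r$) controls $v$, in a weighted $C^{2}$ norm adapted to the decay $\varepsilon^{2}\left( 1+\varepsilon r\right) ^{-2}$, by $R$. The essential point of using the IVP structure is that it bypasses the nontrivial kernel of the Toda linearization --- which is exactly the origin of the one-dimensional moduli space --- so no global invertibility is needed.

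Finally I would close by a bootstrap. Starting from the crude bound $\left\Vert f^{\prime}/r\right\Vert _{\infty}+\left\Vert f^{\prime\prime}\right\Vert _{\infty}\leq\delta_{0}$ (from $f^{\prime}$ small and the equation above), the previous two steps give $\left\Vert f^{\prime}/r\right\Vert _{\infty}+\left\Vert f^{\prime\prime}\right\Vert _{\infty}\leq\left\Vert q_{\varepsilon}^{\prime}/r\right\Vert _{\infty}+\left\Vert q_{\varepsilon}^{\prime\prime}\right\Vert _{\infty}+C\left( \delta_{0}^{2}+\varepsilon^{2}\right) $, and a direct computation from $q_{\varepsilon}\left( r\right) =\mathtt{q}\left( \varepsilon r\right) -\frac{\sqrt{2}}{2}\ln\varepsilon$ shows $\left\Vert q_{\varepsilon}^{\prime}/r\right\Vert _{\infty}+\left\Vert q_{\varepsilon}^{\prime\prime}\right\Vert _{\infty}\leq C\varepsilon^{2}$; iterating absorbs the quadratic term and yields $\left\Vert f^{\prime}/r\right\Vert _{\infty}+\left\Vert f^{\prime\prime}\right\Vert _{\infty}\leq C\varepsilon^{2}$ (the additive constant $h$, contributing $\hat{h}=\sqrt{1+f^{\prime2}}h$ to $p$, is reabsorbed since $\left\Vert h\right\Vert _{C^{2}}$ is quadratically small). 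The main obstacle is the same one flagged in Section \ref{compactness}: the appearance of $f^{\prime\prime\prime}$ (and $h^{\prime\prime\prime}$, $\partial_{r_{1}}^{3}\phi^{\ast}$) in the differentiated equations, for which there is no a priori decay. As there, one first derives the non-optimal estimate $\left\vert f^{\prime\prime\prime}\left( r\right) \right\vert \leq Cr^{-1}+Ce^{-D\left( r\right) }$ by differentiating the equation $L\phi=\left[ E\left( \bar{u}\right) \right] ^{\bot}+\left[ E\left( \bar{u}\right) \right] ^{\Vert}+P\left( \phi\right) $ once in $r_{1}$ and using the $C^{2,\alpha}$ control of $\phi$; this suffices to run the bootstrap. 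A secondary technical point is choosing the weighted norms strong enough that the ODE estimate feeds back into $\left( \ref{fderivatives}\right) $ and Lemma \ref{fi1}, so that the fixed point actually closes --- routine once the weight matches the natural profile $\varepsilon^{2}\left( 1+\varepsilon r\right) ^{-2}$.
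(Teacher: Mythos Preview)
Your route is genuinely different from the paper's and considerably more involved. The paper never compares $f$ with $q_\varepsilon$ here; that comparison is postponed to the \emph{next} lemma (Lemma~\ref{toda}), once the present estimate is already available. For the present lemma the paper gives a two--line maximum--point argument: pick $t_0$ where $f'/r$ attains its supremum (it exists since $f'/r\to 0$), observe by elementary calculus that $f''(t_0)\ge 0$ (otherwise $(f'/r)'<0$ on a left neighbourhood of $t_0$, contradicting maximality), and read off from the projected equation
\[
\frac{f'(t_0)}{t_0}\le \frac{\mathbf c_1}{\mathbf c_0}e^{-\sqrt 2 D(t_0)}+O(\varepsilon^2)\le C e^{-2\sqrt 2 f(0)}+O(\varepsilon^2)=O(\varepsilon^2),
\]
using only the crude monotonicity $D(t_0)\ge 2f(0)$. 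The bound on $f''$ then drops out of the equation. No $q_\varepsilon$, no Gr\"onwall, no weighted norms.

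Your IVP--comparison strategy can probably be pushed through, but as written it has a gap in the Gr\"onwall step. You need the linearised potential $Q(r)=2\sqrt 2\,(\mathbf c_1/\mathbf c_0)\,e^{-2\sqrt 2\,\xi(r)}$ to decay like $\varepsilon^2(1+\varepsilon r)^{-4}$ for the integral estimate on $[0,\infty)$ to close; but a priori you only know $\xi\ge f(0)$, giving merely $Q\le C\varepsilon^2$ with no decay in $r$, and then the feedback term $\frac{1}{r^2}\int_0^r s^3 Q(s)\,ds$ grows like $C\varepsilon^2 r^2$ and the bound blows up for $r\gtrsim\varepsilon^{-1}$. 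Even granting the $q_\varepsilon$--type decay of $Q$, the resulting Gr\"onwall constant $\sup_r r^{-2}\int_0^r s^3 Q(s)\,ds$ is a fixed number determined by $\mathbf c_1/\mathbf c_0$, not something you can make small, so absorption is not automatic. A related issue is your replacement $e^{-\sqrt 2 D}=(1+o(1))e^{-2\sqrt 2 f}$: the Remark after Lemma~\ref{interaction} only gives $2f-D=o(f)$, which does not control the ratio of exponentials; you would need $2f-D=o(1)$, i.e.\ an a priori bound on $f\,f'^2$, which is again part of what you are trying to prove. The paper sidesteps all of this by never needing decay of $e^{-\sqrt 2 D}$ in $r$ --- only the single pointwise inequality $D(t_0)\ge 2f(0)$.
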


\begin{proof}
The starting point of the proof is still  equation $\left(
\ref{fderivatives}\right)  .$ Applying Lemma \ref{fi1}, we infer from $\left(
\ref{fderivatives}\right)  $ that
\begin{align}
&  \frac{\mathbf{c}_{0}f^{\prime\prime}}{\left(  1+f^{\prime2}\right)
^{\frac{3}{2}}}+\frac{\mathbf{c}_{0}f^{\prime}}{r_{1}\left(  1+f^{\prime
2}\right)  ^{\frac{1}{2}}}-\mathbf{c}_{1}e^{-\sqrt{2}D}\label{T1}\\
&  =O\left(  \varepsilon^{2}\right)  .\nonumber
\end{align}
Let $t_{0}$ be a point where
\[
\frac{f^{\prime}\left(  t_{0}\right)  }{t_{0}}=\left\Vert \frac{f^{\prime}}%
{r}\right\Vert _{\infty}.
\]
This point exists because $\frac{f^{\prime}}{r}\rightarrow0,$ as
$r\rightarrow+\infty.$

At the point $t_{0},$ by $\left(  \ref{T1}\right)  ,$ $f$ will satisfy
\begin{align}
&  \mathbf{c}_{0}f^{\prime\prime}\left(  t_{0}\right)  +\mathbf{c}_{0}%
\frac{f^{\prime}\left(  t_{0}\right)  }{t_{0}}-\mathbf{c}_{1}e^{-\sqrt
{2}D\left(  t_{0}\right)  }\label{t0}\\
&  =O\left(  \varepsilon^{2}\right)  .\nonumber
\end{align}
We claim
\begin{equation}
f^{\prime\prime}\left(  t_{0}\right)  \geq0. \label{fsecond}%
\end{equation}
Indeed, if this is not true, then due to the fact that $f^{\prime\prime
}\left(  0\right)  \geq0,$ there will be another point $t_{1}<t_{0}$ such that
$f^{\prime\prime}\left(  t_{1}\right)  =0,$ and $f^{\prime\prime}\left(
t\right)  <0,$ $t\in\left(  t_{1},t_{0}\right)  .$ Since $f^{\prime}\geq0,$ we
find
\[
\left(  \frac{f^{\prime}\left(  r\right)  }{r}\right)  ^{\prime}%
=\frac{f^{\prime\prime}-r^{-1}f^{\prime}}{r}<0,\text{ }r\in\left(  t_{1}%
,t_{0}\right)  .
\]
This contradicts with the fact that $\frac{f^{\prime}\left(  t_{0}\right)
}{t_{0}}=\left\Vert \frac{f^{\prime}}{r}\right\Vert _{\infty}.$ From $\left(
\ref{t0}\right)  $ and $\left(  \ref{fsecond}\right)  $, it may be concluded
that
\begin{align}
\frac{f^{\prime}\left(  t_{0}\right)  }{t_{0}}  &  =\frac{\mathbf{c}_{1}%
}{\mathbf{c}_{0}}e^{-\sqrt{2}D\left(  t_{0}\right)  }-f^{\prime\prime}\left(
t_{0}\right)  +O\left(  \varepsilon^{2}\right) \nonumber\\
&  \leq Ce^{-2\sqrt{2}f\left(  0\right)  }\leq C\varepsilon^{2}. \label{f'}%
\end{align}
Here we have used the fact that $D\left(  t_{0}\right)  \geq2f\left(
0\right)  .$ This proves estimate for the $L^{\infty}$ norm of $\frac
{f^{\prime}}{r}.$ The estimate for $f^{\prime\prime}$ is a direct consequence
of $\left(  \ref{f'}\right)  $ and $\left(  \ref{t0}\right)  .$
\end{proof}

Let $p=f+\sqrt{1+f^{\prime2}}h.$ Next we show that $p$ is indeed close to the
solution $q$ of the Toda equation in a large interval. Set $b=b_{\varepsilon
}=\left\vert \frac{\ln\varepsilon}{\varepsilon}\right\vert .$

\begin{lemma}
$\label{toda}$There exists $\alpha>0,$ such that
\[
\left\vert p\left(  r\right)  -q\left(  r\right)  \right\vert \leq
C\varepsilon^{\alpha},\text{ }r\in\left(  0,b\right)  .
\]

\end{lemma}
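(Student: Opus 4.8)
The plan is to compare $p$ directly with $q_\varepsilon$ by showing that $p$ solves, up to a small remainder, the same scalar second order ODE as $q_\varepsilon$, with matching data at the origin, and then to close a perturbation argument on $(0,b)$. First I would assemble the a priori information already available: by the two preceding lemmas $\|f'/r\|_\infty$ and $\|f''\|_\infty$ are $O(\varepsilon^2)$, whence (through Lemma \ref{fi1}, Lemma \ref{h} and the Schauder estimate for $\phi$) also $\|\phi\|_\infty$, $\|h\|$, $\|h'\|$, $\|h''\|$ are $O(\varepsilon^2)$; moreover, since $f'=O(\varepsilon^2 r)$ and $p-f=\sqrt{1+f^{\prime2}}\,h=O(\varepsilon^2)$, the discrepancy $2p-D$ is $o(1)$ on $(0,b)$, in fact $O(\varepsilon^2|\ln\varepsilon|^3)$, so that $e^{-\sqrt2 D}=e^{-2\sqrt2 p}\bigl(1+O(\varepsilon^2|\ln\varepsilon|^3)\bigr)$. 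Inserting all this into the projected equation (\ref{fderivatives}) and recombining the $f''$, $f'/r$ and $h''$ contributions into $\mathbf{c}_0\bigl(p''+r^{-1}p'\bigr)$ — the computation behind Lemma \ref{P2} — one is left with
\[
\mathbf{c}_0\Bigl(p''+\frac{p'}{r}\Bigr)-\mathbf{c}_1 e^{-2\sqrt2 p}=R(r),\qquad r_0<r<b,
\]
where $R$ is a finite sum of terms each of which is $O(\varepsilon^4)$, or $O(\varepsilon^2 r^{-1})$, or $O\bigl(\varepsilon^2|\ln\varepsilon|^{3}\bigr)e^{-2\sqrt2 q_\varepsilon}$, or a derivative of an $O(\varepsilon^2)$ quantity. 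The useful feature is not pointwise smallness of high order — several pieces are merely $O(\varepsilon^2)$ — but that, once integrated against the weights $s$ and $s(1+\ln s)$ over the interval $(r_0,b)$ of length $b=|\ln\varepsilon|/\varepsilon$, every piece contributes only $O(\varepsilon|\ln\varepsilon|^{C})$.

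On the fixed bounded set $(0,r_0)$ the comparison is elementary: $f(0)=q_\varepsilon(0)=|\mathcal{P}_u|$ and $f'(0)=q_\varepsilon'(0)=0$ by the choice of $\varepsilon$ and the axial symmetry, while $\|f'\|_\infty,\|q_\varepsilon'\|_\infty=O(\varepsilon)$ and $h(0),h'(0)=O(\varepsilon^2)$, so $w:=p-q_\varepsilon$ satisfies $|w|=O(\varepsilon)$ on $(0,r_0)$, and in particular $|w(r_0)|+|w'(r_0)|=O(\varepsilon)$. On $(r_0,b)$ I would subtract the Toda equation (\ref{Toda}) from the displayed equation for $p$ and linearise the exponential, so that $w$ solves
\[
\mathcal{L}_{q_\varepsilon}w:=\mathbf{c}_0\Bigl(w''+\frac{w'}{r}\Bigr)+2\sqrt2\,\mathbf{c}_1 e^{-2\sqrt2 q_\varepsilon}\,w=R+O\bigl(e^{-2\sqrt2 q_\varepsilon}w^2\bigr).
\]
The operator $\mathcal{L}_{q_\varepsilon}$ possesses two explicit homogeneous solutions: the scaling Jacobi field $V_\varepsilon(r)=r\,q_\varepsilon'(r)-\tfrac{1}{\sqrt2}$, obtained by differentiating through $q_\varepsilon$ the one-parameter scaling family of Toda solutions, which is \emph{bounded} on $(0,\infty)$; and a second solution $\widetilde V_\varepsilon$ which grows like $\ln r$ at infinity, the Wronskian being a constant multiple of $1/r$. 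Writing $w$ via the variation-of-parameters formula built from $V_\varepsilon$ and $\widetilde V_\varepsilon$, fixing the free constants by the data at $r_0$, and inserting the bound on $R$ together with $e^{-2\sqrt2 q_\varepsilon}\le 8\varepsilon^2$, one obtains $\|w\|_{L^\infty(r_0,b)}\le C\varepsilon|\ln\varepsilon|^{C'}$ after a routine bootstrap that absorbs the quadratic term. Combined with the estimate on $(0,r_0)$ this gives $|p(r)-q_\varepsilon(r)|\le C\varepsilon^{\alpha}$ on $(0,b)$ for any fixed $\alpha<1$.

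I expect the genuine difficulty to lie in the first step — turning (\ref{fderivatives}) into the clean scalar Toda-type equation with a remainder $R$ whose every term survives, with a power of $\varepsilon$ to spare, the amplification by the factor $b^2\sim\varepsilon^{-2}|\ln\varepsilon|^2$ produced by the Green's kernel of $\mathcal{L}_{q_\varepsilon}$ on the long interval $(r_0,b)$. This forces one to track the precise decay in $r$ of $\phi$, $h$ and their derivatives, to quantify every $(1+o(1))$ in Lemma \ref{P2} and in $e^{-\sqrt2 D}=e^{-2\sqrt2 p}(1+o(1))$ as $1+O(\varepsilon^2|\ln\varepsilon|^3)$, and to recognise the nondecaying error terms as total derivatives of $O(\varepsilon^2)$ quantities. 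A secondary but essential point is to run the comparison through the \emph{bounded} Jacobi field $V_\varepsilon$ rather than a naive Gronwall estimate: the logarithmically growing solution $\widetilde V_\varepsilon$ is nearly resonant with the length of $(r_0,b)$, so a crude iteration yields a factor $C|\ln\varepsilon|\gg1$ and fails to close.
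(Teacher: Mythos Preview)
Your proposal is correct in substance and follows the same overall strategy as the paper: derive an approximate Toda ODE for $p$, subtract the exact Toda equation for $q_\varepsilon$, linearise the exponential, and close by variation of parameters using the Jacobi fields of the Toda operator.

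The paper, however, organises the argument differently by \emph{rescaling first}. It sets
\[
\mathbf{e}(r)=p(\varepsilon^{-1}r)+\tfrac{\sqrt{2}}{2}\ln\varepsilon-\mathtt{q}(r),
\]
so that the comparison takes place against the fixed profile $\mathtt{q}$ on the interval $(0,|\ln\varepsilon|)$ rather than against $q_\varepsilon$ on $(0,b)$ with $b=\varepsilon^{-1}|\ln\varepsilon|$. Under this rescaling the Toda equation is invariant, and the remainder $O(\varepsilon^{2+\alpha})$ in the equation for $p$ becomes $O(\varepsilon^{\alpha})$ after the $\varepsilon^{-2}$ from the second derivative, yielding directly
\[
\mathbf{e}''+\frac{\mathbf{e}'}{r}-e^{-2\sqrt{2}\mathtt{q}}\mathbf{e}=O(\mathbf{e}^2)+O(\varepsilon^{\alpha}).
\]
Since the rescaled interval has length only $|\ln\varepsilon|$, the Green's kernel of the fixed operator $\partial_r^2+r^{-1}\partial_r-e^{-2\sqrt{2}\mathtt{q}}$ contributes at worst polylogarithmic growth, and the variation-of-parameters step closes immediately without any need to classify remainder terms as total derivatives or to track $r$-decay. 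In effect, the rescaling absorbs exactly the $b^2\sim\varepsilon^{-2}|\ln\varepsilon|^2$ amplification that you identify as the main difficulty, so the delicate bookkeeping you describe in your final paragraph is bypassed entirely. Your direct approach works but is longer; the rescaling buys a shorter and more transparent proof.
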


\begin{proof}
Denote by $\mathbf{e}$ the function $p\left(  \varepsilon^{-1}r\right)
+\frac{\sqrt{2}}{2}\ln\varepsilon-\mathtt{q}\left(  r\right)  .$ Then
$\mathbf{e}\left(  0\right)  =O\left(  \varepsilon^{2}\right)  $ and
$\mathbf{e}^{\prime}\left(  0\right)  =0.$ By the previous lemmas, $\left\Vert
\phi\right\Vert _{\infty}\leq\varepsilon^{2}.$ Using this fact, we see that
the function $p=\sqrt{1+f^{\prime2}}h+f$ satisfies the equation
\[
\frac{\mathbf{c}_{0}p^{\prime\prime}}{\left(  1+p^{\prime2}\right)  ^{\frac
{3}{2}}}+\frac{\mathbf{c}_{0}p^{\prime}}{r_{1}\left(  1+p^{\prime2}\right)
^{\frac{1}{2}}}-\mathbf{c}_{1}e^{-\sqrt{2}D}=O\left(  \varepsilon^{2+\alpha
}\right)  .
\]
In particular, this combined with the fact that $D-2f=O\left(  \varepsilon
^{\alpha}\right)  $ yields
\[
p^{\prime\prime}+\frac{p^{\prime}}{r_{1}}-e^{-2\sqrt{2}p}=O\left(
\varepsilon^{2+\alpha}\right)  .
\]
From this equation, we infer that in the region where $\mathbf{e}$ is
$o\left(  1\right)  ,$ $\mathbf{e}$ satisfies
\[
\mathbf{e}^{\prime\prime}+\frac{\mathbf{e}^{\prime}}{r}-e^{-2\sqrt
{2}\mathtt{q}}\mathbf{e}=O\left(  \mathbf{e}^{2}\right)  +O\left(
\varepsilon^{\alpha}\right)  .
\]
The conclusion of the lemma then follows from the variation of parameters formula.
\end{proof}

By definition $q\left(  r\right)  =\mathtt{q}\left(  \varepsilon r\right)
-\frac{\sqrt{2}}{2}\ln\varepsilon,$ hence $rq^{\prime}\left(  r\right)
=\varepsilon r\mathtt{q}^{\prime}\left(  \varepsilon r\right)  ,$ which
implies that
\[
bq^{\prime}\left(  b\right)  =\sqrt{2}+o\left(  1\right)  .
\]
Notice that in Lemma \ref{toda}, we actually could also estimate $\left\vert
\mathbf{e}^{\prime}\right\vert \leq C\varepsilon^{\alpha}.$ From this, we
infer that
\begin{equation}
bf^{\prime}\left(  b\right)  =\sqrt{2}+o\left(  1\right)  . \label{fb}%
\end{equation}

Now we are in a position to prove the main result of this section.

\begin{proof}
[Proof of Proposition \ref{To}]Similar arguments as before yields
\[
\left\vert \phi\left(  r,z\right)  \right\vert \leq C\frac{1}{1+r^{2}%
}+Ce^{-D\left(  r\right)  }.
\]
Equation $\left(  \ref{fderivatives}\right)  $ then becomes
\[
\frac{p^{\prime\prime}}{\left(  1+p^{\prime2}\right)  ^{\frac{3}{2}}}%
+\frac{p^{\prime}}{r_{1}\left(  1+p^{\prime2}\right)  ^{\frac{1}{2}}}=\left(
1+o\left(  1\right)  \right)  e^{-\sqrt{2}D\left(  r_{1}\right)  }+O\left(
r_{1}^{-3}\right)  .
\]
Integrating from $t_{0}$ to $t_{1}$ leads to
\[
\frac{t_{1}p^{\prime}\left(  t_{1}\right)  }{\left(  1+p^{\prime}\left(
t_{1}\right)  ^{2}\right)  ^{\frac{1}{2}}}-\frac{t_{0}p^{\prime}\left(
t_{0}\right)  }{\left(  1+p^{\prime}\left(  t_{0}\right)  ^{2}\right)
^{\frac{1}{2}}}=\left(  1+o\left(  1\right)  \right)  \int_{t_{0}}^{t_{1}%
}e^{-\sqrt{2}D\left(  s\right)  }sds+O\left(  t_{0}^{-1}\right)  .
\]
This together with $\left(  \ref{fb}\right)  $ tells us that
\[
p^{\prime}\left(  r\right)  r\geq\sqrt{2}+o\left(  1\right)  \text{ for }r>b.
\]
Let $\delta>0$ be a fixed small constant. We claim that when $\varepsilon$ is
small,
\begin{equation}
p^{\prime}\left(  r\right)  r\leq\sqrt{2}+\delta\text{ for }r>b. \label{f3}%
\end{equation}
Indeed, suppose $\left(  b,b^{\ast}\right)  $ is the maximal interval where
$p^{\prime}\left(  r\right)  r\leq\sqrt{2}+\delta.$ Then in this interval,
elementary geometrical facts implies that
\begin{align*}
D\left(  r\right)   &  \geq\sqrt{2}\left\vert \ln\varepsilon\right\vert
+2\mathtt{q}\left(  b\right)  +2\left(  \sqrt{2}+o\left(  1\right)  \right)
\int_{b}^{r}\frac{ds}{s}-C\\
&  =\sqrt{2}\left\vert \ln\varepsilon\right\vert +2\mathtt{q}\left(  b\right)
+2\left(  \sqrt{2}+o\left(  1\right)  \right)  \left(  \ln r-\ln b\right)  -C.
\end{align*}
Therefore we have
\begin{align*}
\frac{b^{\ast}p^{\prime}\left(  b^{\ast}\right)  }{\left(  1+p^{\prime}\left(
b^{\ast}\right)  ^{2}\right)  ^{\frac{1}{2}}}-\frac{bp^{\prime}\left(
b\right)  }{\left(  1+p^{\prime}\left(  b\right)  ^{2}\right)  ^{\frac{1}{2}%
}}  &  =\left(  1+o\left(  1\right)  \right)  \int_{b}^{b^{\ast}}e^{-\sqrt
{2}D\left(  s\right)  }sds+O\left(  b^{-1}\right) \\
&  =O\left(  b^{-1}\right)  .
\end{align*}
This implies that when $\varepsilon$ is small, $b^{\ast}=+\infty.$ Applying
$\left(  \ref{f3}\right)  ,$ we finally get $\lim_{r\rightarrow+\infty
}p^{\prime}\left(  r\right)  r=\allowbreak\sqrt{2}+o\left(  1\right)  .$ The
proof is thus completed.
\end{proof}

\subsection{Uniqueness of catenoid type solutions}

Beside the planes, catenoid is the first example of embedded minimal surfaces
with finite total curvature; it is rotationally symmetric with respect to its
axis and the only minimal surface of revolution(up to a homothety). In the
$\left(  r,z\right)  $ coordinate, the one-parameter family of catenoids
$\mathcal{C}_{\varepsilon}$ could be represented by the function
\[
\varepsilon r=\cosh\left(  \varepsilon z\right)  ,
\]
with $\varepsilon>0$ being the parameter. As we mentioned before, in
\cite{MR3019512}, for each $\varepsilon$ sufficiently small, a solution
$u_{\varepsilon}$ of the Allen-Cahn equation is constructed. The nodal set of
$u_{\varepsilon}$ is close to the catenoid $\mathcal{C}_{\varepsilon}$. In
particular, $\mathcal{P}_{u_{\varepsilon}}$ is on the $r$ axis, $\left\vert
\mathcal{P}_{u_{\varepsilon}}\right\vert $ is of the order $O\left(
\varepsilon^{-1}\right)  ,$ and the growth rate of $u_{\varepsilon}$ is also
of the order $O\left(  \varepsilon^{-1}\right)  .$

In this section, we wish to prove that this one-parameter family of solutions
$u_{\varepsilon}$ is unique. This is the content of the following

\begin{proposition}
\label{uniq}Let $u$ be a two-end solution of the Allen-Cahn equation. Suppose
$\mathcal{P}_{u}$ is on the $r$ axis and $\left\vert \mathcal{P}%
_{u}\right\vert $ is large. Then there exists a small $\varepsilon^{\prime}>0$
such that $u=u_{\varepsilon^{\prime}}.$
\end{proposition}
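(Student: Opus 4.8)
The plan is to show that $u$ falls in the range of the Lyapunov--Schmidt scheme of \cite{MR3019512} that produces $u_{\varepsilon'}$, and then to appeal to the uniqueness of the fixed point of that scheme. First I would fix the scale: let $\ell=|\mathcal{P}_u|$, which by hypothesis is large, set $\varepsilon'=\ell^{-1}$, and consider the rescaled solution $\tilde u(x):=u(\varepsilon'^{-1}x)$, whose nodal set meets the $r$--axis at $(1,0)$. Applying the balancing formula $(\ref{b})$ over suitable rectangles as in the proof of Lemma \ref{slope}, together with the monotonicity of Lemma \ref{monoto}, one controls the slope of the nodal curve $z=f(r)$ uniformly and shows that, in the rescaled variables, it stays in a fixed tubular neighbourhood of an embedded minimal surface of revolution. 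A blow-down argument, letting $\varepsilon'\to 0$ (i.e.\ using $\ell\to\infty$) together with interior Allen--Cahn estimates and $|u|<1$, then forces the limit surface --- complete, of finite total curvature, with two ends, meeting the $r$--axis orthogonally at $(1,0)$ --- to be the catenoid $r=\cosh z$. Refining this with equation $(\ref{p-equation})$ --- in the catenoid regime the interaction term $e^{-\sqrt 2 D}$ is exponentially small away from the waist, so the reduced equation for $f$ is an $o(1)$ perturbation of the minimal--surface--of--revolution equation --- and integrating it, one obtains that $r\mapsto\varepsilon' f(\varepsilon'^{-1}r)$ is $C^{2,\alpha}$--close to the graph of $\mathcal{C}_{\varepsilon'}$ with error $o(1)$ as $\ell\to\infty$, and that the growth rate of $u$ is $(1+o(1))\varepsilon'^{-1}$.

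Next I would re-expand $u$ around the catenoid. Using the (global) Fermi coordinate of $\mathcal{C}_{\varepsilon'}$ --- the waist being the delicate region --- I would decompose, for $\ell$ large,
\[
u=\mathbb{U}_{\varepsilon',\mathbf{g}}+\phi ,
\]
where $\mathbb{U}_{\varepsilon',\mathbf{g}}$ is the usual approximate solution obtained by placing the heteroclinic profile $H$ across the normal graph $\mathcal{C}_{\varepsilon'}+\mathbf{g}$, $\mathbf{g}$ is a small normal displacement, and $\phi$ is orthogonal to the approximate kernel, $\int_{\mathbb{R}}X^{\ast}(\phi\,\eta\,H')\,dz_1=0$ along the normal fibres. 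By the closeness obtained above and the implicit function theorem, this decomposition exists and is unique once $u$ is close enough to $\mathbb{U}_{\varepsilon',0}$. The pair $(\mathbf{g},\phi)$ then solves the Lyapunov--Schmidt system: the auxiliary equation for $\phi$, obtained by projecting $\Delta u+u-u^{3}=0$ off the kernel, and the reduced Jacobi--Toda equation for $\mathbf{g}$, obtained by projecting onto it --- precisely the system solved in \cite{MR3019512}.

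Finally, the solution $u_{\varepsilon'}$ is the unique small solution of this system for $\varepsilon'$ small: this is the content of the construction in \cite{MR3019512}, the uniqueness coming from a contraction argument resting on the invertibility, modulo the bounded Jacobi fields of the catenoid, of its Jacobi operator, those Jacobi fields being accounted for by the nondegeneracy of $\mathcal{C}_{\varepsilon'}$. Hence $(\mathbf{g},\phi)=(\mathbf{g}_{\varepsilon'},\phi_{\varepsilon'})$ and $u=u_{\varepsilon'}$. Alternatively, once one knows that $u$ is $S_1\oplus\mathcal{D}$--close to the nondegenerate solution $u_{\varepsilon'}$, Proposition \ref{th} says the moduli space near $u_{\varepsilon'}$ is a one--dimensional real analytic manifold, and since $\varepsilon\mapsto u_\varepsilon$ is a local homeomorphism onto a neighbourhood of $u_{\varepsilon'}$ in it (its derivative is nonzero, the growth rate varying with $\varepsilon$), $u$ must equal some $u_\varepsilon$, which is identified with $u_{\varepsilon'}$ by matching the growth rates.

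The main obstacle is the first step. The largeness of $|\mathcal{P}_u|$ does not by itself prevent the nodal curve from having an unexpected global shape, so one has to squeeze the balancing formula, the monotonicity of Lemma \ref{monoto}, and the fine weighted $C^{2,\alpha}$ control of the error $\phi$ from Section \ref{compactness} --- in particular equations $(\ref{f2})$ and $(\ref{p-equation})$ --- to show that, after rescaling, the reduced equation for the nodal curve is uniformly an $o(1)$ perturbation of the catenoid equation, including across the waist, with no curvature concentrating there. Making the Fermi--coordinate analysis work near the waist, where the curve is essentially vertical and the ``almost horizontal'' framework of Section \ref{compactness} does not directly apply, is the delicate technical point.
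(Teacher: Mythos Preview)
Your overall strategy matches the paper's: control the nodal curve via the reduced equation, show it is close to a catenoid, then invoke the uniqueness of the fixed point in the Lyapunov--Schmidt construction of \cite{MR3019512}. Two points deserve sharpening.

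First, the quantitative order of closeness matters. Your blow-down plus equation $(\ref{p-equation})$ gives only $o(1)$ closeness of the rescaled nodal curve to the catenoid, but the contraction in \cite{MR3019512} is set up in $\varepsilon$-weighted spaces and the fixed point is unique only in a ball whose radius shrinks with $\varepsilon$; $o(1)$ alone does not place $(\mathbf{g},\phi)$ inside that ball. The paper obtains the needed precision by first proving $\|\phi\|_\infty\le C\varepsilon^2$ (Lemma \ref{fi2}) and then, by an explicit ODE comparison with $\cosh$ via variation of parameters (Lemma \ref{p1}), the bound $|p_1(z)-\varepsilon^{-1}\cosh(\varepsilon z)|\le C\varepsilon$. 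This $O(\varepsilon)$ closeness is what feeds into the contraction.

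Second, the identification of $\varepsilon'$. You set $\varepsilon'=|\mathcal P_u|^{-1}$ at the outset, but the catenoid with waist $|\mathcal P_u|$ does not in general have the same asymptotic slope as $u$, so $u-\mathbb U_{\varepsilon',\mathbf g}$ would fail to decay in the weighted norms used in the construction. The paper instead sets $\varepsilon=|\mathcal P_u|^{-1}$ as a working parameter, integrates the reduced equation in the outer region to read off the growth rate $\lim r p_2'(r)=\varepsilon^{-1}+O(\varepsilon)$, and \emph{defines} $\varepsilon'$ by this growth rate; only then does the comparison with $u_{\varepsilon'}$ close. Your alternative via Proposition \ref{th} and matching growth rates effectively does this, but should be the primary route, not an afterthought.

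Finally, you correctly flag the waist as the delicate region. The paper's device is exactly what you anticipate: switch parametrization and write the nodal curve as $r=g(z)$ for $r\in(\varepsilon^{-1},l\varepsilon^{-1})$, redo the Fermi-coordinate error analysis in $(x,y)$-coordinates there, and derive the analogue of $(\ref{p-equation})$ for $p_1=g+\sqrt{1+g'^2}\,h$ (equation $(\ref{g})$), which is a perturbation of the minimal-surface equation $p_1''/(1+p_1'^2)=1/p_1$ rather than of a Toda-type equation. Patching this with the $f$-coordinate analysis for $r>l\varepsilon^{-1}$ gives the global $O(\varepsilon^2)$ control on $\phi$.
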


The nodal curve $\mathcal{N}_{u}$ in $\mathbb{E}^{+}$ could be written as
\[
\left\{  \left(  r,z\right)  :z=f\left(  r\right)  \right\}  ,
\]
where $f$ is a function
\[
f:[\left\vert \mathcal{P}_{u}\right\vert ,+\infty)\rightarrow\mathbb{R}.
\]
Note that by the results in the previous section, $f$ is asymptotic to
$c_{1}\ln r+c_{2}$ as $r\rightarrow+\infty$ for some constants $c_{1}$ and
$c_{2}.$ One could also write $\mathcal{N}_{u}\cap\mathbb{E}^{+}=\left\{
\left(  r,z\right)  :r=g\left(  z\right)  \right\}  $ for some function
$g:[0,+\infty)\rightarrow\mathbb{R}.$

For convenience we introduce the parameter $\varepsilon=\left\vert
\mathcal{P}_{u}\right\vert ^{-1}.$ Observe that by the assumption of
Proposition \ref{uniq}, $\varepsilon$ is small, thus by the validity of De
Giorgi conjecture in $\mathbb{R}^{3},$ locally around the nodal curve, $u$
looks like the heteroclinic solution.

Let $l$ be a large but fixed constant. As a preliminary step, we would like to
get some rough information about the slope of the function $f.$

\begin{lemma}
\label{sl}We have
\[
f^{\prime}\left(  r\right)  >\frac{C}{l}+o\left(  1\right)  ,\text{ }%
r\in\left[  \varepsilon^{-1},l\varepsilon^{-1}\right]  .
\]

\end{lemma}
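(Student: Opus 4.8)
The idea is to read off the slope of $f$ from the balancing formula $(\ref{b})$ with $X=(0,0,1)$, applied to a region that sits \emph{inside the neck} of the nodal surface; this will force the nodal curve to carry the same ``catenoidal flux'' $|\mathcal{P}_{u}|$ as the catenoid through $\mathcal{P}_{u}$, after which the slope bound is elementary algebra, much as in the proof of Lemma \ref{slope}.

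Set $\varepsilon=|\mathcal{P}_{u}|^{-1}$, small by hypothesis, and write $e(u)=\tfrac{1}{2}|\nabla u|^{2}+F(u)$ for the energy density. Since $\mathcal{P}_{u}$ lies on the $r$-axis, Lemma \ref{monoto} shows $\mathcal{N}_{u}$ is simultaneously the graph $z=f(r)$, $r\in[\varepsilon^{-1},+\infty)$, with $f(\varepsilon^{-1})=0$, and the graph $r=g(z)$, $z\in[0,+\infty)$, with $g(0)=\varepsilon^{-1}$; both $f,g$ are nondecreasing, $g'(z)=1/f'(g(z))$, and the evenness of $u$ in $z$ together with $u_{r}(\varepsilon^{-1},0)<0$ makes $(\varepsilon^{-1},0)$ a regular point of $\{u=0\}$, so $g'(0)=0$. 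Because $\varepsilon$ is small, the De Giorgi result in $\mathbb{R}^{3}$ (invoked above) gives near $\mathcal{N}_{u}$ that $u=H(d)+o(1)$ with $d$ the signed distance to $\mathcal{N}_{u}$, the equipartition $\tfrac{1}{2}|\nabla u|^{2}=F(u)+o(1)$, hence $e(u)=|\nabla u|^{2}+o(1)$ near $\mathcal{N}_{u}$ and $|\nabla u|=\tfrac{1}{\sqrt{2}}+o(1)$ on $\mathcal{N}_{u}$; moreover, since in $\mathbb{E}^{+}$ the nodal set is the single monotone curve $\mathcal{N}_{u}$ with $g\ge\varepsilon^{-1}$, the surface of revolution it generates is flat on the unit scale and has injectivity radius $\gtrsim\varepsilon^{-1}$ throughout $\{r\le l\varepsilon^{-1}\}$, so in fact $F(0)-\tfrac{1}{2}|\nabla u|^{2}=O(e^{-c\varepsilon^{-1}})$ on it and $|u|\to1$ exponentially fast away from it.

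Now, for $z_{0}\ge0$, let $\Omega_{z_{0}}=\{(r,\theta,z):0\le z\le z_{0},\ 0\le r<g(z)\}$, a solid of revolution whose boundary is the bottom disc $\{z=0,\ r\le\varepsilon^{-1}\}$, the top disc $\{z=z_{0},\ r\le g(z_{0})\}$, and the piece of the nodal surface between these heights. Apply $(\ref{b})$ to $\Omega_{z_{0}}$. On the nodal part $u=0$ and $\nabla u=-|\nabla u|\nu$ with $\nu$ the outward unit normal, so the integrand equals $\frac{g'}{\sqrt{1+g'^{2}}}\big(\tfrac{1}{2}|\nabla u|^{2}-F(0)\big)$, which is $O(e^{-c\varepsilon^{-1}})$ pointwise; integrated against an area that is at most polynomial in $\varepsilon^{-1}$, this contributes $o(1)$. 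On the bottom disc $u_{z}=0$ by evenness, so the integrand is $e(u)$, concentrated within $O(1)$ of the edge circle $r=\varepsilon^{-1}$, where $\mathcal{N}_{u}$ is nearly the cylinder $r=\varepsilon^{-1}$; integrating the half heteroclinic profile against $2\pi r\,dr$ gives $-2\pi\varepsilon^{-1}\int_{0}^{\infty}H'^{2}\,ds+o(\varepsilon^{-1})=-\pi\varepsilon^{-1}\mathbf{c}_{0}+o(\varepsilon^{-1})$ (the minus sign from the downward outward normal). On the top disc, near the edge circle $r=g(z_{0})$ one has $\nabla u\approx|\nabla u|\nu$ with the $z$-component of $\nu$ equal to $-g'(z_{0})/\sqrt{1+g'(z_{0})^{2}}$, so $e(u)-u_{z}^{2}\approx|\nabla u|^{2}/(1+g'(z_{0})^{2})$, and integrating the half profile gives $\pi g(z_{0})\,\mathbf{c}_{0}/\sqrt{1+g'(z_{0})^{2}}+o(\varepsilon^{-1})$. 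Since the three fluxes sum to zero, dividing by $\pi\mathbf{c}_{0}$ yields
\[
\frac{g(z_{0})}{\sqrt{1+g'(z_{0})^{2}}}=\varepsilon^{-1}\big(1+o(1)\big).
\]
The one delicate point is that the $o(1)$ must be uniform in $z_{0}$ up to the height $f(l\varepsilon^{-1})$, while the bound on the area of the nodal part — needed for the lateral estimate — itself uses the identity (which controls $f$, hence $z_{0}=f(g(z_{0}))$, in terms of $g(z_{0})$); this is resolved by a short continuity argument in $z_{0}$, starting from $z_{0}$ small, where $\mathcal{N}_{u}$ is manifestly close to $r=\varepsilon^{-1}$ and the area is tiny.

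Finally, writing $g'(z_{0})=1/f'(r)$ at $r=g(z_{0})$, the identity reads $rf'(r)/\sqrt{1+f'(r)^{2}}=\varepsilon^{-1}(1+o(1))$, i.e. $f'(r)=\varepsilon^{-1}(1+o(1))/\sqrt{r^{2}-\varepsilon^{-2}(1+o(1))^{2}}$. For $r\in[\varepsilon^{-1},l\varepsilon^{-1}]$ the denominator is at most $\varepsilon^{-1}\sqrt{l^{2}-1+o(1)}$, so $f'(r)\ge\frac{1}{\sqrt{l^{2}-1}}(1-o(1))\ge\frac{1}{l}-o(1)$, which gives the asserted bound (for any constant $C<1$). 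I expect the principal obstacle to be the uniform control of the error terms in the balancing identity: making precise the heteroclinic asymptotics of $e(u)$ on the two planar caps and, above all, running the continuity argument in $z_{0}$ that simultaneously bounds the area of the nodal part and the defect $F(0)-\tfrac{1}{2}|\nabla u|^{2}$; the algebra translating the flux identity into the slope estimate is then immediate.
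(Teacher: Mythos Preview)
Your approach via the balancing formula with $X=(0,0,1)$ is the same starting point as the paper, and the final algebra is fine. The gap is in the choice of domain and the resulting lateral term. By taking $\Omega_{z_0}$ to be the solid \emph{inside} the nodal surface, you acquire a boundary piece on $\{u=0\}$ whose flux equals $\int_0^{z_0} 2\pi\, g\,g'\bigl[\tfrac12|\nabla u|^2-F(0)\bigr]\,dz$. Your claim that the equipartition defect $\tfrac12|\nabla u|^2-F(0)$ is $O(e^{-c\varepsilon^{-1}})$ on $\mathcal N_u$ is far too strong: at this point in the argument only the qualitative $o(1)$ coming from the De Giorgi picture is available, while the sharper bound $\|\phi\|_\infty\le C\varepsilon^2$ (Lemma~\ref{fi2}) is proved \emph{after} the present lemma and explicitly invokes it to obtain $|g'|\le C$. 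With merely $o(1)$ control on the defect and $2gg'=(g^2)'$, the lateral contribution is bounded only by $o(1)\cdot\pi\bigl(g(z_0)^2-\varepsilon^{-2}\bigr)=o(\varepsilon^{-2})$, which for $g(z_0)$ of order $l\varepsilon^{-1}$ swamps the two cap terms of order $\varepsilon^{-1}$ and destroys the identity. The continuity-in-$z_0$ device does not close this loop: turning the flux identity into curvature control on $g$, and hence into a quantitative defect bound, is precisely the content of the later Lemmas~\ref{fi2}--\ref{p1}, which sit downstream of Lemma~\ref{sl}.

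The paper avoids the issue by choosing a domain whose boundary contains no piece of the nodal surface---effectively the solid cylinder $\{r\le r_0,\ z\ge0\}$ (or, in the spirit of Lemma~\ref{slope}, the region below a line orthogonal to $\mathcal N_u$). Then the only estimate needed is a one-sided \emph{lower} bound on the $\{z=0\}$ flux, and that comes for free from the single heteroclinic transition at $r=\varepsilon^{-1}$ (where $g'(0)=0$ forces $u(r,0)\approx H(\varepsilon^{-1}-r)$), giving $\int_0^{r_0}\bigl[\tfrac12 u_r^2+F(u)\bigr]r\,dr\ge c\,\varepsilon^{-1}$ with no circularity. Balancing this against the remaining boundary piece, which the local heteroclinic profile identifies as $\mathbf c_0\, r_0 f'(r_0)/\sqrt{1+f'(r_0)^2}+o(r_0)$, yields $f'(r_0)\ge c/l+o(1)$ for every $r_0\in[\varepsilon^{-1},l\varepsilon^{-1}]$.
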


\begin{proof}
Let $X=\left(  0,0,1\right)  $ be a constant vector field$.$ We have
\[
\int_{0}^{l\varepsilon^{-1}}\left[  \frac{1}{2}u_{r}^{2}+F\left(  u\right)
\right]  rdr\geq Cl\varepsilon^{-1}.
\]
On the other hand,
\begin{align*}
&  \int_{\partial\Omega\cap\left\{  z>0\right\}  }\left\{  \left[  \frac{1}%
{2}u_{r}^{2}+F\left(  u\right)  \right]  X\cdot v-\left(  \nabla u\cdot
X\right)  \left(  \nabla u\cdot v\right)  \right\}  dS\\
&  \sim f^{\prime}\left(  l\varepsilon^{-1}\right)  l\varepsilon^{-1}+o\left(
1\right)  l\varepsilon^{-1}.
\end{align*}
Combine these two estimates and use the balancing formula, we get the desired result.
\end{proof}

To get more precise information, we again need to work in the Fermi coordinate
$\left(  s,t\right)  $ around the nodal curve $\mathcal{N}_{u},$ where $s$ is
the signed distance to $\mathcal{N}_{u}$ and $t$ is a parametrization of
$\mathcal{N}_{u}$.

We slightly abuse the notation and still use $\mathcal{B}_{u}$ to denote the
maximal domain where the Fermi coordinate is well-defined. Similarly as
before, let $\eta$ be a cutoff function supported in $\mathcal{B}_{u},$ with
$\nabla\eta$ supported near the boundary of $\mathcal{B}_{u}.$ Introduce the
approximate solution
\[
\bar{u}\left(  r,z\right)  =\eta\mathcal{H}+\left(  1-\eta\right)
\frac{\mathcal{H}}{\left\vert \mathcal{H}\right\vert },
\]
where $\mathcal{H}$ is defined through $X^{\ast}\mathcal{H}\left(  s,t\right)
=H\left(  s-h\left(  t\right)  \right)  .$ Here $X$ is the map $\left(
s,t\right)  \rightarrow\left(  r,z\right)  .$ Write $u=\bar{u}\mathcal{+}\phi
$. The small function $h$ is chosen such that $\phi$ satisfies the orthogonal
condition%
\[
\int_{\mathbb{R}}X^{\ast}\left(  \eta\phi\mathcal{H}^{\prime}\right)  ds=0,
\]
where $X^{\ast}\mathcal{H}^{\prime}\left(  s,t\right)  =H^{\prime}\left(
s-h\left(  t\right)  \right)  .$

To analyze the solution $u,$ we firstly study $\mathcal{N}_{u}$ in the region
where $r\in\left(  \varepsilon^{-1},l\varepsilon^{-1}\right)  .$ In this
region, many calculations will be explicit to in the Fermi coordinate $\left(
x,y\right)  $ with respect to the graph of the function $g,$ which is
\[
\left\{
\begin{array}
[c]{l}%
r=g\left(  y\right)  +\frac{x}{\sqrt{1+g^{\prime2}}},\\
z=y-\frac{xg^{\prime}}{\sqrt{1+g^{\prime2}}}.
\end{array}
\right.
\]
Hence $x$ is the signed distance and $y$ is a parametrization of the curve.

We shall estimate the $L^{\infty}$ norm of the perturbation term $\phi.$

\begin{lemma}
\label{fi2}Suppose $u$ satisfies the assumption of Proposition \ref{uniq} and
$\phi$ is defined above. Then
\[
\left\Vert \phi\right\Vert _{\infty}\leq C\varepsilon^{2}.
\]

\end{lemma}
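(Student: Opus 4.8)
The plan is to follow the same scheme already used in Section~\ref{compactness} and in Lemma~\ref{fi1}, adapted to the catenoidal regime where the relevant small parameter is $\varepsilon=|\mathcal{P}_u|^{-1}$ rather than $e^{-2\sqrt2 f(0)}$. The starting point is that, by the validity of the De Giorgi conjecture in $\mathbb{R}^3$ together with Lemma~\ref{sl}, the solution $u$ is, locally around $\mathcal{N}_u$, close to the heteroclinic profile $H$, and the nodal curve is close to the catenoid $\mathcal{C}_\varepsilon$; in particular a balancing-formula argument (as in Lemma~\ref{sl}) shows that in the region $r\in(\varepsilon^{-1},l\varepsilon^{-1})$ the geometric quantities $g'$, $g''$ and their analogues are controlled by $\varepsilon$, so the Fermi coordinate $(x,y)$ with respect to the graph of $g$ is well defined and the Laplacian is a small perturbation of the flat one with error $O(\varepsilon^2)$ in the appropriate norms.

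Next I would write $\phi$ satisfying the equation $L\phi = [E(\bar u)]^{\parallel}+[E(\bar u)]^{\perp}+P(\phi)$, exactly as in $(\ref{L3})$, and estimate each term. The perturbation $P(\phi)$ is quadratic in $\phi$ hence negligible once a smallness bound is bootstrapped; the parallel part $[E(\bar u)]^{\parallel}$ is, by the analogue of Lemma~\ref{P1}, controlled by a small multiple of $\phi$ and its derivatives and of $h,h',h''$, which in turn are controlled by $\phi$ via the analogue of Lemma~\ref{h}. The main task is to bound the orthogonal error $[E(\bar u)]^{\perp}$. Expanding $E(\bar u)$ as in $(\ref{Eu})$ and using the explicit form $(\ref{EH1})$–$(\ref{H1or})$ of $E(\mathcal{H}_1)$ in the Fermi coordinate, together with the facts $g'=O(1)$ (bounded, but not small — this is the key difference from the Toda regime), $g''=O(\varepsilon)$, $g'''=O(\varepsilon^2)$ and the exponential smallness of the interaction term $e^{-\sqrt2 D}$ (here $D$ is of order $\varepsilon^{-1}$, so this is super-algebraically small in $\varepsilon$), one finds that all contributions to $[E(\bar u)]^{\perp}$ are $O(\varepsilon^2)$ plus terms quadratic in $\phi$ and its derivatives. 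Feeding this into the a~priori estimate for the operator $L$ (the analogue of Lemma~\ref{L}, which holds uniformly because the nodal curve is almost a straight line after the $\varepsilon$-rescaling) yields $\|\phi\|_\infty \le C\varepsilon^2 + C\|\phi\|_\infty^2 + \ldots$, and absorbing the higher-order terms gives $\|\phi\|_\infty\le C\varepsilon^2$.

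The hard part, and the place where this lemma differs from Lemma~\ref{fi1}, is handling the region where $r$ is comparable to or larger than $l\varepsilon^{-1}$: there the catenoid has $g'$ no longer bounded (the catenoidal end becomes nearly vertical, i.e.\ $\mathcal{N}_u$ becomes nearly a horizontal graph $z=f(r)$ with small slope), so one must switch from the $(x,y)$ Fermi coordinate attached to $g$ to the $(r_1,z_1)$ Fermi coordinate attached to $f$, and match the two estimates on the overlap. In the outer region the analysis of Section~\ref{compactness} (in particular Proposition~\ref{P3} and estimate $(\ref{Esfi})$) applies verbatim and gives decay $|\phi|\le Ce^{-D}+Cr^{-2}$, which at $r\sim l\varepsilon^{-1}$ is of size $O(\varepsilon^2)$, consistent with the inner estimate. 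Patching via the cutoff $\eta$ and a standard iteration (repeated application of the Fermi-coordinate a~priori estimate, as in the proof of Proposition~\ref{P3}) then upgrades the bound to the uniform $\|\phi\|_\infty\le C\varepsilon^2$ on all of $(0,+\infty)$. I expect the bookkeeping of the change of coordinates across the two regions, and verifying that the constants remain uniform in $\varepsilon$, to be the only genuinely delicate point; everything else is a transcription of the estimates already established.
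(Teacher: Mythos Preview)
Your overall scheme matches the paper's proof: split into the inner region $r\in(\varepsilon^{-1},l\varepsilon^{-1})$ with Fermi coordinate $(x,y)$ attached to $r=g(z)$, and the outer region $r>l\varepsilon^{-1}$ with Fermi coordinate $(r_1,z_1)$ attached to $z=f(r)$; write $L\phi=[E(\bar u)]^{\perp}+[E(\bar u)]^{\parallel}+P(\phi)$; bound $\phi$ by the a~priori estimate for $L$ applied to $[E(\bar u)]^{\perp}$; and patch the two regions.

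There is one point where your justification is too quick. You assert that $g''=O(\varepsilon)$ and $g'''=O(\varepsilon^2)$ follow from a balancing-formula argument ``as in Lemma~\ref{sl}'', but Lemma~\ref{sl} only yields a lower bound on the slope $f'$ (equivalently, $|g'|\le C$), not a curvature bound. A~priori nothing prevents $g''$ from being large compared to $\varepsilon$, and then your claimed bound on $[E(\bar u)]^{\perp}$ does not close. The paper handles this differently: it first projects $E(\bar u)$ onto $\eta\mathcal H'$ to derive the equation~(\ref{g}),
\[
-\frac{p_1''}{1+p_1'^{2}}+\frac{1}{p_1}=O(h'^{2})+O(h'g'')+\cdots+o\!\left(\|\phi^{*}\|_{C^2}\right),
\]
for $p_1=g+\tilde h$, and then \emph{uses this relation} to substitute for the $O(|p_1''|^2)$ term appearing in $[E(\bar u)]^{\perp}$, reducing it to $O(p_1^{-2})=O(\varepsilon^2)$ plus terms absorbable into $\phi$. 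This is exactly the circularity-breaking device already employed in (\ref{f2})--(\ref{Euo}) of Section~\ref{compactness}; the paper flags it explicitly at the end of the proof (``the term $O(|p_1''|^2)$ in $[E(\bar u)]^{\perp}$ should be handled using equation~(\ref{g})''). Once you insert this step, your outline is correct and coincides with the paper's argument.
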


\begin{proof}
We only consider the case that the size of the Fermi coordinate is large enough.  The general case could be handled using the arguments of Section
\ref{compactness}. Again many computations here are similar as before.

We need to analyze $E\left(  \bar{u}\right)  .$ First of all, consider the
case $r\in\left(  \varepsilon^{-1},l\varepsilon^{-1}\right)  .$ In the Fermi
coordinate, the error of the approximate solution has the form%
\begin{align*}
E\left(  \bar{u}\right)   &  =\frac{H^{\prime\prime}h^{\prime2}-H^{\prime
}h^{\prime\prime}}{A}+\left(  \frac{\partial_{x}A}{2A}+\frac{\partial_{r}x}%
{r}\right)  H^{\prime}\\
&  +\left(  \frac{\partial_{y}A}{2A^{2}}-\frac{\partial_{r}y}{r}\right)
H^{\prime}h^{\prime}.
\end{align*}
Keep in mind that $H^{\prime}$ is evaluated at $x-h\left(  y\right)  ,$ not
$x.$ By Lemma \ref{sl}, $\left\vert g^{\prime}\right\vert \leq C,$ hence
\[
\frac{\partial_{r}x}{r}=\frac{\frac{1}{\sqrt{1+g^{\prime2}}}}{g+\frac{x}%
{\sqrt{1+g^{\prime2}}}}=\frac{1}{g\sqrt{1+g^{\prime2}}}-\frac{x}{g^{2}\left(
1+g^{\prime2}\right)  }+O\left(  g^{-3}\right)  .
\]
From $\left\vert g^{\prime\prime}\right\vert =o\left(  1\right)  $, we infer
that
\[
\frac{\partial_{x}A}{2A}=-\frac{g^{\prime\prime}}{\left(  1+g^{\prime
2}\right)  ^{\frac{3}{2}}}+\frac{\left(  g^{\prime\prime}\right)  ^{2}%
x}{\left(  1+g^{\prime2}\right)  ^{3}}+O\left(  g^{\prime\prime3}\right)  .
\]
It follows from these expansions that the projection of $E\left(  \bar
{u}\right)  $ onto $\mathcal{H}^{\prime}$ has the form
\begin{align*}
\int_{\mathbb{R}}X^{\ast}\left(  \eta\mathcal{H}^{\prime}E\left(  \bar
{u}\right)  \right)  dx  &  =-\frac{\mathbf{c}_{0}g^{\prime\prime}}{\left(
1+g^{\prime2}\right)  ^{\frac{3}{2}}}+\frac{\mathbf{c}_{0}}{g\sqrt
{1+g^{\prime2}}}\\
&  -h^{\prime\prime}\int_{\mathbb{R}}\frac{H^{\prime2}}{A}+O\left(
h^{\prime2}\right)  +O\left(  h^{\prime}g^{\prime\prime}\right) \\
&  +O\left(  h^{\prime}g^{\prime\prime\prime}\right)  +O\left(  g^{\prime
\prime3}\right)  +O\left(  g^{-3}\right) \\
&  +O\left(  hg^{\prime\prime2}\right)  +O\left(  hg^{-2}\right)  .
\end{align*}
Set $\tilde{h}\left(  y\right)  =\sqrt{1+g^{\prime2}}h\left(  y\right)  $ and
$p_{1}\left(  y\right)  =g\left(  y\right)  +\tilde{h}\left(  y\right)  .$
Then
\begin{align*}
&  -\frac{g^{\prime\prime}}{1+g^{\prime2}}+\frac{1}{g}-\frac{h^{\prime\prime}%
}{\sqrt{1+g^{\prime2}}}\\
&  =-\frac{g^{\prime\prime}}{1+g^{\prime2}}+\frac{1}{g}-\frac{\tilde
{h}^{\prime\prime}}{1+g^{\prime2}}-\frac{2\tilde{h}^{\prime}}{\sqrt
{1+g^{\prime2}}}\left(  \frac{1}{\sqrt{1+g^{\prime2}}}\right)  ^{\prime}\\
&  -\frac{\tilde{h}}{\sqrt{1+g^{\prime2}}}\left(  \frac{1}{\sqrt{1+g^{\prime
2}}}\right)  ^{\prime\prime}\\
&  =-\frac{p_{1}^{\prime\prime}}{1+p_{1}^{\prime2}}+\frac{1}{p_{1}}+O\left(
g^{\prime\prime}h\right)  +O\left(  g^{\prime\prime}h^{\prime}\right)
+O\left(  g^{\prime\prime\prime}h\right)  +O\left(  g^{\prime\prime\prime
}h^{\prime}\right) \\
&  +O\left(  g^{\prime\prime}h\right)  +O\left(  g^{\prime\prime}h^{\prime
}\right)  +O\left(  h^{2}\right)  +O\left(  hh^{\prime}\right)  +O\left(
g^{-1}h\right)  .
\end{align*}
It follows that
\begin{align*}
\sqrt{1+g^{\prime2}}\int_{\mathbb{R}}X^{\ast}\left(  \eta\mathcal{H}^{\prime
}E\left(  \bar{u}\right)  \right)  dx  &  =-\frac{\mathbf{c}_{0}p_{1}%
^{\prime\prime}}{1+p_{1}^{\prime2}}+\frac{\mathbf{c}_{0}}{p_{1}}\\
&  +O\left(  g^{\prime\prime}h\right)  +O\left(  g^{\prime\prime}h^{\prime
}\right)  +O\left(  g^{\prime\prime\prime}h\right)  +O\left(  g^{\prime
\prime\prime}h^{\prime}\right) \\
&  +O\left(  g^{\prime\prime}h\right)  +O\left(  g^{\prime\prime}h^{\prime
}\right)  +O\left(  h^{2}\right)  +O\left(  hh^{\prime}\right)  +O\left(
g^{-1}h\right) \\
&  +O\left(  h^{\prime2}\right)  +O\left(  g^{\prime\prime3}\right)  +O\left(
g^{-3}\right)  +O\left(  h^{\prime}h^{\prime\prime}\right)  .
\end{align*}
On the other hand, we could also show that
\begin{equation}
\int_{\mathbb{R}}X^{\ast}\left(  \eta\mathcal{H}^{\prime}E\left(  \bar
{u}\right)  \right)  dx=o\left(  \left\Vert \phi^{\ast}\left(  y,\cdot\right)
\right\Vert _{C^{2}}\right)  +o\left(  \left\Vert \partial_{y}\phi^{\ast
}\left(  y,\cdot\right)  \right\Vert _{C^{0}}\right)  +o\left(  \left\Vert
\partial_{y}^{2}\phi^{\ast}\left(  y,\cdot\right)  \right\Vert _{C^{0}%
}\right)  . \label{equ1}%
\end{equation}
Here the norm is taken as a function of $x$ variable. As a consequence of the
above two equations, we have
\begin{align}
-\frac{p_{1}^{\prime\prime}}{1+p_{1}^{\prime2}}+\frac{1}{p_{1}}  &  =O\left(
h^{\prime2}\right)  +O\left(  h^{\prime}g^{\prime\prime}\right)  +O\left(
h^{\prime}g^{\prime\prime\prime}\right)  +O\left(  g^{\prime\prime3}\right)
\nonumber\\
&  +O\left(  g^{-3}\right)  +O\left(  hg^{\prime\prime2}\right)  +O\left(
hg^{-2}\right)  +O\left(  h^{\prime}h^{\prime\prime}\right) \nonumber\\
&  +o\left(  \left\Vert \phi^{\ast}\left(  y,\cdot\right)  \right\Vert
_{C^{2}}\right)  +o\left(  \left\Vert \partial_{y}\phi^{\ast}\left(
y,\cdot\right)  \right\Vert _{C^{0}}\right)  +o\left(  \left\Vert \partial
_{y}^{2}\phi^{\ast}\left(  y,\cdot\right)  \right\Vert _{C^{0}}\right)  .
\label{g}%
\end{align}

Next we consider the case $r>\varepsilon^{-1}l.$ In this case, one could use
the Fermi coordinate $\left(  r_{1},z_{1}\right)  $ with respect the curve
$z=f\left(  r\right)  .$ The analysis of $E\left(  \bar{u}\right)  $ in this
case is almost same as that of the previous sections and we omit the details.

Now we write the equation satisfied by $\phi$ into the form%
\[
L\phi=\left[  E\left(  \bar{u}\right)  \right]  ^{\bot}+\left[  E\left(
\bar{u}\right)  \right]  ^{\Vert}+P\left(  \phi\right)  ,
\]
where
\[
\left[  E\left(  \bar{u}\right)  \right]  ^{\Vert}=\frac{\int_{\mathbb{R}%
}X^{\ast}\left(  \eta\mathcal{H}^{\prime}E\left(  \bar{u}\right)  \right)
dx}{\int_{\mathbb{R}}\eta^{2}\mathcal{H}^{\prime2}dx}\eta\mathcal{H}^{\prime
}\text{ and }\left[  E\left(  \bar{u}\right)  \right]  ^{\bot}=E\left(
\bar{u}\right)  -\left[  E\left(  \bar{u}\right)  \right]  ^{\Vert}.
\]
In terms of Fermi coordinate, in the region where $\left\vert g^{\prime
}\right\vert \leq C,$
\[
\left[  E\left(  \bar{u}\right)  \right]  ^{\bot}=O\left(  \left\vert
p_{1}^{\prime\prime}\right\vert ^{2}\right)  +O\left(  p_{1}^{-2}\right)  .
\]
In the region where $\left\vert f^{\prime}\right\vert \leq C,$ $\left[
E\left(  \bar{u}\right)  \right]  ^{\bot}=O\left(  r_{1}^{-2}\right)  .$ By
the a priori estimate of $L,$ we could obtain
\[
\left\Vert \phi\right\Vert _{\infty}\leq\left\Vert \left[  E\left(  \bar
{u}\right)  \right]  ^{\bot}\right\Vert _{\infty}\leq C\varepsilon^{2}.
\]
We emphasize here that $\left\Vert \left[  E\left(  \bar{u}\right)  \right]
^{\bot}\right\Vert _{\infty}$ should be estimated in the whole plane. It is
also worth mentioning  that the term $O\left(  \left\vert p_{1}^{\prime\prime
}\right\vert ^{2}\right)  $ in $\left[  E\left(  \bar{u}\right)  \right]
^{\bot}$ should be handled using equation $\left(  \ref{g}\right)  .$
\end{proof}

Our next aim is to show that in the interval $\left(  \varepsilon
^{-1},l\varepsilon^{-1}\right)  ,$ the function $r=p_{1}\left(  z\right)  $ is
close to the function $r=\varepsilon^{-1}\cosh\left(  \varepsilon z\right)  .$
This is the content of the following

\begin{lemma}
\label{p1}For $z\in\left(  0,f\left(  l\right)  \varepsilon^{-1}\right)  ,$
\[
\left\vert p_{1}\left(  z\right)  -\varepsilon^{-1}\cosh\left(  \varepsilon
z\right)  \right\vert \leq C\varepsilon.
\]

\end{lemma}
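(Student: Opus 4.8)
The plan is to recognize $\varepsilon^{-1}\cosh(\varepsilon z)$ as the exact solution of the catenoid ODE --- the homogeneous version of equation $(\ref{g})$ --- and to compare it with $p_1$ by a continuous-dependence argument carried out after rescaling to unit scale. Since the claim only concerns $z\in(0,f(l)\varepsilon^{-1})$, i.e. $r$ up to roughly $l\varepsilon^{-1}$, we stay throughout in the region where the Fermi coordinate $(x,y)$ relative to the graph of $g$ is used and where, by Lemma $\ref{sl}$, $|g'|\le C$; there $g\sim\varepsilon^{-1}$ and, from the approximately catenoidal shape of $\mathcal{N}_u$, $|g''|=O(\varepsilon)$, $|g'''|=O(\varepsilon^2)$. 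Put $P(y):=\varepsilon\,p_1(\varepsilon^{-1}y)$ and $G_0(y):=\cosh y$, so that $\varepsilon^{-1}\cosh(\varepsilon z)=\varepsilon^{-1}G_0(\varepsilon z)$, and one checks directly that
\[
-\frac{G_0''}{1+G_0'^2}+\frac{1}{G_0}=0,\qquad G_0(0)=1,\quad G_0'(0)=0.
\]
Thus the assertion is equivalent to $\|P-G_0\|_{L^\infty}\le C\varepsilon^2$ on the corresponding bounded interval.

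First I would upgrade equation $(\ref{g})$ to
\[
-\frac{p_1''}{1+p_1'^2}+\frac{1}{p_1}=O(\varepsilon^3)
\]
uniformly in this region. Lemma $\ref{fi2}$ gives $\|\phi\|_\infty\le C\varepsilon^2$, hence by Schauder estimates the same bound for $\|\phi\|_{C^{2,\alpha}}$; the analogue of Lemma $\ref{h}$ (and its differentiated versions) then controls $h,h',h''$ in terms of $\phi$ together with an exponentially small term, so $\tilde h=\sqrt{1+g'^2}\,h$ and its first two derivatives are $O(\varepsilon^2)$ --- in fact smaller for the derivatives, since $\phi$ and $h$ vary on the catenoidal scale $\varepsilon^{-1}$ along the curve. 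Feeding these bounds, together with $|g''|=O(\varepsilon)$ and $g^{-1}=O(\varepsilon)$, into every term on the right of $(\ref{g})$ reduces matters to the single self-referential term $O(|p_1''|^2)$, which is absorbed by reinserting $(\ref{g})$ into itself once --- exactly the bootstrap used in the proof of Proposition $\ref{P3}$. Verifying that each geometric and $\phi$-dependent error term in $(\ref{g})$ really sits at order $\varepsilon^3$, and running this bootstrap cleanly, is the step I expect to be the main obstacle.

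Granting this, the conclusion is soft. After rescaling, $P$ solves $-\frac{P''}{1+P'^2}+\frac1P=O(\varepsilon^2)$ with Cauchy data $P(0)=1+O(\varepsilon^3)$, $P'(0)=0$: here $g(0)=|\mathcal{P}_u|=\varepsilon^{-1}$, the evenness of $u$ in $z$ forces $g'(0)=0$ and $h'(0)=0$, and $|h(0)|\le C\varepsilon^2$ by Lemma $\ref{h}$-type estimates. The difference $\mathbf{e}:=P-G_0$ then solves the second-order linear ODE obtained by linearizing the left-hand side about $G_0$ (with a quadratic-in-$\mathbf{e}$ remainder), with source of size $O(\varepsilon^2)$ and data $\mathbf{e}(0)=O(\varepsilon^3)$, $\mathbf{e}'(0)=0$; since $l$ is a fixed constant, $y$ ranges over a bounded interval, so the variation-of-parameters formula (equivalently Gronwall) gives $\|\mathbf{e}\|_{C^1}\le C\varepsilon^2$ there once $\varepsilon$ is small. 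Undoing the rescaling, $|p_1(z)-\varepsilon^{-1}\cosh(\varepsilon z)|=\varepsilon^{-1}|\mathbf{e}(\varepsilon z)|\le C\varepsilon$ for $z\in(0,f(l)\varepsilon^{-1})$, which is the claim.
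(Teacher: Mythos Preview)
Your proposal is correct and follows essentially the same route as the paper: rescale to $\bar p_1(y)=\varepsilon p_1(\varepsilon^{-1}y)$, upgrade $(\ref{g})$ to $-\frac{p_1''}{1+p_1'^2}+\frac{1}{p_1}=O(\varepsilon^3)$ using Lemma~\ref{fi2} and the bounds on $h,g'',g^{-1}$, then compare with $\cosh$ via the linearized ODE on the bounded interval $(0,l)$. The paper carries out the last step by writing the Jacobi operator explicitly ($\omega''-2\tanh z\,\omega'+\omega=\psi$, fundamental system $\sinh z$ and $z\sinh z-\cosh z$, Wronskian $\cosh^2 z$) and applying variation of parameters, whereas you invoke Gronwall/continuous dependence; on a fixed bounded interval these are equivalent.
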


\begin{proof}
From equation $\left(  \ref{g}\right)  $ and the estimate of $\phi,$ we deduce
that the function $p_{1}$ satisfies
\[
\frac{p_{1}^{\prime\prime}}{1+p_{1}^{\prime2}}-\frac{1}{p_{1}}=O\left(
\varepsilon^{3}\right)  .
\]
At this stage, we introduce the scaled function $\bar{p}_{1}\left(  z\right)
=\varepsilon p_{1}\left(  \varepsilon^{-1}z\right)  .$ Then
\begin{align*}
\bar{p}_{1}^{\prime}\left(  z\right)   &  =p_{1}^{\prime}\left(
\varepsilon^{-1}z\right)  ,\\
\bar{p}_{1}^{\prime\prime}\left(  z\right)   &  =\varepsilon^{-1}p_{1}%
^{\prime\prime}\left(  \varepsilon^{-1}z\right)  .
\end{align*}
It follows that
\begin{equation}
\bar{p}_{1}^{\prime\prime}-\frac{1+\bar{p}_{1}^{\prime2}}{\bar{p}_{1}%
}=O\left(  \varepsilon^{2}\right)  ,\text{ }z\in\left(  0,l\right)  .
\label{gb}%
\end{equation}
Observe that the function $\cosh z$ satisfies the equation
\begin{equation}
\left(  \cosh z\right)  ^{\prime\prime}-\frac{1+\left(  \cosh z\right)
^{\prime2}}{\cosh z}=0. \label{cosh}%
\end{equation}
Let $\omega\left(  z\right)  =\bar{p}_{1}\left(  z\right)  -\cosh z.$ Then
\begin{align*}
\omega\left(  0\right)   &  =\bar{p}_{1}\left(  0\right)  -1=\varepsilon
p_{1}\left(  0\right)  -1\\
&  =\varepsilon\left(  \varepsilon^{-1}+h\left(  0\right)  \right)
-1=O\left(  \varepsilon^{2}\right)  .
\end{align*}
We claim that $\left\vert \omega\left(  z\right)  \right\vert \leq
C\varepsilon^{2}$ for $z\in\left(  0,l\right)  .$ Indeed, subtracting equation
$\left(  \ref{gb}\right)  $ with $\left(  \ref{cosh}\right)  ,$ we get
\[
\omega^{\prime\prime}-2\tanh z\omega^{\prime}+\omega=O\left(  \varepsilon
^{2}\right)  +O\left(  \omega^{2}\right)  +O\left(  \omega^{\prime2}\right)
:=\psi\left(  z\right)  .
\]
Let $\xi_{1}$ and $\xi_{2}$ be two linearly independent solutions of the
homogeneous equation:
\[
\xi_{i}^{\prime\prime}-2\tanh z\xi_{i}^{\prime}+\xi_{i}=0,i=1,2.
\]
Explicitly, we can choose
\begin{align*}
\xi_{1}\left(  z\right)   &  =\cosh^{\prime}z=\sinh z,\\
\xi_{2}\left(  z\right)   &  =\partial_{\varepsilon}\left(  \varepsilon
^{-1}\cosh\varepsilon z\right)  |_{\varepsilon=1}=-\cosh z+z\sinh z.
\end{align*}
The Wronsky of these two solutions are
\[
W\left(  z\right)  :=\left\vert
\begin{array}
[c]{cc}%
\sinh z & \cosh z\\
-\cosh z+z\sinh z & z\cosh z
\end{array}
\right\vert =\cosh^{2}z.
\]
By the variation of parameters formula,
\[
\omega\left(  z\right)  =\xi_{2}\left(  z\right)  \int_{0}^{z}\frac{\xi
_{1}\left(  s\right)  \psi\left(  s\right)  }{W\left(  s\right)  }ds-\xi
_{1}\left(  z\right)  \int_{0}^{z}\frac{\xi_{2}\left(  s\right)  \psi\left(
s\right)  }{W\left(  s\right)  }ds+O\left(  \varepsilon^{2}\right)  .
\]
The desired estimate comes from this formula.
\end{proof}

\begin{proof}
[Proof of Proposition \ref{uniq}]Let $p_{2}=f+\sqrt{1+f^{\prime2}}h.$ For
$r>l\varepsilon^{-1},$ projecting $E\left(  \bar{u}\right)  $ on
$\eta\mathcal{H}^{\prime}$ and perform similar calculation as in Section
\ref{compactness}, we could estimate the perturbation $\phi$ in algebraically
weighted norm(Remember that $h$ could be controlled by $\phi$). This leads to
the equation:
\[
\left(  \frac{r_{1}p_{2}^{\prime}}{\sqrt{1+p_{2}^{\prime2}}}\right)  ^{\prime
}=O\left(  \frac{\varepsilon^{2}}{1+\varepsilon^{2}r^{2}}\right)  .
\]
Introduce the scaling of $p_{2}$:%
\[
\bar{p}_{2}\left(  r_{1}\right)  =\varepsilon p_{2}\left(  \varepsilon
^{-1}r_{1}\right)  .
\]
We find that in $\left(  l,+\infty\right)  ,$ $\bar{p}_{2}$ satisfies has the
equation%
\begin{equation}
\left(  \frac{r_{1}\bar{p}_{2}^{\prime}}{\sqrt{1+\bar{p}_{2}^{\prime2}}%
}\right)  ^{\prime}=O\left(  \frac{\varepsilon^{2}}{1+r^{2}}\right)  .
\label{p2}%
\end{equation}
From this equation, we deduce
\[
\lim_{r_{1}\rightarrow+\infty}r_{1}\bar{p}_{2}^{\prime}\left(  r_{1}\right)
-l\bar{p}_{2}^{\prime}\left(  l\right)  =O\left(  \varepsilon^{2}\right)  .
\]
On the other hand, by Lemma \ref{p1},
\[
l\bar{p}_{2}^{\prime}\left(  l\right)  -1=O\left(  \varepsilon^{2}\right)  .
\]
Hence
\[
\lim_{r_{1}\rightarrow+\infty}r_{1}\bar{p}_{2}^{\prime}\left(  r_{1}\right)
=1+O\left(  \varepsilon^{2}\right)  .
\]
Consequently, the growth rate of $u$ is equal to
\[
\lim r_{1}p_{2}^{\prime}\left(  r_{1}\right)  =\frac{1}{\varepsilon}+O\left(
\varepsilon\right)  :=\frac{1}{\varepsilon^{\prime}}.
\]
Obviously, $\varepsilon^{\prime}=\varepsilon+O\left(  \varepsilon^{3}\right)
.$

Now let $r=\varepsilon^{\prime-1}\cosh\left(  \varepsilon^{\prime}z\right)  $
be the catenoid with the same growth rate as $u.$ Then there is a solution
$u_{\varepsilon^{\prime}}$ with the slope $\varepsilon^{\prime-1}$ whose nodal
line is close to this catenoid(the nodal line in $\mathbb{E}^{+}$ decaying
algebraically to a vertical small translation of the catenoid. Actually, the
error could be estimated by some positive power of $\varepsilon.$ Our aim is
to show that $u=u_{\varepsilon^{\prime}}.$ To see this, we need to estimate
the function $p_{1}\left(  z\right)  -\varepsilon^{\prime-1}\cosh\left(
\varepsilon^{\prime}z\right)  $ and $p_{2}\left(  r_{1}\right)  -\varepsilon
^{\prime-1}\cosh^{-1}\left(  \varepsilon^{\prime}r_1\right)  .$

Note that
\begin{align*}
\varepsilon\varepsilon^{\prime-1}\cosh\left(  \varepsilon^{\prime}%
\varepsilon^{-1}z\right)  -\cosh z  &  =\left(  1+O\left(  \varepsilon
^{2}\right)  \right)  \cosh\left(  1+O\left(  \varepsilon^{2}\right)
z\right)  -\cosh z\\
&  =O\left(  \varepsilon^{2}\right)  .
\end{align*}
Hence using Lemma \ref{p1}, we obtain $\bar{p}_{1}\left(  z\right)
-\varepsilon\varepsilon^{\prime-1}\cosh\left(  \varepsilon^{\prime}%
\varepsilon^{-1}z\right)  =O\left(  \varepsilon^{2}\right)  .$ This implies
that
\[
p_{1}\left(  z\right)  -\varepsilon^{\prime-1}\cosh\left(  \varepsilon
^{\prime}z\right)  =O\left(  \varepsilon\right)  .
\]
Using $\left(  \ref{p2}\right)  ,$ we then find that $p_{2}\left(
r_{1}\right)  -\varepsilon^{\prime-1}\cosh^{-1}\left(  \varepsilon^{\prime
}r_{1}\right)  =O\left(  \varepsilon\right)  .$

Therefore the nodal line of $u$ is close to the nodal line of $u_{\varepsilon
^{\prime}}$ at the order $O\left(  \varepsilon\right)  .$ Then,  applying the
mapping property of the Jacobi operator of the catenoid,   a contraction
mapping argument shows that $u=u_{\varepsilon^{\prime}}.$ We refer to
\cite{MR3148064} for similar arguments in the case of four-end solutions in
$\mathbb{R}^{2}.$
\end{proof}

\section{Concluding the Proof of Theorem \ref{main}}

In this section, combining the results of the previous sections, we would like
to finish the proof of Theorem \ref{main}.

Let $M$ be the set of all two-end solutions to the Allen-Cahn equation with
growth rate larger than $\sqrt{2}.$ Consider a catenoid type solution
$u_{\varepsilon}$ arising from a largely dilated catenoid. As we mentioned in
Section \ref{Moduli}, $u_{\varepsilon}$ is nondegenerate$.$By Proposition
\ref{th}, locally around $u_{\varepsilon}$, the set of two-end solutions is a
one dimensional real analytic manifold, which we denote as the image of a map
$\varrho$%
\[
\varrho:\left(  -\delta,\delta\right)  \rightarrow M,
\]
for some small $\delta>0.$ Using the compactness result in Section
\ref{compactness} and by the structure theorem $\left(  \ref{structure}%
\right)  $, $\varrho$ has a global continuation:%
\[
\varrho:\left(  -\delta,+\infty\right)  \rightarrow M.
\]
We claim that the growth rate of the solution $\varrho\left(  t\right)  $
tends to $\sqrt{2}$ as $t\rightarrow+\infty.$ Indeed, as $t\rightarrow
+\infty,$ $\mathcal{P}_{\varrho\left(  t\right)  }$ could not remain bounded.
(Recall that $\mathcal{P}_{\varrho\left(  t\right)  }$ is the intersection of
the nodal set of $\varrho\left(  t\right)  $ will the $r$ or $z$ axis.)
Otherwise by the compactness, the image of $\varrho$ will be a closed loop,
which could not be true since the family of catenoid type solutions
$u_{\varepsilon}$ are not compact. Also, as $t\rightarrow+\infty,$
$\mathcal{P}_{\varrho\left(  t\right)  }$ could not be on the $r$ axis, this
follows from the uniqueness of catenoid type solutions, Proposition
\ref{uniq}. Therefore, $\mathcal{P}_{\varrho\left(  t\right)  }$ will be on
the $z$ axis and $\left\vert \mathcal{P}_{\varrho\left(  t\right)
}\right\vert \rightarrow+\infty,$ as $t\rightarrow+\infty.$ By the analysis of
Toda type solutions, Proposition \ref{To}, the growth rate of the solution
$\varrho\left(  t\right)  $ will go to $\sqrt{2}$ as $t\rightarrow+\infty.$
This finishes the proof.

\section{Appendix}

\subsection{Monotonicity of two-end solutions}

In this appendix, we sketch the proof of monotonicity of two-end
solutions(Proposition \ref{monoto} in Section \ref{compactness}) using the
moving plane argument. Essentially, we follow the proof of monotonicity for
four-end solutions of 2D Allen-Cahn equation in \cite{MR2911416}. But their is
a slight difference here. Namely for the two-end solutions in dimension three,
to start the moving procedure at infinity, one need to have suitable control
of the asymptotic behavior of the solution(estimate $\left(  \ref{r2}\right)
$ below), while this is not needed in dimension two case. The asymptotic
expansion we need is provided by the results of Section \ref{compactness}.

\begin{proof}
[Proof of Proposition \ref{monoto}]Let $u$ be a two-end solution. We first
prove its monotonicity in the $r$ direction. To use the moving plane
machinery, we will work in the usual Euclidean coordinate $\left(
x,y,z\right)  .$ Set $U\left(  x,y,z\right)  :=u\left(  \sqrt{x^{2}+y^{2}%
},z\right)  =u\left(  r,z\right)  .$

Suppose the growth rate of $u$ is equal to $k>\sqrt{2}.$ Hence the asymptotic
curve of its nodal line in the first quadrant is
\[
z=k\ln r+b
\]
for some $b\in\mathbb{R}.$

Let $x_{0}\geq0$ be a parameter and define
\[
\bar{U}\left(  x,y,z\right)  =U\left(  x,y,z;x_{0}\right)  :=U\left(
2x_{0}-x,y,z\right)  .
\]
Certainly $\bar{U}\left(  x_{0},y,z\right)  =U\left(  x_{0},y,z\right)  .$
Note that $\bar{U}$ actually depends on the parameter $x_{0}$.

The first step is to show that the moving plane procedure could be started at
$+\infty.$ We claim that for $x_{0}$ large enough, say $x_{0}>a_{0},$
\[
\bar{U}\left(  x,y,z;x_{0}\right)  <U\left(  x,y,z\right)  ,\text{ for
}x<x_{0}.
\]
First of all, we consider the region where $|\bar{U}|$ is not close to $1.$
Recall that the nodal set of $\bar{U}$ is close to
\[
z=k\ln\sqrt{\left(  2x_{0}-x\right)  ^{2}+y^{2}}+b.
\]
We have
\begin{align}
&  \ln\sqrt{\left(  2x_{0}-x\right)  ^{2}+y^{2}}-\ln\sqrt{x^{2}+y^{2}%
}\nonumber\\
&  =\frac{1}{2}\ln\left(  1+\frac{4x_{0}^{2}-4x_{0}x}{r^{2}}\right)  .
\label{err}%
\end{align}
Note that for $x<x_{0},$ $\frac{4x_{0}^{2}-4x_{0}x}{r^{2}}>0.$ Fix a small
positive constant $\epsilon.$

If $\frac{4x_{0}^{2}-4x_{0}x}{r^{2}}>\epsilon,$ then by $\left(
\ref{err}\right)  ,$
\begin{equation}
\ln\sqrt{\left(  2x_{0}-x\right)  ^{2}+y^{2}}-\ln\sqrt{x^{2}+y^{2}}>\frac
{1}{2}\ln\left(  1+\epsilon\right)  . \label{dif}%
\end{equation}
By the results of Section \ref{compactness}, for $r$ large, we have
\begin{equation}
u\left(  r,z\right)  -H\left(  z_{1}\right)  =O\left(  r^{-2}\right)  .
\label{r2}%
\end{equation}
Here $z_{1}$ is the signed distance of $\left(  r,z\right)  $ to the nodal
line. As a consequence
\begin{align*}
&  \bar{U}\left(  x,y,z;x_{0}\right)  -U\left(  x,y,z\right) \\
&  =u\left(  \sqrt{\left(  2x_{0}-x\right)  ^{2}+y^{2}},z\right)  -u\left(
r,z\right) \\
&  =H\left(  \left[  z-k\ln\sqrt{\left(  2x_{0}-x\right)  ^{2}+y^{2}%
}-b\right]  \cos\theta_{1}\right) \\
&  -H\left(  \left[  z-k\ln\sqrt{x^{2}+y^{2}}-b\right]  \cos\theta_{2}\right)
+O\left(  r^{-2}\right)  ,
\end{align*}
where $\theta_{i}=O\left(  r^{-1}\right)  .$ It follows from $\left(
\ref{dif}\right)  $ that
\begin{align*}
\bar{U}\left(  x,y,z;x_{0}\right)  -U\left(  x,y,z\right)   &  \leq H\left(
z-k\ln\sqrt{\left(  2x_{0}-x\right)  ^{2}+y^{2}}-b\right) \\
&  -H\left(  z-k\ln\sqrt{x^{2}+y^{2}}-b\right)  +O\left(  r^{-2}\right) \\
&  <0,
\end{align*}
for $r$ large enough.

If $\frac{4x_{0}^{2}-4x_{0}x}{r^{2}}\in\left(  0,\varepsilon\right)  ,$ then
\begin{equation}
\ln\sqrt{\left(  2x_{0}-x\right)  ^{2}+y^{2}}-\ln\sqrt{x^{2}+y^{2}}%
=\frac{2x_{0}^{2}-2x_{0}x}{r^{2}}+O\left(  \frac{x_{0}^{2}\left(  x_{0}%
-x_{0}\right)  ^{2}}{r^{4}}\right)  . \label{low}%
\end{equation}
There are two possible cases. Case 1: $x\in\left(  -\infty,x_{0}-1\right)  .$
In this case, by $\left(  \ref{low}\right)  ,$
\[
\ln\sqrt{\left(  2x_{0}-x\right)  ^{2}+y^{2}}-\ln\sqrt{x^{2}+y^{2}}\geq
\frac{x_{0}}{r^{2}}.
\]
This together with $\left(  \ref{r2}\right)  $ implies that for $x_{0}$ large,
$\bar{U}<U.$ Case 2: $x\in\left(  x_{0}-1,x_{0}\right)  .$ In this case, by
the estimate
\[
\partial_{r}\left(  u\left(  r,z\right)  -H\left(  z_{1}\right)  \right)
=O\left(  r^{-2}\right)  ,
\]
we find that(here one should also use some estimates of the Fermi coordinate)
\[
\bar{U}-U\leq-C\frac{x_{0}^{2}-x_{0}x}{r^{2}}+O\left(  r^{-2}\left(
x_{0}-x\right)  \right)
\]
for certain positive constant $C.$ Hence if we choose $x_{0}$ large, $\bar
{U}<U.$ Therefore, to prove the claim, it remains to consider the region where
$\left\vert \bar{U}\right\vert \sim1.$ Observe that for $x_{0}$ large enough,
in the region where $\bar{U}\sim1,$ $U$ is also close to $1.$ Now let
$\varphi:=\bar{U}-U.$ Then in the region $x<x_{0},$ $\varphi$ satisfies
\[
-\Delta_{\left(  x,y,z\right)  }\varphi+\left(  \bar{U}^{2}+\bar{U}%
U+U^{2}-1\right)  \varphi=0.
\]
Since $\lim\sup_{\left\vert \left(  x,y,z\right)  \right\vert \rightarrow
+\infty}\varphi\leq0,$ by the maximum principle, $\varphi\left(  x,y,z\right)
<0,$ for $x<x_{0}.$ This proves the claim.

In the second step, we define
\[
x^{\ast}=\inf\left\{  \bar{x}:\bar{U}\left(  x,y,z;x_{0}\right)  <U\left(
x,y,z\right)  \text{ for }x<x_{0}\text{ and }x_{0}\in\left(  \bar{x}%
,a_{0}\right)  \right\}  .
\]
We show that $x^{\ast}=0.$ To see this, we first prove that $r^{\ast}<a_{0}.$
Indeed, by the first step,
\begin{align*}
\bar{U}\left(  x,y,z;a_{0}\right)   &  <U\left(  x,y,z\right)  \text{ for
}x<a_{0},\\
\bar{U}\left(  a_{0},y,z;a_{0}\right)   &  =U\left(  a_{0},y,z\right)  .
\end{align*}
Hence by the Hopf Lemma, $\partial_{x}\left(  \bar{U}\left(  \cdot
;a_{0}\right)  -U\right)  >0$ for $x=a_{0}.$ Then standard arguments together
with the asymptotic behavior of $U$ implies that $\bar{U}\left(
x,y,z;x_{0}\right)  <U,$ for $x<x_{0}$ and $x_{0}$ sufficiently close to
$a_{0}.$ Then one could use this type of arguments to show that $x^{\ast}%
=0$. (The plane could be moved to the left until the inequality $\bar{U}<U$ is
violated at infinity.)

The monotonicity in $z$ direction could be proved similarly using moving
plane. This completes the proof.
\end{proof}

$\bigskip$

$\bigskip$

$\mathbf{Acknowledgement}$ Y. Liu is partially supported by NSFC grant
11101141 and the Fundamental Research Funds for the Central Universities
13MS39. J. Wei is partially supported by NSERC of Canada.

\bigskip

\end{document}